\theoremstyle{plain}
\newtheorem{thm}{Theorem}[section]
\newtheorem{prop}[thm]{Proposition}
\newtheorem{lem}[thm]{Lemma}
\newtheorem{con}[thm]{Conjecture}
\newtheorem{corl}[thm]{Corollary}
\theoremstyle{definition}
\newtheorem{defn}[thm]{Definition}
\newtheorem{nota}[thm]{Notation}
\theoremstyle{plain}
\newtheorem{thms}{Theorem}[subsection]
\newtheorem{props}[thms]{Proposition}
\newtheorem{lems}[thms]{Lemma}
\newtheorem{corls}[thms]{Corollary}
\newtheorem{khi}[thms]{Case}
\theoremstyle{definition}
\newtheorem{exa}{Example}[subsection]
\renewcommand{\theequation}{\arabic{section}.\arabic{equation}}
\def\vdvh{\vrule height 11pt depth 5pt width 0.5pt}
\def\vdq{\vdvh\ }
\def\DD{D\kern-.7em\raise0.4ex\hbox{\char '55}\kern.33em}
\begin{document} 
\title[On the Peterson hit problem ]
{On the Peterson hit problem}
\author{Nguy\~\ecircumflex n Sum}

\footnotetext[1]{\ 2010 {\it Mathematics Subject Classification}. Primary 55S10; 55S05, 55T15.}
\footnotetext[2]{\ {\it Keywords and phrases}. Steenrod squares, polynomial algebra, Peterson hit problem.}
\footnotetext[3]{\ This version is a revision of a preprint of Quy Nh\ohorn n University, Vi\^{\d e}t Nam, 2011.}

\begin{abstract}
We study the {\it hit problem}, set up by F. Peterson, of finding a minimal set 
of generators for the polynomial algebra $P_k := \mathbb F_2[x_1,x_2,\ldots,x_k]$ as a module over the  mod-2 Steenrod algebra, $\mathcal{A}$. In this paper, we study a minimal set of generators 
for $\mathcal A$-module $P_k$ in some so-called generic degrees and apply these results  to explicitly determine the hit problem for $k=4$. 
\end{abstract}
\maketitle

\begin{center}
{\it Dedicated to Prof. N. H. V. H\uhorn ng on the occasion of his sixtieth birthday}
\end{center}

\bigskip

\section{Introduction and statement of results}\label{s1} 
\setcounter{equation}{0}

Let $V_k$ be an elementary abelian 2-group of rank $k$. Denote by $BV_k$ the classifying space of $V_k$. 
It may be thought of as the product of $k$ copies of the real projective space $\mathbb RP^\infty$.  
Then  $$P_k:= H^*(BV_k) \cong \mathbb F_2[x_1,x_2,\ldots ,x_k],$$ 
a polynomial algebra in  $k$ variables $x_1, x_2, \ldots , x_k$, each of degree 1. Here the cohomology is taken with coefficients in the prime field $\mathbb F_2$ of two elements. 

Being the cohomology of a space, $P_k$ is a module over the mod-2 Steenrod algebra $\mathcal{A}$. 
The action of $\mathcal A$ on $P_k$ is explicitly given by the formula
$$Sq^i(x_j) = \begin{cases} x_j, &i=0,\\ x_j^2, &i=1,\\ 0, &\text{otherwise,}
\end{cases}$$
and subject to the Cartan formula
$$Sq^n(fg) = \sum_{i=0}^nSq^i(f)Sq^{n-i}(g),$$
for $f, g \in P_k$ (see Steenrod and Epstein~\cite{st}).

A polynomial $f$ in $P_k$ 
is called {\it hit} if it can be written as a finite sum $f = \sum_{i>0}Sq^i(f_i)$ 
for some polynomials $f_i$.  That means $f$ belongs to  $\mathcal{A}^+P_k$, 
where $\mathcal{A}^+$ denotes the augmentation ideal in $\mathcal A$.
We are interested in the {\it hit problem}, set up by F. Peterson, of finding a minimal set 
of generators for the polynomial algebra $P_k$ as a module over the Steenrod algebra. 
In other words, we want to find a basis of the $\mathbb F_2$-vector space 
$QP_k := P_k/\mathcal A^+P_k = \mathbb F_2 \otimes_{\mathcal A} P_k$.

The hit problem was first studied by Peterson~\cite{pe,pe1}, Wood~\cite{wo}, Singer~\cite {si1}, 
and Priddy~\cite{pr}, who showed its relation to several classical problems respectively in cobordism theory, 
modular representation theory, Adams spectral sequence for the stable homotopy of spheres, and
stable homotopy type of classifying spaces of finite groups.
The vector space $QP_k$ was explicitly calculated by 
Peterson~\cite{pe} for $k=1, 2,$ by Kameko~\cite{ka} for $k=3$. The case $k = 4$ has been treated by Kameko~\cite{ka2} and by the present author \cite{su2}. 

\medskip
Several aspects of the hit problem were then investigated by many authors. (See Boardman~\cite{bo}, 
Bruner, H\`a and H\uhorn ng~\cite{br}, Carlisle and Wood~\cite{cw}, 
Crabb and Hubbuck~\cite{ch}, Giambalvo and Peterson~\cite{gp}, H\` a~\cite{ha}, H\uhorn ng~\cite{hu}, H\uhorn ng and Nam~\cite{hn1, hn2}, H\uhorn ng and Peterson~\cite{hp,hp2}, Janfada and Wood~\cite{jw1, jw2}, Kameko~\cite{ka,ka1}, Minami~\cite{mi}, Mothebe \cite{mo,mo1}, Nam~\cite{na,na2}, 
Repka and Selick~\cite{res}, Silverman~\cite{sl}, Silverman and Singer~\cite{ss}, Singer~\cite{si2}, Walker and Wood~\cite{wa, wa1,wa2}, Wood~\cite{wo2,wo3} and others.)

\medskip
The $\mu$-function is one of the numerical functions that have much been used  in the context of the hit problem. For a positive integer $n$, by $\mu(n)$ one means the smallest number $r$ for which it is possible to write $n = \sum_{1\leqslant i\leqslant r}(2^{d_i}-1),$ where $d_i >0$.
 A routine computation shows that  $\mu(n) = s$ if and only if there exists uniquely a sequence of integers $d_1 > d_2 >\ldots > d_{s-1}\geqslant d_s>0$ such that
\begin{equation} \label{ct1.1}n =  2^{d_1} + 2^{d_2}+ \ldots + 2^{d_{s-1}}+ 2^{d_{s}} - s.
\end{equation}
From this it implies $n-s$ is even and $\mu(\frac{n-s}2) \leqslant s$.

Denote by $(P_k)_n$ the subspace of $P_k$ consisting of all the homogeneous polynomials of degree $n$ in $P_k$ and by $(QP_k)_n$ the subspace of $QP_k$ consisting of all the classes represented by the elements in $(P_k)_n$.

\medskip
Peterson~\cite{pe} made the following conjecture, which was subsequently proved by Wood~\cite{wo}.    
 
\begin{thm}[Wood~\cite{wo}]\label{dlmd1} 
If $\mu(n) > k$, then $(QP_k)_n = 0$.
\end{thm}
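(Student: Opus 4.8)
The plan is to reduce the statement to monomials and then to exhibit, for each monomial of degree $n$, an explicit Steenrod square that produces it modulo hit terms. Since $(P_k)_n$ is spanned by the monomials $x^a = x_1^{a_1}\cdots x_k^{a_k}$ with $\sum_i a_i = n$, and $(QP_k)_n$ is their image, it suffices to prove that every such monomial lies in $\mathcal A^+P_k$ whenever $\mu(n) > k$. The basic computational input is the Cartan formula, which on a single power gives $Sq^j(x_i^{a_i}) = \binom{a_i}{j}x_i^{a_i+j}$, hence
\begin{equation*}
Sq^s(x^a) = \sum_{j_1+\cdots+j_k = s}\Big(\prod_{i=1}^k\binom{a_i}{j_i}\Big)x_1^{a_1+j_1}\cdots x_k^{a_k+j_k},
\end{equation*}
together with Lucas' theorem, which tells us that $\binom{a_i}{j_i}$ is odd precisely when the binary digits of $j_i$ form a submask of those of $a_i$.

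The guiding idea is a descending induction on a monomial order within the fixed degree $n$. For a monomial $z = x^a$ I want to locate a power of two $s = 2^t$ and a monomial $w$ of degree $n - s$ so that $Sq^s(w) = z + \sum u$, where every error term $u$ is strictly smaller than $z$ in a chosen order (for instance the left-lexicographic order on exponent vectors). Granting such a \emph{peeling} step for every monomial, the minimal monomials are forced to be hit directly, and the induction then expresses an arbitrary $z$ as $Sq^s(w)$ plus a combination of already-hit monomials, proving $z \in \mathcal A^+P_k$. The role of the hypothesis is exactly to guarantee that a legitimate pair $(s,w)$ exists: I will use the characterization \eqref{ct1.1} and the consequence that $\mu(n) > k$ leaves no \emph{spike} $x_1^{2^{d_1}-1}\cdots x_k^{2^{d_k}-1}$ of degree $n$, so that for any $a$ summing to $n$ there is a variable whose exponent carries a binary ``gap'' that can absorb a power of two split off by $Sq^s$ with odd Lucas coefficient.

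Concretely, the combinatorial heart is a lemma asserting that if $\mu(n) > k$, then for every $a$ with $\sum_i a_i = n$ there exist $t \geqslant 0$ and a monomial $w$ of degree $n - 2^t$ realizing a carry-free decrement, i.e.\ with $Sq^{2^t}(w)$ having leading term $x^a$. I would prove this by analyzing the binary expansions of the $a_i$: were no such splitting available for any power of two, each $a_i$ would be forced into the all-ones pattern $2^{d_i}-1$ on its support, making $x^a$ a spike and forcing $\mu(n) \leqslant k$, contrary to hypothesis. This contradiction is the crux.

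The main obstacle is the bookkeeping of the error terms $u$: I must verify that the chosen $Sq^{2^t}$ produces $z$ with coefficient $1$ and that all other monomials it produces are genuinely lower in the order (or already known to be hit), so that the induction does not circle back. Controlling these Lucas coefficients simultaneously across all $k$ variables, and checking that the order is compatible with the squaring step, is the delicate part; the $\mu$-arithmetic recorded after \eqref{ct1.1}, namely that $n - \mu(n)$ is even and $\mu\big(\tfrac{n-\mu(n)}{2}\big) \leqslant \mu(n)$, is what keeps the degrees and the splittings consistent through the recursion.
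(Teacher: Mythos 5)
Your reduction to monomials and your spike-detection lemma are both sound: if every exponent $a_i$ were of the form $2^{d_i}-1$ then $n=\sum_i(2^{d_i}-1)$ would give $\mu(n)\leqslant k$, so under the hypothesis $\mu(n)>k$ some $a_i$ has a binary gap, and Lucas' theorem then produces a $t$ with $\binom{a_i-2^t}{2^t}$ odd. (Note, for calibration, that the paper contains no proof of this statement at all --- it is quoted from Wood's paper --- so the comparison below is with Wood's original argument.) The genuine gap in your proposal is the step you yourself flag as ``delicate'': the claim that the Cartan-formula by-products of $Sq^{2^t}(w)$ are strictly smaller than $z$ in a fixed monomial order. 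This is not a bookkeeping detail; it is false for the order you propose. The peeling position $i$ is forced on you by where the binary gap sits, and it varies from monomial to monomial. If $i$ is not the lex-most-significant variable, the Cartan expansion of $Sq^{2^t}(w)$ contains terms in which part of the square lands on a variable $x_m$ with $m<i$, and those monomials are strictly \emph{larger} than $z$ in left-lexicographic order, so the descending induction can circle back. Since no single fixed order is compatible with peelings at arbitrary positions, the inductive scheme collapses; overcoming exactly this difficulty is what the machinery of weight vectors, admissible monomials, and the Kameko and Singer criteria (Theorems \ref{dlcb1} and \ref{dlsig} in the paper) was invented for, and your proposal supplies no substitute mechanism.

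Wood's actual proof avoids all of this with the conjugation trick, and you may want to compare. Write $z=uv^2$, where $u$ is the product of the $s\leqslant k$ variables occurring in $z$ with odd exponent and $v$ is a monomial of degree $d$, so $n=2d+s$. Since $v^2=Sq^{d}(v)$, the $\chi$-trick (for any $f,g$ one has $f\,Sq^{d}(g)\equiv \chi(Sq^{d})(f)\,g$ modulo $\mathcal A^+P_k$) gives $z\equiv \chi(Sq^{d})(u)\,v$. Multiplicativity of the total operation and $\chi(Sq)(x_i)=x_i+x_i^2+x_i^4+\cdots$ show that $\chi(Sq^{d})(u)$ is a sum of monomials $\prod_{i}x_i^{2^{c_i}}$ over all ways of writing $d=\sum_{i=1}^{s}(2^{c_i}-1)$; in particular $\chi(Sq^{d})(u)=0$ unless $\mu(d)\leqslant s$. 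But $\mu(d)\leqslant s$ would yield $n=2d+s=\sum_{i=1}^{s}(2^{c_i+1}-1)$, hence $\mu(n)\leqslant s\leqslant k$, contradicting the hypothesis. Therefore $\chi(Sq^{d})(u)=0$ and $z$ is hit --- with no monomial order and no error terms to control. Unless you can produce a genuinely new ordering argument that tames the Cartan by-products (a substantial missing piece, not a routine verification), I would recommend restructuring your proof along these lines.
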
 

One of the main tools in the study of the hit problem is  Kameko's homomorphism 
$\widetilde{Sq}^0_*: QP_k \to QP_k$. 
This homomorphism is induced by the $\mathbb F_2$-linear map, also denoted by  $\widetilde{Sq}^0_*:P_k\to P_k$, given by
$$
\widetilde{Sq}^0_*(x) = 
\begin{cases}y, &\text{if }x=x_1x_2\ldots x_ky^2,\\  
0, & \text{otherwise,} \end{cases}
$$
for any monomial $x \in P_k$. Note that $\widetilde{Sq}^0_*$ is not an $\mathcal A$-homomorphism. However, 
$\widetilde{Sq}^0_*Sq^{2t} = Sq^{t}\widetilde{Sq}^0_*,$ and $\widetilde{Sq}^0_*Sq^{2t+1} = 0$ for any non-negative integer $t$. 

\begin{thm}[Kameko~\cite{ka}]\label{dlmd2} 
Let $m$ be a positive integer. If $\mu(2m+k)=k$, then 
$(\widetilde{Sq}^0_*)_m: (QP_k)_{2m+k}\to (QP_k)_m$ 
is an isomorphism of the $\mathbb F_2$-vector spaces.
\end{thm}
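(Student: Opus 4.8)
The plan is to establish surjectivity and injectivity of $(\widetilde{Sq}^0_*)_m$ separately; only injectivity will use the hypothesis $\mu(2m+k)=k$. Surjectivity holds in every degree: for a monomial $w$ of degree $m$ the monomial $x_1x_2\cdots x_kw^2$ has degree $2m+k$ and satisfies $\widetilde{Sq}^0_*(x_1\cdots x_kw^2)=w$. Since squaring is additive over $\mathbb F_2$, the assignment $w\mapsto x_1\cdots x_kw^2$ extends to an $\mathbb F_2$-linear map $\theta\colon(P_k)_m\to(P_k)_{2m+k}$ with $\widetilde{Sq}^0_*\circ\theta=\mathrm{id}$, so $(\widetilde{Sq}^0_*)_m$ is onto.

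For injectivity I would first reduce the problem. Given $f\in(P_k)_{2m+k}$, write $f=f_{\mathrm{od}}+f_{\mathrm{ev}}$, where $f_{\mathrm{od}}$ is the sum of those monomials all of whose exponents are odd, precisely the monomials not annihilated by $\widetilde{Sq}^0_*$. Setting $g:=\widetilde{Sq}^0_*(f)=\widetilde{Sq}^0_*(f_{\mathrm{od}})$, a direct check gives $f_{\mathrm{od}}=x_1\cdots x_kg^2=\theta(g)$. Consequently $(\widetilde{Sq}^0_*)_m$ is injective once I prove two statements: \emph{(I)} the map $\theta$ sends hit polynomials to hit polynomials (so $g\in\mathcal A^+P_k$ forces $f_{\mathrm{od}}=\theta(g)\in\mathcal A^+P_k$); and \emph{(II)} under the assumption $\mu(2m+k)=k$, every monomial of degree $2m+k$ having at least one even exponent is hit (so that $f_{\mathrm{ev}}\in\mathcal A^+P_k$ and $f\equiv f_{\mathrm{od}}$). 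Indeed, if $\widetilde{Sq}^0_*(f)=g$ is hit, then (II) gives $f\equiv\theta(g)$ and (I) gives $\theta(g)\in\mathcal A^+P_k$, whence $f$ is hit.

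Claim \emph{(I)} is degree-independent and I would prove it by the Cartan formula. It is enough to treat $g=Sq^i(h)$ with $i\ge1$, so that $g^2=Sq^{2i}(h^2)$. Expanding $Sq^{2i}(x_1\cdots x_kh^2)$ by Cartan isolates $x_1\cdots x_kSq^{2i}(h^2)=\theta(g)$ as the term coming from $Sq^0(x_1\cdots x_k)$, while every other contribution has the form $Sq^{2a}(x_1\cdots x_k)\,(Sq^{i-a}h)^2$ with $a\ge1$. Since $Sq^{2i}(x_1\cdots x_kh^2)$ is manifestly hit, one reduces $\theta(g)$ modulo $\mathcal A^+P_k$ to these correction terms, which are disposed of using the top-square identity $Sq^{\deg z}(z)=z^2$ together with a short induction. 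This step is routine but requires some bookkeeping.

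The substance of the theorem is claim \emph{(II)}, and this is where I expect the main difficulty. First, if some exponent of the monomial $z$ is $0$, then $z$ lies in the subalgebra generated by the remaining $k-1$ variables; since $\mu(2m+k)=k>k-1$, Theorem~\ref{dlmd1} applied in $P_{k-1}$ shows $z$ is hit there, hence in $P_k$. So I may assume every exponent is $\ge1$. A parity count then helps: as $\deg z=2m+k\equiv k\pmod2$, the number of odd exponents is congruent to $k$ modulo $2$, so the number of even exponents is even, and being positive it is at least $2$. The remaining task is to show that any such $z$ is hit, and here the hypothesis $\mu(2m+k)=k$ is essential: it forces $2m+k$ to have the rigid representation $2m+k=\sum_{i=1}^k(2^{d_i}-1)$, so that the only monomials of this degree able to survive in $QP_k$ are the all-odd spikes and their images under $\theta$. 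I would make this precise by a downward induction on the monomials of degree $2m+k$: using the Cartan expansion of a suitable $Sq^s$ applied to the monomial obtained by lowering an even exponent, one writes $z$ as $Sq^s(\text{a smaller monomial})$ plus monomials that either again carry an even exponent or fall under the previous case, the tight shape of $2m+k$ guaranteeing that no new non-hit monomial is produced. Controlling this induction, equivalently proving that no monomial with an even exponent survives in $(QP_k)_{2m+k}$ when $\mu(2m+k)=k$, is the crux of the argument.
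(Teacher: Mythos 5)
Note first that the paper does not actually prove this theorem: it is quoted from Kameko's thesis and used as a black box, so there is no internal proof to compare yours against. Judged on its own terms, your proposal has the correct and standard skeleton (surjectivity from the section $w\mapsto x_1\cdots x_kw^2$, injectivity reduced to your claims (I) and (II)), and the reduction logic is sound; but it contains a genuine gap, which you in fact flag yourself. Claim (II) \emph{is} the substance of the theorem, and your ``downward induction lowering an even exponent'' is a plan, not an argument: it never explains how the hypothesis $\mu(2m+k)=k$ enters, nor how the Cartan correction terms are controlled, so the induction does not close. Claim (I) as sketched is also incomplete: expanding $Sq^{2i}(x_1\cdots x_kh^2)$ leaves the terms $Sq^{2b}(x_1\cdots x_k)(Sq^{i-b}h)^2$ with $b\geqslant 1$, and these equal $\sum_{|S|=2b}\prod_{j\in S}x_j^2\prod_{j\notin S}x_j\,(Sq^{i-b}h)^2$; such terms are not squares, so the top-square identity $Sq^{\deg z}(z)=z^2$ does not apply to them, and no actual induction is indicated that would dispose of them in arbitrary degree.

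The gap can be closed with a tool the paper does have: Singer's criterion, Theorem \ref{dlsig}. Since $\mu(2m+k)=k$, the minimal spike $z$ of degree $2m+k$ has all $k$ exponents of the form $2^{d_i}-1$ with $d_i>0$, so $\omega_1(z)=k$; any monomial $x$ of this degree with at least one even exponent (zero included) satisfies $\omega_1(x)\leqslant k-1<\omega_1(z)$, hence $\omega(x)<\omega(z)$ and $x$ is hit. That is exactly your claim (II), obtained in one line and with your separate appeal to Theorem \ref{dlmd1} for zero exponents rendered unnecessary. Moreover, in the relevant degree, (II) also repairs (I): every monomial occurring in the correction terms above carries a positive even exponent (on each variable of $S$), so all of them are hit by (II), giving $x_1\cdots x_kg^2\equiv\sum_iSq^{2i}(x_1\cdots x_kh_i^2)\equiv 0$ modulo hit elements whenever $g=\sum_{i\geqslant 1}Sq^i(h_i)$. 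With these two substitutions your outline becomes a complete proof; this $\omega_1$-comparison is precisely the mechanism the paper itself exploits (compare Lemma \ref{bdbs1}), whereas Kameko's original argument, predating Singer's criterion, proceeded differently.
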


Based on Theorems~\ref{dlmd1} and \ref{dlmd2}, the hit problem is reduced to the case of degree $n$ with $\mu(n) = s < k$. 

\medskip
The hit problem in the case of degree $n$ of the form (\ref{ct1.1}) with $s=k-1$,  $d_{i-1}-d_i>1$ for $2\leqslant i <k$ and $d_{k-1} >1$ was partially studied by Crabb and Hubbuck~\cite{ch}, Nam~\cite{na}, 
Repka and Selick~\cite{res} and the present author \cite{su}.

\medskip
In this paper, we explicitly determine the hit problem for the case $k=4$. First, we study the hit problem for the case of degree $n$ of the form (\ref{ct1.1}) for  $s = k-1$. The following theorem gives an inductive formula for the dimension of $(QP_k)_n$ in this case.

\begin{thm}\label{dl1} Let $n =\sum_{1 \leqslant i \leqslant k-1}(2^{d_i}-1)$ 
with $d_i$ positive integers such that $d_1 > d_2 > \ldots >d_{k-2} \geqslant d_{k-1},$ and let $m = \sum_{1 \leqslant i \leqslant k-2}(2^{d_i-d_{k-1}}-1)$.
If $d_{k-1} \geqslant k-1 \geqslant 3$, then
$$\dim (QP_k)_n = (2^k-1)\dim (QP_{k-1})_m.$$
\end{thm}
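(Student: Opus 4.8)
The plan is to reduce the computation of $(QP_k)_n$ by exploiting the fact that the Steenrod operations preserve the set of variables occurring in a monomial. Write $P_k=\bigoplus_{S}P_S$, where $P_S$ is the span of the monomials whose support (the set of variables appearing with positive exponent) is exactly $S\subseteq\{1,\dots,k\}$. Since $Sq^i(x_j^a)=\binom{a}{i}x_j^{a+i}$, the Cartan formula shows that each $P_S$ is an $\mathcal A$-submodule, so $QP_k=\bigoplus_S Q(P_S)$ and, after relabelling variables, $\dim(QP_k)_n=\sum_{i=0}^k\binom{k}{i}b_i$, where $b_i:=\dim Q(P_{\{1,\dots,i\}})_n$ counts the classes of degree $n$ represented by monomials in which all $i$ variables actually occur. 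Because $\mu(n)=k-1$, Theorem~\ref{dlmd1} forces $b_i=0$ for $i\le k-2$, and the same decomposition in $k-1$ variables gives $b_{k-1}=\dim(QP_{k-1})_n=:D$. Hence $\dim(QP_k)_n=kD+b_k$, and the theorem becomes equivalent to the identity $b_k=(2^k-1-k)D$ together with the statement $D=\dim(QP_{k-1})_m$.

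Next I would settle the factor $D=\dim(QP_{k-1})_m$ by a chain of Kameko isomorphisms in $k-1$ variables. Setting $e_i=d_i-d_{k-1}$ and $m_j=\sum_{i=1}^{k-2}(2^{e_i+j}-1)+(2^{j}-1)$ for $1\le j\le d_{k-1}$, one checks $m_{j+1}=2m_j+(k-1)$, $m_{d_{k-1}}=n$, and $2m+(k-1)=m_1$, and that $\mu(m_j)=k-1$ for every $j\ge1$. Thus Theorem~\ref{dlmd2} applies at each stage and yields
$$\dim(QP_{k-1})_m=\dim(QP_{k-1})_{m_1}=\cdots=\dim(QP_{k-1})_{m_{d_{k-1}}}=\dim(QP_{k-1})_n=D,$$
which is exactly what is needed. (The lower bound $d_{k-1}\ge k-1\ge3$ is what places us in the generic range and will really be used only in the last step.)

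The heart of the proof is therefore the equality $b_k=(2^k-1-k)D$, i.e. the determination of the full-support space $Q(P_{\{1,\dots,k\}})_n$. Here I would construct an explicit basis: starting from a fixed basis of $(QP_{k-1})_m$ (equivalently of $(QP_{k-1})_n$), I would produce, for each subset $U\subseteq\{1,\dots,k\}$ with $|U|\ge2$, a family of admissible monomials of degree $n$ in which all $k$ variables occur, arranged so that the $\binom{k}{i}$ subsets of each size $i\ge2$ supply $\sum_{i=2}^{k}\binom{k}{i}=2^k-1-k$ copies of the $(k-1)$-variable basis. Note that since $n\equiv k-1\pmod2$ no monomial of degree $n$ can have all exponents odd, so every full-support monomial carries at least one even exponent; the bookkeeping of these even exponents, via repeated use of the Cartan formula and the relation $\widetilde{Sq}^0_*Sq^{2t}=Sq^{t}\widetilde{Sq}^0_*$, is what drives the matching with the $(k-1)$-variable data and is where the hypothesis $d_{k-1}\ge k-1$ rules out the edge effects that would spoil the uniform count.

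The main obstacle is the linear independence of these full-support classes in $(QP_k)_n$, equivalently the assertion that no nontrivial combination of the chosen monomials lies in $\mathcal A^+P_k$. Spanning is comparatively routine once one shows that every full-support monomial can be pushed, modulo $\mathcal A^+P_k$, onto the chosen list using the admissibility order together with Kameko-type reductions; but excluding hidden relations requires a delicate analysis of the weight sequences of the monomials and an induction on the number of variables. I expect this independence step to absorb the bulk of the argument and to demand the explicit, somewhat lengthy case analysis characteristic of the hit problem.
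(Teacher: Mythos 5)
Your two outer reductions are correct, and they coincide with what the paper actually does. The decomposition of $QP_k$ by exact support is legitimate (Steenrod squares never lower exponents, so each support-summand is an $\mathcal A$-submodule; this refines Proposition \ref{2.7}), and with Theorem \ref{dlmd1} it gives $\dim(QP_k)_n = k\dim(QP_{k-1})_n + b_k$, since $\mu(n)=k-1$ kills all summands with at most $k-2$ variables. Likewise your chain $m_{j+1}=2m_j+(k-1)$ with $\mu(m_j)=k-1$ is exactly the paper's argument with the degrees $h_u$, and Theorem \ref{dlmd2} does yield $\dim(QP_{k-1})_n=\dim(QP_{k-1})_m$. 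So far, so good — but these are the routine parts of the theorem.

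The genuine gap is the equality $b_k=(2^k-1-k)\dim(QP_{k-1})_n$, which you announce ("I would construct an explicit basis \dots arranged so that \dots") but never construct; this equality \emph{is} the theorem, and in the paper it is Proposition \ref{mdc1}. Carrying it out requires: (a) the explicit operators $\phi_{(i;I)}$ for the $2^k-1-k$ pairs $(i;I)\in\mathcal N_k$ with $\ell(I)\geqslant 1$ — these are precisely the families indexed by subsets of size $\geqslant 2$ that you postulate — defined via the $u$-compatibility conditions of Definition \ref{dfn1}; (b) the proof that their images span $(QP_k^+)_n$, which is the long induction through Cases \ref{th01}--\ref{th12}, resting on Lemmas \ref{hq0}, \ref{bdcbs}, \ref{hq4} and Singer's criterion (Theorem \ref{dlsig}); and (c) the proof of linear independence, done with the $\mathcal A$-module projections $p_{(i;I)}$ and Lemma \ref{bdc1}, which in the boundary case $d_{k-1}=k-1$ forces the splitting of $B_{k-1}(n)$ into the classes $\mathcal C,\mathcal D,\mathcal E$ and a considerably more delicate elimination. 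Your proposal also misjudges where the difficulty sits: spanning is not ``comparatively routine'' — it is the bulk of Section \ref{s3} — and the hypothesis $d_{k-1}\geqslant k-1\geqslant 3$ cannot be waved at as merely ``ruling out edge effects'': the theorem was first announced in \cite{su4} under the weaker condition $d_{k-1}\geqslant k-1\geqslant 1$ and had to be corrected to $\geqslant 3$, so the edge behavior is exactly where such a construction can break. Without (a)--(c) your text is a correct restatement of what must be proved, not a proof.
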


For $d_{k-2} > d_{k-1} \geqslant k$, the theorem follows from a result in Nam~\cite{na}. For $d_{k-2} = d_{k-1} > k$, it has been proved in \cite{su}. However, for either $d_{k-1} = k-1$ or $d_{k-2} = d_{k-1} = k$, the theorem is new.

 From the results in Peterson \cite{pe} and Kameko \cite{ka}, we see that if $k=3$, then this theorem  is true for either $d_{1} > d_{2} \geqslant 2$ or $d_1=d_2\geqslant 3$; if $k=2$, then it is true for $d_1 \geqslant 2$. 

The main tool in the proof of the theorem is Singer's criterion  on the hit monomials (Theorem \ref{dlsig}.) So, the condition $d_1 > d_2 > \ldots >d_{k-2} \geqslant d_{k-1}>0$ is used in our proof when we use this criterion.

\medskip
Based on Theorem \ref{dl1}, we explicitly compute $QP_4$. 

\begin{thm}\label{dl3} Let $n $ be an  arbitrary positive integer with $\mu(n) < 4$. The dimension of the $\mathbb F_2$-vector space $(QP_4)_n$ is given by the following table:

\medskip{\rm
\centerline{\begin{tabular}{llcccc}
\ \hskip2.5cm$n$ &\vdq $s=1$ & $s=2$ & $s=3$ & $s=4$  & $s\geqslant 5$\cr
\hline
$2^{s+1}-3$ &\vdq\ \ 4 & 15 & 35 & 45 & 45  \cr
$2^{s+1}-2$&\vdq \ \ 6 & 24 & 50 & 70 & 80  \cr
$2^{s+1}-1$ &\vdq\ 14 & 35 & 75 &  89 & 85 \cr 
$2^{s+2}+2^{s+1}-3$ &\vdq\ 46 & 94 & 105  &105&105\cr
$2^{s+3}+2^{s+1}-3$&\vdq\ 87 & 135 & 150  &150&150\cr
$2^{s+4}+2^{s+1}-3$ &\vdq 136& 180 & 195 &195&195\cr 
$2^{s+t+1}+2^{s+1}-3, t\geqslant 4$ &\vdq 150& 195 & 210 & 210 & 210\cr
 $2^{s+1}+2^s-2$ &\vdq\ 21 & 70 & 116 & 164 & 175 \cr
 $2^{s+2}+2^s-2$&\vdq \ 55 & 126 & 192 &240 & 255  \cr
$2^{s+3}+2^s-2$ &\vdq \ $73$& 165 & 241 & 285 &  300  \cr 
 $2^{s+4}+2^s-2$ &\vdq \ 95& 179 & 255 & 300&  315  \cr
 $2^{s+5}+2^s-2$ &\vdq 115& 175 & 255 & 300 & 315 \cr
$2^{s+t}+2^s-2, t\geqslant 6$ &\vdq 125& 175 & 255 & 300 & 315  \cr
 $2^{s+2}+2^{s+1}+2^s-3$ &\vdq\  64&120  &120 &120&120\cr
 $2^{s+3}+2^{s+2}+2^s-3$  &\vdq 155 & 210 &210&210&210 \cr
 $2^{s+t+1}+2^{s+t}+2^s-3, t\geqslant 3$  &\vdq 140&210  &210&210&210 \cr
$2^{s+3}+2^{s+1}+2^s-3$ &\vdq 140& 225 &225&225&225 \cr
 $2^{s+u+1}+2^{s+1}+2^s-3, u\geqslant 3$ &\vdq  120& 210 &210 &210&210\cr
$2^{s+u+2}+2^{s+2}+2^s-3, u\geqslant 2$&\vdq 225 & 315 &315&315&315 \cr
 $2^{s+t+u}+2^{s+t}+2^s-3, u\geqslant 2, t\geqslant 3$  &\vdq 210 &315& 315 &315 &315 \cr
\end{tabular}}}
\end{thm}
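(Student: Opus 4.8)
The plan is to begin with a reduction that isolates the only genuinely new piece. Let $P_j^+\subset P_j$ denote the span of those monomials in which every one of the $j$ variables occurs with positive exponent, and let $(QP_j^+)_n$ be its image in $(QP_j)_n$. Since a Steenrod square only raises the exponent of a variable already present and never introduces a new one, the $\mathcal A$-action preserves the set of occurring variables; hence $(P_4)_n$ splits as an $\mathcal A$-module into summands indexed by the subsets of $\{1,2,3,4\}$, and
$$\dim(QP_4)_n=\sum_{j=1}^{4}\binom{4}{j}\dim(QP_j^+)_n .$$
The summands with $j=1,2,3$ are fixed once and for all by the computations of Peterson and Kameko, so the whole theorem amounts to evaluating $\dim(QP_4^+)_n$ in each listed degree.

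I would then organize these degrees by the value of $\mu(n)$: every ``$\cdots-3$'' degree in the table has $\mu(n)=3$, every ``$\cdots-2$'' degree has $\mu(n)=2$, and the degrees $2^{s+1}-1$ have $\mu(n)=1$ (a handful of small-$s$ entries, such as $n=1$, collapse to a smaller $\mu$ and are read off directly). The $\mu(n)=3$ rows are governed cleanly by Theorem~\ref{dl1}: writing $n=\sum_{i=1}^{3}(2^{d_i}-1)$ with $d_1>d_2\geqslant d_3$, once $d_3\geqslant 3$ the formula yields $\dim(QP_4)_n=15\,\dim(QP_3)_m$ with $m=\sum_{i=1}^{2}(2^{d_i-d_3}-1)$. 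Because $m$ depends only on the differences $d_i-d_3$, which are constant along each row, this both explains why the entries stabilize as $s$ grows and reads off the stable value from the known $QP_3$. The finitely many boundary entries with $d_3<3$ fall outside Theorem~\ref{dl1} and are left to the direct method.

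For the degrees with $\mu(n)\in\{1,2\}$, together with those boundary $\mu=3$ entries, I would compute $\dim(QP_4^+)_n$ directly. When $n=2m+4$ is even, Kameko's homomorphism (Theorem~\ref{dlmd2}) is an isomorphism exactly when $\mu(n)=4$ and is onto in general, so $\dim(QP_4)_n=\dim(QP_4)_m+\dim\ker(\widetilde{Sq}^0_*)_m$; iterating this halving collapses each even family onto finitely many minimal degrees, the eventual constancy of the kernel producing the stabilized columns. The odd family $2^{s+1}-1$, on which $\widetilde{Sq}^0_*$ vanishes, is instead treated directly through the decomposition above. In every one of the resulting base degrees I would pin down $(QP_4^+)_n$ by hand: reduce arbitrary monomials, modulo hit polynomials, to a distinguished spanning set; use Singer's criterion on hit monomials (Theorem~\ref{dlsig}) to decide which monomials are hit and so trim that set; and then prove the survivors linearly independent in $QP_4$ by inspecting the possible hit relations among them.

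The hard part is exactly this last, explicit step. In contrast to the $\mu=3$ rows, the $\mu\in\{1,2\}$ base cases live in degrees where the candidate monomials are numerous, and one must verify \emph{both} that the spanning set is complete and that no hidden hit relation has been overlooked; this is the most error-prone point, and it is where the earlier computations of the case $k=4$ had to be revisited. Singer's criterion is the indispensable tool for deciding hit-ness without exhibiting explicit hit expressions, and once the finitely many base cases are settled, the Kameko reduction and the decomposition propagate the values to all degrees in the table.
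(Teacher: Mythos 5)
Your proposal follows essentially the same route as the paper: splitting off the monomials missing some variable (your support decomposition is a finer form of the paper's $QP_4 = QP_4^0 \oplus QP_4^+$ together with its $\Phi^0$ construction), invoking Theorem \ref{dl1} for the $\mu(n)=3$ rows once $d_3 \geqslant 3$, reducing the even $\mu(n)=2$ families by the Kameko epimorphism to lower degrees plus kernel computations, and settling the remaining base degrees by explicitly determining admissible monomials via Singer's criterion, strict-inadmissibility lemmas, and hand-checked linear independence. This is precisely the structure of Sections \ref{s3a}--\ref{s4} of the paper, so the plan is sound, with the substance lying, as you acknowledge, in the case-by-case computations.
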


The vector space $QP_4$ was also computed in Kameko \cite{ka2} by using computer calculation. However the manuscript is unpublished at the time of the writing.

Carlisle and Wood showed in \cite{cw} that the dimension of the vector space  $(QP_k)_n$ is uniformly bounded by a number depended only on $k$. In 1990, Kameko made the following conjecture in his Johns Hopkins University PhD thesis \cite{ka}.

\begin{con}[Kameko~\cite{ka}]\label{kac} For every non-negative integer $n$,
$$ \dim (QP_k)_n
\leqslant \prod_{1\leqslant i \leqslant k} (2^i-1).$$ 
\end{con}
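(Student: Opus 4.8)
The natural plan is to argue by induction on $k$, exploiting the fact that the conjectured bound factors as $\prod_{1\leqslant i\leqslant k}(2^i-1) = (2^k-1)\prod_{1\leqslant i\leqslant k-1}(2^i-1)$. This product structure mirrors exactly the recursion supplied by Theorem~\ref{dl1}, which is what makes an inductive attack plausible in the first place. The base cases $k=1,2,3$ are the explicit computations of Peterson and Kameko, and the case $k=4$ is settled by Theorem~\ref{dl3}: scanning the table, the largest entry is $315 = \prod_{1\leqslant i\leqslant 4}(2^i-1)$, so the bound holds, with equality attained, in four variables.

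First I would cut down the degrees that need to be examined. By Theorem~\ref{dlmd1} every degree $n$ with $\mu(n)>k$ contributes $0$, so such degrees are harmless. By Theorem~\ref{dlmd2}, whenever $\mu(2m+k)=k$ the map $(\widetilde{Sq}^0_*)_m$ is an isomorphism $(QP_k)_{2m+k}\to(QP_k)_m$; iterating this downward replaces any such degree by a strictly smaller one without changing the dimension. After these two reductions it suffices to bound $\dim(QP_k)_n$ for the remaining \emph{generic} degrees, namely those of the form~(\ref{ct1.1}) with $\mu(n)=s<k$.

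For the top generic stratum $s=k-1$ the induction closes immediately. Writing $n$ as in the hypothesis of Theorem~\ref{dl1} and setting $m=\sum_{1\leqslant i\leqslant k-2}(2^{d_i-d_{k-1}}-1)$, that theorem gives $\dim(QP_k)_n=(2^k-1)\dim(QP_{k-1})_m$; combining with the inductive hypothesis $\dim(QP_{k-1})_m\leqslant\prod_{1\leqslant i\leqslant k-1}(2^i-1)$ yields $\dim(QP_k)_n\leqslant(2^k-1)\prod_{1\leqslant i\leqslant k-1}(2^i-1)=\prod_{1\leqslant i\leqslant k}(2^i-1)$, exactly the desired estimate.

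The main obstacle is the lower strata $s<k-1$, which lie outside the scope of Theorem~\ref{dl1} and for which no clean desuspension to $QP_{k-1}$ is available. Here one would have to bound the non-hit monomials directly, presumably via Singer's criterion together with an explicit analysis of admissible monomials in each generic degree, and it is precisely this combinatorial bookkeeping --- whose complexity grows rapidly with $k$ --- that I expect to be the crux. I would watch this step with particular suspicion: the value $315$ already saturates the bound at $k=4$, so the margin is gone, and this strongly suggests that the estimate is in danger of breaking once $k\geqslant 5$. The honest anticipated outcome of pushing the plan through the lower strata is therefore not a proof but a counterexample, located exactly where the recursion of Theorem~\ref{dl1} fails to apply.
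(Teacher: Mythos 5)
Your assessment is correct, and it coincides with the paper's own treatment of this statement: it is a conjecture, not a theorem, and the paper never proves it in general — it verifies it for $k\leqslant 4$ and then explicitly disproves it for $k\geqslant 5$. Your verification for $k\leqslant 4$ is exactly the paper's (Kameko for $k\leqslant 3$; for $k=4$ the table of Theorem \ref{dl3}, whose maximal entry is $315=\prod_{1\leqslant i\leqslant 4}(2^i-1)$), and the counterexample you anticipate is precisely what the paper constructs. By Corollary \ref{hhq}, in the degree $n=2n_k+k=\sum_{1\leqslant i\leqslant k-2}(2^{d_i}-1)$ — which has $\mu(n)=k-2$, hence lies in the lower strata you flagged as out of reach of Theorem \ref{dl1} — one has
$$\dim (QP_k)_{n} = \prod_{1 \leqslant i \leqslant k}(2^i-1)
+ \sum_{5\leqslant r \leqslant k}\Big(\prod_{r+1 \leqslant i \leqslant k}(2^i-1)\Big)
\dim\mathrm{Ker}(\widetilde{Sq}^0_*)_{n_r},$$
and each kernel term is nonzero because the spike $x_1^{2^{e_1}-1}x_2^{2^{e_2}-1}\ldots x_{r-2}^{2^{e_{r-2}}-1}$ represents a nonzero class annihilated by Kameko's homomorphism. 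So the bound is strictly exceeded for every $k\geqslant 5$, exactly as you predicted, and the mechanism is the non-injectivity of $\widetilde{Sq}^0_*$ in these degrees rather than any failure of the recursion itself.

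One caveat on your intermediate step: the claim that "the induction closes immediately" on the whole stratum $\mu(n)=k-1$ overstates the reach of Theorem \ref{dl1}, whose hypothesis requires $d_{k-1}\geqslant k-1\geqslant 3$. Degrees with $\mu(n)=k-1$ but $d_{k-1}<k-1$ are not covered by that theorem, so even the top generic stratum is not fully controlled by the recursion; the paper itself had to correct an earlier announcement on exactly this point (replacing $d_{k-1}\geqslant k-1\geqslant 1$ by $d_{k-1}\geqslant k-1\geqslant 3$). This does not affect your conclusion, since the conjecture already fails in the stratum $\mu(n)=k-2$, but the inductive step is weaker than you state.
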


The conjecture was shown by Kameko himself for $k\leqslant 3$ in \cite{ka}.
From Theorem \ref{dl3}, we see that the conjecture is also true for $k=4$.

By induction on $k$, using Theorem \ref{dl1}, we obtain the following.
\begin{corl}\label{hqq} Let $n =\sum_{1 \leqslant i \leqslant k-1}(2^{d_i}-1)$ 
with $d_i$ positive integers. If
$d_1-d_2\geqslant 2, d_{i-1} - d_i \geqslant i-1, 3 \leqslant i \leqslant k-1, d_{k-1}\geqslant k-1\geqslant 2$, then
$$\dim (QP_k)_n = \prod_{1 \leqslant i \leqslant k}(2^i-1).$$
\end{corl}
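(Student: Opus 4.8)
The plan is to argue by induction on $k \geqslant 3$, using Theorem \ref{dl1} as the reduction engine at each descent and the known computations of Peterson \cite{pe} and Kameko \cite{ka} in low rank to anchor the base case. First I would observe that the hypotheses $d_1 - d_2 \geqslant 2$ and $d_{i-1} - d_i \geqslant i-1$ (for $3 \leqslant i \leqslant k-1$) force the strict chain $d_1 > d_2 > \cdots > d_{k-2} > d_{k-1}$, since each gap is at least $2$; together with $d_{k-1} \geqslant k-1$ this gives exactly the monotonicity $d_1 > d_2 > \cdots > d_{k-2} \geqslant d_{k-1}$ and the threshold $d_{k-1} \geqslant k-1 \geqslant 3$ demanded by Theorem \ref{dl1} whenever $k \geqslant 4$.

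Applying Theorem \ref{dl1} then yields
$$\dim (QP_k)_n = (2^k-1)\,\dim (QP_{k-1})_m, \qquad m = \sum_{1 \leqslant i \leqslant k-2}\bigl(2^{\,d_i - d_{k-1}}-1\bigr).$$
The heart of the inductive step is to check that the shifted exponents $e_i := d_i - d_{k-1}$, $1 \leqslant i \leqslant k-2$, again satisfy the hypotheses of the corollary with $k-1$ in place of $k$. The gap conditions are translation invariant: $e_1 - e_2 = d_1 - d_2 \geqslant 2$ and $e_{i-1} - e_i = d_{i-1} - d_i \geqslant i-1$ for $3 \leqslant i \leqslant k-2$, while the final requirement $e_{k-2} = d_{k-2} - d_{k-1} \geqslant (k-1)-1$ is precisely the instance $i = k-1$ of the hypothesis $d_{i-1} - d_i \geqslant i-1$. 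Hence the inductive hypothesis applies and gives $\dim (QP_{k-1})_m = \prod_{1 \leqslant i \leqslant k-1}(2^i-1)$; multiplying by $2^k-1$ produces $\dim (QP_k)_n = \prod_{1 \leqslant i \leqslant k}(2^i-1)$, closing the step.

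For the base case $k = 3$ the hypotheses reduce to $d_1 - d_2 \geqslant 2$ and $d_2 \geqslant 2$. Here I would invoke the extension of Theorem \ref{dl1} to $k=3$ recorded in the remark following it (valid already for $d_1 > d_2 \geqslant 2$ by the results of Kameko \cite{ka}) to obtain $\dim (QP_3)_n = 7\,\dim (QP_2)_{2^{\,d_1-d_2}-1}$, and then use Peterson's explicit computation of $QP_2$ in \cite{pe}, which gives $\dim (QP_2)_{2^e-1} = 3$ for every $e \geqslant 2$. This yields $\dim (QP_3)_n = 7 \cdot 3 = 21 = \prod_{1 \leqslant i \leqslant 3}(2^i-1)$, as required.

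I expect the only delicate point to be bookkeeping rather than any genuine analytic obstacle: one must verify that the numerical hypotheses survive the substitution $d_i \mapsto d_i - d_{k-1}$ at each descent and that the applicability threshold $d_{k-1} \geqslant k-1 \geqslant 3$ of Theorem \ref{dl1} is met, which is exactly why the boundary value $k=3$ must be peeled off and handled through the small-rank computations of \cite{pe} and \cite{ka}.
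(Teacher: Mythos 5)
Your proof is correct and takes exactly the route the paper indicates for this corollary: induction on $k$ via Theorem \ref{dl1}, with the base case $k=3$ anchored by the remark extending that theorem to $k=3$ (valid for $d_1 > d_2 \geqslant 2$ by Kameko's and Peterson's computations) together with $\dim (QP_2)_{2^e-1}=3$ for $e \geqslant 2$. Your check that the shifted exponents $e_i = d_i - d_{k-1}$ again satisfy the hypotheses at rank $k-1$ — in particular that $e_{k-2} \geqslant k-2$ is the $i=k-1$ instance of the gap condition — is precisely the bookkeeping the paper leaves to the reader, and you carry it out correctly.
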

For the case $d_{i-1} - d_i \geqslant i, 2 \leqslant i \leqslant k-1,$ and $d_{k-1}\geqslant k$, this result is due to Nam~\cite{na}. This corollary also shows that  Kameko's conjecture is true for the degree $n$ as given in the corollary.

\medskip
By induction on $k$, using Theorems \ref{dl1}, \ref{dl3} and the fact that   Kameko's homomorphism is an epimorphism, one gets the following.

\begin{corl}\label{hhq}Let  $n =\sum_{1 \leqslant i \leqslant k-2}(2^{d_i}-1)$ with $d_i$ positive integers  and let
$d_{k-1} = 1, \ n_r = \sum_{1 \leqslant i \leqslant r-2}(2^{d_i- d_{r-1}}-1) -1$
with $r= 5,6, \ldots , k$. If  $d_1-d_2 \geqslant 4,\ d_{i-2}-d_{i-1}\geqslant i,$ for $4\leqslant i \leqslant k$ and $k  \geqslant 5,$ then 
$$
\dim (QP_k)_{n} = \prod_{1 \leqslant i \leqslant k}(2^i-1)
+ \sum_{5\leqslant r \leqslant k}\Big(\prod_{r+1 \leqslant i \leqslant k}(2^i-1)\Big)
\dim\text{\rm Ker}(\widetilde{Sq}^0_*)_{n_r},
$$
where $(\widetilde{Sq}^0_*)_{n_r}: (QP_r)_{2n_r+r} \to (QP_r)_{n_r}$ denotes Kameko's homomorphism $\widetilde{Sq}^0_*$ in degree $2n_r+r$. Here, by convention, $\prod_{r+1 \leqslant i \leqslant k}(2^i-1) = 1$ for $r=k$.
\end{corl}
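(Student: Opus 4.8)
The plan is to induct on $k\geqslant 5$, peeling off the lowest block of the degree with Kameko's homomorphism and then reducing from $k$ to $k-1$ variables via Theorem \ref{dl1}. I begin with the arithmetic. Setting $m = (n-k)/2$, the gap hypotheses together with $d_{k-1}=1$ force $d_{k-2}\geqslant k+1$ (the case $i=k$ of $d_{i-2}-d_{i-1}\geqslant i$), so all $d_i$ are large and a direct count gives that $n-k$ is even, $n = 2m+k$, and
$$m = \sum_{1\leqslant i\leqslant k-2}(2^{d_i-1}-1)-1 = n_k.$$
Since $n = \sum_{1\leqslant i\leqslant k-2}(2^{d_i}-1)$ is a sum of $k-2$ distinct terms, $\mu(n)=k-2<k$, so Theorem \ref{dlmd2} does not force an isomorphism; however, Kameko's homomorphism $(\widetilde{Sq}^0_*)_{n_k}\colon (QP_k)_{n}\to (QP_k)_{n_k}$ is always an epimorphism, and rank--nullity yields
$$\dim(QP_k)_n = \dim(QP_k)_{n_k} + \dim\mathrm{Ker}(\widetilde{Sq}^0_*)_{n_k}.$$

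Next I would apply Theorem \ref{dl1} to the residual degree $m=n_k$. The essential point is that the trailing ``$-1$'' raises the multiplicity: writing $e_i = d_i-1$ for $1\leqslant i\leqslant k-3$ and $e_{k-2}=e_{k-1}=d_{k-2}-2$, one checks that $m = \sum_{1\leqslant i\leqslant k-1}(2^{e_i}-1)$ with $e_1>\cdots>e_{k-2}\geqslant e_{k-1}$ and $e_{k-1}=d_{k-2}-2\geqslant k-1\geqslant 3$, so the hypotheses of Theorem \ref{dl1} are met and
$$\dim(QP_k)_{n_k} = (2^k-1)\,\dim(QP_{k-1})_{m'}, \qquad m' = \sum_{1\leqslant i\leqslant k-3}(2^{d_i-d_{k-2}+1}-1).$$

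Now put $d_i' = d_i-d_{k-2}+1$, so that $m'=\sum_{1\leqslant i\leqslant (k-1)-2}(2^{d_i'}-1)$ is a degree of exactly the shape treated by the corollary in $k-1$ variables, with $d_{k-2}'=1$ playing the role of the new $d_{(k-1)-1}=1$. The hypotheses are inherited, since $d_1'-d_2'=d_1-d_2\geqslant 4$ and $d_{i-2}'-d_{i-1}'=d_{i-2}-d_{i-1}\geqslant i$ for $4\leqslant i\leqslant k-1$. For $k\geqslant 6$ the induction hypothesis applies to $(QP_{k-1})_{m'}$, and the substitution $d_i'-d_{r-1}'=d_i-d_{r-1}$ shows the associated kernel degrees satisfy $n_r'=n_r$ for $5\leqslant r\leqslant k-1$. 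Multiplying the inductive formula by $(2^k-1)$ collapses $(2^k-1)\prod_{1\leqslant i\leqslant k-1}(2^i-1)$ to $\prod_{1\leqslant i\leqslant k}(2^i-1)$ and each $(2^k-1)\prod_{r+1\leqslant i\leqslant k-1}(2^i-1)$ to $\prod_{r+1\leqslant i\leqslant k}(2^i-1)$; reinstating the $r=k$ kernel term (where $\prod_{k+1\leqslant i\leqslant k}(2^i-1)=1$ by convention) gives precisely the asserted formula. For the base case $k=5$ the induction bottoms out at $k-1=4$, where instead of the hypothesis I would read the table of Theorem \ref{dl3}: here $m'=2^{d_1'}+2^{d_2'}-2$ with $d_2'=d_2-d_3+1\geqslant 5$ and $d_1'-d_2'=d_1-d_2\geqslant 4$, so the row $2^{s+t}+2^s-2$ with $s\geqslant 5$, $t\geqslant 4$ gives $\dim(QP_4)_{m'}=315=\prod_{1\leqslant i\leqslant 4}(2^i-1)$, and the $k=5$ formula follows.

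I expect the main obstacle to be the bookkeeping in the reduction step rather than any deep new idea: one must verify that after one application of Kameko's homomorphism the residual degree $m$ lands exactly in the regime of Theorem \ref{dl1}. The delicate part is checking that the extra ``$-1$'' splits the lowest power $2^{d_{k-2}-1}$ into two equal summands $2^{d_{k-2}-2}$, thereby raising $\mu$ from $k-2$ to $k-1$ and producing the required equality $e_{k-2}=e_{k-1}$ with $e_{k-1}\geqslant k-1$; and then confirming that the re-indexed data $d_i'=d_i-d_{k-2}+1$ both reproduce the hypotheses of the corollary for $k-1$ variables and leave every kernel degree $n_r$ unchanged, so that the inductive formula telescopes cleanly.
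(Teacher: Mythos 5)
Your proposal is correct and takes essentially the same route the paper intends: the paper gives no detailed proof, only the remark that the corollary follows by induction on $k$ using Theorem \ref{dl1}, Theorem \ref{dl3} and the surjectivity of Kameko's homomorphism, and your argument supplies exactly those steps — rank–nullity from the epimorphism $(\widetilde{Sq}^0_*)_{n_k}$, the observation that the trailing $-1$ splits the lowest summand into two equal terms $2^{d_{k-2}-2}-1$ so that Theorem \ref{dl1} applies with $e_{k-2}=e_{k-1}=d_{k-2}-2\geqslant k-1$, and the entry $315=\prod_{1\leqslant i\leqslant 4}(2^i-1)$ of Theorem \ref{dl3} for the base case $k=5$. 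The only cosmetic slip is the claim $\mu(n)=k-2$ (a priori the representation only gives $\mu(n)\leqslant k-2$), but this is not load-bearing since your argument uses only the epimorphism property, not Theorem \ref{dlmd2}.
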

This corollary has been proved in \cite{su} for the case $d_{i-2} - d_{i-1} > i+1$ with  $3 \leqslant i \leqslant k$.

\medskip
Obviously $2n_r + r = \sum_{1\leqslant i \leqslant r-2}(2^{e_i}-1),$ where $e_i = d_i - d_{r-1} + 1$, for $1 \leqslant i \leqslant r - 2$. So, in degree $2n_r + r$ of $P_r$, there is a so-called spike $x = x_1^{2^{e_1}-1}x_2^{2^{e_2}-1}\ldots x_{r-2}^{2^{e_{r-2}}-1},$ i.e. a monomial whose exponents are all of the form $2^e-1$ for some $e$. Since the class $[x]$ in $(QP_k)_{2n_r+r}$ represented by the spike $x$ is nonzero and $\widetilde{Sq}^0_*([x]) = 0$, we have Ker$(\widetilde{Sq}^0_*)_{n_r} \ne 0$, for any $5 \leqslant r \leqslant k$. Therefore, by Corollary \ref{hhq}, Kameko's conjecture is not true in degree $n= 2n_k+k$ for any $k \geqslant 5$, where $n_k = 2^{d_1-1} + 2^{d_2-1} + \ldots + 2^{d_{k-2}-1} - k + 1$.

\medskip
This paper is organized as follows.
 In Section \ref{s2}, we recall  some needed information on the admissible monomials in $P_k$ and Singer's criterion on the hit monomials.  We prove Theorem \ref{dl1} in Section \ref{s3} by describing a basis of  $(QP_k)_n$  in terms of a given basis  of $(QP_{k-1})_m$. In Section \ref{s3a}, we recall the results on the hit problem for $k \leqslant 3$.
 Theorem \ref{dl3} will be proved in Section \ref{s4} by explicitly determining all of the admissible monomials in $P_4$.

\medskip
The first formulation of this paper was given in a 240-page 
preprint in 2007 \cite{su2}, which was then publicized to a remarkable
number of colleagues. One year latter, we found the negative answer
to Kameko's conjecture on the hit problem \cite{su1,su}.
Being led by the insight of this new study, we have remarkably reduced
the length of the paper.

The main results of the present paper have already been announced in \cite{su4}. However, we correct Theorem 3 in \cite{su4} by replacing the condition  $d_{k-1} \geqslant k-1 \geqslant 1$ with $d_{k-1} \geqslant k-1 \geqslant 3$.
\section{Preliminaries}\label{s2}
\setcounter{equation}{0}

In this section, we recall some results in Kameko ~\cite{ka} and Singer ~\cite{si2} which will be used in the next sections.

\begin{nota} Throughout the paper, we use the following notations.
\begin{align*}
\mathbb N_k &= \{1,2, \ldots , k\},\\
X_{\mathbb J} &= X_{\{j_1,j_2,\ldots , j_s\}} =
 \prod_{j\in \mathbb N_k\setminus \mathbb J}x_j , \ \ \mathbb J = \{j_1,j_2,\ldots , j_s\}\subset \mathbb N_k,
\end{align*}
In particular, we have
\begin{align*}
&X_{\mathbb N_k} =1,\\
&X_\emptyset = x_1x_2\ldots x_k, \\
&X_j = X_{\{j\}}= x_1\ldots \hat x_j \ldots x_k, \ 1 \leqslant j \leqslant k.
\end{align*}

Let $\alpha_i(a)$ denote the $i$-th coefficient in dyadic expansion of a non-negative integer $a$. That means
$a= \alpha_0(a)2^0+\alpha_1(a)2^1+\alpha_2(a)2^2+ \ldots ,$
for $ \alpha_i(a) =0$ or 1 and $i\geqslant 0.$ Denote by $\alpha(a)$ the number of 1's in dyadic expansion of $a$.

Let $x=x_1^{a_1}x_2^{a_2}\ldots x_k^{a_k} \in P_k$. Denote $\nu_j(x) = a_j, 1 \leqslant j \leqslant k$.  
Set $$\mathbb J_i(x) = \{j \in \mathbb N_k :\alpha_i(\nu_j(x)) =0\},$$ 
for $i\geqslant 0$. Then we have
$$x = \prod_{i\geqslant 0}X_{\mathbb J_i(x)}^{2^i}.$$ 

For a  polynomial $f$ in $P_k$, we denote by $[f]$ the class in $QP_k$ represented by $f$. For a subset $S \subset P_k$, we denote 
$$[S] = \{[f] : f \in S\} \subset QP_k.$$
\end{nota}
\begin{defn}
For a monomial  $x$,  define two sequences associated with $x$ by
\begin{align*} 
\omega(x)&=(\omega_1(x),\omega_2(x),\ldots , \omega_i(x), \ldots),\\
\sigma(x) &= (\nu_1(x),\nu_2(x),\ldots ,\nu_k(x)),
\end{align*}
where
$\omega_i(x) = \sum_{1\leqslant j \leqslant k} \alpha_{i-1}(\nu_j(x))= \deg X_{I_{i-1}(x)},\ i \geqslant 1.$

The sequence $\omega(x)$ is called  the weight vector of $x$ (see Wood~\cite{wo2}). The weight vectors and the sigma vectors can be ordered by the left lexicographical order. 

Let $\omega=(\omega_1,\omega_2,\ldots , \omega_i, \ldots)$ be a sequence of non-negative integers such that $\omega_i = 0$ for $i \gg 0$. Define $\deg \omega = \sum_{i > 0}2^{i-1}\omega_i$. 
Denote by   $P_k(\omega)$ the subspace of $P_k$ spanned by all monomials $y$ such that
$\deg y = \deg \omega$, $\omega(y) \leqslant \omega$, and by  $P_k^-(\omega)$ the subspace of $P_k$ spanned by all monomials $y \in P_k(\omega)$  such that $\omega(y) < \omega$. Denote by $\mathcal A^+_s$ the subspace of $\mathcal A$ spanned by all $Sq^j$ with $1\leqslant j < 2^s$.
\end{defn}
\begin{defn}\label{dfn2} Let $\omega$ be a sequence of non-negative integers and $f, g$ two polynomials  of the same degree in $P_k$. 

i) $f \equiv g$ if and only if $f - g \in \mathcal A^+P_k$. 

ii) $f \simeq_{(s,\omega)} g$ if and only if $f - g \in \mathcal A^+_sP_k+P_k^-(\omega)$. 
\end{defn}

Since $\mathcal A_0^+P_k = 0$, $f \simeq_{(0,\omega)} g$ if and only if $f - g \in P_k^-(\omega)$. If $x$ is a monomial in $P_k$ and $\omega = \omega(x)$, then we denote $x \simeq_{s}g$  if and only if $x \simeq_{(s,\omega(x))}g$. 

Obviously, the relations $\equiv$ and $\simeq_{(s,\omega)}$ are equivalence
relations.

We recall some relations on the action of the Steenrod squares on $P_k$.

\begin{prop}\label{mdcb1} Let $f$ be  a  polynomial in $P_k$. 

{\rm i)} If $i > \deg f$, then $Sq^i(f) =0$. If $i = \deg f$, then $Sq^i(f) =f^2$.

{\rm ii)} If $i$ is not divisible by $2^s$, then $Sq^i(f^{2^s}) = 0$ while $Sq^{r2^s}(f^{2^s}) = (Sq^r(f))^{2^s}$.
\end{prop}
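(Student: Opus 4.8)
The plan is to derive both parts from the two defining properties recalled earlier: the action of $Sq^i$ on the degree-one generators, and the Cartan formula. Since each $Sq^i$ is $\mathbb F_2$-linear, it suffices to verify every assertion on monomials, and for homogeneous $f = \sum_\alpha x_\alpha$ I will use that in characteristic $2$ one has $f^2 = \sum_\alpha x_\alpha^2$, so the diagonal statements extend from monomials to sums. The base case throughout is the generator $x_j$: here $Sq^i(x_j)=0$ for $i>1=\deg x_j$ and $Sq^1(x_j)=x_j^2$, which is exactly part~i) for a single variable.

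For part~i) I would argue by induction on the number of variable-factors of a monomial $x$, writing $x = y\cdot z$ with $y,z$ of positive degree and $\deg y+\deg z=\deg x$. The Cartan formula gives $Sq^i(x)=\sum_{a+b=i}Sq^a(y)Sq^b(z)$, and by the inductive hypothesis a summand can be nonzero only when $a\leqslant\deg y$ and $b\leqslant\deg z$, forcing $i=a+b\leqslant\deg x$. Hence $Sq^i(x)=0$ whenever $i>\deg x$, proving the first claim. When $i=\deg x$ the same constraints leave only the single term with $a=\deg y$, $b=\deg z$, which by induction equals $Sq^{\deg y}(y)\,Sq^{\deg z}(z)=y^2z^2=x^2$; summing over the monomials of a homogeneous $f$ then yields $Sq^{\deg f}(f)=f^2$.

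For part~ii) the decisive observation is the characteristic-two cancellation in the Cartan expansion of a square. Applying the formula to $g^2$ gives $Sq^i(g^2)=\sum_{a+b=i}Sq^a(g)Sq^b(g)$; since $Sq^a(g)Sq^b(g)=Sq^b(g)Sq^a(g)$, the off-diagonal pairs $(a,b)$ and $(b,a)$ coincide and cancel over $\mathbb F_2$, leaving only the diagonal term $a=b$. Thus $Sq^i(g^2)=0$ when $i$ is odd, while $Sq^{2r}(g^2)=(Sq^r(g))^2$. This settles the case $s=1$, and I would then induct on $s$ by writing $f^{2^{s+1}}=(f^{2^s})^2$: the $s=1$ result kills all odd $i$ and reduces $Sq^{2j}(f^{2^{s+1}})$ to $(Sq^{j}(f^{2^s}))^2$, after which the inductive hypothesis for $s$ forces $2^s\mid j$ and evaluates $Sq^{r2^s}(f^{2^s})=(Sq^r(f))^{2^s}$, giving $Sq^{r2^{s+1}}(f^{2^{s+1}})=(Sq^r(f))^{2^{s+1}}$ and vanishing unless $2^{s+1}\mid i$.

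The genuinely content-bearing step, and the only place where more than bookkeeping is required, is the pairing argument in part~ii): everything hinges on the mod-$2$ cancellation of the off-diagonal Cartan terms, and on threading the divisibility condition correctly through the induction on $s$. The remaining work is routine verification of the base cases and of the homogeneity convention used to pass from monomials to general $f$ in the diagonal identities.
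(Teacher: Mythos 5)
Your proof is correct: the unstability statements in part i) follow exactly as you argue by Cartan-formula induction on monomials plus the Frobenius identity $(\sum x_\alpha)^2=\sum x_\alpha^2$, and the mod-2 cancellation of off-diagonal Cartan terms together with induction on $s$ gives part ii). Note that the paper offers no proof of this proposition at all — it is recalled in the Preliminaries as a standard fact (essentially from Steenrod and Epstein) — so your argument simply supplies the classical derivation that the paper takes for granted; there is nothing to compare beyond observing that yours is the standard route.
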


\begin{prop}\label{mdcb4}  Let $x, y$ be  monomials and let $f, g $ be polynomials in $P_k$ such that $\deg x = \deg f $, $\deg y = \deg g$.

{\rm i)} If $\omega_i(x) \leqslant 1$ for $i > s$ and $x\simeq_{t}f$ with $t \leqslant s$, then $xy^{2^s}\simeq_{t}f y^{2^s}$. 

{\rm ii)} If  $\omega_i(x) = 0$ for $i > s $, $x\simeq_{s} f$ and $y \simeq_r g$, then $xy^{2^s} \simeq_{s+r} fg^{2^s}$.
\end{prop}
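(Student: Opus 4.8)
The plan is to reduce both parts to two mechanisms: the Cartan formula together with Proposition~\ref{mdcb1}(ii), which lets one slide $Sq$-operations past a $2^s$-th power (since $Sq^b(z^{2^s})=0$ whenever $0<b<2^s$, and $Sq^{r2^s}(z^{2^s})=(Sq^rz)^{2^s}$), and a monotonicity property of the weight vector under multiplication by a $2^s$-th power, which guarantees that the ``lower'' subspace $P_k^-(\omega)$ is carried into the correct lower subspace. Here part (i) is the workhorse, and part (ii) will be obtained from it together with a squaring-compatibility lemma.

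For (i), I would unpack the hypothesis $x\simeq_t f$ as $x-f=\sum_{1\leqslant j<2^t}Sq^j(h_j)+p$ with $p\in P_k^-(\omega(x))$, and multiply through by $y^{2^s}$. For the Steenrod part, since $j<2^t\leqslant 2^s$, the Cartan formula gives $Sq^j(h_j)\,y^{2^s}=Sq^j(h_jy^{2^s})$: every cross term carries a factor $Sq^b(y^{2^s})$ with $0<b<2^s$, which vanishes by Proposition~\ref{mdcb1}(ii). Hence these terms lie in $\mathcal A^+_tP_k$. It then remains to show $p\,y^{2^s}\in P_k^-(\omega(xy^{2^s}))$, i.e. that each monomial $w$ occurring in $p$ (so $\deg w=\deg x$ and $\omega(w)<\omega(x)$) satisfies $\omega(wy^{2^s})<\omega(xy^{2^s})$. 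Because multiplication by $y^{2^s}$ leaves the first $s$ coordinates of the weight vector unchanged, the first drop of $\omega(w)$ below $\omega(x)$ is preserved whenever it occurs among those slots; the hypothesis $\omega_i(x)\leqslant 1$ for $i>s$ is exactly what controls the higher coordinates (where carries can appear) so that no spurious increase reverses the inequality.

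For (ii), I would first establish the squaring-compatibility lemma $y\simeq_r g\Rightarrow y^{2^s}\simeq_{s+r}g^{2^s}$. Writing $y-g=\sum_{1\leqslant l<2^r}Sq^l(e_l)+q$ with $q\in P_k^-(\omega(y))$ and raising to the $2^s$-th power (Frobenius over $\mathbb F_2$), Proposition~\ref{mdcb1}(ii) turns $(Sq^le_l)^{2^s}$ into $Sq^{l2^s}(e_l^{2^s})\in\mathcal A^+_{s+r}P_k$ (as $1\leqslant l2^s<2^{s+r}$), while squaring shifts the weight vector $s$ places, so $q^{2^s}\in P_k^-(\omega(y^{2^s}))$. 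Then I would combine: by (i) with $t=s$ (legitimate since $\omega_i(x)=0\leqslant 1$ for $i>s$) one gets $xy^{2^s}\simeq_s fy^{2^s}$, hence $xy^{2^s}\simeq_{s+r}fy^{2^s}$; and to pass from $fy^{2^s}$ to $fg^{2^s}$ I would expand $f(y-g)^{2^s}=f\sum_l Sq^{l2^s}(e_l^{2^s})+fq^{2^s}$ and commute the factor $f$ inside via the Cartan formula, so that the leading terms $Sq^{l2^s}(fe_l^{2^s})$ lie in $\mathcal A^+_{s+r}P_k$ while the Cartan corrections, together with $fq^{2^s}$, are shown to lie in $P_k^-(\omega(xy^{2^s}))$.

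The main obstacle is precisely this weight-vector bookkeeping. In (i) the hard point is verifying that $\omega(wy^{2^s})<\omega(xy^{2^s})$ survives the carries possible in coordinates $>s$, which is where the sparseness hypothesis $\omega_i(x)\leqslant 1$ is indispensable. In (ii) the sharper hypothesis $\omega_i(x)=0$ for $i>s$ forces $x$ (and, modulo lower terms, $f$) to have all exponents less than $2^s$, so that the base-$2^s$ digits separate cleanly and $\omega(xy^{2^s})$ is the concatenation of $\omega(x)$ with $\omega(y)$; the delicate step is then to check that applying the extra $Sq^{a2^s}$ ($a\geqslant 1$) in the Cartan corrections only redistributes dyadic weight toward higher positions, thereby strictly lowering the weight vector in the left-lexicographic order and placing those terms in $P_k^-(\omega(xy^{2^s}))$.
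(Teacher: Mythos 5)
Your plan for part (i) has the same skeleton as the paper's proof (multiply the defining relation $x+f+\sum_{u<2^t}Sq^u(z_u)=h\in P_k^-(\omega(x))$ by $y^{2^s}$, and use the Cartan formula with Proposition \ref{mdcb1}(ii) to absorb the Steenrod terms), but the step you yourself flag as ``the hard point'' is never actually proved, and the mechanism you propose for it is not the right one. What is needed is that every monomial $z$ with $\deg z=\deg x$ and $\omega(z)<\omega(x)$ has its \emph{first} weight drop at an index $i\leqslant s$; once this is known, multiplication by $y^{2^s}$ (which leaves the first $s$ weight coordinates untouched) immediately gives $zy^{2^s}\in P_k^-(\omega(xy^{2^s}))$, and no question about carries in coordinates $>s$ ever arises. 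The paper proves this by a degree/digit count: if the first drop were at $i>s$, then $\omega_i(x)=1$ and $\omega_i(z)=0$, and the residue $\deg x-\sum_{j\leqslant i-1}2^{j-1}\omega_j(x)$ equals $2^{i-1}+(\text{a multiple of }2^i)$ when computed from $x$, but equals a multiple of $2^i$ when computed from $z$; its $(i-1)$-st dyadic digit would be both $1$ and $0$, a contradiction. Your proposal instead asserts that the hypothesis $\omega_i(x)\leqslant 1$ ``controls'' the higher-coordinate carries; there is no direct carry-control argument to be had there --- the case of a drop beyond position $s$ has to be shown to be vacuous, and this missing counting argument is the whole content of the step.

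In part (ii) your order of substitutions is reversed relative to the paper's, and the order matters; as written your final step fails. Your auxiliary lemma ($y\simeq_r g\Rightarrow y^{2^s}\simeq_{s+r}g^{2^s}$, via Frobenius and Proposition \ref{mdcb1}(ii)) is correct, but you then pass from $fy^{2^s}$ to $fg^{2^s}$ by commuting the \emph{arbitrary polynomial} $f$ past $Sq^{l2^s}$, which produces corrections $Sq^{b2^s}(f)\,(Sq^{l-b}(e_l))^{2^s}$ ($b\geqslant 1$) and the term $fq^{2^s}$, both of which you claim lie in $P_k^-(\omega(xy^{2^s}))$. Nothing forces this: $f$ is constrained only by $x\simeq_s f$, so it can contain monomials of weight strictly \emph{greater} than $\omega(x)$. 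For instance, with $x=x_1^6$ and $s=3$, the polynomial $f=x_1^6+Sq^2(x_1x_2x_3^2)$ satisfies $x\simeq_s f$ but contains the monomial $x_1x_2x_3^4$ of weight $(2,0,1)>(0,1,1)=\omega(x)$; multiplying it by any $2^s$-th power keeps first coordinate $2>0$, so $fq^{2^s}$ already fails to lie in $P_k^-(\omega(xy^{2^s}))$, and the $Sq^{b2^s}(f)$ corrections suffer the same lack of control. The paper avoids this trap by substituting in the opposite order: first $xy^{2^s}\simeq_{r+s}xg^{2^s}$, where the large operations $Sq^{l2^s}$ are commuted past the \emph{monomial} $x$, whose structure $x=\prod_{0\leqslant i<s}X_{\mathbb J_i(x)}^{2^i}$ (forced by $\omega_i(x)=0$ for $i>s$) guarantees that every monomial of $Sq^{b2^s}(x)$ drops in weight within the first $s$ slots; and only then $xg^{2^s}\simeq_s fg^{2^s}$, using the relation for $x\simeq_s f$ multiplied by $g^{2^s}$, where the only operations occurring are $Sq^u$ with $u<2^s$, and these commute past $g^{2^s}$ with no correction terms at all. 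So large Steenrod operations are only ever applied to the structured monomial $x$, never to $f$; your route cannot be completed without reorganizing it along these lines.
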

\begin{proof} Suppose that
\begin{equation}\label{ct00}
x+f + \sum_{1\leqslant u < 2^{t}}Sq^u(z_u) = h \in P_k^-(\omega(x)),
\end{equation}
where $z_u \in P_k$. From this and Proposition 2.4, we have $Sq^u(z_u) y^{2^s} = Sq^u(z_u y^{2^s})$ for $1\leqslant u < 2^t \leqslant 2^s$. Observe that $\omega_v(xy^{2^s}) = \omega_v(x)$ for $1\leqslant v \leqslant s$. If $z$ is a monomial and $z \in P_k^-(\omega(x))$, then there exists an index $i \geqslant 1$ such that $\omega_j(z) = \omega_j(x)$ for $j \leqslant i-1$ and $\omega_i(z) < \omega_i(x)$. If $i>s$, then $\omega_i(x)=1, \omega_i(z)=0$. Then we have
 $$\alpha_{i-1}\Big(\deg x - \sum_{1\leqslant j\leqslant i-1}2^{j-1}\omega_j(x)\Big) = \alpha_{i-1}\Big(2^{i-1}+ \sum_{j>i}2^{j-1}\omega_j(x)\Big) =1.$$

On the other hand, since $\deg x = \deg z,\ \omega_i(z)=0$ and $\omega_j(z)=\omega_j(x),$ for $j \leqslant i-1,$ one gets
\begin{align*}\alpha_{i-1}\Big(\deg x - \sum_{1\leqslant j\leqslant i-1}2^{j-1}\omega_j(x)\Big) &=\alpha_{i-1}\Big(\deg z  - \sum_{1\leqslant j\leqslant i-1}2^{j-1}\omega_j(z)\Big)\\ &=  \alpha_{i-1}\Big(\sum_{j>i}2^{j-1}\omega_j(z)\Big) =0.
\end{align*}
This is a contradiction. Hence, $1 \leqslant i \leqslant s.$

From the above equalities and the fact that  $h \in P_k^-(\omega(x))$, one gets
$$
xy^{2^s} + fy^{2^s} + \sum_{1\leqslant i < 2^{t}}Sq^i(z_iy^{
2^s}) = hy^{2^s} \in P_k^-(\omega(xy^{2^s})).
$$
The first part of the proposition is proved.

Suppose that $y+g + \sum_{1\leqslant j < 2^r}Sq^j(u_j) = h_1 \in P_k^-(\omega(y)),$ where $u_j \in P_k$. Then 
$$xy^{2^s} =  xg^{2^s} + xh_1^{2^s} + \sum_{1\leqslant j < 2^r}xSq^{j2^s}(u_j^{2^s}).$$
Since $\omega_i(x) = 0$ for $i>s$ and $h_1 \in P_k^-(\omega(y))$, we get $xh_1^{2^s} \in P_k^-(\omega(xy^{2^s}))$. Using the Cartan formula and Proposition \ref{mdcb1}, we obtain
$$xSq^{j2^s}(u_j^{2^s}) = Sq^{j2^s}(xu_j^{2^s}) + \sum_{0< b  \leqslant j}Sq^{b2^s}(x)(Sq^{j-b}(u_j))^{2^s}.$$
Since $\omega_i(x) = 0$ for $i>s$, we have $x = \prod_{0\leqslant i<s}X_{\mathbb J_i(x)}^{2^i}$. Using the Cartan formula and Proposition \ref{mdcb1}, we see that $Sq^{b2^s}(x)$ is a sum of polynomials of the form
$$  \prod_{0\leqslant i<s}(Sq^{b_i}(X_{\mathbb J_i(x)}))^{2^i},$$
where $\sum_{0\leqslant i<s}b_i2^i = b2^s$ and $0\leqslant b_i \leqslant \deg X_{\mathbb J_i(x)}$. Let $\ell$ be the smallest index  such that $b_\ell >0$ with $0\leqslant \ell <s$. Suppose that a monomial $z$ appears as a term of the polynomial $\Big(\prod_{0\leqslant i<s}(Sq^{b_i}(X_{\mathbb J_i(x)}))^{2^i}\Big)(Sq^{j-b}(u_j))^{2^s}$. Then $\omega_u(z) = \deg X_{\mathbb J_{u-1}}(x) = \omega_u(x) = \omega_u(xy^{2^s})$ for $u \leqslant \ell,$ and $\omega_{\ell+1}(z) = \deg X_{\mathbb J_\ell(x)} - b_\ell < \deg X_{\mathbb J_\ell(x)} = \omega_{\ell + 1}(x) =  \omega_{\ell + 1}(xy^{2^s})$. Hence, 
$$\Big(\prod_{0\leqslant i<s}(Sq^{b_i}(X_{\mathbb J_i(x)}))^{2^i}\Big)(Sq^{j-b}(u_j))^{2^s}\in P_k^-(\omega(xy^{2^s})).$$
This implies  $Sq^{b2^s}(x)(Sq^{j-b}(u_j))^{2^s}\in P_k^-(\omega(xy^{2^s}))$ for $0< b \leqslant j$.  So, one gets
$$xy^{2^s}+xg^{2^s} + \sum_{1\leqslant j < 2^r}Sq^{j2^s}(xu_j^{2^s}) \in P_k^-(\omega(xy^{2^s})).$$
Since $1 \leqslant j2^s < 2^{r+s}$ for $1 \leqslant j < 2^r$, we obtain $xy^{2^s} \simeq_{r+s} xg^{2^s}$.  

Since $\omega_i(x) = 0$ for $i>s$ and $h \in P_k^-(\omega(x))$, we have $hg^{2^s} \in P_k^-(\omega(xy^{2^s})) $. Using Proposition \ref{mdcb1}, the Cartan formula and the relation (\ref{ct00}) with $t=s$, we get
$$xg^{2^s} + fg^{2^s} +  \sum_{1\leqslant u < 2^{s}}Sq^{u}(z_ug^{2^s}) = hg^{2^s} \in P_k^-(\omega(xy^{2^s})).$$

 Combining the above equalities gives $xy^{2^s} + fg^{2^s} \in \mathcal A_{r+s}P_k+P_k^-(\omega(xy^{2^s}))$. This implies $xy^{2^s} \simeq_{r+s} fg^{2^s}$. The proposition follows.
\end{proof}

\begin{defn}\label{defn3} 
Let $x, y$ be monomials of the same degree in $P_k$. We say that $x <y$ if and only if one of the following holds:  

i) $\omega (x) < \omega(y)$;

ii) $\omega (x) = \omega(y)$ and $\sigma(x) < \sigma(y).$
\end{defn}

\begin{defn}
A monomial $x$ is said to be inadmissible if there exist monomials $y_1,y_2,\ldots, y_t$ such that $y_j<x$ for $j=1,2,\ldots , t$ and $x - \sum_{j=1}^ty_j \in \mathcal A^+P_k.$ 

A monomial $x$ is said to be admissible if it is not inadmissible.
\end{defn}

Obviously, the set of all the admissible monomials of degree $n$ in $P_k$ is a minimal set of $\mathcal{A}$-generators for $P_k$ in degree $n$. 

\begin{defn} 
 A monomial $x$ is said to be strictly inadmissible if and only if there exist monomials $y_1,y_2,\ldots, y_t$ such that $y_j<x,$ for $j=1,2,\ldots , t$ and $x - \sum_{j=1}^t y_j \in \mathcal A_s^+P_k$ with $s = \max\{i \ :\ \omega_i(x) > 0\}$.
\end{defn}

It is easy to see that if $x$ is strictly inadmissible, then it is inadmissible.
The following theorem is a modification of a result in \cite{ka}.

\begin{thm}[Kameko \cite{ka}, Sum \cite{su}]\label{dlcb1}  
 Let $x, y, w$ be monomials in $P_k$ such that $\omega_i(x) = 0$ for $i > r>0$, $\omega_s(w) \ne 0$ and   $\omega_i(w) = 0$ for $i > s>0$.

{\rm i)}  If  $w$ is inadmissible, then  $xw^{2^r}$ is also inadmissible.

{\rm ii)}  If $w$ is strictly inadmissible, then $xw^{2^r}y^{2^{r+s}}$ is inadmissible.
\end{thm}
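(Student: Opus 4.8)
The plan is to treat the two parts separately, in each case lifting a witnessing relation for the (strict) inadmissibility of $w$ up to the monomial $xw^{2^r}$ (respectively $xw^{2^r}y^{2^{r+s}}$) by raising to the $2^r$-th power and then re-inserting the factor $x$ via the Cartan formula, in the same spirit as the proof of Proposition \ref{mdcb4}. The single recurring technical fact I will need is an order-preservation lemma: for monomials $z<w$ of degree $\deg w$, one has $xz^{2^r}<xw^{2^r}$, and if in addition $\omega(z)=\omega(w)$ then $xz^{2^r}y^{2^{r+s}}<xw^{2^r}y^{2^{r+s}}$. This holds because $\omega_i(x)=0$ for $i>r$ confines the dyadic digits of the exponents of $x$ to positions $[0,r-1]$ and those of $w^{2^r}$ (or $z^{2^r}$) to positions $\geqslant r$, so no carries occur and the weight and sigma vectors of $xw^{2^r}$ split as $\omega(x)$ followed by the shift of $\omega(w)$; the comparison then reduces coordinatewise to $z$ versus $w$. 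When the extra factor $y^{2^{r+s}}$ is present, the same splitting persists only if $z$ and $w$ keep their weight in positions $\leqslant s$ (guaranteed by $\omega(z)=\omega(w)$, since $\omega_i(w)=0$ for $i>s$), so that $z^{2^r},w^{2^r}$ occupy $[r,r+s-1]$ and $y^{2^{r+s}}$ occupies $[r+s,\infty)$ without interference.

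For part (i), since $w$ is inadmissible I may write $w \equiv \sum_j z_j$ with $z_j < w$, i.e. $w = \sum_j z_j + \sum_{u\geqslant 1}Sq^u(\phi_u)$ for suitable $\phi_u \in P_k$. Applying the Frobenius $f\mapsto f^{2^r}$ (a ring homomorphism over $\mathbb F_2$) together with Proposition \ref{mdcb1}(ii) gives $w^{2^r} = \sum_j z_j^{2^r} + \sum_{u}Sq^{u2^r}(\phi_u^{2^r})$. Multiplying by $x$ and expanding $Sq^{u2^r}(x\phi_u^{2^r})$ by the Cartan formula, while using Proposition \ref{mdcb1}(ii) to discard every term whose lower Steenrod index is not a multiple of $2^r$, I obtain
$$
x\,Sq^{u2^r}(\phi_u^{2^r}) = Sq^{u2^r}(x\phi_u^{2^r}) - \sum_{d\geqslant 1}Sq^{d2^r}(x)\,\bigl(Sq^{u-d}(\phi_u)\bigr)^{2^r}.
$$
The terms $Sq^{u2^r}(x\phi_u^{2^r})$ are hit. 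For the terms with $d\geqslant 1$, the computation in the proof of Proposition \ref{mdcb4} shows that every monomial of $Sq^{d2^r}(x)\bigl(Sq^{u-d}(\phi_u)\bigr)^{2^r}$ lies in $P_k^-(\omega(xw^{2^r}))$: writing $x=\prod_{0\leqslant i<r}X_{\mathbb J_i(x)}^{2^i}$ and letting $\ell$ be the least index carrying a positive exponent in the relevant Cartan summand, the weight vector agrees with $\omega(x)$ below position $\ell+1\leqslant r$ and is strictly smaller at position $\ell+1$, whereas the $2^r$-th power factor contributes nothing below position $r+1$. Hence $xw^{2^r} \equiv \sum_j xz_j^{2^r} \pmod{\mathcal A^+P_k + P_k^-(\omega(xw^{2^r}))}$, and since $xz_j^{2^r} < xw^{2^r}$ by the order lemma, $xw^{2^r}$ is inadmissible.

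For part (ii), $w$ is strictly inadmissible, so $w - \sum_j z_j \in \mathcal A_s^+P_k$ with $z_j < w$ and $s=\max\{i:\omega_i(w)>0\}$. Pushing the $z_j$ with $\omega(z_j)<\omega(w)$ into $P_k^-(\omega(w))$, I may assume $g:=\sum_j z_j$ consists only of monomials with $\omega(z_j)=\omega(w)$ and $\sigma(z_j)<\sigma(w)$, so that $w \simeq_s g$; in particular each remaining $z_j$ has its weight confined to positions $\leqslant s$, which is exactly what the order lemma requires. Now Proposition \ref{mdcb4}(ii) (taking its $x,s,y,r$ to be our $x,r,w,s$ and $f=x$) yields $xw^{2^r}\simeq_{r+s}xg^{2^r}$, and then Proposition \ref{mdcb4}(i) (taking its $x,s,t,y$ to be $xw^{2^r},r+s,r+s,y$, legitimate since $\omega_i(xw^{2^r})=0$ for $i>r+s$) upgrades this to $xw^{2^r}y^{2^{r+s}}\simeq_{r+s}xg^{2^r}y^{2^{r+s}}=\sum_j xz_j^{2^r}y^{2^{r+s}}$. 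Unwinding the definition of $\simeq_{r+s}$, the difference $xw^{2^r}y^{2^{r+s}}-\sum_j xz_j^{2^r}y^{2^{r+s}}$ lies in $\mathcal A^+P_k + P_k^-(\omega(xw^{2^r}y^{2^{r+s}}))$; since moreover $xz_j^{2^r}y^{2^{r+s}}<xw^{2^r}y^{2^{r+s}}$ by the order lemma, every term on the right is a monomial strictly smaller than $xw^{2^r}y^{2^{r+s}}$, so the latter is inadmissible.

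I expect the main obstacle to be the order-preservation lemma in part (ii): it is precisely the no-carry/weight-splitting phenomenon that forces the reduction modulo $P_k^-(\omega(w))$, discarding those witnesses $z_j$ with $\omega(z_j)<\omega(w)$ (which may place weight in positions $>s$, where the clean concatenation of weight vectors with the $y^{2^{r+s}}$ factor fails). Everything else is a bookkeeping re-run of the Cartan computations already performed for Proposition \ref{mdcb4}.
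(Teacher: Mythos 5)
The paper never proves Theorem \ref{dlcb1}: it is quoted from Kameko \cite{ka} and \cite{su} ("a modification of a result in \cite{ka}"), so there is no in-paper proof to compare against, and your argument has to stand on its own. It does. The order-preservation lemma is sound: $\omega_i(x)=0$ for $i>r$ forces every exponent of $x$ below $2^r$, so multiplication by $z^{2^r}$ (and, when $\omega_i(z)=0$ for $i>s$, additionally by $y^{2^{r+s}}$) produces no dyadic carries, the weight vector concatenates, and the sigma comparison reduces coordinatewise to $z$ versus $w$; together with the fact that $P_k^-(\omega(xw^{2^r}))$ (resp.\ $P_k^-(\omega(xw^{2^r}y^{2^{r+s}}))$) is spanned by monomials strictly smaller in the order of Definition \ref{defn3}, this is exactly what the definition of inadmissibility needs. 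In part (i) your Cartan-formula bookkeeping for $x\,Sq^{u2^r}(\phi_u^{2^r})$ is literally the computation already carried out inside the paper's proof of Proposition \ref{mdcb4}, and in part (ii) you correctly identify the one genuinely delicate point: the witnesses $z_j$ with $\omega(z_j)<\omega(w)$ must be discarded into $P_k^-(\omega(w))$ (this is where strict inadmissibility, i.e.\ confinement to $\mathcal A_s^+$, is used), after which Proposition \ref{mdcb4}(ii) and then (i) apply with the parameter identifications you state. This is essentially the route taken in \cite{su} for the corresponding statement, reconstructed from the paper's Propositions \ref{mdcb1} and \ref{mdcb4}; I see no gap.
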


\begin{prop}[\cite{su}]\label{mdcb3} Let $x$ be an admissible monomial in $P_k$. Then we have

{\rm i)} If there is an index $i_0$ such that $\omega_{i_0}(x)=0$, then $\omega_{i}(x)=0$ for all $i > i_0$.

{\rm ii)} If there is an index $i_0$ such that $\omega_{i_0}(x)<k$, then $\omega_{i}(x)<k$ for all $i > i_0$.
\end{prop}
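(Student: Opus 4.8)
The plan is to prove both statements in contrapositive form: assuming the forbidden ``drop-then-rise'' pattern in the weight vector, I exhibit a factorisation that makes $x$ inadmissible, contradicting admissibility. The engine throughout is Theorem \ref{dlcb1}, which lifts (strict) inadmissibility of a top factor to the whole monomial, together with the canonical factorisation $x = \prod_{i\geq 0} X_{\mathbb J_i(x)}^{2^i}$ and the identity $\omega_{i+1}(x) = \deg X_{\mathbb J_i(x)}$. Recall also that an admissible monomial represents a nonzero class, hence is never hit.

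For (i) the base case is the remark that $\omega_1(w)=0$ forces every exponent of $w$ to be even, so $w=u^2$ with $\deg u\geq 1$ whenever $w$ is nonconstant; by Proposition \ref{mdcb1}(i), $w = Sq^{\deg u}(u)\in\mathcal A^+P_k$ is hit, hence inadmissible. For the general step, suppose $\omega_{i_0}(x)=0$ while $\omega_s(x)\neq 0$ for some $s>i_0$, and set $r=i_0-1$. Since $\omega_{i_0}(x)=0$ makes the digit-$(i_0-1)$ factor trivial, I split the canonical factorisation as $x = A\cdot w^{2^r}$ with $A=\prod_{0\leq i<r}X_{\mathbb J_i(x)}^{2^i}$ and $w=\prod_{i\geq r}X_{\mathbb J_i(x)}^{2^{i-r}}$. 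Then $\omega_i(A)=0$ for $i>r$, while $\omega_1(w)=\deg X_{\mathbb J_r(x)}=\omega_{i_0}(x)=0$ and $w$ is nonconstant because $\omega_{s-r}(w)=\omega_s(x)\neq 0$. By the base case $w$ is inadmissible, so Theorem \ref{dlcb1}(i) forces $x$ to be inadmissible. The case $i_0=1$ is exactly the base case.

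For (ii), note $\omega_i(x)\leq k$ always, so $\omega_i(x)=k$ means $X_{\mathbb J_{i-1}(x)}=x_1\cdots x_k$. It suffices to show that for admissible $x$ the set $\{i:\omega_i(x)=k\}$ is an initial segment, i.e. to rule out an adjacent pattern $\omega_{j-1}(x)<k=\omega_j(x)$; using part (i) I may assume $\omega_{j-1}(x)>0$, so $S:=\{\ell:\alpha_{j-2}(\nu_\ell(x))=1\}$ is a nonempty proper subset of $\mathbb N_k$ of size $\omega_{j-1}(x)$. The crux is a strictly inadmissible \emph{witness}: the monomial $w=\prod_{\ell\in S}x_\ell^3\prod_{\ell\notin S}x_\ell^2$, which is precisely $X_{\mathbb J_{j-2}(x)}X_{\mathbb J_{j-1}(x)}^2$ and has $\omega(w)=(|S|,k,0,\ldots)$. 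Choosing $\ell_0\notin S$ and $g=w/x_{\ell_0}$, the Cartan formula gives $Sq^1(g)=w+\sum_{\ell\in S}x_\ell g$; each error term $x_\ell g$ has the same first weight coordinate $|S|$ as $\omega(w)$ but second coordinate $k-2<k$, hence is strictly smaller in the order of Definition \ref{defn3}. Since $Sq^1\in\mathcal A^+_2$ and $2=\max\{i:\omega_i(w)>0\}$, this exhibits $w$ as strictly inadmissible.

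Finally I transfer this. For $j\geq 3$, factor $x = A\cdot w^{2^{j-2}}\cdot Y^{2^{j}}$, where $A$ collects digits $<j-2$ and $Y=\prod_{i\geq j}X_{\mathbb J_i(x)}^{2^{i-j}}$ collects digits $\geq j$; then $\omega_i(A)=0$ for $i>j-2>0$, and Theorem \ref{dlcb1}(ii) makes $x$ inadmissible. For the residual base case $j=2$, Theorem \ref{dlcb1} does not apply ($r=0$), so I instead multiply the strict-inadmissibility relation for $w$ by $Y^4$ and invoke Proposition \ref{mdcb4}(i) to get $x=wY^4\simeq_2\big(\sum_{\ell\in S}x_\ell g\big)Y^4$; because $Y^4$ involves only digits $\geq 2$, it introduces no carries into digits $0,1$, so the first two weight coordinates are unchanged and each term $x_\ell g\,Y^4$ remains $<x$, again forcing $x$ inadmissible. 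The main obstacle is pinning down the witness $w$ and verifying simultaneously that its reduction lies in $\mathcal A^+_2P_k$ and that every resulting term is strictly smaller; the rest is bookkeeping with the canonical factorisation and the shift-by-powers behaviour of weight vectors.
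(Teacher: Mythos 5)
Your proof is correct. There is, however, nothing in this paper to compare it against: Proposition \ref{mdcb3} is stated here without proof, being quoted from \cite{su}, so the only fair assessment is whether your blind argument is complete and runs on the tools this paper does restate --- and it does. Part (i) combines the unstable relation $u^2=Sq^{\deg u}(u)$ of Proposition \ref{mdcb1}(i) (a positive-degree square is hit, hence inadmissible) with Theorem \ref{dlcb1}(i) applied to the digit factorisation $x=A\,w^{2^{i_0-1}}$, which is exactly the natural route. Part (ii) correctly reduces to excluding the adjacent pattern $\omega_{j-1}(x)<k=\omega_j(x)$, uses part (i) to make $S=\{\ell:\alpha_{j-2}(\nu_\ell(x))=1\}$ nonempty and proper, and your witness $w=\prod_{\ell\in S}x_\ell^3\prod_{\ell\notin S}x_\ell^2$ is indeed strictly inadmissible: with $g=w/x_{\ell_0}$ one has $w+\sum_{\ell\in S}x_\ell g=Sq^1(g)\in\mathcal A_2^+P_k$, and each error term $x_\ell g$ has weight vector $(|S|,k-2,0,\ldots)<(|S|,k,0,\ldots)=\omega(w)$; these computations check out. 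Two details deserve emphasis because they are easy to get wrong and you handled both: first, Theorem \ref{dlcb1}(ii) requires $r>0$ and so genuinely cannot treat $j=2$, and your substitute --- multiplying the relation for $w$ by $Y^4$ via Proposition \ref{mdcb4}(i) and observing that adding exponents divisible by $4$ cannot alter $\omega_1$ and $\omega_2$, so every resulting monomial stays below $x$ in the order of Definition \ref{defn3} --- is valid; second, the reduction to the adjacent pattern via minimality of $j$. The only convention you use silently is that a hit monomial is inadmissible (the empty sum $t=0$ in the definition of inadmissibility); this is the intended reading, since otherwise the admissible monomials could not represent a basis of $QP_k$.
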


Now, we recall a result in \cite{si2} on the hit monomials in $P_k$. 

\begin{defn}  A monomial $z$ in $P_k$   is called a spike if $\nu_j(z)=2^{s_j}-1$ for $s_j$ a non-negative integer and $j=1,2, \ldots , k$. 
If $z$ is a spike with $s_1>s_2>\ldots >s_{r-1}\geqslant s_r>0$ and $s_j=0$ for $j>r,$ then it is called a minimal spike.
\end{defn}

The following is a criterion for the hit monomials in $P_k$.

\begin{thm}[Singer~\cite{si2}]\label{dlsig} Suppose $x \in P_k$ is a monomial of degree $n$, where $\mu(n) \leqslant k$. Let $z$ be the minimal spike of degree $n$. If $\omega(x) < \omega(z)$, then $x$ is hit.
\end{thm}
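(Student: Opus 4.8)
The plan is to prove Singer's criterion by leveraging the duality between $P_k$ and the divided power algebra $\Gamma_k := H_*(BV_k)$, which is the natural dual Hopf algebra on which $\mathcal{A}$ acts by the dual Steenrod operations $Sq^i_*$. The statement that $x$ is hit, i.e. $x \in \mathcal{A}^+ P_k$, is equivalent by duality to saying that the linear functional corresponding to $x$ annihilates the submodule of $\mathcal{A}$-annihilated elements; more precisely, a monomial $x$ of degree $n$ fails to be hit exactly when there is a dual class in $(\Gamma_k)_n$ on which all positive $Sq^i_*$ vanish and which pairs nontrivially with $x$. So the first step is to translate the hit condition into the vanishing of a certain pairing against the space of $\mathcal{A}^+$-annihilated elements in homology.

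The key structural input is a result (due to Singer, and ultimately resting on Wood's theorem and the arithmetic of the $\mu$-function) identifying the $\mathcal{A}^+$-annihilated elements in $(\Gamma_k)_n$ in terms of spikes. Concretely, I would show that when $\mu(n) \leqslant k$ the space of elements in $(\Gamma_k)_n$ killed by every $Sq^i_*$ with $i>0$ is spanned by duals of the spike monomials, and that the \emph{minimal} spike $z$ (with weight vector $\omega(z)$ lexicographically largest among spikes, reflecting the essentially unique minimal-length decomposition $n = \sum (2^{d_i}-1)$ from \eqref{ct1.1}) controls the top weight that can be detected. The crucial monotonicity is that the weight vector $\omega$ is compatible with the $\mathcal{A}$-action in the sense that applying positive Steenrod operations cannot raise the weight vector past $\omega(z)$; equivalently, any monomial $x$ with $\omega(x) < \omega(z)$ lies, modulo hit elements, in the span of monomials of weight strictly below that of the spike.

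With this setup, the argument proceeds by a downward comparison of weight vectors. Given $x$ with $\omega(x) < \omega(z)$, I would pair $x$ against an arbitrary $\mathcal{A}^+$-annihilated homology class and show the pairing vanishes: since such a class is a combination of spike duals, and the weight vector of $x$ is strictly dominated by $\omega(z)$, the relevant coefficient in the pairing is forced to be zero by the dyadic bookkeeping encoded in $\omega$. This uses that the spike $z$ realizes the maximal weight vector in degree $n$, so any strictly smaller weight vector cannot produce a nonzero pairing with the top annihilated class, and the subdominant annihilated classes are handled by induction on the weight ordering (or by noting there are no spikes of intermediate weight below $z$ and above $\omega(x)$ that survive). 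Hence the functional dual to $x$ kills the full annihilator space, and by the duality of the first step $x$ is hit.

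The main obstacle I expect is the homological identification of the $\mathcal{A}^+$-annihilated subspace of $(\Gamma_k)_n$ with the span of spike duals when $\mu(n) \leqslant k$, together with the precise claim that the minimal spike furnishes the lexicographically maximal weight vector. The clean combinatorial fact here is that $\mu(n) = s$ forces the unique strictly-then-weakly-decreasing exponent sequence $d_1 > \cdots > d_{s-1} \geqslant d_s > 0$ of \eqref{ct1.1}, so the minimal spike is genuinely unique and its weight vector dominates; verifying that $Sq^i_*$ for $i>0$ annihilates precisely the spike duals (and nothing of strictly larger weight survives) is where the real content lies, and it is exactly the point at which Wood's theorem (Theorem~\ref{dlmd1}) and the inequality $\mu(\frac{n-s}{2}) \leqslant s$ get used. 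Once that annihilator description is in hand, the weight-vector comparison that finishes the proof is essentially formal.
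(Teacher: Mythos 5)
You should first note that the paper does not prove this statement at all: it is quoted as Singer's theorem from \cite{si2}, so there is no internal proof to compare against, and your sketch has to stand on its own. It does not, because its two load-bearing claims are false. First, you assert that for $\mu(n)\leqslant k$ the subspace of $(\Gamma_k)_n$ annihilated by all $Sq^i_*$, $i>0$, is spanned by duals of spikes. By the duality you set up in your first step, that annihilated subspace is dual to $(QP_k)_n$, so its dimension equals $\dim(QP_k)_n$; but spikes are far too few. For instance, take $k=4$, $n=45$: the paper computes $\dim(QP_4)_{45}=105$, while the spikes of degree $45$ in $P_4$ are only the permutations of $x_1^{31}x_2^{7}x_3^{7}$ and of $x_1^{15}x_2^{15}x_3^{15}$ (sixteen monomials in all). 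So the annihilator space is not spanned by spike duals, and the pairing argument you build on this identification collapses.

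Second, you claim the minimal spike has the lexicographically \emph{largest} weight vector among spikes and hence ``controls the top weight.'' It is exactly the other way around. For $n=45$ the minimal spike is $z=x_1^{31}x_2^{7}x_3^{7}$ with $\omega(z)=(3,3,3,1,1)$, whereas the spike $x_1^{15}x_2^{15}x_3^{15}$ has weight vector $(3,3,3,3)>\omega(z)$. This reversal is not incidental: since no spike is ever hit, every spike must have weight vector $\geqslant\omega(z)$ if the criterion is to hold, which is precisely why the theorem is stated with the \emph{minimal} spike and the inequality $\omega(x)<\omega(z)$. Your proposed ``downward comparison of weight vectors,'' in which strictly smaller weight cannot pair with the top annihilated class, therefore has no top class to compare against, and the induction over intermediate weights has nothing to anchor it. What a correct proof actually requires (and what Singer supplies) is a direct argument showing that a monomial whose weight vector drops below that of the minimal spike can be rewritten, modulo $\mathcal A^+P_k$, in terms of the $Sq^i$ acting on lower pieces of its dyadic decomposition, with Wood's theorem and the inequality $\mu(\frac{n-s}{2})\leqslant s$ entering at the inductive step; none of that is reproduced or replaced by your duality framework.
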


From this theorem, we see that if $z$ is a minimal spike, then $P_k^-(\omega(z)) \subset \mathcal A^+P_k$.

The following lemma has been proved in \cite{su}.
\begin{lem}[\cite{su}]\label{bdbs1} Let $n =\sum_{1 \leqslant i \leqslant k-1}(2^{d_i}-1)$ with $d_i$ positive integers such that $d_1 > d_2 > \ldots >d_{k-2} \geqslant d_{k-1}>0,$ and let $x$ be a monomial of degree $n$ in $P_k$. If $[x] \ne 0$, then $\omega_i(x) = k-1$ for $1 \leqslant i \leqslant d_{k-1}$.
\end{lem}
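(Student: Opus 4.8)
The plan is to combine Singer's hit criterion (Theorem \ref{dlsig}) with an inductive parity argument on the weight vector. First I would identify the minimal spike in degree $n$. Since $d_1 > d_2 > \ldots > d_{k-2} \geqslant d_{k-1} > 0$, the monomial $z = \prod_{j=1}^{k-1} x_j^{2^{d_j}-1}$ (with $\nu_k(z) = 0$) is exactly the minimal spike of degree $n$, and $\mu(n) = k-1 \leqslant k$. Reading off dyadic digits, $\alpha_{i-1}(2^{d_j}-1) = 1$ iff $i \leqslant d_j$, so $\omega_i(z) = \#\{j : d_j \geqslant i\}$; in particular $\omega_i(z) = k-1$ for all $1 \leqslant i \leqslant d_{k-1}$. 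Because $[x] \ne 0$, the monomial $x$ is not hit, so Theorem \ref{dlsig} forbids $\omega(x) < \omega(z)$; since the left lexicographic order is total, this gives $\omega(x) \geqslant \omega(z)$.

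Next I would prove $\omega_i(x) = k-1$ by induction on $i$ for $1 \leqslant i \leqslant d_{k-1}$. Assume $\omega_j(x) = k-1$ for all $j < i$ (vacuous when $i=1$). Since $\omega_j(x) = k-1 = \omega_j(z)$ for $j < i$ and $\omega(x) \geqslant \omega(z)$, comparing the two sequences at position $i$ forces $\omega_i(x) \geqslant \omega_i(z) = k-1$. On the other hand $\omega_i(x) = \sum_{j=1}^k \alpha_{i-1}(\nu_j(x)) \leqslant k$, there being only $k$ variables. Thus $\omega_i(x) \in \{k-1, k\}$, and it remains only to exclude the value $k$.

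For the parity step, set $n_i = \deg x - \sum_{j=1}^{i-1} 2^{j-1}\omega_j(x) = \sum_{j \geqslant i} 2^{j-1}\omega_j(x)$, which is divisible by $2^{i-1}$, and put $m_i = n_i/2^{i-1} = \sum_{j\geqslant i}2^{j-i}\omega_j(x)$, so that $m_i \equiv \omega_i(x) \pmod 2$. Using the induction hypothesis and $n = \sum_{l=1}^{k-1}2^{d_l} - (k-1)$, I would compute
$$m_i = \frac{1}{2^{i-1}}\Big(\sum_{l=1}^{k-1}2^{d_l} - (k-1)2^{i-1}\Big) = \sum_{l=1}^{k-1}2^{d_l - i + 1} - (k-1).$$
Since $i \leqslant d_{k-1} \leqslant d_l$ for every $l$, each exponent $d_l - i + 1$ is at least $1$, so every summand $2^{d_l-i+1}$ is even and $m_i \equiv -(k-1) \equiv k-1 \pmod 2$. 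Hence $\omega_i(x) \equiv k-1 \pmod 2$, which excludes $\omega_i(x) = k$ and forces $\omega_i(x) = k-1$, completing the induction.

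The only delicate point is the bookkeeping in this last computation: one must check that $2^{i-1}$ divides $n_i$ and that every exponent $d_l - i + 1$ is strictly positive, which is precisely where the hypothesis $i \leqslant d_{k-1}$ (equivalently $d_{k-1} \geqslant i$) enters. Everything else is a formal consequence of Singer's criterion together with the elementary bound $\omega_i(x) \leqslant k$.
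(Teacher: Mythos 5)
Your proof is correct and complete: the identification of the minimal spike, the lexicographic lower bound $\omega(x)\geqslant\omega(z)$ from Theorem \ref{dlsig}, and the parity computation $m_i\equiv \omega_i(x)\equiv k-1\pmod 2$ (valid precisely because $i\leqslant d_{k-1}\leqslant d_l$ makes every $2^{d_l-i+1}$ even) all check out. The paper does not reprove this lemma but defers to \cite{su}, and the argument there rests on exactly the ingredients you use---Singer's criterion applied to the minimal spike $x_1^{2^{d_1}-1}\cdots x_{k-1}^{2^{d_{k-1}}-1}$ combined with a parity/counting argument on the weight components---so your approach is essentially the same.
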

 The following is a modification of a result in \cite{su}.
\begin{lem}\label{bdcb3} 
Let $n$ be as in Lemma \ref{bdbs1} and let $\omega = (\omega_1,\omega_2, \ldots)$ be a sequence of non-negative integers such that $\omega_i = k-1,$ for $1 \leqslant i \leqslant s \leqslant d_{k-1}$, $\omega_i \leqslant 1$ for $i > s$, $\omega_i =0$ for $i \gg 0$, and $\deg \omega < n$.  Suppose  $f, g, h, p \in P_k$ with $\deg f = \deg g = \deg\omega$,  $\deg h = \deg p =  (n - \deg\omega)/2^s = \sum_{i=1}^{k-1}(2^{d_i-s}-1) - \sum _{j \geqslant 1}2^{j-1}\omega_{s+j}$.

\smallskip
{\rm i)} If $f \simeq_{(s,\omega)} g$, then $fh^{2^s} \equiv gh^{2^s}$.

{\rm ii)} If $\omega_i =0$ for $i>s$, and $h \equiv p$, then $fh^{2^s} \equiv fp^{2^s}$. 
\end{lem}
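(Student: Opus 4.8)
The plan is to reduce both statements, after multiplying out, to the minimal spike $z$ of degree $n$ and then invoke Singer's criterion (Theorem~\ref{dlsig}). Since $z=\prod_{1\leqslant j\leqslant k-1}x_j^{2^{d_j}-1}$, a direct computation gives $\omega_i(z)=k-1$ for $1\leqslant i\leqslant d_{k-1}$, so $\omega$ and $\omega(z)$ agree in their first $s$ coordinates (recall $s\leqslant d_{k-1}$). Throughout I use the elementary fact that when a weight vector has all entries $\leqslant 1$ it is the \emph{binary expansion} of its degree, and that this binary expansion is the lex-smallest weight vector of that degree (minimize the number of odd exponents, then recurse on the halved exponents); dually, $(k-1,\dots,k-1)$ with $s$ entries is the lex-largest weight vector in degree $(k-1)(2^s-1)$.

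For part (i) I would write $f-g=\sum_{1\leqslant u<2^s}Sq^u(z_u)+B$ with $B\in P_k^-(\omega)$, and multiply by $h^{2^s}$. Since $Sq^b(h^{2^s})=0$ for $0<b<2^s$ by Proposition~\ref{mdcb1}(ii), the Cartan formula collapses to $Sq^u(z_u)h^{2^s}=Sq^u(z_uh^{2^s})\in\mathcal A^+P_k$ for each $u<2^s$, so the Steenrod part contributes nothing modulo $\equiv$. It then suffices to show that every monomial $m=y(h')^{2^s}$, with $y$ a monomial of $B$ (so $\deg y=\deg\omega$ and $\omega(y)<\omega$) and $h'$ a monomial of $h$, is hit. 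The lower $s$ binary digits of each exponent of $m$ come only from $y$, whence $\omega_i(m)=\omega_i(y)$ for $1\leqslant i\leqslant s$. Let $i_0$ be the first index with $\omega_{i_0}(y)<\omega_{i_0}$. I claim $i_0\leqslant s$: if not, then $\omega_i(y)=k-1$ for $i\leqslant s$, so splitting the exponents of $y$ at bit $s$ as $y=y_0y_1^{2^s}$ gives $\deg y_1=\deg\omega'$ and $\omega(y_1)<\omega'$, where $\omega'=(\omega_{s+1},\omega_{s+2},\dots)$; but every $\omega'_t\leqslant1$, so $\omega'$ is the binary expansion of $\deg\omega'$ and admits no strictly smaller weight vector, a contradiction. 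Thus $i_0\leqslant s$, and comparing with $z$ gives $\omega_i(m)=k-1=\omega_i(z)$ for $i<i_0$ and $\omega_{i_0}(m)<k-1=\omega_{i_0}(z)$, so $\omega(m)<\omega(z)$ and $m$ is hit by Theorem~\ref{dlsig}. Hence $Bh^{2^s}\in\mathcal A^+P_k$ and $fh^{2^s}\equiv gh^{2^s}$.

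For part (ii) I would use $fh^{2^s}-fp^{2^s}=f(h-p)^{2^s}$ in characteristic $2$, expand $h-p=\sum_{c>0}Sq^c(q_c)$, and apply Proposition~\ref{mdcb1}(ii) together with the Frobenius to get $(h-p)^{2^s}=\sum_c Sq^{c2^s}(q_c^{2^s})$. Pushing $f$ through each term by the Cartan formula yields
$$f\,Sq^{c2^s}(q_c^{2^s})=Sq^{c2^s}(f\,q_c^{2^s})+\sum_{0\leqslant e<c}Sq^{(c-e)2^s}(f)\,(Sq^{e}q_c)^{2^s},$$
where every term is homogeneous of degree $n$. Modulo $\mathcal A^+P_k$ it remains to kill the correction terms $Sq^{b2^s}(f)\,(Sq^{e}q_c)^{2^s}$ with $b\geqslant1$. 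Here I would use $\deg f=(k-1)(2^s-1)$ and the maximality of $(k-1,\dots,k-1)$ noted above to argue that $Sq^{b2^s}$ with $b\geqslant1$ strictly lowers the first $s$ coordinates of the weight vector below $(k-1,\dots,k-1)$ (and, by parity of the number of odd exponents, never raises any of them to $k$). Consequently every monomial occurring in such a correction term has weight vector $<\omega(z)$ and is hit by Theorem~\ref{dlsig}, giving $fh^{2^s}\equiv fp^{2^s}$.

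The clean half is part (i): the hypothesis $\omega_t\leqslant1$ for $t>s$ is exactly what makes the tail $\omega'$ a minimal (binary) weight vector, so the troublesome case $i_0>s$ simply cannot occur and one lands inside $P_k^-(\omega(z))\subset\mathcal A^+P_k$. The main obstacle is the weight-vector control in part (ii): unlike multiplication by a pure $2^s$-th power, the operator $Sq^{b2^s}$ genuinely redistributes weight, and the crux is to prove that it cannot leave all of $\omega_1,\dots,\omega_s$ equal to $k-1$ nor inflate one of them to $k$. I expect to establish this by induction on $s$ — halving exponents and tracking the parity of the odd-exponent count at each level — which is precisely where the degree constraint $\deg f=(k-1)(2^s-1)$ enters. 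Alternatively one can route the correction terms through the inadmissibility propagation of Theorem~\ref{dlcb1}, but the Singer-criterion bound above is the more direct finish.
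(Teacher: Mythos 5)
Your part (i) is correct and is essentially the argument the paper intends (the paper itself gives no details, remarking only that the lemma follows from Proposition \ref{mdcb4}, Theorem \ref{dlsig} and Lemma \ref{bdbs1}): the $\mathcal A_s^+$-part passes through $h^{2^s}$ because $Sq^b(h^{2^s})=0$ for $0<b<2^s$, and your parity argument forcing the first deviation index to satisfy $i_0\leqslant s$ is the same one used inside the paper's proof of Proposition \ref{mdcb4}(i), after which Singer's criterion finishes.

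In part (ii) you take a genuinely different, and heavier, route. The reduction the paper has in mind exploits the lex-maximality fact you state but then do not use this way: every monomial of degree $\deg\omega=(k-1)(2^s-1)$ has weight vector $\leqslant\omega$, so $f=f_1+f_2$ with $f_1$ a sum of monomials of weight exactly $\omega$ (these have all exponents $<2^s$ and factor as $\prod_{0\leqslant i<s}X_{j_i}^{2^i}$) and $f_2\in P_k^-(\omega)$. For $f_2$, both $f_2h^{2^s}$ and $f_2p^{2^s}$ are hit outright by the part (i) argument, so nothing needs to be proved; for $f_1$, the correction terms $Sq^{b2^s}(\mu)\,(Sq^{e}q_c)^{2^s}$ are controlled exactly as in the paper's proof of Proposition \ref{mdcb4}(ii), where the squarefree factorization of $\mu$ makes the weight drop visible. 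You instead attack $Sq^{b2^s}(f)$ for an arbitrary polynomial $f$, and the claim you defer ("no monomial of $Sq^{b2^s}(f)$, $b\geqslant1$, has first-$s$ weight part $\geqslant(k-1,\dots,k-1)$ lexicographically") is precisely the crux of your route. The claim is true, and the induction you indicate does close it: write a monomial $v$ of $f$ as $v_{(0)}(v')^2$ with $v_{(0)}$ its squarefree part; by the Cartan formula a monomial $u$ of $Sq^{N}(v)$, $N=b2^s$, arises from a term $Sq^{N_1}(v_{(0)})\,\bigl(Sq^{N_2}(v')\bigr)^2$ with $N_1+2N_2=N$, so $N_1$ is even, and $\omega_1(u)=\omega_1(v)-N_1$ since $Sq^{N_1}$ squares exactly $N_1$ of the variables of $v_{(0)}$; thus $\omega_1(u)=k-1$ together with $\omega_1(v)\leqslant k$ forces $N_1=0$ and $\omega_1(v)=k-1$, and one descends to a monomial of $Sq^{b2^{s-1}}(v')$ with $\deg v'=(k-1)(2^{s-1}-1)$, terminating after $s$ steps in $Sq^{b}(1)=0$; the same evenness of $N_1$ plus $\omega_1(v^{(i_0-1)})\equiv k-1\pmod 2$ rules out the value $k$ at the first deviation. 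So your proposal is sound and completable exactly as you expect, but be aware that this weight-lowering property of $Sq^{b2^s}$ for arbitrary polynomials is the hard step of your approach, whereas the splitting $f=f_1+f_2$ lets the paper avoid ever applying $Sq^{b2^s}$ to the part of $f$ lying in $P_k^-(\omega)$ --- which is why the paper can call the lemma easy.
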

This lemma can easily be proved by using Proposition  \ref{mdcb4}, Theorem \ref{dlsig} and  Lemma \ref{bdbs1}.

For latter use, we set 
\begin{align*} 
P_k^0 &=\langle\{x=x_1^{a_1}x_2^{a_2}\ldots x_k^{a_k} \ : \ a_1a_2\ldots a_k=0\}\rangle,
\\ P_k^+ &= \langle\{x=x_1^{a_1}x_2^{a_2}\ldots x_k^{a_k} \ : \ a_1a_2\ldots a_k>0\}\rangle. 
\end{align*}

It is easy to see that $P_k^0$ and $P_k^+$ are the $\mathcal{A}$-submodules of $P_k$. Furthermore, we have the following.

\begin{prop}\label{2.7} We have a direct summand decomposition of the $\mathbb F_2$-vector spaces
$$QP_k =QP_k^0 \oplus  QP_k^+.$$
Here $QP_k^0 = P_k^0/\mathcal A^+.P_k^0$ and  $QP_k^+ = P_k^+/\mathcal A^+.P_k^+.$
\end{prop}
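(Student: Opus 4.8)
The plan is to derive the splitting entirely from the fact, recalled just before the statement, that $P_k^0$ and $P_k^+$ are $\mathcal{A}$-submodules of $P_k$, together with the evident vector space decomposition $P_k = P_k^0 \oplus P_k^+$ coming from partitioning the monomial basis into those monomials $x_1^{a_1}\cdots x_k^{a_k}$ with $a_1\cdots a_k = 0$ and those with $a_1\cdots a_k > 0$. Since the functor $\mathbb{F}_2 \otimes_{\mathcal{A}}(-)$ carries direct sums of $\mathcal{A}$-modules to direct sums, the conclusion $QP_k = QP_k^0 \oplus QP_k^+$ is then formal; I would, however, spell this out at the level of the hit submodule $\mathcal{A}^+P_k$ in order to keep the argument self-contained.

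First I would record that, because $P_k^0$ and $P_k^+$ are $\mathcal{A}$-submodules, the augmentation ideal sends each into itself: $\mathcal{A}^+ P_k^0 \subseteq P_k^0$ and $\mathcal{A}^+ P_k^+ \subseteq P_k^+$. Next, applying $\mathcal{A}^+$ to the decomposition $P_k = P_k^0 \oplus P_k^+$ and using additivity of the action gives $\mathcal{A}^+ P_k = \mathcal{A}^+ P_k^0 + \mathcal{A}^+ P_k^+$. Because the two summands lie in $P_k^0$ and $P_k^+$ respectively while $P_k^0 \cap P_k^+ = 0$, this sum is in fact direct, so that $\mathcal{A}^+ P_k = \mathcal{A}^+ P_k^0 \oplus \mathcal{A}^+ P_k^+$.

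From here the quotient splits compatibly with the decomposition of the numerator, namely
$$QP_k = \frac{P_k^0 \oplus P_k^+}{\mathcal{A}^+ P_k^0 \oplus \mathcal{A}^+ P_k^+} \;\cong\; \frac{P_k^0}{\mathcal{A}^+ P_k^0} \oplus \frac{P_k^+}{\mathcal{A}^+ P_k^+} = QP_k^0 \oplus QP_k^+,$$
which is exactly the claimed decomposition. Concretely, the isomorphism is induced by sending the class of $f = f^0 + f^+$, with $f^0 \in P_k^0$ and $f^+ \in P_k^+$, to the pair $([f^0],[f^+])$; this is well defined precisely because the hit elements split as above.

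There is no genuinely hard step here: the whole content is the submodule property of $P_k^0$ and $P_k^+$, which is already granted. The only point that deserves a word of care is the directness of $\mathcal{A}^+ P_k = \mathcal{A}^+ P_k^0 \oplus \mathcal{A}^+ P_k^+$ — one must invoke that $\mathcal{A}$ preserves $P_k^0$ and $P_k^+$ \emph{separately}, so that each piece is trapped in its own summand, rather than merely that the two pieces together span $\mathcal{A}^+ P_k$. Without the submodule property the intersection $\mathcal{A}^+P_k^0 \cap \mathcal{A}^+P_k^+$ need not vanish and the quotient would fail to split.
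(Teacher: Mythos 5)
Your argument is correct and is exactly the reasoning the paper intends: the paper states the proposition as an immediate consequence of the observation, recorded just before it, that $P_k^0$ and $P_k^+$ are $\mathcal{A}$-submodules of $P_k$, and gives no further proof. Your write-up simply makes explicit the same route — the monomial-basis splitting $P_k = P_k^0 \oplus P_k^+$, the induced direct decomposition $\mathcal{A}^+P_k = \mathcal{A}^+P_k^0 \oplus \mathcal{A}^+P_k^+$, and the compatible splitting of the quotient — so there is nothing to add or correct.
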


\section{Proof of Theorem \ref{dl1}}\label{s3}
\setcounter{equation}{0}

We denote
$$\mathcal N_k =\{(i;I)\ :\ I=(i_1,i_2,\ldots,i_r),1 \leqslant  i < i_1 <  \ldots < i_r\leqslant  k,\ 0\leqslant r <k\}.$$
Let $(i;I) \in \mathcal N_k$ and $j \in \mathbb N_k$. Denote  by $r = \ell(I)$ the length of $I$, and
$$I\cup j = \begin{cases}I,&  \text { if  } j \in I,\\  
 (j,i_1,\ldots ,i_r), &\text { if  } 0<j<i_1, \\
(i_1,\ldots ,i_{t-1},j,i_t,\ldots ,i_r), &\text { if  } i_{t-1}<j<i_t, \ 2 \leqslant t \leqslant r+1.
 \end{cases}$$
Here $i_{r+1} = k+1$. 
For $1 \leqslant i \leqslant k$, define the homomorphism $f_i= f_{k;i}: P_{k-1} \to P_k$ of algebras by substituting
$$f_i(x_j) = \begin{cases} x_j, &\text{ if } 1 \leqslant j <i,\\
x_{j+1}, &\text{ if } i \leqslant j <k.
\end{cases}$$
\begin{defn} Let $(i;I) \in \mathcal N_k$, let $r = \ell(I)$, and let $u$
be an integer with $1 \leqslant  u \leqslant r$. A monomial $x \in P_{k-1}$ is said to be $u$-compatible with $(i;I)$ if all of the following hold:

\medskip
i) $\nu_{i_1-1}(x)= \nu_{i_2-1}(x)= \ldots = \nu_{i_{(u-1)}-1}(x)=2^{r} - 1$,

ii) $\nu_{i_u-1}(x) > 2^{r} - 1$,

iii) $\alpha_{r-t}(\nu_{i_u-1}(x)) = 1,\ \forall t,\ 1 \leqslant t \leqslant  u$,

iv) $\alpha_{r-t}(\nu_{i_t-1} (x)) = 1,\ \forall t,\ u < t \leqslant r$.
\end{defn}

Clearly,  a monomial $x \in P_k$ can be $u$-compatible with a given $(i;I) \in \mathcal N_k$, $r =\ell(I) >0,$ for at most one value of $u$. By convention, $x$ is $1$-compatible with $(i;\emptyset)$.

\begin{defn}\label{dfn1} Let $(i;I) \in \mathcal N_k$. Denote $x_{(I,u)} = x_{i_u}^{2^{r-1}+\ldots + 2^{r-u}}\prod_{u< t \leqslant r}x_{i_t}^{2^{r-t}}$ for $1\leqslant u \leqslant r = \ell(I)$, and $x_{(\emptyset,1)} = 1$.  For a monomial $x$ in $P_{k-1}$, 
we define the monomial  $\phi_{(i;I)}(x)$ in $P_k$ by setting
$$ \phi_{(i;I)}(x) = \begin{cases} (x_i^{2^r-1}f_i(x))/x_{(I,u)}, &\text{if there exists $u$ such that}\\
&\text{$x$ is $u$-compatible with $(i, I)$,}\\
0, &\text{otherwise.}
\end{cases}$$

Then we have an $\mathbb F_2$-linear map $\phi_{(i;I)}:P_{k-1}\to P_k$.
In particular, $\phi_{(i;\emptyset)} = f_i$.  

For example, let $I=(j)$ and $1 \leqslant i <j \leqslant k$. A monomial $x\in P_{k-1}$ is $1$-compatible with $(i;I)$ if and only if $\alpha_0(\nu_{j-1}(x)) =1$ and $\phi_{(i;I)}(x) = (x_if_i(x))/x_j$.

Let $k=4$ and $I = (2,3,4)$. The monomial $x=x_1^{12}x_2^6x_3^9$ is  $1$-compatible with $(1;I)$; $y = x_1^7x_2^{15}x_3^7$ is $2$-compatible with $(1;I)$; $z = x_1^7x_2^7x_3^{15}$  is $3$-compatible with $(1;I)$ and $\phi_{(1;I)}(x) = x_1^7x_2^8x_3^4x_4^8$, $\phi_{(1;I)}(y) = x_1^7x_2^7x_3^9x_4^6$, $\phi_{(1;I)}(z) = x_1^7x_2^7x_3^7x_4^8$.

Let  $x = X^{2^d-1}y^{2^d}$, with $y$ a monomial in $P_{k-1}$ and $X= x_1x_2\ldots , x_{k-1}\in P_{k-1}$. 

If $r < d$, then $x$ is $1$-compatible with $(i;I)$ and
\begin{equation}\label{rela22}\phi_{(i;I)}(x) = \phi_{(i;I)}(X^{2^{d}-1})f_i(y)^{2^d} =  x_i^{2^r-1}\prod_{1 \leqslant t \leqslant r}x_{i_t}^{2^d-2^{r-t}-1} X_{i,i_1,\ldots, i_r}^{2^d-1}f_i(y)^{2^d}.
\end{equation}

If $d = r$, $\nu_{j-1}(y) = 0, j = i_1, i_2, \ldots , i_{u-1}$ and $\nu_{i_u-1}(y) > 0$,  then $x$ is $u$-compatible with $(i;I)$ and
\begin{equation}\label{rela2}\phi_{(i;I)}(x) = \phi_{(i_u;J_u)}(X^{2^{d}-1})f_{i}(y)^{2^d} ,
\end{equation}
where $J_u = (i_{u+1},\ldots , i_r)$. 
\end{defn}

Let $B$ be a finite subset of $P_{k-1}$ consisting of  some  polynomials in degree $n$. We set
\begin{align*}\Phi^0(B) &= \bigcup_{1\leqslant i \leqslant k}\phi_{(i;\emptyset)}(B) = \bigcup_{1\leqslant i \leqslant k}f_i(B).\\
\Phi^+(B) &= \bigcup_{(i;I)\in \mathcal N_k, 0<\ell(I)\leqslant k-1}\phi_{(i;I)}(B)\setminus P_k^0.\\
\Phi(B) &= \Phi^0(B)\bigcup \Phi^+(B). 
\end{align*} 

 It is easy to see that if $B_{k-1}(n)$ is a minimal set of generators for $\mathcal A$-module $P_{k-1}$ in degree $n$, then  $\Phi^0(B_{k-1}(n))$  is a minimal set of generators for $\mathcal A$-module $P_k^0$ in degree $n$ and $\Phi^+(B_{k-1}(n)) \subset P_k^+$.

\begin{prop}\label{mdc1} Let $n =\sum_{1 \leqslant i \leqslant k-1}(2^{d_i}-1)$ with $d_i$ positive integers such that $d_1 > d_2 > \ldots >d_{k-2} \geqslant d_{k-1} \geqslant k-1 \geqslant 3$. If $B_{k-1}(n)$ is a minimal set of generators for  $\mathcal A$-module $P_{k-1}$ in degree $n$, then $B_k(n) = \Phi(B_{k-1}(n))$ is also a minimal set of generators for  $\mathcal A$-module $P_k$ in degree $n$. 
\end{prop}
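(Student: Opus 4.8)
The plan is to establish the two defining properties of a minimal generating set separately: that $B_k(n) = \Phi(B_{k-1}(n))$ spans $(P_k)_n$ modulo $\mathcal A^+P_k$, and that its image in $(QP_k)_n$ is linearly independent. Throughout I would exploit the direct sum decomposition $QP_k = QP_k^0 \oplus QP_k^+$ of Proposition \ref{2.7}, which is compatible with the construction of $\Phi$: the subset $\Phi^0(B_{k-1}(n)) = \bigcup_i f_i(B_{k-1}(n))$ consists of monomials lying in $P_k^0$ (each $f_i$ omits the variable $x_i$), while $\Phi^+(B_{k-1}(n))$ consists of monomials in $P_k^+$. Since the excerpt already records that $\Phi^0(B_{k-1}(n))$ is a minimal set of $\mathcal A$-generators for $P_k^0$ in degree $n$, the entire new content to be established is that $[\Phi^+(B_{k-1}(n))]$ is a basis of $(QP_k^+)_n$, so both the spanning and the independence questions reduce to the summand $QP_k^+$.

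For spanning, I would take an admissible monomial $z \in P_k^+$ of degree $n$ with $[z] \neq 0$ and show $[z] \in [\Phi^+(B_{k-1}(n))]$. The starting point is Lemma \ref{bdbs1}, which forces $\omega_i(z) = k-1$ for all $1 \leqslant i \leqslant d_{k-1}$; since $z$ uses all $k$ variables, this says that in each of the binary digit positions $0,1,\ldots,d_{k-1}-1$ exactly one variable carries a $0$. Reading off which variable is switched off at each of these positions yields combinatorial data that I would package precisely as a pair $(i;I) \in \mathcal N_k$ together with a compatibility index $u$ in the sense of Definition \ref{dfn1}; the constraints $d_1 > \ldots > d_{k-2} \geqslant d_{k-1} \geqslant k-1$ guarantee there are enough full positions to accommodate any $\ell(I) \leqslant k-1$. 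Using the defining formula for $\phi_{(i;I)}$ and the description (\ref{rela22})--(\ref{rela2}) of its effect on spike-type monomials, I would exhibit a monomial $y \in P_{k-1}$ of degree $n$ with $z = \phi_{(i;I)}(y)$ modulo a sum of monomials strictly smaller than $z$ in the order of Definition \ref{defn3}. The smaller terms are then disposed of: those with weight vector strictly below that of the minimal spike are hit by Singer's criterion (Theorem \ref{dlsig}), and the remaining inadmissible correction terms are eliminated using the strict-inadmissibility criterion of Theorem \ref{dlcb1} together with Proposition \ref{mdcb4} and Lemma \ref{bdcb3}, which let me split off the top $2^s$-th power factor. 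Finally, replacing $y$ by its expansion in the given minimal set $B_{k-1}(n)$ and applying $\phi_{(i;I)}$ places $[z]$ in $[\Phi^+(B_{k-1}(n))]$.

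For linear independence I would suppose a relation $\sum \gamma_{(i;I),b}\,\phi_{(i;I)}(b) \in \mathcal A^+P_k$ with $\ell(I)>0$ and $b \in B_{k-1}(n)$, and show every coefficient vanishes. The monomials $\phi_{(i;I)}(b)$ attached to distinct pairs $(i;I)$ are separated by their weight and sigma vectors, which by construction encode $(i;I)$ and the compatibility type; grouping the relation by these invariants reduces it, for each fixed $(i;I)$, to a relation among $\{\phi_{(i;I)}(b) : b \in B_{k-1}(n)\}$. For such a fixed $(i;I)$ the map $\phi_{(i;I)}$ is essentially $b \mapsto x_i^{2^r-1}f_i(b)/x_{(I,u)}$, and I would use the injectivity of $f_i$ on classes together with the minimality (hence linear independence in $QP_{k-1}$) of $B_{k-1}(n)$ to conclude $\gamma_{(i;I),b}=0$ for all $b$.

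I expect the spanning step to be the main obstacle. The reduction to $QP_k^+$ and the independence argument are largely formal once the leading-term bookkeeping is in place, but showing that every admissible monomial of $P_k^+$ genuinely arises from the $\phi_{(i;I)}$ construction requires a full case analysis of the admissible weight patterns allowed by Lemma \ref{bdbs1} and Proposition \ref{mdcb3}, and careful control of the error terms produced at each factorization via Theorem \ref{dlcb1}. This is where the hypothesis $d_{k-1} \geqslant k-1 \geqslant 3$ is genuinely used, and where the correction noted after Theorem \ref{dl1} (excluding the small values of $k$) becomes essential.
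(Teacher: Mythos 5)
Your reduction to $QP_k^+$ via Proposition \ref{2.7} is sound, and your starting point (Lemma \ref{bdbs1} forcing $\omega_i(z)=k-1$ for $i\leqslant d_{k-1}$) matches the paper. But the core of your spanning step rests on a claim that is false: that an admissible monomial $z\in P_k^+$ with $[z]\ne 0$ equals $\phi_{(i;I)}(y)$ for a \emph{single} pair $(i;I)$ and some $y\in P_{k-1}$, modulo monomials smaller in the order of Definition \ref{defn3}. Writing $z=\big(\prod_{0\leqslant t<d}X_{j_t}^{2^t}\big)\bar y^{2^d}$ and applying Lemma \ref{hq0}, one gets $z\equiv \phi_{(i;I)}(X^{2^d-1})\bar y^{2^d}$, but the top factor $\bar y$ is a monomial in all $k$ variables; $z$ lies in the image of $\phi_{(i;I)}$ essentially only when $\nu_i(\bar y)=0$, which is exactly Case \ref{th01} of the paper. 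When $\nu_i(\bar y)>0$, no amount of ordering or admissibility bookkeeping turns $z$ into one $\phi_{(i;I)}(y)$ plus lower terms: what is true, and what the paper proves, is that $[z]$ is a \emph{sum} of classes $[\phi_{(j;J)}(\cdot)]$ over several distinct pairs, produced by the genuine Steenrod relation $Sq^1(X_\emptyset^{2^c-1}y_0^{2^c})=\sum_j X_j^{2^c-1}y_j^{2^c}$ (relation (\ref{ctpbs})), packaged as Lemma \ref{bdcbs} and Lemma \ref{hq4}, and closed up by the double induction on the exponents of $\bar y$ that occupies the fourteen cases of Subsection \ref{sub31}. Example \ref{vdu1} shows this concretely: $\phi_{(1;\emptyset)}(X^{7})(x_1y_0)^{8}$ is congruent to a sum of three terms $\phi_{(1;j)}(X^{7})(x_jy_0)^{8}$, $j=2,3,4$, not to a single term plus corrections. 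Singer's criterion (Theorem \ref{dlsig}) and Theorem \ref{dlcb1} cannot replace these relations; they only dispose of terms of strictly lower weight, and the terms that must be rewritten here all have the maximal weight vector.

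Your independence argument has a parallel gap. The weight vector does not separate the pairs $(i;I)$: for $z=X^{2^d-1}\bar z^{2^d}\in B_{k-1}(n)$ and $\ell(I)<d$ one has $\phi_{(i;I)}(z)=\phi_{(i;I)}(X^{2^d-1})f_i(\bar z)^{2^d}$, whose weight vector is $(k-1,\ldots,k-1,\omega_1(\bar z),\omega_2(\bar z),\ldots)$ regardless of $(i;I)$; and the sigma vector only records that distinct monomials are distinct, which is useless for splitting a hit relation, since a sum of distinct monomials can perfectly well lie in $\mathcal A^+P_k$ without any subsum doing so. Likewise, "injectivity of $f_i$ on classes" is not available for $\ell(I)>0$ because $\phi_{(i;I)}$ is not an $\mathcal A$-homomorphism. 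The paper's mechanism is entirely different: it applies the $\mathcal A$-module homomorphisms $p_{(j;J)}:P_k\to P_{k-1}$ to the relation and uses the biorthogonality of Lemma \ref{bdc1}, namely $p_{(j;J)}(\phi_{(i;I)}(z))\equiv z$ if $(j;J)=(i;I)$ and $\equiv 0$ (or a controlled error in $\langle\mathcal D\cup\mathcal E\rangle$) otherwise, to kill coefficients by induction on $\ell(I)$. Moreover, in the boundary case $d_{k-1}=k-1$ — which your hypotheses allow and which is precisely the new content of this proposition relative to \cite{na} and \cite{su} — the identity (\ref{ctth}) shows that $\phi_{(1;I_1)}(z)$ degenerates to $\phi_{(t;I_t)}(X^{2^d-1})f_{t-1}(\bar z)^{2^d}$ depending on whether $z$ lies in $\mathcal C$, $\mathcal D$ or $\mathcal E$, and the resulting interference between different pairs forces the extra analysis of relations (\ref{ctbs}) and (\ref{ctbs1}). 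Your proposal never engages with this degeneration, so even granting a separation of the relation by pairs, the case that actually requires proof would remain open.
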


For $d_{k-1}\geqslant k$, this proposition is a modification of a result in Nam \cite{na}. For $d_{k-2} = d_{k-1} > k$, it has been proved in \cite{su}.

\medskip
The proposition will be proved in Subsection \ref{sub31}. We need some lemmas. 

\begin{lem}\label{hq0} Let $j_0, j_1, \ldots , j_{d-1} \in \mathbb N_k$, with $d$ a positive integer and let $i = \min\{j_0,\ldots , j_{d-1}\}$, $I = (i_1, \ldots, i_r)$ with $\{i_1, \ldots, i_r\} = \{j_0,\ldots , j_{d-1}\}\setminus \{i\}$. Then,  
$$x := \prod_{0 \leqslant t <d}X_{j_t}^{2^t} \simeq_{d-1} \phi_{(i;I)}(X^{2^d-1}).$$
\end{lem}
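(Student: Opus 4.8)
The plan is to induct on $d$, keeping $k$ fixed. I begin by pinning down the weight vectors. Since every $X_{j_t}$ is a product of $k-1$ distinct variables, $\omega_i(x)=k-1$ for $1\le i\le d$ and $\omega_i(x)=0$ for $i>d$; because $r=|\{j_0,\dots,j_{d-1}\}|-1\le d-1<d$, formula (\ref{rela22}) applies and a direct reading of the exponents there shows $\phi_{(i;I)}(X^{2^d-1})$ has the very same weight vector, call it $\omega$. Hence both monomials are top-weight elements of $P_k(\omega)$, and the assertion $x\simeq_{d-1}\phi_{(i;I)}(X^{2^d-1})$ is a congruence modulo $\mathcal A^+_{d-1}P_k+P_k^-(\omega)$ between monomials of the same weight $\omega$; in particular $\simeq_{d-1}$ is here the equivalence relation $\simeq_{(d-1,\omega)}$, so I may chain finitely many such congruences.

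The base case $d=1$ is immediate: then $x=X_{j_0}$, $i=j_0$, $I=\emptyset$, and by the definition of $f_{j_0}$ we have $\phi_{(j_0;\emptyset)}(X)=f_{j_0}(x_1x_2\cdots x_{k-1})=X_{j_0}$, so the two sides coincide.

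For the inductive step I peel off the lowest factor, writing $x=X_{j_0}\,(x'')^2$ with $x''=\prod_{0\le t<d-1}X_{j_{t+1}}^{2^t}$ the monomial attached to the tail $(j_1,\dots,j_{d-1})$. Put $i''=\min\{j_1,\dots,j_{d-1}\}$, let $I''$ be the complementary list, and set $\psi=\phi_{(i'';I'')}(X^{2^{d-1}-1})$; the inductive hypothesis gives $x''\simeq_{d-2}\psi$. Since $\omega_i(X_{j_0})=0$ for $i>1$ and $X_{j_0}\simeq_1 X_{j_0}$ trivially, Proposition \ref{mdcb4}(ii) (with $s=1$ and $r=d-2$) transports this congruence through multiplication by $X_{j_0}$ and squaring, yielding $x=X_{j_0}(x'')^2\simeq_{d-1}X_{j_0}\,\psi^2$. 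By transitivity it remains to prove the combine relation $X_{j_0}\,\psi^2\simeq_{d-1}\phi_{(i;I)}(X^{2^d-1})$. (For $d=2$ the peeling is vacuous, $\psi=X_{j_1}$ and $X_{j_0}\psi^2=x$, so the combine carries the whole content; indeed already there one checks $X_{j_0}X_{j_1}^2+\phi_{(i;I)}(X^3)=Sq^1(h)$ modulo $P_k^-(\omega)$ for a suitable $h$.)

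The combine relation is the heart of the matter and I expect it to be the main obstacle. Expanding both monomials via (\ref{rela22}), they agree on every variable $x_j$ with $j\notin\{j_0,\dots,j_{d-1}\}$, where both have exponent $2^d-1$, and differ only in the exponents carried by the indices in $\{i,i_1,\dots,i_r\}$. The strategy is to select an intermediate monomial $h$ and a Steenrod square $Sq^{2^p}$ with $0\le p\le d-2$ (so $2^p<2^{d-1}$) such that, by the Cartan formula, the two top-weight terms of $Sq^{2^p}(h)$ are exactly $X_{j_0}\psi^2$ and $\phi_{(i;I)}(X^{2^d-1})$, while every other Cartan summand has strictly smaller weight vector and so lies in $P_k^-(\omega)$; this gives $X_{j_0}\psi^2+\phi_{(i;I)}(X^{2^d-1})\in\mathcal A^+_{d-1}P_k+P_k^-(\omega)$ (a short sum of such squares may be needed for larger $d$, which is harmless since we may chain $\simeq_{(d-1,\omega)}$). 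The argument splits into two cases according to whether $j_0<i''$, so that $j_0$ becomes the overall minimum $i$ and enlarges the distinguished exponent pattern of $\phi$ at its smallest slot, or $j_0\ge i''$, so that $i=i''$ and $j_0$ merely augments a multiplicity inside the tail. The genuinely delicate points, which these two cases are designed to control, are: confirming that the chosen square produces precisely the wanted pair of leading terms; verifying that all remaining Cartan terms really drop in weight, hence belong to $P_k^-(\omega)$; and, above all, the combinatorial bookkeeping of how the minimum index $i$ and the list $I$ change in passing from the tail $(j_1,\dots,j_{d-1})$ to the full sequence $(j_0,\dots,j_{d-1})$.
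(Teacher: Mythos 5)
Your reduction step is sound: peeling off the bottom factor $X_{j_0}$, writing $x=X_{j_0}(x'')^2$, and invoking Proposition \ref{mdcb4}(ii) with $s=1$ does correctly transport the inductive congruence $x''\simeq_{d-2}\psi$ to $x\simeq_{d-1}X_{j_0}\psi^2$; your computation that both $x$ and $\phi_{(i;I)}(X^{2^d-1})$ have weight vector $\omega_v=k-1$ for $1\leqslant v\leqslant d$ is also correct. But the proof stops exactly where the lemma begins. The combine relation $X_{j_0}\psi^2\simeq_{d-1}\phi_{(i;I)}(X^{2^d-1})$ is never proved: you describe a strategy (choose an intermediate monomial $h$ and a square $Sq^{2^p}$ whose Cartan expansion has precisely the two desired top-weight terms), you concede that ``a short sum of such squares may be needed,'' and you then list as ``genuinely delicate points'' exactly the three items that would constitute the proof --- exhibiting the squares, verifying the leading terms, verifying that every other Cartan summand lies in $P_k^-(\omega)$, and tracking how $(i;I)$ changes from $(i'';I'')$. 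None of these is carried out in either of your two cases, and even in your $d=2$ illustration the monomial $h$ with $X_{j_0}X_{j_1}^2+\phi_{(i;I)}(X^3)=Sq^1(h)$ modulo $P_k^-(\omega)$ is asserted rather than exhibited. This is not a presentational gap; it is the entire mathematical content of the lemma.

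For comparison, in the paper this combine step is not a one-square trick but is delegated to two auxiliary lemmas with their own inductive proofs: Lemma \ref{bdad} ($X_i^aX_j^b\simeq_2 X_i^{2^d-2}X_j$ for $a+b=2^d-1$, proved by induction on $b$ with a case analysis on $\alpha_0(b),\alpha_1(b)$ using $Sq^1$ and $Sq^2$ applied to several intermediate monomials) and Lemma \ref{bdbss2} ($\phi_{(i;I)}(X^{2^{h}-1})X_u^{2^{d}-2^{h}} \simeq_{r+2}\phi_{(i;I\cup u)}(X^{2^d-1})$, proved by induction on $\ell(I)$), followed in the proof of Lemma \ref{hq0} itself by a further case analysis ($H=\emptyset$; $s<d-2$ versus $s=d-2$; $h<j_{d-1}$ versus $h>j_{d-1}$; $j_{d-1}\in H$ or not). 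Note moreover that the paper peels off the \emph{top} factor $X_{j_{d-1}}^{2^{d-1}}$, so its combine expression $\phi_{(h;H)}(X^{2^{d-1}-1})X_{j_{d-1}}^{2^{d-1}}$ has exactly the shape covered by Lemma \ref{bdbss2}; your bottom-first peeling produces $X_{j_0}\psi^2$, which has the opposite shape (a first power of one $X_{j_0}$ times the square of a $\phi$), so the paper's auxiliary lemmas do not apply directly, and you would need to formulate and prove a ``prepending'' analogue of Lemma \ref{bdbss2} from scratch, with its own case analysis on whether $j_0<i''$, $j_0=i''$, or $j_0\in I''$. Until such a lemma is stated and proved, the proposal is a plan for a proof, not a proof.
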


\begin{lem}\label{bdcbs}Let $n =\sum_{1 \leqslant i \leqslant k-1}(2^{d_i}-1)$ with $d_i$ positive integers such that $d_1 > d_2 > \ldots >d_{k-2} \geqslant d_{k-1}=d > 0, m =  \sum_{1 \leqslant i \leqslant k-2}(2^{d_i-d}-1),$  and let $y_0$ be   a monomial in $(P_k)_{m-1}$, $y_j = y_0x_j$ for $1\leqslant j \leqslant k$, and $(i;I) \in \mathcal N_k$.

\medskip
{\rm i)} If \ $r = \ell(I) < d$, then
$$ \phi_{(i;I)}(X^{2^{d}-1})y_i^{2^d} \equiv 
 \sum _{1\leqslant j < i} \phi_{(j;I)}(X^{2^{d}-1})y_j^{2^d} + \sum _{i < j \leqslant k}\phi_{(t_j;I^ {(j)})}(X^{2^{d}-1})y_j^{2^d},$$
where $t_j = \min(j,I)$,  and $I^ {(j)} = (I\cup j)\setminus \{t_j\}$ for $j > i$.

\medskip
{\rm ii)} If \ $r +1<d$, then
$$ \phi_{(i;I)}(X^{2^{d}-1})y_i^{2^d} \equiv 
 \sum _{1\leqslant j < i} \phi_{(j;I\cup i)}(X^{2^{d}-1})y_j^{2^d} + \sum _{i < j \leqslant k}\phi_{(i;I\cup j)}(X^{2^{d}-1})y_j^{2^d}.$$
\end{lem}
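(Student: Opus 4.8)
The plan is to deduce both congruences from a single symmetric hit relation produced by $Sq^1$, and then to transport it into degree $n$ by means of Lemmas \ref{hq0} and \ref{bdcb3}. Put $e=d-r$ and, for $1\le l\le k$, set
$$C=\prod_{1\le t\le r}X_{i_t}^{2^{r-t}},\qquad g_l=X_l^{2^{e}-1}y_l^{2^{e}}.$$
A direct comparison of exponents using (\ref{rela22}) gives the factorization $\phi_{(i;I)}(X^{2^{d}-1})y_i^{2^{d}}=C\,g_i^{2^{r}}$. The seed relation comes from the fact that $Sq^1$ is a derivation with $Sq^1(x_l^{2^{e}-1})=x_l^{2^{e}}$: one computes
$$\sum_{1\le l\le k}g_l=Sq^1\Big(\big(\prod_{1\le p\le k}x_p^{2^{e}-1}\big)\,y_0^{2^{e}}\Big),$$
where $y_0^{2^{e}}$ passes through $Sq^1$ trivially because $2^{e}\nmid 1$ (Proposition \ref{mdcb1}(ii)). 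Hence $g_i\equiv\sum_{l\ne i}g_l$, and this one relation will drive both parts.

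For part i) I would first multiply this relation by $C$. Since $\omega_v(C)=k-1$ for $v\le r$ and $\omega_v(C)=0$ for $v>r$, with $r\le d_{k-1}$, the degrees match the hypotheses of Lemma \ref{bdcb3}, and part (ii) of that lemma with $s=r$ yields
$$C\,g_i^{2^{r}}\equiv\sum_{l\ne i}C\,g_l^{2^{r}}.$$
It then remains to recognize each summand. Discarding the factor $y_l^{2^{d}}$, the monomial $C\,g_l^{2^{r}}$ equals $\big(\prod_{0\le t<d}X_{j_t}^{2^{t}}\big)y_l^{2^{d}}$, where $(j_t)_{0\le t<d}$ exhausts $\{i_1,\dots,i_r,l\}$ with minimum $t_l$; Lemma \ref{hq0} gives $\prod_{0\le t<d}X_{j_t}^{2^{t}}\simeq_{d-1}\phi_{(t_l;I^{(l)})}(X^{2^{d}-1})$. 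Reattaching $y_l^{2^{d}}$ (Proposition \ref{mdcb4}(i)) and applying Lemma \ref{bdcb3}(i) with $s=d$, whose complementary degree is exactly $m=\deg y_l$, upgrades this to an honest congruence. Summing over $l\ne i$ produces the three regimes on the right-hand side: $l<i$ gives $\phi_{(l;I)}$, $i<l\notin I$ gives $\phi_{(t_l;I^{(l)})}$, and $l\in I$ gives the collapsed terms where $I\cup l=I$.

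For part ii) the same scheme applies one binary level lower, which is where the stronger hypothesis $r+1<d$ (equivalently $e\ge 2$) is used. I would split $g_i^{2^{r}}=X_i^{2^{r}}(g_i')^{2^{r+1}}$ with $g_l'=X_l^{2^{e-1}-1}y_l^{2^{e-1}}$, so that $\phi_{(i;I)}(X^{2^{d}-1})y_i^{2^{d}}=D\,(g_i')^{2^{r+1}}$ for $D=C\,X_i^{2^{r}}$, where now $\omega_v(D)=k-1$ for $v\le r+1$. The seed relation at level $e-1$ reads $g_i'\equiv\sum_{l\ne i}g_l'$, Lemma \ref{bdcb3}(ii) with $s=r+1$ transports it to $D(g_i')^{2^{r+1}}\equiv\sum_{l\ne i}D(g_l')^{2^{r+1}}$, and Lemma \ref{hq0} identifies $D(g_l')^{2^{r+1}}$, whose special index set is $\{i,l\}\cup I$ with top index its minimum, with $\phi_{(l;I\cup i)}(X^{2^{d}-1})y_l^{2^{d}}$ when $l<i$ and with $\phi_{(i;I\cup l)}(X^{2^{d}-1})y_l^{2^{d}}$ when $l>i$; the upgrade to $\equiv$ is again Lemma \ref{bdcb3}(i).

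The routine parts are the exponent computation giving $\phi_{(i;I)}(X^{2^{d}-1})y_i^{2^{d}}=C\,g_i^{2^{r}}$ and the arithmetic verifying the degree hypotheses of Lemma \ref{bdcb3}. The step I expect to be the main obstacle is the identification: one must check that multiplying the symmetric relation by $C$ (resp. $D$) and feeding the result through Lemma \ref{hq0} reproduces exactly the reindexed generators $\phi_{(t_j;I^{(j)})}$ (resp. $\phi_{(j;I\cup i)}$ and $\phi_{(i;I\cup j)}$), tracking carefully the degenerate cases in which $l$ already lies in $I$ or falls between two consecutive entries of $I$, so that the index list collapses or its minimum migrates.
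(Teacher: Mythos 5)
Your proposal is correct and follows essentially the same route as the paper: your seed relation $\sum_{l}g_l=Sq^1\big(X_\emptyset^{2^e-1}y_0^{2^e}\big)$ is exactly the paper's Cartan-formula relation, your factorizations $C\,g_i^{2^r}$ and $D\,(g_i')^{2^{r+1}}$ coincide with the paper's $\phi_{(i_1;I\setminus i_1)}(X^{2^r-1})\big(X_i^{2^{d-r}-1}y_i^{2^{d-r}}\big)^{2^r}$ and $\phi_{(i;I)}(X^{2^{r+1}-1})\big(X_i^{2^{d-r-1}-1}y_i^{2^{d-r-1}}\big)^{2^{r+1}}$, and the transport step is the same application of Lemma \ref{bdcb3}. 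The only cosmetic difference is that you identify the resulting terms via Lemma \ref{hq0}, whereas the paper invokes Lemma \ref{bdbss2} directly; since Lemma \ref{hq0} is proved from Lemma \ref{bdbss2} independently of the present lemma, this is a harmless repackaging rather than a different argument.
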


Denote  $I_t =(t+1, t+2, \ldots, k)$ for $1 \leqslant t  \leqslant k$. Set  
$$Y_t = \sum_{u=t}^k\phi_{(t;I_t)}(X^{2^{d}-1})x_u^{2^{d}},\  d \geqslant k-t+1.$$ 

\begin{lem}\label{hq4} For $1 \leqslant t \leqslant k$, 
$$Y_t \simeq_{(k-t+1,\omega)} \sum_{(j;J)}\phi_{(j;J)}(X^{2^d-1})x_j^{2^d},$$ 
where the sum runs over all $(j;J)\in \mathcal N_k$ with $1 \leqslant j < t$, $J \subset I_{t-1}$, $J \ne I_{t-1}$ and $\omega=\omega(X_1^{2^d-1}x_1^{2^d})$. 
\end{lem}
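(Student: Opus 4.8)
My plan is to prove Lemma \ref{hq4} by descending induction on $t$, running from the base case $t=k$ down to $t=1$, with Lemma \ref{bdcbs}(i) as the engine and Proposition \ref{mdcb4} to keep the reductions at the required congruence level. Two structural facts would be recorded first. Since $d\geqslant k-t+1$, the length $r=\ell(I_t)=k-t$ satisfies $r<d$, so Lemma \ref{bdcbs}(i) is applicable to the diagonal term $\phi_{(t;I_t)}(X^{2^d-1})x_t^{2^d}$. Moreover, a direct computation from (\ref{rela22}) shows that each monomial $\phi_{(t;I_t)}(X^{2^d-1})x_u^{2^d}$ with $t\leqslant u\leqslant k$, as well as each $\phi_{(j;J)}(X^{2^d-1})x_j^{2^d}$ on the right-hand side (where $\ell(J)\leqslant k-t<d$), has weight vector exactly $\omega$, namely the value $k-1$ repeated $d$ times followed by a single $1$. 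Hence every term genuinely lives in $P_k(\omega)\setminus P_k^-(\omega)$, which both justifies working modulo $P_k^-(\omega)$ and warns that no simplification is available from the weight order alone.

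The heart of the argument is the behaviour of Lemma \ref{bdcbs}(i) on $\phi_{(t;I_t)}(X^{2^d-1})x_t^{2^d}$. For every $j$ with $t<j\leqslant k$ one has $j\in I_t$, so $t_j=\min(j,I_t)=t+1$ and $I_t^{(j)}=(I_t\cup j)\setminus\{t+1\}=I_{t+1}$; therefore the upper sum collapses precisely to $\phi_{(t+1;I_{t+1})}(X^{2^d-1})\sum_{j=t+1}^{k}x_j^{2^d}=Y_{t+1}$, while the lower sum yields $\sum_{1\leqslant j<t}\phi_{(j;I_t)}(X^{2^d-1})x_j^{2^d}$. Because $I_t\subsetneq I_{t-1}$, these lower-sum terms already have the shape demanded in the conclusion. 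Since the only Steenrod operation involved here is $Sq^{2^{k-t}}$ and $2^{k-t}<2^{k-t+1}$, I would check that this relation in fact holds up to $\simeq_{(k-t+1,\omega)}$ and not merely up to $\equiv$, which is exactly the precision the statement requires.

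The base case $t=k$ is immediate: there $I_k=\emptyset$, $Y_k=f_k(X^{2^d-1})x_k^{2^d}$ is a single monomial, the upper sum of Lemma \ref{bdcbs}(i) is empty, and one obtains $Y_k\simeq_{(1,\omega)}\sum_{1\leqslant j<k}f_j(X^{2^d-1})x_j^{2^d}$, matching the statement because $I_{k-1}=(k)$ forces $J=\emptyset$. For the inductive step I would substitute the hypothesis for $Y_{t+1}$ into the relation above and then confront the terms of $Y_t$ that are not the diagonal one, namely $\sum_{t<u\leqslant k}\phi_{(t;I_t)}(X^{2^d-1})x_u^{2^d}$.

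The step I expect to be the main obstacle is the bookkeeping of this off-diagonal remainder. After inserting the inductive description of $Y_{t+1}$, two families of terms survive that are not yet of the prescribed form: the pieces $\phi_{(t;I_t)}(X^{2^d-1})x_u^{2^d}$ with $u>t$ inherited from $Y_t$, and the pieces coming from the hypothesis whose $\phi$-index equals $t$ (violating $j<t$). My plan is to split each such monomial as a product of a factor with all exponents below $2^d$ and a $2^d$-th power, apply Proposition \ref{mdcb4} to peel off the $2^d$-th power factors, and so reduce each obstruction to a congruence in the few variables $x_t,\ldots,x_k$ that can be resolved by $\simeq_{(k-t+1,\omega)}$-relations of the kind already treated, straightening any residual full index set $J=I_t$ into proper subsets of $I_{t-1}$ by a further application of Lemma \ref{bdcbs}. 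Verifying that these remainders reorganize exactly into the missing terms $\phi_{(j;J)}(X^{2^d-1})x_j^{2^d}$ with $J\subsetneq I_{t-1}$, and that every Steenrod square used throughout has degree below $2^{k-t+1}$ so that all cancellations legitimately take place in $\mathcal A^+_{k-t+1}P_k+P_k^-(\omega)$ rather than only in $\mathcal A^+P_k$, is where the genuine work of the proof lies.
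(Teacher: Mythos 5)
Your strategy is viable, and it is genuinely different from the paper's proof. The paper never applies Lemma \ref{bdcbs} to $Y_t$ at all: it factors $Y_t=Z^{2^d-1}Y_{(t,k,d)}$ with $Z=x_1\cdots x_{t-1}$, proves by induction on the number of variables (Lemmas \ref{bdbss} and \ref{bdbss0}) that $Y_{(t,k,d)}$ equals an explicit sum $\sum_{i,J}Sq^{2^i}(y_{(J,t,k,d)})$ modulo the controlled errors $R_{(t,k,d)}$ and lower-weight terms, and then lets the Cartan formula act: in $Z^{2^d-1}Sq^{2^i}(y_{(J,t,k,d)})$ the squares overflow onto the factor $Z^{2^d-1}$, and that overflow is precisely $\sum_{j<t}\phi_{(j;J)}(X^{2^d-1})x_j^{2^d}$, all remaining terms lying in $\mathcal A^+_{k-t+1}P_k+P_k^-(\omega)$. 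Your descending induction on $t$ replaces this hit-expression construction by iterated Singer-type relations, which is more combinatorial and recursive; what it costs is that Lemma \ref{bdcbs} \emph{as stated} is unusable for you, since it is only a congruence modulo $\mathcal A^+P_k$: its proof routes through Lemma \ref{bdcb3}, hence through Singer's criterion, which destroys all information about which $Sq^j$ occur. Your remark that ``only $Sq^{2^{k-t}}$ is involved'' is not the right accounting either. The correct way to fill this is to re-prove Lemma \ref{bdcbs}(i) \emph{and} (ii) in the special case $y_0=1$ with Proposition \ref{mdcb4} in place of Lemma \ref{bdcb3}: the relation (\ref{ctpbs}) lies in $\mathcal A^+_1P_k$, Lemma \ref{bdbss2} contributes level $\ell+2$, and Proposition \ref{mdcb4} then gives (i) at level $\ell(I)+1$ and (ii) at level $\ell(I)+2$ with respect to $\omega$; for $I=I_t$ (resp. $J\subsetneq I_t$) this is $\leqslant k-t+1$, and the hypotheses of Proposition \ref{mdcb4} hold exactly because $d\geqslant k-t+1$.

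The deferred ``bookkeeping'' is where your sketch is weakest (peeling $2^d$-th powers and reducing to the variables $x_t,\dots,x_k$ is not the mechanism), but it does close, and cleanly, by the tool you half-name: apply the upgraded Lemma \ref{bdcbs}(ii) to every straggler $\phi_{(t;J)}(X^{2^d-1})x_t^{2^d}$, $J\subsetneq I_t$, produced by the inductive hypothesis for $Y_{t+1}$. Its lower sum is $\sum_{j<t}\phi_{(j;J\cup t)}(X^{2^d-1})x_j^{2^d}$, which supplies exactly the missing terms of the conclusion with $t\in J'\subsetneq I_{t-1}$; its upper sum is $\sum_{j>t}\phi_{(t;J\cup j)}(X^{2^d-1})x_j^{2^d}$, and summing these over all proper $J\subsetneq I_t$ collapses, identically modulo $2$, to $\sum_{j>t}\phi_{(t;I_t)}(X^{2^d-1})x_j^{2^d}$: for fixed $j>t$ the two preimages $J\in\{K,K\setminus j\}$ of any $K\ne I_t$ with $j\in K$ pair off, and only $K=I_t$ survives. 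This is precisely your off-diagonal remainder, so the two cancel as polynomials and $Y_t\simeq_{(k-t+1,\omega)}\sum_{j<t}\sum_{J\subset I_{t-1},\,J\ne I_{t-1}}\phi_{(j;J)}(X^{2^d-1})x_j^{2^d}$ follows, all congruences used being at levels $\leqslant k-t+1$. So your plan is sound; what must be added to make it a proof are (a) the re-proof of Lemma \ref{bdcbs}(i),(ii) at the $\simeq$ level, and (b) this cancellation argument.
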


We assume that all elements of $B_{k-1}(n)$ are monomials. Denote  $\mathcal B = B_{k-1}(n)$. We set
\begin{align*}
\mathcal C &=\{z \in \mathcal B : \nu_1(z) > 2^{k-1}-1\},\\
\mathcal D &=\{z \in \mathcal B : \nu_1(z) = 2^{k-1}-1, \nu_2(z) > 2^{k-1}-1 \},\\
\mathcal E &=\{z \in \mathcal B : \nu_1(z) = \nu_2(z) = 2^{k-1}-1 \}.
\end{align*}

Since $\omega_k(z) \geqslant k-3$ for all $z \in \mathcal B$, we have $\mathcal B = \mathcal C\cup \mathcal D \cup \mathcal E$.  If $d= d_{k-1}> k-1$, then $\mathcal D = \mathcal E = \emptyset.$  If $d_{k-2} > d_{k-1} = k-1$, then $\mathcal E = \emptyset.$ 

We set 
$\bar {\mathcal B} = \{\bar z \in P_{k-1} : X^{2^d-1}\bar z^{2^d} \in \mathcal B\}.$
If either $d \geqslant k$ or $I\ne I_1$, then $\phi_{(i;I)}(z) = \phi_{(i;I)}(X^{2^d-1})f_i(\bar z)^{2^d}$. If $d = d_{k-1} = k-1$, then using the relation (\ref{rela2}), we have
\begin{equation}\label{ctth}\phi_{(1;I_1)}(z) = \begin{cases}\phi_{(2;I_2)}(X^{2^d-1})f_1(\bar z)^{2^d}, &\text{ if }  z \in \mathcal C,\\
\phi_{(3;I_3)}(X^{2^d-1})f_2(\bar z)^{2^d}, &\text{ if }  z \in \mathcal D,\\
\phi_{(4;I_4)}(X^{2^d-1})f_3(\bar z)^{2^d}, &\text{ if }  z \in \mathcal E.
\end{cases}\end{equation}

For any $(i;I) \in \mathcal N_k$, we define the homomorphism $p_{(i;I)}: P_k \to P_{k-1}$ of algebras by substituting
$$p_{(i;I)}(x_j) =\begin{cases} x_j, &\text{ if } 1 \leqslant j < i,\\
\sum_{s\in I}x_{s-1}, &\text{ if }  j = i,\\  
x_{j-1},&\text{ if } i< j \leqslant k.
\end{cases}$$
Then $p_{(i;I)}$ is a homomorphism of $\mathcal A$-modules.  In particular, for $I =\emptyset$, we have $p_{(i;\emptyset)}(x_i)= 0$. 

\begin{lem}\label{bdc1} Let $z \in \mathcal B$ and $(i;I), (j;J) \in \mathcal N_k$ with $\ell(J) \leqslant \ell (I)$.

\medskip
{\rm i)} If either $d \geqslant k$ or $d = k -1$ and $I\ne I_1$, then
$$p_{(j;J)}(\phi_{(i;I)}(z)) \equiv \begin{cases}z, &\text{if } (j;J) = (i;I),\\
0,  &\text{if } (j;J) \ne (i;I).\end{cases}$$

{\rm ii)} If $z \in \mathcal C$ and $d = k -1$,  then
$$p_{(i;I)}(\phi_{(1;I_1)}(z)) \equiv \begin{cases}z, &\text{if } (i;I) = (1;I_1), \\
0 \ \text{\rm mod} \langle \mathcal D \cup \mathcal E \rangle, &\text{if } (i;I) = (2;I_2),\\
0,  &\text{otherwise } .
\end{cases}$$

{\rm iii)} If $z \in \mathcal D$ and $d = k -1$,  then
$$p_{(i;I)}(\phi_{(1;I_1)}(z)) \equiv \begin{cases}z, &\text{if } (i;I) = (1;I_1), (1;I_2), (2;I_2),\\
0 \ \text{\rm mod} \langle \mathcal E \rangle, &\text{if } (i;I) = (3;I_3),\\
0,  &\text{otherwise } .\end{cases}$$

{\rm iv)} If $z \in \mathcal E$ and $d = k -1$,  then
$$p_{(i;I)}(\phi_{(1;I_1)}(z)) \equiv \begin{cases}z &\text{if } I_3 \subset I,\\
0,  &\text{otherwise}.\end{cases}$$
\end{lem}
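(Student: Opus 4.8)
The plan is to reduce every assertion to an explicit monomial computation and then discard the unwanted summands with Singer's criterion. Two structural facts drive the whole argument. First, each $p_{(j;J)}$ is an $\mathcal A$-homomorphism, so it respects the relation $\equiv$ and descends to $QP_{k-1}$; hence it suffices to compute $p_{(j;J)}(\phi_{(i;I)}(z))$ as a polynomial and then delete its hit monomials. Second, since $p_{(i;I)}$ and $f_i$ are algebra maps and $p_{(i;I)}(f_i(x_m))=x_m$ for every generator $x_m$ of $P_{k-1}$, one has the identity $p_{(i;I)}\circ f_i=\mathrm{id}_{P_{k-1}}$, while for $j\neq i$ the composite $p_{(j;J)}\circ f_i$ is a controlled algebra endomorphism carrying the single substitution $x_j\mapsto\sum_{s\in J}x_{s-1}$. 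The test for hitness will always be the weight vector: by Lemma~\ref{bdbs1}, $[x]\neq 0$ forces $\omega_\ell(x)=k-1$ for all $\ell\leq d$, so as soon as $\omega_\ell(x)<k-1$ for a single $\ell\leq d$ the monomial $x$ is hit by Theorem~\ref{dlsig}.

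For part~(i) the hypothesis ($d\geq k$, or $d=k-1$ with $I\neq I_1$) gives $r=\ell(I)<d$, so $z=X^{2^d-1}\bar z^{2^d}$ is $1$-compatible with $(i;I)$ and relation~(\ref{rela22}) applies:
\[
\phi_{(i;I)}(z)=x_i^{2^r-1}\prod_{t=1}^r x_{i_t}^{2^d-2^{r-t}-1}\,X_{i,i_1,\ldots,i_r}^{2^d-1}\,f_i(\bar z)^{2^d}.
\]
Applying the algebra map $p_{(j;J)}$ and treating the diagonal case $(j;J)=(i;I)$ first: the last factor becomes $\bar z^{2^d}$ by the identity above, the background factor relabels to $X_{\{i_1-1,\ldots,i_r-1\}}^{2^d-1}$, and $x_i^{2^r-1}$ becomes $\bigl(\sum_t x_{i_t-1}\bigr)^{2^r-1}=\prod_{p=0}^{r-1}\sum_t x_{i_t-1}^{2^p}$. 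In a typical expanded monomial the exponent of $x_{i_t-1}$ is $2^d-2^{r-t}-1+\sum_{p\in S_t}2^p$ for a partition $\bigcup_t S_t=\{0,\ldots,r-1\}$, and the total low degree of these equals $r(2^d-1)$, as for $z$. A short binary-digit check shows that all low $d$ bits of every such exponent equal $1$ only for the partition $S_t=\{r-t\}$, which reproduces exactly $z$; every other partition leaves some exponent different from $2^d-1$ with a vanishing bit below position $d$, so $\omega_\ell<k-1$ for some $\ell\leq d$ and that monomial is hit. This gives $p_{(i;I)}(\phi_{(i;I)}(z))\equiv z$. For $(j;J)\neq(i;I)$ with $\ell(J)\leq\ell(I)$ the same expansion shows that the substitution $x_j\mapsto\sum_{s\in J}x_{s-1}$, having only $\ell(J)$ summands, can never restore the full weight $k-1$ at every position $\ell\leq d$, so every monomial that appears is hit and $p_{(j;J)}(\phi_{(i;I)}(z))\equiv 0$.

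Parts~(ii)--(iv) treat the degenerate case $d=k-1$ for $\phi_{(1;I_1)}$, where $r=d$ and~(\ref{rela22}) fails. Here I start from the explicit form~(\ref{ctth}) of $\phi_{(1;I_1)}(z)$, which depends on whether $z\in\mathcal C,\mathcal D,\mathcal E$, together with relation~(\ref{rela2}) (the compatibility index $u$ being dictated by which leading variables of $\bar z$ vanish). The computation of $p_{(i;I)}$ then proceeds exactly as in part~(i): expand into monomials and delete the hit ones by the weight test, which pins down when the diagonal term $z$ survives and reproduces the listed cases (in~(iv) the surviving condition $I_3\subset I$ emerges from the three vanishing exponents forced by $z\in\mathcal E$). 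The new feature is bookkeeping: beyond $z$, the expansion can leave monomials that are themselves generators in $\mathcal D\cup\mathcal E$ (resp. $\mathcal E$), which is exactly why the vanishing in~(ii) and~(iii) is asserted only modulo $\langle\mathcal D\cup\mathcal E\rangle$ (resp. $\langle\mathcal E\rangle$).

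I expect the main obstacle to be the off-diagonal and mod-$\langle\cdot\rangle$ claims rather than the diagonal one. One must check carefully that, after applying $p_{(j;J)}$ and expanding, every surviving monomial either equals the asserted generator or drops its weight vector below that of the minimal spike. Controlling the extra bits produced by $x_j\mapsto\sum_{s\in J}x_{s-1}$ under the constraint $\ell(J)\leq\ell(I)$, and cleanly isolating the residual generators of $\mathcal D$ and $\mathcal E$ in parts~(ii)--(iv), is the delicate part of the argument.
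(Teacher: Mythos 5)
Your part (i) is, up to packaging, the paper's own argument: the paper isolates the binary-digit analysis you describe into Lemma \ref{bddl} (applied to the factor $\phi_{(i;I)}(X^{2^d-1})$) and then multiplies by $f_i(\bar z)^{2^d}$ via Lemma \ref{bdcb3}, whereas you expand the whole product at once; in both versions the off-diagonal terms die because some variable retains an exponent of the form $2^r-1$ or $2^r-2^{r-t}-1$, forcing $\omega_{r+1}$ (resp. $\omega_{r-t+1}$) below $k-1$, and under $\ell(J)\leqslant\ell(I)$ a proper containment $(i;I)\subsetneq(j;J)$ is impossible, so only the diagonal pair survives. That part of your proposal is sound in substance, even though your ``only $\ell(J)$ summands'' justification should be replaced by the positional argument just described.

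The genuine gap is in parts (ii)--(iv), and it is not mere bookkeeping: what you defer as ``the delicate part'' is precisely the content of those statements. After rewriting $\phi_{(1;I_1)}(z)$ by (\ref{ctth}) as $\phi_{(i_0;I_{i_0})}(X^{2^d-1})f_{i_0-1}(\bar z)^{2^d}$ (with $i_0=2,3,4$ for $z\in\mathcal C,\mathcal D,\mathcal E$), the survival criterion is no longer equality of index pairs but the containment $(i_0;I_{i_0})\subset(i;I)$ from Lemma \ref{bddl}; this is exactly why in (iii) the value $z$ appears for the three distinct pairs $(1;I_1),(1;I_2),(2;I_2)$ and in (iv) for every $I\supset I_3$. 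Your part-(i) heuristic (``off-diagonal always vanishes'') cannot produce this case structure, and your proposal never formulates the containment version. Moreover, on the surviving pairs one must still evaluate $p_{(i;I)}(f_{i_0-1}(\bar z))$: it equals $\bar z$ only because $\nu_1(\bar z)=0$ (resp. $\nu_1(\bar z)=\nu_2(\bar z)=0$) for $z\in\mathcal D$ (resp. $\mathcal E$), while on the exceptional pair $(i;I)=(i_0;I_{i_0})$ one needs the composite identity $p_{(i+1;I_{i+1})}\circ f_{k,i}=f_{k-1,i}\circ p_{(i;I_{i}^{*})}$ (the paper's Lemma \ref{bdtn}) to identify the residual as $X^{2^d-1}f_{k-1,i_0-1}\big(p_{(i_0-1;I_{i_0-1}^{*})}(\bar z)\big)^{2^d}$. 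Your characterization of these residuals as ``monomials that are themselves generators in $\mathcal D\cup\mathcal E$'' is wrong: they are monomials $X^{2^{k-1}-1}w^{2^{k-1}}$ with $w$ free of $x_1$ (resp. of $x_1,x_2$), not admissible generators, and the clauses ``$0 \bmod \langle\mathcal D\cup\mathcal E\rangle$'', ``$0 \bmod \langle\mathcal E\rangle$'' require a separate argument that the classes of such monomials lie in the span of $[\mathcal D\cup\mathcal E]$, resp. $[\mathcal E]$ --- an argument your plan does not contain. Finally, part (iv) needs one further step absent from your proposal: for $(i;I)=(4;I_4)$ the residual is shown to be actually hit, since $\nu_1(\bar z)=\nu_2(\bar z)=0$ forces every term of $f_{k-1,3}\big(p_{(3;I_{3}^{*})}(\bar z)\big)$ to involve at most $k-4$ variables, so its weight drops below that of the minimal spike and Theorem \ref{dlsig} applies; this is why (iv), unlike (ii) and (iii), carries no ``mod'' clause. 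As written, your proposal proves (i) but only restates (ii)--(iv).
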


The above lemmas will be proved in Subsections \ref{sub32}-\ref{sub34}. In particular, for $d >k$, the first part of Lemma \ref{bdc1} has been proved in \cite{su}.

\begin{lem}[Nam \cite{na}]\label{bdmu} 
Let $x$ be a monomial in $P_k$. Then $x \equiv \sum\bar x$, where  $\bar x$ are  monomials with $\nu_1(\bar x) = 2^t-1$ and $t = \alpha(\nu_1(x))$. 
\end{lem}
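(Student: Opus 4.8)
The idea is to isolate $x_1$ and lower its exponent by a descending induction powered by the Cartan formula. Write $x=x_1^{b}M$ with $b=\nu_1(x)$ and $M$ a monomial in $x_2,\ldots ,x_k$, and put $t=\alpha(b)$. Since $Sq^{l}(x_1^{c})=\binom{c}{l}x_1^{c+l}$, the Cartan formula together with Proposition~\ref{mdcb1} gives, for every splitting $b=c+j$ with $j\geqslant 1$,
\[
Sq^{j}(x_1^{c}M)=\sum_{l=0}^{j}\binom{c}{l}x_1^{c+l}Sq^{j-l}(M),
\]
whose top term $l=j$ is $\binom{c}{j}x_1^{b}M$; hence, whenever $\binom{c}{j}$ is odd, one obtains the reduction rule
\[
x_1^{b}M\equiv\sum_{0\leqslant l<j}\binom{c}{l}\,x_1^{c+l}\,Sq^{j-l}(M),
\]
in which each $Sq^{j-l}(M)$ is again a sum of monomials in $x_2,\ldots ,x_k$ and every occurring exponent $c+l$ is $<b$. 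Throughout I will use the elementary fact (Lucas) that a summand survives only when $l$ is bitwise contained in $c$, and that in that case $\alpha(c+l)\leqslant\alpha(c)$.

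Rather than proving the statement directly, I would establish the more flexible claim that, for every $t'$ with $\alpha(b)\leqslant t'$ and $2^{t'}-1\leqslant b$,
\[
x_1^{b}M\equiv\sum x_1^{2^{t'}-1}M',\qquad M'\ \text{monomials in }x_2,\ldots ,x_k;
\]
the lemma is the case $t'=\alpha(b)$. This is proved by strong induction on $b$, the case $b=2^{t'}-1$ being trivial. For $b>2^{t'}-1$ it suffices to produce one splitting $b=c+j$ with $\binom{c}{j}$ odd and with $c\geqslant 2^{t'}-1$ and $\alpha(c)\leqslant t'$: the reduction rule then gives $x_1^{b}M\equiv\sum\binom{c}{l}x_1^{c+l}Sq^{j-l}(M)$, where every exponent satisfies $2^{t'}-1\leqslant c\leqslant c+l<b$ and $\alpha(c+l)\leqslant\alpha(c)\leqslant t'$, so the inductive hypothesis applies to each monomial $x_1^{c+l}M''$ with the \emph{same} target $t'$, and summing closes the induction.

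Producing such a $c$ is the heart of the argument and, I expect, the only real obstacle; it is a finite computation with binary expansions handled through Lucas' theorem. Let $p$ be the position of the top bit of $b$, so $p\geqslant t'$. If $p=t'$, i.e.\ $2^{t'}\leqslant b\leqslant 2^{t'+1}-2$, I take $c=2^{t'}-1$ and $j=b-(2^{t'}-1)$; then $j<2^{t'}$, so $\binom{2^{t'}-1}{j}$ is odd, while $\alpha(c)=t'$ and $c=2^{t'}-1$ by construction. If $p\geqslant t'+1$ then $b$ is not a full block $2^{q}-1$ (such a block has $\alpha=q>t'$), so it has an internal zero; letting $g$ be the lowest such zero and $h$ the lowest set bit above $g$, I shift a single bit across the gap by taking $j=2^{g}$ when $h=g+1$ and $j=2^{\,h-1}$ when $h>g+1$. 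In either case $c=b-j$ has $\binom{c}{j}$ odd, keeps $\alpha(c)=\alpha(b)\leqslant t'$ (one bit is merely moved down one place), and satisfies $c\geqslant 2^{p-1}\geqslant 2^{t'}>2^{t'}-1$.

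Granting these three sub-cases, the induction closes at once, and specializing to $t'=\alpha(\nu_1(x))$ expresses $x$ modulo $\mathcal A^{+}P_k$ as a sum of monomials whose $x_1$-exponent is exactly $2^{t}-1$, as required.
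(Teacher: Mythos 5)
Your proof is correct, and it supplies something the paper itself does not: the paper states this lemma with attribution to Nam~\cite{na} and gives no argument at all, so there is no internal proof to compare against. Your route is self-contained and checks out in every detail. The reduction rule is sound: since $Sq^{l}(x_1^{c})=\binom{c}{l}x_1^{c+l}$ and $Sq^{j}(x_1^{c}M)\in\mathcal A^{+}P_k$ for $j\geqslant 1$, an odd $\binom{c}{j}$ lets you trade $x_1^{b}M$ for terms $x_1^{c+l}M''$ with $c+l<b$ and $M''$ still a monomial in $x_2,\ldots,x_k$. The one structural subtlety — that the naive induction on the statement itself does not close, because a surviving term can have $\alpha(c+l)<\alpha(b)$ — is exactly what your strengthened claim with a fixed target $t'$ repairs; this strengthening is the right move and is where a careless attempt would fail. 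The supporting facts are all verified: by Lucas, $\binom{c}{l}$ odd forces $l$ to be bitwise contained in $c$, and then $\alpha(c+l)\leqslant\alpha(c)$ (e.g.\ write $c=l+d$ with disjoint bits, so $\alpha(c+l)=\alpha(d+2l)\leqslant\alpha(d)+\alpha(l)=\alpha(c)$); and your three sub-cases for producing the splitting all work — in case $p=t'$ the hypothesis $\alpha(b)\leqslant t'$ excludes $b=2^{t'+1}-1$, so $j<2^{t'}$ and $\binom{2^{t'}-1}{j}$ is odd, while in the two bit-shifting sub-cases one checks directly that $\binom{c}{j}$ is odd, $\alpha(c)=\alpha(b)$, and $c\geqslant 2^{p-1}\geqslant 2^{t'}$. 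Specializing to $t'=\alpha(\nu_1(x))$ gives the lemma. Whether this coincides with Nam's original argument cannot be judged from the paper, but as a replacement for the external citation your proof is complete.
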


Now, based on Proposition \ref{mdc1} and Lemma \ref{bdc1}, we prove Theorem \ref{dl1}.

\begin{proof}[Proof of Theorem \ref{dl1}] Denote by $|S|$ the cardinal of a set $S$. It is easy to check that $|\mathcal N_k| = 2^k-1$. Let $(i;I), (j;J) \in \mathcal N_k$ with $\ell(J) \leqslant \ell(I)$ and $y, z \in B_{k-1}(n).$ Suppose that $\phi_{(j;J)}(y) = \phi_{(i;I)}(z)$. Using Lemma \ref{bdc1}, we have $y \equiv p_{(j;J)}(\phi_{(i;I)}(z))\not \equiv 0$. From this, Lemma \ref{bdc1} and the relation (\ref{ctth}), we get $y = z$ and $(i;I) = (j;J)$. Hence,
$$\phi_{(i;I)}(B_{k-1}(n))\cap \phi_{(j;J)}(B_{k-1}(n)) = \emptyset.$$ 
for $(i;I) \ne (j;J)$ and $|\phi_{(i;I)}(B_{k-1}(n))| = |B_{k-1}(n)|$.  From Proposition \ref{mdc1}, we have
\begin{align*}\dim (QP_k)_n &= |B_k(n)| = \sum_{(i;I)\in \mathcal N_k}|B_{k-1}(n)|\\
&= |\mathcal N_k|\dim (QP_{k-1})_n\\
&= (2^k-1)\dim (QP_{k-1})_n.
\end{align*}
Set $h_u = 2^{d_1-u}+\ldots + 2^{d_{k-2}-u} + 2^{d_{k-1}-u}-k+1,$ for $0\leqslant u \leqslant d$. We have $h_0 = n,\ h_d = m,\ 2h_{u}+k-1 = h_{u-1}$ and $\mu(2h_{u}+k-1) = k-1$ for $1\leqslant u \leqslant d$. By Theorem \ref{dlmd2}, Kameko's homomorphism $(\widetilde{Sq}^0_*)_{h_u}: (QP_{k-1})_{h_{u-1}} \to (QP_{k-1})_{h_{u}}$ is an isomorphism of the $\mathbb F_2$-vector spaces. So, the iterated homomorphism
$$(\widetilde{Sq}^0_*)^d = (\widetilde{Sq}^0_*)_{h_d}\ldots (\widetilde{Sq}^0_*)_{h_1}: (QP_{k-1})_{n} \to (QP_{k-1})_{m}$$
is an isomorphism of the $\mathbb F_2$-vector spaces. Hence, 
$\dim (QP_{k-1})_n = \dim (QP_{k-1})_m.$ The theorem is proved.  
\end{proof}

In the remaining part of the section, we prove Proposition \ref{mdc1} and Lemmas \ref{hq0}-\ref{bdc1}.

\subsection{Proof of Proposition \ref{mdc1}}\label{sub31}\
\setcounter{exa}{3}

\medskip
Denote by $\mathcal P_k(n)$ the subspace of $(P_k)_n$ spanned by all elements of the set $B_k(n)$.
Let $x$ be a monomial of degree $n$ in $P_k$ and $[x] \ne 0$. By Lemma \ref{bdbs1}, we have $\omega_i(x) = k-1$ for $1 \leqslant i \leqslant d_{k-1}= d$. Hence, we obtain
$x = \Big(\prod_{0 \leqslant t < d}X_{j_t}^{2^t}\Big)\bar y^{2^d},$
for suitable monomial $\bar y \in (P_k)_m$, with $m =  \sum_{1 \leqslant i \leqslant k-2}(2^{d_i-d}-1)$.  

According to Lemmas \ref{hq0} and \ref{bdcb3}, there is $(i;I) \in \mathcal N_k$ such that
\begin{equation*}x = \Big(\prod_{0 \leqslant t < d}X_{j_t}^{2^t}\Big)\bar y^{2^d}\equiv \phi_{(i;I)}(X^{2^d-1})\bar y^{2^d},
\end{equation*}
where $r = \ell(I) < d$. 

Now, we prove $[x]\in [\mathcal P_k(n)]$. The proof is divided into many cases.
 
\begin{khi}\label{th01}
 If  $\bar y = f_i(y)$ with $y \in (P_{k-1})_m$, then $[x]\in [\mathcal P_k(n)]$.
\end{khi} 
Since the iterated homomorphism
$(\widetilde{Sq}^0_*)^d : (QP_{k-1})_{n} \to (QP_{k-1})_{m}$
is an isomorphism of the $\mathbb F_2$-vector spaces,
$$\bar{\mathcal B} = (\widetilde{Sq}^0_*)^d(B_{k-1}(n)) = \{\bar z \in (P_{k-1})_m : X^{2^d-1}\bar z^{2^d} \in B_{k-1}(n)\}$$
is a minimal set of $\mathcal A$-generators for $P_{k-1}$ in degree $m$.

 Since $y \in (P_{k-1})_m$,  we have $y \equiv \bar z_1+ \bar z_2 + \ldots + \bar z_s$ with $\bar z_t$  monomials in $\bar{\mathcal B}$. Using Lemma \ref{bdcb3}, we have
$$x\equiv  \phi_{(i;I)}(X^{2^d-1})f_i(y)^{2^d}\equiv \sum_{1\leqslant t \leqslant s} \phi_{(i;I)}(X^{2^d-1})f_i(\bar z_t)^{2^d}.$$

 Since $\phi_{(i;I)}(X^{2^d-1})f_i(\bar z_t)^{2^d}= \phi_{(i;I)}(X^{2^d-1}\bar z_t^{2^d})$ and $X^{2^d-1}\bar z_t^{2^d} \in B_{k-1}(n)$, we get $[x]\in [\mathcal P_k(n)]$. 

\begin{khi}\label{th02} If $d \geqslant k$,  then $[x]\in [\mathcal P_k(n)]$ for all $\bar y \in (P_{k-1})_{m}$.
\end{khi}
We have $\bar y = x_i^af_i(y)$ with $a= \nu_i(\bar y)$ and $y = p_{(i;\emptyset)}(\bar y/x_i^a) \in (P_{k-1})_{m-a}$.  The proof proceeds by double induction on $(i,a)$. If $a=0$, then by Case \ref{th01}, $[x]\in [\mathcal P_k(n)]$ for any $i$. Suppose that $a > 0$ and this case is true for $a-1$ and any $i$.

 If  $i = 1$ and either $I \ne I_1$ or $d>k$, then $d-r-1\geqslant 1$. Applying Lemma \ref{bdcbs}(ii) with $y_0 = x_1^{a-1}f_1(y)$, we get
$$
x \equiv \sum_{2\leqslant j \leqslant k}\phi_{(1;I\cup j)}(X^{2^{d}-1})(x_1^{a-1}f_1(x_{j-1}y))^{2^{d}}.
$$
From this and the inductive hypothesis, we obtain $[x] \in [\mathcal P_k(n)]$. 

If  $I = I_1$ and $d = k$, then $r = d-1$. Using Lemma \ref{bdcbs}(i) with $y_0 = x_1^{a-1}f_1(y)$ and Lemmas \ref{hq4}, \ref{bdcb3}, we get
\begin{align*}
x &\equiv \sum_{j =2}^k\phi_{(2;I_2)}(X^{2^{k}-1})(x_jy_0)^{2^{k}}= Y_2y_0^{2^k}
\equiv \sum\phi_{(1;J)}(X^{2^k-1})(x_1^af_1(y))^{2^k},
\end{align*}
where the last sum runs over all $J \ne I_1$. Hence, $[x] \in [\mathcal P_k(n)]$.

Suppose  $i > 1$ and assume this case has been proved  in  the  subcases $1,2, \ldots  , i-1$. Then,  $r + 1 \leqslant k-i + 1< k \leqslant d$. Applying Lemma \ref{bdcbs}(ii) with $y_0 = x_i^{a-1}f_i(y)$, we obtain
\begin{align*} 
x \equiv \sum _{1\leqslant j < i} \phi_{(j;I\cup i)}(X^{2^{d}-1})y_j^{2^d} + \sum _{i < j \leqslant k}\phi_{(i;I\cup j)}(X^{2^{d}-1})(x_i^{a-1}f_i(x_{j-1}y)^{2^d}.
\end{align*}
Using the inductive hypothesis, we have $\phi_{(j;I\cup i)}(X^{2^{d}-1})y_j^{2^d} \in [\mathcal P_k(n)]$ for $j<i$, and 
$\phi_{(i;I\cup j)}(X^{2^{d}-1})(x_i^{a-1}f_i(x_{j-1}y)^{2^d}\in [\mathcal P_k(n)] $ for $j>i$. Hence, $[x] \in [\mathcal P_k(n)]$.

\medskip
So, the proposition is proved for $d \geqslant k$.
In the remaining part of the proof, we assume that $d = k - 1$. 

\begin{khi}\label{th03} If\ $I=I_i$, $\bar y = f_{i-1}(y)$ with $y \in (P_{k-1})_{m}$, $\nu_j(y)=0$ for $j \leqslant i-2$, and $2\leqslant i \leqslant4$, then  $[x]\in [\mathcal P_k(n)]$.
\end{khi}
 Since $y \in (P_{k-1})_m$,  we have $y \equiv \bar z_1+ \bar z_2 + \ldots + \bar z_s$ with $\bar z_t$  monomials in $\bar{\mathcal B}$ and $\nu_j(\bar z_t)=0$ for $j\leqslant i-2$. Using Lemma \ref{bdcb3}, we get
$$x\equiv  \phi_{(i;I_i)}(X^{2^d-1})f_{i-1}(y)^{2^d}\equiv \sum_{1\leqslant t \leqslant s} \phi_{(i;I_i)}(X^{2^d-1})f_{i-1}(\bar z_t)^{2^d}.$$

 If $\nu_{i-1}(\bar z_t) > 0$, then $\phi_{(i;I_i)}(X^{2^d-1})f_{i-1}(\bar z_t)^{2^d}= \phi_{(1;I_1)}(X^{2^d-1}\bar z_t^{2^d})$. 
If $\nu_{i-1}(\bar z_t) = 0$, then $f_{i-1}(\bar z_t) = f_i(\bar z_t)$ and $\phi_{(i;I_i)}(X^{2^d-1})f_{i-1}(\bar z_t)^{2^d}= \phi_{(i;I_i)}(X^{2^d-1}\bar z_t^{2^d})$. Hence, $[x]\in [\mathcal P_k(n)]$. 

\begin{khi}\label{th1}  
If $\bar y = x_i^{2^s}f_i(y)$ with $y \in (P_{k-1})_{m-2^s}$, $\nu_j(y) = 0$ for $j<i$, $r=\ell(I) < k-i-1$ and $i\leqslant 2$, then $[x] \in [\mathcal P_k(n)]$. 
\end{khi}

According to Lemmas \ref{bdmu} and \ref{bdcb3}, $x_i^{2^s}f_i(y)^{2^{d}}\equiv \sum x_if_i(z)$, where the sum runs over some monomials $z \in P_{k-1}$ with $\nu_j(z)=0$, $j<i$. So, by using Lemma \ref{bdcb3}, we can assume $s = 0$. 

Let $i=1$. Since $r+1 < k-1 =d$, using Lemma \ref{bdcbs}(ii) with $y_0 = f_1(y)$, we have
$$ x \equiv \sum_{u=2}^k \phi_{(1;I\cup u)}(X^{2^{d}-1})(f_1(x_{u-1}y))^{2^{d}}.$$
Hence,  by Case \ref{th01},  $[x] \in [\mathcal P_k(n)]$.

Let $i=2$. Since $r+1 < k-2 < d$, using Lemma \ref{bdcbs}(ii) with $y_0 = f_2(y)$, one gets
$$ x \equiv \phi_{(1;I\cup 2)}(X^{2^{d}-1})(x_{1}f_1(y))^{2^{d}} + \sum_{u=3}^k \phi_{(2;I\cup u)}(X^{2^{d}-1})(f_2(x_{u-1}y))^{2^{d}}.$$
Since $\nu_1(y)=0$, $f_2(y) = f_1(y)$ and $\ell(I\cup 2) < k-2$, from this equality, Case \ref{th01} and Case \ref{th1} with $i=1$, we obtain  $[x] \in [\mathcal P_k(n)]$.

\begin{exa}\label{vdu1} Let $k=4, d_1 =5, d_2 =d_3=3$. Then, we have $n =45, m =3$ and  $\omega = (3,3,3,1,1)$ is the minimal sequence. Let $B_3(n)$ be the set of all the admissible monomials of degree $n$ in $P_3$. Then $\bar B_3(m)$ is the set of all the admissible monomials of degree $m$ in $P_3$. Let $I=\emptyset, y_0 =x_4^2= f_1(x_3^2)= f_2(x_3^2)  \in P_4$. Denote $A =\phi_{(1;\emptyset)}(X^{7})(x_1y_0)^{8}$,  $B =\phi_{(2;\emptyset)}(X^{7})(x_2y_0)^{8}$ and $z = \phi_{(1;2)}(X^{7})(x_1y_0)^{8}$.  From the proof of this case, we obtain
\begin{align*}
A&\equiv \phi_{(1;2)}(X^{7})(x_2y_0)^{8} + \phi_{(1;3)}(X^{7})(x_3y_0)^{8} +\phi_{(1;4)}(X^{7})(x_4y_0)^{8},\\
B &\equiv z + \phi_{(2;3)}(X^{7})(x_3y_0)^{8} +\phi_{(2;4)}(X^{7})(x_4y_0)^{8},\\
z &\equiv \phi_{(1;2)}(X^{7})(x_2y_0)^{8} + \phi_{(1;(2,3))}(X^{7})(x_3y_0)^{8} +\phi_{(1;(2,4))}(X^{7})(x_4y_0)^{8}.
\end{align*} 
Since $x_2y_0 = x_2x_4^2, \ x_3y_0 = x_3x_4^2, \ x_4y_0 = x_4^3$ are the admissible monomials, we get $[A], [B], [x] \in  [\mathcal P_4(n)]$. Furthermore,
\begin{align*}  
A&\equiv    x_1x_2^{7}x_3^{7}x_4^{30} + x_1x_2^{7}x_3^{14}x_4^{23} + x_1x_2^{14}x_3^{7}x_4^{23},\\
B&\equiv  x_1x_2^{14}x_3^{7}x_4^{23} + x_1^{3}x_2^{5}x_3^{7}x_4^{30} + x_1^{3}x_2^{5}x_3^{14}x_4^{23} + x_1^{7}x_2x_3^{7}x_4^{30} + x_1^{7}x_2x_3^{14}x_4^{23}.
\end{align*} 
All monomials in the right hand sides of the last equalities are admissible.
\end{exa}

\begin{khi}\label{th3} 
If $\bar y = x_3^{2^s}f_3(y)$, with $y \in (P_{k-1})_{m-2^s}$, $\nu_1(y) = \nu_2(y) = 0$ and $i=3$, then $[x] \in [\mathcal P_k(n)]$. 
\end{khi}

According to Lemmas \ref{bdmu} and \ref{bdcb3}, we need only to prove this case for $s = 0$. Note that, since $\nu_1(y) = \nu_2(y) = 0$, we have $x_3f_3(y) = f_2(x_2y)$. 
If $I = I_3$, then by Case \ref{th03} with $i=3$, $[x] \in [\mathcal P_k(n)]$. Suppose $I\ne I_3$.

If $d_{k-2}> d_{k-1}$, then $\omega_k(x) = \omega_1(y) +1 = k-2$. Hence,  $\alpha_0(\nu_j(y)) = 1$ for $j = 3,\ldots, k-1$. Applying Lemma \ref{bdcbs}(i) with $y_0 = f_3(y)$ and Theorem \ref{dlsig}, we get
$$x \equiv \phi_{(1;I)}(X^{2^d-1})(x_1f_1(y))^{2^d} +  \phi_{(2;I)}(X^{2^d-1})(x_2f_2(y))^{2^d}.$$
Hence,  by  Case \ref{th1}, $[x] \in [\mathcal P_k(n)]$.

Suppose that $d_{k-2}= d_{k-1}$. If $\ell(I) < k-4$, then using Lemma \ref{bdcbs}(ii) with $y_0 = f_3(y) = f_1(y) = f_2(y)$, one gets
\begin{align*} x \equiv\phi_{(1;I\cup 3)}(X^{2^{d}-1})(x_1f_1(y))^{2^{d}} &+ \phi_{(2;I\cup 3)}(X^{2^{d}-1})(x_{2}f_2(y))^{2^{d}}\\  
& + \sum_{v=4}^k \phi_{(3;I\cup v)}(X^{2^{d}-1})(f_3(x_{v-1}y))^{2^{d}}.
\end{align*}
From this equality and Cases \ref{th01}, \ref{th1}, we obtain  $[x] \in [\mathcal P_k(n)]$.

If $\ell(I) = k-4$, then $I = (4,\ldots , \hat u , \dots , k)$ with $4 \leqslant u \leqslant k$. Since  $\omega_k(x) = \omega_1(y) +1 = k-3$, we have $\omega_1(y) = k-4$. Hence, there exists uniquely $3 \leqslant t < k$ such that $\alpha_0(\nu_{t}(y)) = 0$.
 
If $t=u-1$, then using Lemma \ref{bdcbs}(i) with $y_0 = f_3(y)$ and Theorem \ref{dlsig}, we obtain
\begin{align*} x \equiv \phi_{(1;I)}(X^{2^{d}-1})(x_{1}f_1(y))^{2^{d}} &+ \phi_{(2;I)}(X^{2^{d}-1})(x_{2}f_2(y))^{2^{d}}\\ 
&+ \phi_{(4;I_4)}(X^{2^{d}-1})(f_3(x_{t}y))^{2^{d}} .
\end{align*}
By Cases \ref{th03} and \ref{th1}, we get $[x] \in [\mathcal P_k(n)]$.

If $u=4 < t+1$, then using Lemma \ref{bdcbs}(i) with $y_0 = f_3(y)$ and Theorem \ref{dlsig}, we get
\begin{align*}x \equiv \phi_{(1;I)}(X^{2^{d}-1})(x_{1}f_1(y))^{2^{d}} &+ \phi_{(2;I)}(X^{2^{d}-1})(x_{2}f_2(y))^{2^{d}}\\ 
&+ \phi_{(5;I_5)}(X^{2^{d}-1})(x_5f_3(x_{t}y/x_4))^{2^{d}}.
\end{align*} 
Applying Lemma \ref{bdcbs}(i) with $y_0 = f_3(x_ty/x_4)$ and Theorem \ref{dlsig}, we have 
  $$ \phi_{(5;I_5)}(X^{2^{d}-1})(x_5f_3(x_{t}y/x_4))^{2^{d}}\equiv \sum_{1\leqslant v \leqslant 3} \phi_{(v;I_5)}(X^{2^{d}-1})(x_{v}f_3(x_ty/x_4))^{2^{d}}.$$
Since $\ell(I_5) = k-5<k-4$, $\phi_{(3;I_5)}(X^{2^{d}-1})(x_{3}f_3(x_ty/x_4))^{2^{d}} \in [\mathcal P_k(n)]$. So, combining Case \ref{th1}, the above equalities and the fact that $x_{v}f_3(x_ty/x_4)= x_{v}f_v(x_ty/x_4)$ for $v=1, 2$, one gets $[x] \in [\mathcal P_k(n)]$.

Suppose that  $4 < u \ne t+1$. Using Lemma \ref{bdcbs}(i) with $y_0 = f_3(y)$ and Theorem \ref{dlsig}, we obtain
\begin{align*}  x \equiv \phi_{(1;I)}(X^{2^{d}-1})(x_{1}f_1(y))^{2^{d}} &+ \phi_{(2;I)}(X^{2^{d}-1})(x_{2}f_2(y))^{2^{d}}\\ 
 &+ \phi_{(4;I\setminus 4)}(X^{2^{d}-1})(x_4f_3(x_{t}y/x_3))^{2^{d}}.
\end{align*}
Applying Lemma \ref{bdcbs}(i) with $y_0 = f_3(x_ty/x_3)$ and Theorem \ref{dlsig}, we have 
  $$ \phi_{(4;I\setminus 4)}(X^{2^{d}-1})(x_4f_3(x_{t}y/x_3))^{2^{d}}\equiv \sum_{1\leqslant v \leqslant 3} \phi_{(v;I\setminus 4)}(X^{2^{d}-1})(x_{v}f_3(x_ty/x_3))^{2^{d}}.$$
Since $\ell(I\setminus 4) = k-5<k-4$, $\phi_{(3;I\setminus 4)}(X^{2^{d}-1})(x_{3}f_3(x_ty/x_3))^{2^{d}}\in [\mathcal P_k(n)]$. So, from the above equalities, Case \ref{th1} and the fact that $x_{v}f_3(x_ty/x_3)= x_{v}f_v(x_ty/x_3)$, for $v=1,2$, we get $[x] \in [\mathcal P_k(n)]$.

\begin{exa}\label{vdu2} Let $k=4, n, m, B_3(n), A, B, y_0$ be as in Example \ref{vdu1}. Then,
\begin{align*}C =\phi_{(3;\emptyset)}(X^{15})(x_3y_0)^{8}\equiv A + B  +\phi_{(4;\emptyset)}(X^{7})(x_4y_0)^{8}.
\end{align*}
Since $\phi_{(4;\emptyset)}(X^{7})(x_4y_0)^{8} = \phi_{(1;I_1)}(X^{7}x_3^{24})$ and $X^{7}x_3^{24}= x_1^7x_2^7x_3^{31} \in B_3(n)$, one gets $[C] \in [\mathcal P_4(n)]$ and 
\begin{align*}C&\equiv x_1x_2^{7}x_3^{7}x_4^{30} + x_1x_2^{7}x_3^{14}x_4^{23} + x_1^{3}x_2^{5}x_3^{7}x_4^{30} + x_1^{3}x_2^{5}x_3^{14}x_4^{23}\\ &\quad+ x_1^{7}x_2x_3^{7}x_4^{30} + x_1^{7}x_2x_3^{14}x_4^{23} + x_1^{7}x_2^{7}x_3^{7}x_4^{24}.
\end{align*} 
\end{exa}
\begin{khi}\label{th4}   
If  $\nu_1(\bar y) = \nu_2(\bar y) = 0$ and $i=4$, then $[x] \in [\mathcal P_k(n)]$.  
\end{khi}

Since  $\nu_1(\bar y) = \nu_2(\bar y) = 0$, we have $\bar y =  x_3^{b}x_4^cf_4(y)$ for suitable $y \in (P_{k-1})_{m-b-c}$ with $\nu_j(y) =0, j \leqslant 3$, and $b =\nu_3(\bar y), c =\nu_4(\bar y)$. Using Lemmas \ref{bdmu} and \ref{bdcb3}, we assume that $b = 2^s-1$.  

We prove this case by induction on $c$. If $c = 0$, then by Case \ref{th01}, $[x] \in [\mathcal P_k(n)]$.  Suppose that $c>0$ and this case holds for $c-1$ and all $I \subset I_4$.

If $I \ne I_4$, then applying Lemma \ref{bdcbs}(ii) with  $y_0 = x_3^{b}x_4^{c-1}f_4(y)$, we have 
\begin{align*} x &\equiv \phi_{(1;I\cup 4)}(X^{2^{d}-1})(x_1f_1(x_2^{b}x_3^{c-1}y))^{2^{d}}+ \phi_{(2;I\cup 4)}(X^{2^{d}-1})(x_2f_2(x_2^{b}x_3^{c-1}y))^{2^{d}}\\ 
&+ \phi_{(3;I\cup 4)}(X^{2^{d}-1})(x_3^{2^s}f_3(x_3^{c-1}y))^{2^{d}} + \sum_{u=5}^k \phi_{(4;I\cup u)}(X^{2^{d}-1})(x_3^{b}x_4^{c-1}f_4(x_{u-1}y))^{2^{d}}.
\end{align*}

Combining this equality, Cases \ref{th1}, 
\ref{th3} and the inductive hypothesis gives  $[x] \in [\mathcal P_k(n)].$

If $I = I_4$, then applying Lemma \ref{bdcbs}(i) with  $y_0 = x_3^{b}x_4^{c-1}f_4(y)$ and using Cases \ref{th1}, \ref{th3}, we obtain 
\begin{align*} x &\equiv \phi_{(1;I_4)}(X^{2^{d}-1})(x_1f_1(x_2^{b}x_3^{c-1}y))^{2^{d}}+ \phi_{(2;I_4)}(X^{2^{d}-1})(x_2f_2(x_2^{b}x_3^{c-1}y))^{2^{d}}\\ 
&\quad \ + \phi_{(3;I_ 4)}(X^{2^{d}-1})(x_3^{2^s}f_3(x_3^{c-1}y))^{2^{d}}  + Y_5y_0^{2^{d}} \equiv Y_5y_0^{2^{d}}\ \text{mod}(\mathcal P_k(n)).
\end{align*}
By Lemmas \ref{hq4} and \ref{bdcb3}, 
\begin{align*} Y_5y_0^{2^{d}} \equiv  \sum\phi_{(j;J)}(X^{2^{d}-1})\big(x_jx_3^{b}x_4^{c-1}f_4(y)\big)^{2^{d}},
\end{align*}
where the sum runs over all $(j,J)$ with $1 \leqslant j <5, J\subset I_4$ and $J\ne I_4$. 
Since $J\ne I_4, [\phi_{(4;J)}(X^{2^{d}-1})\big(x_3^{b}x_4^{c}f_4(y)\big)^{2^{d}}] \in [\mathcal P_k(n)]$. By Case \ref{th1},
$$
[\phi_{(j;J)}(X^{2^{d}-1})\big(x_jx_3^{b}x_4^{c-1}f_4(y)\big)^{2^{d}} ] = [\phi_{(j;J)}(X^{2^{d}-1})\big(x_jf_j(x_2^{b}x_3^{c-1}y)\big)^{2^{d}}]  \in [\mathcal P_k(n)],$$
for $j=1,2$. By Case \ref{th3},
$$[\phi_{(3;J)}(X^{2^{d}-1})\big(x_3x_3^{b}x_4^{c-1}f_4(y)\big)^{2^{d}}] = [\phi_{(3;J)}(X^{2^{d}-1})\big(x_3^{2^s}f_3(x_3^{c-1}y)\big)^{2^{d}}] \in [\mathcal P_k(n)].$$
Hence, $[x] \in [\mathcal P_k(n)].$

\begin{exa}\label{vdu3} Let $k=4, n, m, B_3(n), C$ be as in Example \ref{vdu2}.  Let $I=\emptyset, y_0 =x_3x_4, \bar y = x_4y_0 = x_3x_4^2$.  From the proof of this case, we obtain
\begin{align*}
D=\phi_{(4;\emptyset)}(X^{7})(x_4y_0)^{8}\equiv x + y +\phi_{(3;\emptyset)}(X^{7})(x_3y_0)^{8},
\end{align*} 
where  $x =\phi_{(1;\emptyset)}(X^{7})(x_1y_0)^{8}$, $y =\phi_{(2;\emptyset)}(X^{7})(x_2y_0)^{8}$. Since $x_3y_0 = x_3^2x_4 \equiv x_3x_4^2$, $\phi_{(3;\emptyset)}(X^{7})(x_3y_0)^{8} \equiv C$. By Case \ref{th1}, $[x], [y] \in [\mathcal P_4(n)]$. Hence, $[D] = [C] + [x] + [y] \in [\mathcal P_4(n)]$.
By a computation analogous to the previous one, we obtain
\begin{align*}  
D &\equiv x_1x_2^{7}x_3^{7}x_4^{30} + x_1x_2^{7}x_3^{15}x_4^{22} + x_1^{3}x_2^{5}x_3^{7}x_4^{30} + x_1^{3}x_2^{5}x_3^{15}x_4^{22}\\ &\quad + x_1^{7}x_2x_3^{7}x_4^{30} + x_1^{7}x_2x_3^{15}x_4^{22} + x_1^{7}x_2^{7}x_3^{7}x_4^{24}. 
\end{align*} 
\end{exa}
\begin{khi}\label{th5} 
If $\nu_1(\bar y) = \nu_2(\bar y) = 0$ and $i=3$, then $[x] \in [\mathcal P(n)]$.
\end{khi}
 We have $\bar y = x_3^{b}f_3(y)$ for suitable $y \in (P_{k-1})_{m-b}$ with $\nu_1(y) = \nu_2(y) =0$, and $b = \nu_3(\bar y)$. We prove $[x] \in [\mathcal P(n)]$ by induction on $b$. 

If $b = 0$, then by Case \ref{th01}, $[x] \in [\mathcal P(n)].$ Suppose $b >0$ and this case holds for $b-1$. If $\alpha_0(b) =0$, then $\bar y = Sq^1(x_3^{b-1}f_3(y)) + x_3^{b-1}f_3(Sq^1(y)) \equiv x_3^{b-1}f_3(Sq^1(y))$. Hence, using Lemma \ref{bdcb3} and the inductive hypothesis, one gets  $[x] \in [\mathcal P(n)]$. Now, assume that $\alpha_0(b) = 1$.

Since $x_3^{b}f_3(y) = f_2(x_2^{b}y)$, if $I=I_3$, then  
by Case \ref{th03}, $[x] \in [\mathcal P(n)].$ 
If $\ell(I) <k-4$, then applying Lemma \ref{bdcbs}(ii) with  $y_0 = x_3^{b-1}f_3(y)$, we obtain  
\begin{align*}
x \equiv \phi_{(1;I\cup 3)}(X^{2^{d}-1})(x_1f_1(x_2^{b-1}y))^{2^{d}} &+ \phi_{(2;I\cup 3)}(X^{2^{d}-1})(x_2f_2(x_2^{b-1}y))^{2^{d}}\\ 
&+ \sum_{v=4}^k \phi_{(3;I\cup v)}(X^{2^{d}-1})(x_3^{b-1}f_3(x_{v-1}y))^{2^{d}}.
\end{align*}
Using Case \ref{th1} and the inductive hypothesis, one gets  $[x] \in [\mathcal P(n)].$

Suppose that $\ell(I) =k-4$. Then $I = (4, \ldots, \hat u, \ldots , k)$, with $4 \leqslant  u\leqslant k$. 

If $d_{k-2}> d_{k-1}$, then $\omega_k(x) = \omega_1(y) + \alpha_0(b) = k-2$. Hence,  $\alpha_0(\nu_j(y)) = 1$ for $j = 3,\ldots, k-1$. Applying Lemma \ref{bdcbs}(i) with $y_0 = x_3^{b-1}f_3(y)$ and Theorem \ref{dlsig}, we get
$$x \equiv \phi_{(1;I)}(X^{2^d-1})(x_1f_1(x_2^{b-1}y))^{2^d} +  \phi_{(2;I)}(X^{2^d-1})(x_2f_2(x_2^{b-1}y))^{2^d}.$$
Hence,  by  Case \ref{th1}, we obtain  $[x] \in [\mathcal P(n)]$.

Suppose  $d_{k-2}= d_{k-1}$. Since  $\omega_k(x) = \omega_1(y) +\alpha_0(b) = k-3$, we have $\omega_1(y) = k-4$. Hence, there exists uniquely $3 \leqslant t \leqslant k-1$ such that $\alpha_0(\nu_{t}(y)) = 0$.
 
If $t=u-1$, then using Lemma \ref{bdcbs}(i) with $y_0 = x_3^{b-1}f_3(y)$ and Theorem \ref{dlsig}, we have
\begin{align*} x \equiv \phi_{(1;I)}(X^{2^{d}-1})(x_{1}f_1(x_2^{b-1}y))^{2^{d}} &+ \phi_{(2;I)}(X^{2^{d}-1})(x_{2}f_2(x_2^{b-1}y))^{2^{d}} \\ 
&+ \phi_{(4;I_4)}(X^{2^{d}-1})(x_3^{b-1}f_3(x_{t}y))^{2^{d}} .
\end{align*}
From this equality, Case \ref{th1} and \ref{th4}, we get $[x] \in [\mathcal P(n)]$.

If $u=4 < t+1$, then using Lemma \ref{bdcbs}(i) with $y_0 = x_3^{b-1}f_3(y)$ and Theorem \ref{dlsig}, we obtain
\begin{align*}x \equiv \phi_{(1;I)}(X^{2^{d}-1})(x_{1}f_1(x_2^{b-1}y))^{2^{d}} &+ \phi_{(2;I)}(X^{2^{d}-1})(x_{2}f_2(x_2^{b-1}y))^{2^{d}} \\  
&+ \phi_{(5;I_5)}(X^{2^{d}-1})(x_3^{b-1}f_3(x_{t}y))^{2^{d}}.
\end{align*} 
Applying Lemma \ref{bdcbs}(i) with $y_0 = x_3^{b-1}f_3(x_ty/x_4)$ and Theorem \ref{dlsig}, we have 
  $$ \phi_{(5;I_5)}(X^{2^{d}-1})(x_3^{b-1}f_3(x_{t}y))^{2^{d}}\equiv \sum_{1\leqslant v \leqslant 3} \phi_{(v;I_5)}(X^{2^{d}-1})(x_{v}x_3^{b-1}f_3(x_ty/x_4))^{2^{d}}.$$
Since $\ell(I_5) = k-5<k-4$, $\phi_{(3;I_5)}(X^{2^{d}-1})(x_3^{b}f_3(x_ty/x_4))^{2^{d}}\in [\mathcal P(n)]$. Hence, combining the above equalities, Case \ref{th1} and the fact that $x_{v}x_3^{b-1}f_3(x_ty/x_4) = x_{v}f_v(x_2^{b-1}x_ty/x_4)$, for $v=1,2$, one gets $[x] \in [\mathcal P(n)]$.

Suppose that  $4 < u \ne t+1$. Using Lemma \ref{bdcbs}(i) with $y_0 = x_3^{b-1}f_3(y)$ and Theorem \ref{dlsig}, we obtain
\begin{align*} x  \equiv \phi_{(1;I)}(X^{2^{d}-1})(x_{1}f_1(x_2^{b-1}y))^{2^{d}} &+ \phi_{(2;I)}(X^{2^{d}-1})(x_{2}f_2(x_2^{b-1}y))^{2^{d}}\\ 
 &+ \phi_{(4;I\setminus 4)}(X^{2^{d}-1})(x_3^{b-1}f_3(x_{t}y))^{2^{d}}.
\end{align*}
From the above equalities, Cases \ref{th1} and \ref{th4}, we get $[x] \in [\mathcal P(n)]$.

\begin{exa}\label{vdu4} Let $k=4, n, m, B_3(n), D$ be as in Example  \ref{vdu3}.  Let $I=\emptyset, y_0 =x_3^2= x_3^2f_3(x_3^0)  \in P_4$. This case is entry with $b=3, u=4, t=3=u-1$. Then,
$E =\phi_{(3;\emptyset)}(X^{7})(x_3y_0)^{8}\equiv x + y + \phi_{(4;\emptyset)}(X^{7})(x_4y_0)^{8} ,
$
where  $x =\phi_{(1;\emptyset)}(X^{7})(x_1y_0)^{8}$, $y =\phi_{(2;\emptyset)}(X^{7})(x_2y_0)^{8}$. Since $x_4y_0 = x_3^2x_4 \equiv x_3x_4^2$, we have  $\phi_{(4;\emptyset)}(X^{7})(x_4y_0)^{8}\equiv D$. By Case \ref{th1}, $[x], [y] \in [\mathcal P_4(n)]$. Hence, $[E] = [D] + [x] + [y] \in [\mathcal P_4(n)]$.
By a computation analogous to the previous one, we obtain
\begin{align*} 
E&\equiv x_1x_2^{7}x_3^{7}x_4^{30} + x_1x_2^{7}x_3^{30}x_4^{7} + x_1^{3}x_2^{5}x_3^{7}x_4^{30} + x_1^{3}x_2^{5}x_3^{30}x_4^{7}\\ &\quad + x_1^{7}x_2x_3^{7}x_4^{30} + x_1^{7}x_2x_3^{30}x_4^{7} + x_1^{7}x_2^{7}x_3^{7}x_4^{24}. 
\end{align*} 
Here, the monomials in the right hand sides of the last relation are admissible.
\end{exa}

\begin{khi}\label{th6} 
If $\bar y = x_2^{2^s}f_2(y)$ for  $y \in (P_{k-1})_{m-2^s}$ with $\nu_1(y) = 0$ and $i=2$, then $[x] \in [\mathcal P(n)]$.
\end{khi}
It suffices to prove this case for $s = 0$. 
If $\ell(I) <k-3$, then by Case \ref{th1}, $[x] \in [\mathcal P(n)] $. Since $x_2f_2(y) = f_1(x_1y)$, if $I=I_2$, then by Case \ref{th03}, $[x] \in [\mathcal P(n)].$ 

Suppose $\ell(I) = k-3$. Then, $I = (3,\ldots, \hat u,\ldots , k)$ with $3 \leqslant  u\leqslant k$. 

If $u = 3$, then using Lemma \ref{bdcbs}(i) with $y_0 = f_2(y)$, we get
\begin{align*}x \equiv  \phi_{(1;I_3)}(X^{2^{d}-1})(x_1f_1(y))^{2^{d}} &+ \phi_{(3;I_3)}(X^{2^{d}-1})(f_2(x_2y))^{2^{d}}\\ 
&+ \sum_{v=4}^k\phi_{(4;I_4)}(X^{2^{d}-1})(f_2(x_{v-1}y))^{2^{d}}.\end{align*}

If $u > 3$, then using Lemma \ref{bdcbs}(i) with $y_0 = f_2(y)$, we get
\begin{align*}x \equiv  \phi_{(1;I)}(X^{2^{d}-1})(x_1f_1(y))^{2^{d}} 
&+ \sum_{3\leqslant v \leqslant k}\phi_{(3;I\cup v\setminus 3)}(X^{2^{d}-1})(f_2(x_{v-1}y))^{2^{d}}.
\end{align*}

Since $\nu_1(f_2(x_{v-1}y)) = \nu_2(f_2(x_{v-1}y)) = 0$, for $3\leqslant v \leqslant k$,
combining the above equalities, Cases \ref{th03}, \ref{th1}, \ref{th4} and \ref{th5}, we obtain  $[x] \in [\mathcal P(n)].$ 

\begin{exa}\label{vdu4b} Let $k=4, n, m, B_3(n), E$ be as in Example  \ref{vdu3}.  Let $I=(3), y_0 =x_3^2= x_2^2f_3(x_3^0)  \in P_4$. This case is entry with $u=4, t=3=u-1$. Then, $F =\phi_{(2;3)}(X^{7})(x_2y_0)^{8}\equiv x + E + y,$
where  $x =\phi_{(1;3)}(X^{7})(x_1y_0)^{8}$, $y =\phi_{(3;4)}(X^{7})(x_4y_0)^{8}$. Since $x_4y_0 = x_3^2x_4 \equiv x_3x_4^2$, we have  $y\equiv \phi_{(1;I_1)}(X^{7}(x_2x_3^2)^{8}) = x_1^7x_2^7x_3^9x_4^{22}$. By Case \ref{th1}, $[x] \in [\mathcal P_4(n)]$. Hence, $[F] = [x] + [E] + [y] \in [\mathcal P_4(n)]$.
By a computation analogous to the previous one, we obtain
\begin{align*} 
F&\equiv x_1x_2^{7}x_3^{7}x_4^{30} + x_1^{3}x_2^{5}x_3^{7}x_4^{30} + x_1^{3}x_2^{5}x_3^{30}x_4^{7} + x_1^{3}x_2^{7}x_3^{13}x_4^{22} + x_1^{3}x_2^{13}x_3^{22}x_4^{7}\\ &\quad + x_1^{7}x_2x_3^{7}x_4^{30} + x_1^{7}x_2x_3^{30}x_4^{7} + x_1^{7}x_2^{7}x_3^{7}x_4^{24} + x_1^{7}x_2^{7}x_3^{9}x_4^{22}. 
\end{align*} 
Here, the monomials in the right hand sides of the last relation are admissible.
\end{exa} 
\begin{khi}\label{th7} 
If $\nu_1(\bar y) =0$ and $i=3$, then $[x] \in [\mathcal P(n)]$.
\end{khi}
We have $\bar y = x_2^{a}x_3^bf_3(y)$ for suitable $y \in (P_{k-1})_{m-a-b}$ with $\nu_1(y) = \nu_2(y) = 0$, $a = \nu_2(\bar y), b = \nu_3(\bar y)$. Using Lemmas \ref{bdmu} and \ref{bdcb3}, we can assume that $a = 2^s-1$. We prove this case  by induction on $b$. If $b= 0$, then by Case \ref{th01}, this case is true. Suppose that $b>0$ and this case is true for $b-1$. 

If $I \ne I_3$, then using Lemma \ref{bdcbs}(ii) with $y_0 =x_2^{a}x_3^{b-1}f_3(y)$, we get
\begin{align*}x \equiv \phi_{(1;I\cup 3)}(X^{2^{d}-1})(x_1f_1(x_1^{a}x_2^{b-1}&y))^{2^{d}} + \phi_{(2;I\cup 3)}(X^{2^{d}-1})(x_2^{2^s}f_2(x_2^{b-1}y))^{2^{d}}\\
 &+ \sum_{v=4}^k\phi_{(3;I\cup v)}(X^{2^{d}-1})(x_2^{a}x_3^{b-1}f_3(x_{v-1}y))^{2^{d}}.
\end{align*}
From this equality,  Case \ref{th1}  
and  the inductive hypothesis we obtain $[x] \in [\mathcal P(n)].$ 

If $I = I_3$, then  using Lemma \ref{bdcbs}(i) with $y_0 =x_2^{a}x_3^{b-1}f_3(y)$, and Case \ref{th1},  we have
\begin{align*}x &\equiv \phi_{(1;I_3)}(X^{2^{d}-1})(x_1f_1(x_1^{a}x_2^{b-1}y))^{2^{d}}\\
&\quad + \phi_{(2;I_3)}(X^{2^{d}-1})(x_2^{2^s}f_2(x_2^{b-1}y))^{2^{d}} + Y_4y_0^{2^{d}}\equiv Y_4y_0^{2^{d}}\ \text{mod}(\mathcal P(n)).
\end{align*}

Using Lemmas \ref{hq4} and \ref{bdcb3}, we get
$$Y_4y_0^{2^{d}} \equiv \sum_{(j;J)}\phi_{(j;J)}(X^{2^{d}-1})(x_jx_2^{a}x_3^{b-1}f_3(y))^{2^{d}},$$
where the last sum runs over some $(j;J)$ with $ 1 \leqslant j  < 4$, $J \subset I_3$ and $J \ne I_3$.
Since $J \ne I_3$, $\phi_{(3;J)}(X^{2^{d}-1})(x_2^{a}x_3^{b}f_3(y))^{2^{d}}\in [\mathcal P(n)]$. By Cases \ref{th1} and \ref{th6},
\begin{align*}
\phi_{(1;J)}(X^{2^{d}-1})(x_1x_2^{a}x_3^{b-1}f_3(y))^{2^{d}} = \phi_{(1;J)}(X^{2^{d}-1})(x_1f_1(x_1^{a}x_2^{b-1}y))^{2^{d}}\in [\mathcal P(n)],\\
\phi_{(2;J)}(X^{2^{d}-1})(x_2x_2^{a}x_3^{b-1}f_3(y))^{2^{d}} = \phi_{(2;J)}(X^{2^{d}-1})(x_2^{2^s}f_2(x_2^{b-1}y))^{2^{d}}\in [\mathcal P(n)].
\end{align*}
 This case is proved.
\begin{exa}\label{vdu5} Let $k=4, n, m, B_3(n), F$ be as in Example \ref{vdu4b}.  
 Let $I=I_3=(4), y_0 =x_2x_4, \bar y = x_3y_0$. Since $I= I_3$,
$G =\phi_{(3;4)}(X^{7})(x_3y_0)^{8}\equiv Y + Z + Y_4y_0^8 ,
$
where  $Y =\phi_{(1;4)}(X^{7})(x_1y_0)^{8}$, $Z =\phi_{(2;4)}(X^{7})(x_2^2x_4)^{8}$. By Case \ref{th1}, $[Y], [Z] \in [\mathcal P_4(n)]$. Since $Y_4y_0^8 = \phi_{(4;\emptyset)}(X^{7})(x_2x_4^2)^{8}$, we have
$Y_4y_0^8 \equiv y + z +w$, where 
$$y = \phi_{(1;\emptyset)}(X^{7})(x_1x_2x_4)^{8},\ z = \phi_{(2;\emptyset)}(X^{7})(x_2^2x_4)^{8},\ w = \phi_{(3;\emptyset)}(X^{7})(x_2x_3x_4)^{8}.$$
By Case \ref{th1}, $[y], [z] \in [\mathcal P_4(n)]$. We have
\begin{align*}w \equiv \phi_{(1;3)}(X^{7})(x_1x_2x_4)^{8} + \phi_{(2;3)}(X^{7})(x_2^2x_4)^{8} + \phi_{(3;4)}(X^{7})(x_2x_4^2)^{8}.
\end{align*}
Using Cases \ref{th01} and \ref{th1}, one gets $[w] \in [\mathcal P_4(n)]$. Hence, $[x]= [Y] + [Z] + [y] + [z] + [w] \in [\mathcal P_4(n)]$.
By a computation analogous to the previous one, we obtain
\begin{align*} 
G&= x_1^7x_2^{15}x_3^9x_4^{14}\equiv x_1^{3}x_2^{15}x_3^{13}x_4^{14} + x_1^{7}x_2^{7}x_3^{9}x_4^{22} + x_1^{3}x_2^{7}x_3^{13}x_4^{22}\\ &\quad  + x_1^{3}x_2^{15}x_3^{5}x_4^{22}+ x_1^{7}x_2^{15}x_3x_4^{22} + x_1^{7}x_2^{7}x_3x_4^{30} + x_1^{3}x_2^{7}x_3^{5}x_4^{30}. 
\end{align*} 
All monomials in the right hand sides of the last relation are admissible.

Now, let $I=\emptyset, y_1 =x_2x_3, \bar y = x_3y_1$. Then,
$H =\phi_{(3;\emptyset)}(X^{7})(x_3y_1)^{8}\equiv a +F + G,
$
where  $a =\phi_{(1;3)}(X^{7})(x_1y_1)^{8}$. By Case \ref{th1}, $[a] \in [\mathcal P_4(n)]$. Hence, $[H]= [a] + [F] + [G] \in [\mathcal P_4(n)]$.
By a computation analogous to the previous one, we obtain
\begin{align*} 
H &= x_1x_2^{7}x_3^{7}x_4^{30} + x_1x_2^{15}x_3^{22}x_4^{7} + x_1^{3}x_2^{5}x_3^{7}x_4^{30} + x_1^{3}x_2^{5}x_3^{30}x_4^{7} \\ &\quad + x_1^{3}x_2^{7}x_3^{5}x_4^{30} + x_1^{3}x_2^{15}x_3^{5}x_4^{22} + x_1^{7}x_2x_3^{7}x_4^{30} + x_1^{7}x_2x_3^{30}x_4^{7}\\ &\quad  + x_1^{7}x_2^{7}x_3x_4^{30} + x_1^{7}x_2^{7}x_3^{7}x_4^{24} + x_1^{7}x_2^{15}x_3x_4^{22}. 
\end{align*} 
All monomials in the right hand sides of the last relation are admissible.
\end{exa}

\begin{khi}\label{th8} 
If $\nu_1(\bar y) = 0$ and $i=2$, then $[x] \in [\mathcal P(n)]$. 
\end{khi}
We have $\bar y = x_2^{a}f_2(y)$ for suitable $y \in (P_{k-1})_{m-a}$ with $\nu_1(y)  =0$ and $a=\nu_1(\bar y)$. We prove $[x] \in [\mathcal P(n)]$ by induction on $a$. If $a = 0$, then by Case \ref{th01}, $[x] \in [\mathcal P(n)]$. Suppose that $ a>0$ and this case holds for $a-1$.

 Since $x_2^{a}f_2(y) = f_1(x_1^{a}y)$, if $I = I_2$, then by Case \ref{th03}, $[x] \in [\mathcal P(n)]$. 
If $\ell(I) < k-3$, then applying Lemma \ref{bdcbs}(ii) with $y_0 =x_2^{a-1}f_2(y)$, we have
$$x \equiv  \phi_{(1;I\cup 2)}(X^{2^{d}-1})(x_1f_1(x_1^{a-1}y))^{2^{d}} +\sum_{v=3}^k \phi_{(2;I\cup v)}(X^{2^{d}-1})(x_2^{a-1}f_2(x_{v-1}y))^{2^{d}}.$$
Using Case \ref{th1} and the inductive hypothesis, we get $[x] \in [\mathcal P(n)]$.

Suppose that $\ell(I)=k-3$. Then $I = (3, \ldots,\hat u, \ldots , k)$ with $3 \leqslant u \leqslant k$. 

If $u=3$, then $I= I_3$. Applying Lemma \ref{bdcbs}(i) with $y_0 =x_2^{a-1}f_2(y)$, we obtain
\begin{align*} x \equiv  \phi_{(1;I_3)}(X^{2^{d}-1})(x_1f_1(x_1^{a-1}y))^{2^{d}} &+\phi_{(3;I_3)}(X^{2^{d}-1})(x_2^{a-1}f_2(x_2y))^{2^{d}}\\
& + \sum_{v=4}^k \phi_{(4;I_4)}(X^{2^{d}-1})(x_{v}x_2^{a-1}f_2(y))^{2^{d}}.
\end{align*}

Applying Lemma \ref{hq4} and Lemma \ref{bdcb3}, one gets
\begin{align*} \sum_{v=4}^k \phi_{(4;I_4)}(X^{2^{d}-1})(x_{v}x_2^{a-1}f_2(y))^{2^{d}} &= Y_4y_0^{2^d} \\ 
&\equiv  \sum_{(j;J)} \phi_{(j;J)}(X^{2^{d}-1})(x_jx_2^{a-1}f_2(y))^{2^{d}},\end{align*}
where the last sum runs over some $(j;J)$ with $1 \leqslant j < 4$, $J \subset I_3$ and $J \ne I_3$. 

Since $\ell(J) < \ell(I_3) = k-3$, from the above equalities and Cases  \ref{th1}, \ref{th7}, we have $[x] \in [\mathcal P(n)]$. 

If $u > 3$, applying Lemma \ref{bdcbs}(i) with $y_0 =x_2^{a-1}f_2(y)$, we get
$$ x \equiv  \phi_{(1;I)}(X^{2^{d}-1})(x_1f_1(x_1^{a-1}y))^{2^{d}} 
 + \sum_{v=3}^k \phi_{(3;I\cup v)}(X^{2^{d}-1})(x_2^{a-1}f_2(x_{v-1}y))^{2^{d}}.
$$
From the last equality, Cases \ref{th1} and \ref{th7}, we have $[x] \in [\mathcal P(n)]$. 

\begin{exa}\label{vdu6} Let $k=4, n, m, B_3(n), H$ be as in Example \ref{vdu5}.  
 Let $I=I_3=(3), y_0 =x_2^2, \bar y = x_2y_0= x_2^3$. Then,
$K :=\phi_{(2;3)}(X^{7})(x_2y_0)^{8}\equiv a + H + b ,
$
where  $a =\phi_{(1;3)}(X^{7})(x_1y_0)^{8}$, $b =\phi_{(3;4)}(X^{7})(x_4y_0)^{8}$. By Cases \ref{th01} and \ref{th1}, $[a], [b] \in [\mathcal P_4(n)]$.  Hence, $[K]= [a] + [H] + [b] \in [\mathcal P_4(n)]$.
By a simple computation, we have
\begin{align*} 
K&= x_1x_2^{7}x_3^{7}x_4^{30} + x_1^{3}x_2^{5}x_3^{7}x_4^{30} + x_1^{3}x_2^{5}x_3^{30}x_4^{7} + x_1^{3}x_2^{7}x_3^{5}x_4^{30} + x_1^{3}x_2^{29}x_3^{6}x_4^{7}\\ &\quad + x_1^{7}x_2x_3^{7}x_4^{30} + x_1^{7}x_2x_3^{30}x_4^{7} + x_1^{7}x_2^{7}x_3x_4^{30} + x_1^{7}x_2^{7}x_3^{7}x_4^{24}. 
\end{align*} 
All monomials in the right hand sides of the last relation are admissible.

\end{exa}

\begin{khi}\label{th9}
If $\bar y = x_1^{2^s}f_1(y)$ with $y \in (P_{k-1})_{m-2^s}$ and $i=1$, then $[x] \in [\mathcal P(n)]$.
\end{khi}
By Lemmas \ref{bdmu} and \ref{bdcb3}, we need only to prove this case for $s=0$. Note that $r=\ell (I) < d= k-1$. If $r < k -2$, then by Case \ref{th1}, $[x] \in [\mathcal P(n)]$. If $r =k-2$, then $I = (2,\ldots,\hat u, \ldots ,k)$ with $2 \leqslant u \leqslant k$. 

If $u = 2$, then $I = I_2$. Applying  Lemma \ref{bdcbs}(i) with $y_0 =f_1(y)$,  one gets   
$$ x \equiv  \phi_{(2;I_2)}(X^{2^{d}-1})(f_1(x_1y))^{2^{d}} 
 + \sum_{v=3}^k \phi_{(3;I_3)}(X^{2^{d}-1})(f_1(x_{v-1}y))^{2^{d}}.
$$
By Case  \ref{th03}, $\phi_{(2;I_2)}(X^{2^{d}-1})(f_1(x_1y))^{2^{d}} \in [\mathcal P(n)]$. Since $\nu_1(f_1(x_1y))^{2^{d}} =0$, by Case \ref{th7}, $\phi_{(3;I_3)}(X^{2^{d}-1})(f_1(x_{v-1}y))^{2^{d}} \in [\mathcal P(n)]$. Hence, $x \in [\mathcal P(n)]$.

If $u > 2$, then applying  Lemma \ref{bdcbs}(i) with $y_0 =f_1(y)$,  we obtain
$$ x \equiv \sum_{2 \leqslant v \leqslant k} \phi_{(2;I\setminus 2)}(X^{2^{d}-1})(f_1(x_{v-1}y))^{2^{d}}.
$$
Since $\nu_1(f_1(x_{v-1}y)) =0$, this equality and Case  \ref{th8} imply $[x] \in [\mathcal P(n)]$. 

\begin{exa}\label{vdu7} Let $k=4, n, m, B_3(n), K$ be as in Example \ref{vdu6} and let $H$ be as in Example \ref{vdu4b}.  
 Let $I=I_3=(2,3), y_0 =x_2^2, \bar y = x_1y_0= x_1x_2^2$. Then,
$L :=\phi_{(1;(2,3))}(X^{7})(x_1y_0)^{8}\equiv K + F + a,
$
where  $a =\phi_{(2;I_2)}(X^{7})(x_4y_0)^{8}\equiv \phi_{(1;I_1)}(X^{7}(x_1x_3^2)^{8})$.  Hence, $[L]= [K] + [F] + [a] \in [\mathcal P_4(n)]$.
By a simple computation, we have
\begin{align*} 
L&=x_1^{3}x_2^{7}x_3^{5}x_4^{30} + x_1^{3}x_2^{7}x_3^{13}x_4^{22} + x_1^{3}x_2^{13}x_3^{22}x_4^{7} + x_1^{3}x_2^{29}x_3^{6}x_4^{7}\\ &\quad + x_1^{7}x_2^{7}x_3x_4^{30} + x_1^{7}x_2^{7}x_3^{9}x_4^{22} + x_1^{7}x_2^{11}x_3^{5}x_4^{22}. 
\end{align*} 
All monomials in the right hand sides of the last relation are admissible.
\end{exa}

\begin{khi}\label{th10} If $i=2$, then $[x] \in [\mathcal P(n)]$. 
\end{khi}

We have $\bar y = x_1^{a}x_2^bf_2(y)$ for suitable $y \in (P_{k-1})_{m-a-b}$ with $\nu_1(y) = 0$, and $a=\nu_1(\bar y), b=\nu_2(\bar y)$. We prove this case by induction on $b$. By using Lemmas \ref{bdmu} and \ref{bdcb3}, we can assume that $a = 2^s - 1$. 

If $b = 0$, then by Case \ref{th01}, $[x] \in [\mathcal P(n)]$.  Suppose  that $b >0$ and this case is true for $b-1$. 

If $I \ne I_2$, then applying Lemma \ref{bdcbs}(ii) with $y_0 = x_1^ax_2^{b-1}f_2(y)$, we get
\begin{align*} x &\equiv  \phi_{(1;I\cup 2)}(X^{2^{d}-1})(x_1^{2^s}f_1(x_1^{b-1}y))^{2^{d}} \\
&\quad  + \sum_{3 \leqslant v \leqslant k} \phi_{(2;I\cup v)}(X^{2^{d}-1})(x_1^ax_2^{b-1}f_2(x_{v-1}y))^{2^{d}}.
\end{align*}
This equality, Case \ref{th9} and the inductive hypothesis imply $[x] \in [\mathcal P(n)]$. 

If $I = I_2$, then applying Lemma \ref{bdcbs}(i) with $y_0 = x_1^ax_2^{b-1}f_2(y)$ and using Case \ref{th1}, we get
$$ x \equiv  \phi_{(1;I_2)}(X^{2^{d}-1})(x_1^{2^s}f_1(x_1^{b-1}y))^{2^{d}} + Y_3y_0^{2^d} \equiv Y_3y_0^{2^d} \ \text{mod}(\mathcal P(n)).$$

By Lemmas \ref{hq4} and \ref{bdcb3}, we have 
\begin{align*} 
Y_3y_0^{2^d} \equiv \sum_{(j;J)} \phi_{(j;J)}(X^{2^{d}-1})(x_{j}x_1^ax_2^{b-1}f_2(y))^{2^{d}}, \end{align*}
where the last sum runs over some $(j;J)$ with $j = 1,2$, $J\subset I_2$ and $J \ne I_2$.
Since $J \ne I_2$, $\phi_{(2;J)}(X^{2^{d}-1})(x_1^ax_2^{b}f_2(y))^{2^{d}} \in [\mathcal P(n)]$. By Case  \ref{th9}, 
$$\phi_{(1;J)}(X^{2^{d}-1})(x_1x_1^ax_2^{b-1}f_2(y))^{2^{d}} =  \phi_{(1;J)}(X^{2^{d}-1})(x_1^{2^s}f_1(x_1^{b-1}y))^{2^{d}} \in [\mathcal P(n)].$$
This case is proved.
\begin{exa}\label{vdu8} Let $k=4, n, m, B_3(n)$ be as in Example \ref{vdu1} and let $H$ be as in Example \ref{vdu4b}.  
 Let $I=I_3=\emptyset, y_0 =x_1x_3, \bar y = x_2y_0= x_1x_2x_3$. Then,
$M :=\phi_{(2;\emptyset)}(X^{7})(x_1y_0)^{8}\equiv a + b + c,
$
where  $a =\phi_{(1;2)}(X^{7})(x_1y_0)^{8}$, $b=\phi_{(2;3)}(X^{7})(x_3y_0)^{8}$, $c = \phi_{(2;4)}(X^{7})(x_4y_0)^{8}$. By Cases \ref{th01} and \ref{th1}, $[a], [b], [c] \in [\mathcal P_4(n)]$.   Hence, $[M] \in [\mathcal P_4(n)]$.
By a simple computation, we have
\begin{align*} 
M&=x_1^{}x_2^{14}x_3^{23}x_4^{7} + x_1^{3}x_2^{5}x_3^{15}x_4^{22} + x_1^{3}x_2^{5}x_3^{30}x_4^{7} + x_1^{15}x_2^{}x_3^{22}x_4^{7} + x_1^{15}x_2^{1}x_3^{15}x_4^{14}. 
\end{align*} 
All monomials in the right hand sides of the last relation are admissible.

\end{exa}

\begin{khi}\label{th11} 
If  $i=1$, then $[x] \in [\mathcal P(n)]$.
\end{khi}

We have $\bar y = x_1^{a}f_1(y)$ for suitable $y \in (P_{k-1})_{m- a}$ and $a = \nu_1(\bar y)$. We prove this case by induction on $a$. 

If $a=0$, then by Case \ref{th01}, $[x] \in [\mathcal P(n)]$. Suppose that $a > 0$ and this case holds for $a-1$. 

Note that $r=\ell(I) \leqslant d -1 = k-2$. If $r< k-2$, then applying Lemma \ref{bdcbs}(ii) with $y_0 = x_1^{a-1}f_1(y)$, we get
$$x\equiv \sum_{v=2}^k \phi_{(1;I\cup v)}(X^{2^{d}-1})(x_{1}^{a-1}f_1(x_{v-1}y))^{2^{d}}. $$
Hence,  by the inductive hypothesis, we obtain $[x] \in [\mathcal P(n)]$. 

Suppose that $r = k - 2$. Then, $ I = (2,\ldots,\hat u, \ldots ,k)$ with $2\leqslant u \leqslant k$. If $u = 2$, then applying Lemma \ref{bdcbs}(i) with $y_0 = x_1^{a-1}f_1(y)$,  Lemma \ref{bdcb3} and Case \ref{th10}, we get
$$ x \equiv \phi_{(2;I_2)}(X^{2^{d}-1})(x_{1}^{a-1}f_1(x_{1}y))^{2^{d}}+ Y_3y_0^{2^d}\equiv Y_3y_0^{2^d}\ \text{ mod}(\mathcal P_k(n)).$$

By Lemmas \ref{hq4} and \ref{bdcb3}, we have
\begin{align*}
Y_3y_0^{2^d}\equiv \sum_{(j;J)} \phi_{(j;J)}(X^{2^{d}-1})(x_jx_{1}^{a-1}f_1(y))^{2^{d}}, 
\end{align*}
where the last sum runs over some $(j;J)$ with $j = 1, 2$, $J \subset I_2$ and $J \ne I_2$.
 By Case \ref{th10},  $\phi_{(2;J)}(X^{2^{d}-1})(x_2x_{1}^{a-1}f_1(y))^{2^{d}}\in [\mathcal P(n)]$. Since $J \ne I_2$, we have $\phi_{(1;J)}(X^{2^{d}-1})(x_{1}^{a}f_1(y))^{2^{d}}  \in [\mathcal P(n)]$. Hence, $x \in [\mathcal P(n)]$.

 If $u > 2$, then applying Lemma \ref{bdcbs}(i) with $y_0 = x_1^{a-1}f_1(y)$, we get
\begin{align*} x \equiv 
\sum_{2 \leqslant v \leqslant k} \phi_{(2;I\cup v\setminus 2)}(X^{2^{d}-1})(x_{1}^{a-1}f_1(x_{v-1}y))^{2^{d}}.\end{align*}

From the last equality and Case \ref{th10}, we obtain $[x] \in [\mathcal P(n)]$. 
\begin{exa}\label{vdu9} Let $k=4, n, m, B_3(n), L$ be as in Example \ref{vdu7}.  
 Let $I=I_3=(2,3), y_0 =x_1^2, \bar y = x_1y_0= x_1^3$. Then,
$N :=\phi_{(1;(2,3))}(X^{7})(x_1y_0)^{8}\equiv L + a + b,
$
where  $a =\phi_{(2;3)}(X^{7})(x_3y_0)^{8}\equiv \phi_{(1;I_1)}(X^{7}(x_1x_3^2)^{8})$ and $b =\phi_{(2;I_2)}(X^{7})(x_4y_0)^{8}\equiv \phi_{(1;I_1)}(X^{7}(x_1x_4^2)^{8})$.  Hence, $[N]= [L] + [a] + [b] \in [\mathcal P_4(n)]$.
By a simple computation, we have
\begin{align*} 
N&=x_1^{3}x_2^{7}x_3^{5}x_4^{30} + x_1^{3}x_2^{7}x_3^{13}x_4^{22} + x_1^{3}x_2^{13}x_3^{22}x_4^{7} + x_1^{3}x_2^{29}x_3^{6}x_4^{7}\\ &\quad + x_1^{7}x_2^{7}x_3x_4^{30} + x_1^{7}x_2^{7}x_3^{9}x_4^{22} + x_1^{7}x_2^{11}x_3^{5}x_4^{22} + x_1^{15}x_2^{}x_3^{22}x_4^{7} + x_1^{15}x_2^{3}x_3^{5}x_4^{22}. 
\end{align*} 
All monomials in the right hand sides of the last relation are admissible.
\end{exa}

\begin{khi}\label{th12}
$[x]\in [\mathcal P(n)]$ for all $\bar y \in (P_{k-1})_m$.
\end{khi}
 We have $\bar y = x_i^af_i(y)$ with $a=\nu_i(\bar y)$ and $y = p_{(i;\emptyset)}(\bar y/x_i^a)\in (P_{k-1})_{m-a}$. 
 We prove this case by double induction on $(i,a)$. If $a = 0$, then by Case \ref{th01}, $[x] \in [\mathcal P(n)]$ for all $i$. Suppose $a >0$ and this case holds for $a-1$ and all $i$.

If $i = 1, 2$, then by Cases \ref{th10} and \ref{th11}, $[x] \in [\mathcal P(n)]$ for all $a$. 
Suppose  $i > 2$ and assume this case already  proved in the subcases $1, 2,\ldots  , i-1$. Then,  $r + 1 \leqslant k-i + 1 < k-1=d$. Applying Lemma \ref{bdcbs}(ii) with $y_0 = x_i^{a-1}f_i(y)$, we obtain
 $$ x \equiv \sum_{1 \leqslant j < i}\phi_{(j;I\cup i)}(X^{2^{d}-1})y_j^{2^{d}}+ \sum_{i < j \leqslant k}\phi_{(i;I\cup j)}(X^{2^{d}-1})(x_i^{a-1}f_i(x_{j-1}y)^{2^d}.$$
Using the inductive hypothesis, we have $\phi_{(j;I\cup i)}(X^{2^{d}-1})y_j^{2^d} \in [\mathcal P(n)]$ for $j<i$, and 
$\phi_{(i;I\cup j)}(X^{2^{d}-1})(x_i^{a-1}f_i(x_{j-1}y)^{2^d}\in [\mathcal P(n)] $ for $j>i$. Hence, $[x]\in [\mathcal P(n)]$.

\medskip
Now we prove that the set $[B_{k}(n)]$ is linearly independent in $QP_k$. Suppose there is a linear relation
\begin{equation}\label{cmmdc1}
\mathcal S =\sum_{((i;I),z) \in \mathcal N_k\times B_{k-1}(n)}\gamma_{(i;I),z}\phi_{(i;I)}(z) \equiv 0,
\end{equation}
where $\gamma_{(i;I),z} \in \mathbb F_2$. 

If $d \geqslant k$, then  by induction on $\ell(I)$, we can show that $\gamma_{(i;I),z} =0$, for all $(i;I) \in \mathcal N_k$ and $z \in B_{k-1}(n)$ (see \cite{su} for the case $d > k$). 

Suppose that $d = k - 1$.  By Lemma \ref{bdc1}, the homomorphism $p_{j} = p_{(j;\emptyset)}$ sends the relation (\ref{cmmdc1}) to  
$\sum_{z \in B_{k-1}(n)}\gamma_{(j;\emptyset),z}z \equiv 0.$
This relation implies $\gamma_{(j;\emptyset),z} =0$ for any $1 \leqslant j \leqslant k$ and $z \in B_{k-1}(n)$. 

Suppose $0< \ell(J) < k-3$ and $\gamma_{(i;I),z}=0$ for all $(i;I) \in \mathcal N_k$ with $\ell(I)<\ell(J)$, $1\leqslant i\leqslant k$ and $z \in B_{k-1}(n)$. Then, using Lemma \ref{bdc1} and the relation (\ref{ctth}), we see that the homomorphism $p_{(j,J)}$ sends the relation (\ref{cmmdc1}) to $\sum_{z \in B_{k-1}(n)}\gamma_{(j;J),z}z \equiv 0.$ Hence,  we get $\gamma_{(j;J),z}=0$ for all $z \in B_{k-1}(n)$.

Now, let $(j;J)\in \mathcal N_k$ with $\ell(J) = k-3$. If $J \ne I_3$, then using Lemma \ref{bdc1}, we have $p_{(j;J)}(\phi_{(i;I)}(z)) \equiv 0$ for all $z \in B_{k-1}(n)$ and $(i;I) \in \mathcal N_k$ with $(i;I) \ne (j;J)$. So, we get
$$p_{(j;J)}(\mathcal S) \equiv \sum_{z \in B_{k-1}(n)}\gamma_{(j;J),z}z \equiv 0.$$
Hence,  $\gamma_{(j;J),z} = 0$, for all $z \in B_{k-1}(n)$. 

According to  Lemma \ref{bdc1}, 
$p_{(j;I_3)}(\phi_{(1;I_1)}(z)) \equiv 0$ for $z \in \mathcal C$ and $p_{(j;I_3)}(\phi_{(1;I_1)}(z)) \in \langle \mathcal E\rangle$ for $z \in \mathcal D \cup  \mathcal E$. Hence,  we obtain
$$p_{(j;I_3)}(\mathcal S) \equiv \sum_{z \in \mathcal C \cup \mathcal D}\gamma_{(j;I_3),z}z \equiv 0 \ \text{ mod }\langle \mathcal E\rangle.$$
So, we get $\gamma_{(j;I_3),z} = 0$ for all $z \in \mathcal C \cup  \mathcal D$.

Now, let $(j;J)\in \mathcal N_k$ with $\ell(J) = k-2$. Suppose that $I_3 \not\subset J$. Then,  using Lemma \ref{bdc1}, we have $p_{(j;J)}(\phi_{(1;I_1)}(z)) \equiv 0$ for all $z \in \mathcal B$. Hence,  we get
$$p_{(j;J)}(\mathcal S) \equiv \sum_{z \in \mathcal B}\gamma_{(j;J),z}z \equiv 0 .$$
From this, we obtain $\gamma_{(j;J),z} = 0$ for all $z \in \mathcal B$. 

Suppose that $I_3 \subset J$. Then, either $J = I_2, j = 1,2$ or $J = I_3\cup 2, j = 1$. According to  Lemma \ref{bdc1}, $p_{(j;I_2)}(\phi_{(1;I_1)}(z)) \in \langle \mathcal D \cup \mathcal E\rangle$ for all $z \in \mathcal B$, $p_{(j;I_3\cup 2)}(\phi_{(1;I_1)}(z)) \equiv 0$ for $z \in \mathcal C \cup \mathcal D$ and $p_{(1;I_3\cup 2)}(\phi_{(1;I_1)}(z)) \in \langle \mathcal E\rangle$ for $z \in \ \mathcal E$. Hence,  we obtain
\begin{align*} p_{(j;I_2)}(\mathcal S) &\equiv \sum_{z \in \mathcal C }\gamma_{(j;I_2),z}z \equiv 0 \ \text{ mod }\langle \mathcal D\cup \mathcal E\rangle,\\
p_{(1;I_3\cup 2)}(\mathcal S) &\equiv \sum_{z \in \mathcal C\cup  \mathcal D}\gamma_{(1;I_3\cup 2),z}z \equiv 0 \ \text{ mod }\langle \mathcal E\rangle.
\end{align*}
So, $\gamma_{(j;I_2),z} = 0$ for $z \in  \mathcal C$ and $\gamma_{(1;I_3\cup 2),z} = 0$ for $z \in  \mathcal C \cup \mathcal D$. Since $\gamma_{(i;I),z} = 0$, for all $z\in\mathcal C $ and $I \ne I_1$, applying Lemma \ref{bdc1}, we have 
 $$p_{(1;I_1)}(\mathcal S) \equiv \sum_{z \in \mathcal C }\gamma_{(1;I_1),z}z \equiv 0 \ \text{ mod }\langle \mathcal D\cup \mathcal E\rangle.$$
Hence,  $\gamma_{(1;I_1),z} = 0$ for all $z \in  \mathcal C$. So, the relation  (\ref{cmmdc1}) becomes
 \begin{align}\label{ctbs}\mathcal S \notag&= \sum_{1 \leqslant i \leqslant 3, z \in \mathcal E}\gamma_{(i;I_3),z}\phi_{(i;I_3)}(z) + \sum_{z \in  \mathcal E}\gamma_{(1;I_3\cup 2),z}\phi_{(1;I_3\cup 2)}(z)\\ &\quad+ \sum_{1 \leqslant i \leqslant 2, z \in \mathcal D \cup \mathcal E}\gamma_{(i;I_2),z}\phi_{(i;I_2)}(z) + \sum_{ z \in \mathcal D \cup \mathcal E}\gamma_{(1;I_1),z}\phi_{(1;I_1)}(z)\equiv 0.
\end{align}
Using the relation (\ref{ctbs}) and Lemma \ref{bdc1}, 
 $$p_{(i;I_2)}(\mathcal S) \equiv \sum_{z \in \mathcal D}(\gamma_{(i;I_2),z}+\gamma_{(1;I_1),z})z \equiv 0 \ \text{ mod }\langle \mathcal E\rangle, \ i = 1, 2.$$
This relation implies $\gamma_{(1;I_2),z} = \gamma_{(2;I_2),z} = \gamma_{(1;I_1),z}$ for all $z \in \mathcal D$. On the other hand, using the relation (\ref{ctbs}) and Lemma \ref{bdc1}, one gets
$$p_{(1;I_1)}(\mathcal S) \equiv \sum_{z \in \mathcal D}(\gamma_{(1;I_2),z}+ \gamma_{(2;I_2),z} +\gamma_{(1;I_1),z})z \equiv 0 \ \text{ mod }\langle \mathcal E\rangle.$$
So, $\gamma_{(1;I_2),z}+ \gamma_{(2;I_2),z} +\gamma_{(1;I_1),z} = 0$. Hence, $\gamma_{(1;I_2),z} = \gamma_{(2;I_2),z} = \gamma_{(1;I_1),z} = 0$, for all $z \in \mathcal D$. Now, the relation (\ref{ctbs}) becomes
 \begin{align}\label{ctbs1}\notag\mathcal S &= \sum_{1 \leqslant i \leqslant 3, z \in \mathcal E}\gamma_{(i;I_3),z}\phi_{(i;I_3)}(z) + \sum_{z \in  \mathcal E}\gamma_{(1;I_3\cup 2),z}\phi_{(1;I_3\cup 2)}(z)\\ 
&\quad+ \sum_{1 \leqslant i \leqslant 2, z \in  \mathcal E}\gamma_{(i;I_2),z}\phi_{(i;I_2)}(z) + \sum_{ z \in \mathcal E}\gamma_{(1;I_1),z}\phi_{(1;I_1)}(z)\equiv 0.\end{align}
Using the relation (\ref{ctbs1}) and Lemma \ref{bdc1}, one gets
 \begin{align*}
p_{(i;I_3)}(\mathcal S) &\equiv \sum_{z \in \mathcal E}(\gamma_{(i;I_3),z}+\gamma_{(1;I_1),z})z \equiv 0,  \ i = 1, 2, 3,\\
p_{(1;I_3\cup 2)}(\mathcal S) &\equiv \sum_{z \in \mathcal E}(\gamma_{(1;I_3),z}+ \gamma_{(2;I_3),z}+ \gamma_{(1;I_3\cup 2),z}+\gamma_{(1;I_1),z})z \equiv 0,\\
p_{(j;I_2)}(\mathcal S) &\equiv \sum_{z \in \mathcal E}(\gamma_{(j;I_3),z} + \gamma_{(3;I_3),z} + \gamma_{(j;I_2),z} + \gamma_{(1;I_1),z})z \equiv 0,\ j= 1, 2,\\
p_{(1;I_1)}(\mathcal S) &\equiv \sum_{z \in \mathcal E}(\gamma_{(1;I_3),z} + \gamma_{(2;I_3),z} + \gamma_{(3;I_3),z}\\
&\qquad + \gamma_{(1;I_2),z}+\gamma_{(2;I_2),z} + \gamma_{(1;I_3\cup 2),z}+ \gamma_{(1;I_1),z})z \equiv 0.
\end{align*}

From the above relations, we get 
$$\gamma_{(i;I_3),z} = \gamma_{(j;I_2),z} = \gamma_{(1;I_3\cup 2),z}= \gamma_{(1;I_1),z} = 0$$ 
for all $z \in \mathcal E, \ i = 1, 2, 3, \ j = 1,2$. 
The proposition is completely proved. \hfill $\square$

\medskip
Let $n =\sum_{1 \leqslant i \leqslant k-1}(2^{d_i}-1)$ 
with $d_i$ positive integers such that $d_1 > d_2 > \ldots >d_{k-2} \geqslant d_{k-1}=d>0,$ and let $m = \sum_{1 \leqslant i \leqslant k-2}(2^{d_i-d}-1)$. Set $p = \min\{k,d\}$ and $\mathcal N_{k,p} = \{(i;I) \in \mathcal N_k: \ell(I)<p\}$. Then, we have $|\mathcal N_{k,p}| = \sum_{1\leqslant u \leqslant  p}\binom ku$. From the proof of Proposition \ref{mdc1}, we see that the set 
$\left[\bigcup_{(i;I)\in \mathcal N_{k,p}}\phi_{(i;I)} (B_{k-1}(n))\right]$
is linearly independent in $QP_k.$
So, one gets the following.
\begin{corls}[Mothebe \cite{mo}] 
${\dim (QP_k)_n \geqslant \sum_{u=1}^p\binom ku\dim (QP_{k-1})_m.}$
\end{corls}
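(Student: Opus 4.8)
The plan is to produce an explicit linearly independent family in $(QP_k)_n$ whose size is the claimed lower bound; the family is simply the part of the generating set from Proposition \ref{mdc1} indexed by $\mathcal N_{k,p}$, namely $\{[\phi_{(i;I)}(z)] : (i;I)\in\mathcal N_{k,p},\ z\in B_{k-1}(n)\}$. I would first settle the bookkeeping. Sending $(i;I)$ to the subset $\{i,i_1,\dots,i_r\}\subset\mathbb N_k$ (whose minimum recovers $i$) is a bijection of $\mathcal N_k$ with the nonempty subsets of $\mathbb N_k$, carrying $\{(i;I):\ell(I)<p\}$ onto the subsets of size $1,\dots,p$; hence $|\mathcal N_{k,p}|=\sum_{u=1}^p\binom ku$. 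Also, as $B_{k-1}(n)$ is a minimal set of $\mathcal A$-generators for $P_{k-1}$ in degree $n$, its cardinality is $\dim(QP_{k-1})_n$.

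Next I would pass from degree $n$ to degree $m$. This is the iterated Kameko isomorphism already used in the proof of Theorem \ref{dl1}: with $h_u=\sum_{1\le i\le k-1}2^{d_i-u}-(k-1)$ one has $2h_u+(k-1)=h_{u-1}$ and $\mu(h_{u-1})=k-1$ for $1\le u\le d$, so Theorem \ref{dlmd2} makes $(\widetilde{Sq}^0_*)_{h_u}\colon(QP_{k-1})_{h_{u-1}}\to(QP_{k-1})_{h_u}$ an isomorphism for each such $u$; composing from $h_0=n$ to $h_d=m$ yields $\dim(QP_{k-1})_n=\dim(QP_{k-1})_m=|B_{k-1}(n)|$.

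The crux — and the step I expect to be the main obstacle — is the linear independence of the chosen family. Suppose $\sum\gamma_{(i;I),z}\,\phi_{(i;I)}(z)\equiv0$ with coefficients in $\mathbb F_2$ and $(i;I)$ ranging over $\mathcal N_{k,p}$. The tool is the family of $\mathcal A$-module homomorphisms $p_{(j;J)}\colon P_k\to P_{k-1}$ together with the detection formula of Lemma \ref{bdc1}(i). Since we have restricted to $\ell(I)<p=\min\{k,d\}\le d$, every index satisfies $\ell(I)<d$, so the ``shift'' of type (\ref{rela2}) never occurs and each $\phi_{(i;I)}(z)$ equals $\phi_{(i;I)}(X^{2^d-1})f_i(\bar z)^{2^d}$; I would check that in this range the formula of Lemma \ref{bdc1}(i) persists verbatim, giving $p_{(j;J)}(\phi_{(i;I)}(z))\equiv z$ when $(j;J)=(i;I)$ and $\equiv0$ whenever $(j;J)\ne(i;I)$ with $\ell(J)\le\ell(I)$. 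Running an induction on $\ell(J)$ and applying the $\mathcal A$-linear map $p_{(j;J)}$ to the relation then annihilates every term except those indexed by $(j;J)$, leaving $\sum_z\gamma_{(j;J),z}\,z\equiv0$ in $QP_{k-1}$; minimality of $B_{k-1}(n)$ forces $\gamma_{(j;J),z}=0$ for all $z$. It is precisely this restriction to $\mathcal N_{k,p}$ that keeps Lemma \ref{bdc1}(i) applicable and sidesteps the collisions among the $\phi_{(i;I)}$ that set in once $\ell(I)=d$, which is why one obtains only an inequality in general.

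Assembling the three pieces, the family is linearly independent of cardinality $|\mathcal N_{k,p}|\cdot|B_{k-1}(n)|=\big(\sum_{u=1}^p\binom ku\big)\dim(QP_{k-1})_m$, whence $\dim(QP_k)_n\ge\sum_{u=1}^p\binom ku\,\dim(QP_{k-1})_m$, as claimed.
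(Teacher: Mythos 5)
Your proposal is correct and is essentially the paper's own argument: the paper obtains the bound by noting that the subfamily $\left\{[\phi_{(i;I)}(z)] : (i;I)\in\mathcal N_{k,p},\ z\in B_{k-1}(n)\right\}$ is linearly independent via the $p_{(j;J)}$-detection argument from the proof of Proposition \ref{mdc1} (which in the range $\ell(I)<p\leqslant d$ rests on Lemma \ref{bddl} together with Singer's criterion, exactly as you indicate), then counts $|\mathcal N_{k,p}|=\sum_{u=1}^{p}\binom{k}{u}$ and identifies $\dim (QP_{k-1})_n=\dim (QP_{k-1})_m$ by the iterated Kameko isomorphism as in the proof of Theorem \ref{dl1}. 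Your explicit remark that restricting to $\ell(I)<p$ rules out the shift (\ref{rela2}) and keeps the detection formula of Lemma \ref{bdc1}(i) valid is precisely the (implicit) reason the paper can simply cite the proof of Proposition \ref{mdc1}.
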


\subsection{Proof of Lemmas \ref{hq0} and \ref{bdcbs}}\label{sub32}\

\medskip
We need the following lemma. 

\begin{lems}\label{bdad} Let $i, j, d, a, b$ be positive integers such that $i, j\leqslant k$, $i \ne j$, and $a+b = 2^d-1$. Then
 $$x:= X_i^aX_j^b \simeq_2 X_i^{2^d-2}X_j.$$
\end{lems}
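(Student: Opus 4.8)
The plan is to strip the statement down to a two-variable computation and then run an induction on one exponent. First I would record the exponents explicitly: writing $T=\prod_{\ell\in\mathbb N_k\setminus\{i,j\}}x_\ell^{2^d-1}$, one has $X_i^aX_j^b=x_i^bx_j^aT$ and $X_i^{2^d-2}X_j=x_ix_j^{2^d-2}T$. Since $a+b=2^d-1$ with $a,b<2^d$ forces $b=2^d-1-a$ to be the $d$-bit complement of $a$ (so the binary digits of $a$ and $b$ partition $\{0,\dots,d-1\}$ with no carries), a direct count of the weight coordinates shows that \emph{both} monomials have the same weight vector $\omega:=(\underbrace{k-1,\dots,k-1}_{d},0,0,\dots)$, which is exactly the weight of the minimal spike of degree $n=(k-1)(2^d-1)$. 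Thus it suffices to prove, for every $a,b\ge 1$ with $a+b=2^d-1$, the relation $m(a,b):=x_i^bx_j^aT\simeq_2 m(2^d-2,1)$, all taken relative to this single $\omega$; since every $m(a,b)$ shares this $\omega$, transitivity of $\simeq_{(2,\omega)}$ will then close the argument.

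I would prove this by induction on $b$, the case $b=1$ being the target itself. For the step I produce, from a low Steenrod square applied to a suitable monomial, a relation that strictly decreases $b$. Expanding $Sq^c(x_i^{b-c}x_j^aT)$ by the Cartan formula, the terms in which no square lands on $T$ are $\sum_{c_i+c_j=c}\binom{b-c}{c_i}\binom{a}{c_j}\,m(a+c_j,\,b-c+c_i)$, whose $\nu_i$-top term is $\binom{b-c}{c}m(a,b)$, while every term with a square hitting $T$ raises some $x_\ell$ $(\ell\ne i,j)$ above $2^d-1$. The three cases are: if $b$ is even, $Sq^1(x_i^{b-1}x_j^aT)=m(a,b)+m(a+1,b-1)+(\text{$T$-terms})$ with both coefficients odd, giving $m(a,b)\simeq_2 m(a+1,b-1)$; if $b\equiv 1\pmod 4$, a single $Sq^2(x_i^{b-2}x_j^aT)$ gives $m(a,b)\simeq_2 m(a+2,b-2)$, since $\binom{b-2}{2}$ and $\binom a2$ are odd while the middle coefficient $(b-2)a$ is even; and if $b\equiv 3\pmod 4$, I first use $Sq^1(x_i^{b}x_j^{a-1}T)$ to pass to $m(a-1,b+1)$ and then $Sq^2(x_i^{b-1}x_j^{a-1}T)$ to reach $m(a+1,b-1)$, the relevant binomials again having the right parities. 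In every case $b$ drops to a strictly smaller \emph{positive} value (the moves only increase $a$, so $a,b\ge 1$ is preserved), and the induction hypothesis finishes the step.

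The difficulty is concentrated in the bookkeeping rather than in any deep idea, and it is two-fold. First, I must check that the ``$T$-terms'' — the Cartan contributions in which a square of total degree $\ge 1$ lands on $T$ — all lie in $P_k^-(\omega)$. This is where the identification of $\omega$ as the minimal-spike weight pays off: each such term has some $\nu_\ell\ge 2^d$, hence a nonzero weight coordinate in position $d+1$, so its weight cannot be the left-lex maximal value $\omega$ and is therefore strictly smaller; equivalently, a short direct computation (as in the even case, where the error $x_i^{b-1}x_j^aSq^1(T)$ visibly drops below $(k-1,\dots)$ at the first position governed by the lowest set bit of $b$) exhibits the strict decrease. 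Second, the residues $b\equiv 3\pmod 4$ (for which $\binom{b-c}{c}$ is even for \emph{all} $c\le 3$, e.g. $b=3,7,11,\dots$) genuinely force the two-step detour through $b+1$, as no single $Sq^c$ with $c\le 3$ isolates $m(a,b)$ with odd coefficient; verifying the parities along the detour (using $a\equiv 0$, $b-1\equiv 2$, $a-1\equiv 3\pmod 4$) is the one place where care is required.
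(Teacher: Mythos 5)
Your proof is correct and follows essentially the same route as the paper's: induction on $b$ with base case $b=1$, a case split on the two lowest binary digits of $b$, and $Sq^1$/$Sq^2$ Cartan relations in which every term hitting the factor $X_{\{i,j\}}^{2^d-1}$ is discarded into $P_k^-(\omega)$ because its weight drops below the common (maximal) weight vector $\omega=\big((k-1)^{(d)}\big)$. The only cosmetic differences are that for $b\equiv 1\pmod 4$ the paper reaches $X_i^{a+1}X_j^{b-1}$ by a combined $Sq^1+Sq^2$ relation whereas you jump to $X_i^{a+2}X_j^{b-2}$ with a single $Sq^2$, and your two-step detour for $b\equiv 3\pmod 4$ (through $X_i^{a-1}X_j^{b+1}$) is exactly the paper's one combined relation written as two.
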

\begin{proof} We prove  the lemma by induction on $b$. If $b=1$, then 
$$ x=X_i^aX_j^b = X_i^{2^d-2}X_j.$$ So, the lemma holds. Suppose that $b > 1$ and the lemma holds for $b-1$. Since $i\ne j$, 
 $X_i^aX_j^b = x_i^bx_j^aX_{\{i,j\}}^{2^d-1}$. 
 
 If $\alpha_0(b) = 0$, then $\alpha_0(a) = \alpha_0(2^d-1-b) =1$. Using the Cartan formula and the inductive hypothesis, we have
\begin{align*}x &\simeq_0  Sq^1(x_i^{b-1}x_j^{a}X_{\{i,j\}}^{2^d-1}) + x_i^{b-1}x_j^{a+1}X_{\{i,j\}}^{2^d-1}
\simeq_1 X_i^{a+1}X_j^{b-1} \simeq_2 X_i^{2^d-2}X_j .
\end{align*}  

By an argument analogous to the previous one, we see that if $\alpha_0(b) = 1, \alpha_1(b) = 0$, then 
\begin{align*} 
x &\simeq_0  Sq^1(x_i^{b-2}x_j^{a+1}X_{\{i,j\}}^{2^d-1}) + Sq^2(x_i^{b-2}x_j^{a}X_{\{i,j\}}^{2^d-1})  +x_i^{b-1}x_j^{a+1}X_{\{i,j\}}^{2^d-1}\\
& \simeq_2 x_i^{b-1}x_j^{a+1}X_{\{i,j\}}^{2^d-1} =  X_i^{a+1}X_j^{b-1} \simeq_2 X_i^{2^d-2}X_j .
\end{align*}  

If $\alpha_0(b) = \alpha_{1}(b) = 1$, then  
\begin{align*}
x &\simeq_0 Sq^1(x_i^{b}x_j^{a-1}X_{\{i,j\}}^{2^d-1}) + Sq^2(x_i^{b-1}x_j^{a-1}X_{\{i,j\}}^{2^d-1}) + x_i^{b-1}x_j^{a+1}X_{\{i,j\}}^{2^d-1}\\
& \simeq_2 x_i^{b-1}x_j^{a+1}X_{\{i,j\}}^{2^d-1} =  X_i^{a+1}X_j^{b-1} \simeq_2 X_i^{2^d-2}X_j.
\end{align*}

The lemma is proved. 
\end{proof}

\begin{lems}\label{bdbss2} Let $(i;I) \in \mathcal N_k$ and let $d, h, u$ be   integers such that $\ell(I)= r < h \leqslant d$, and $1 \leqslant i< u \leqslant k$. Then, we have
$$Y:=\phi_{(i;I)}(X^{2^{h}-1})X_u^{2^{d }-2^{h}} \simeq_{r+2}\phi_{(i;I\cup u)}(X^{2^d-1}).$$
\end{lems}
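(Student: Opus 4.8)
The plan is to prove the equivalence by induction on $r=\ell(I)$, the engine being Lemma \ref{bdad} together with the closure of $\simeq$ under multiplication by squares (Proposition \ref{mdcb4}). Write $I=(i_1,\dots,i_r)$ and $I'=(i_1,\dots,i_{r-1})$. Comparing exponents in the product formula (\ref{rela22}) gives the recursion
$$\phi_{(i;I)}(X^{2^h-1})=X_{i_r}\bigl(\phi_{(i;I')}(X^{2^{h-1}-1})\bigr)^2,$$
valid whenever $r<h$, together with its analogue for $I\cup u$. When $r=0$ the assertion reduces to $X_i^{2^h-1}X_u^{2^d-2^h}\simeq_2 X_i^{2^d-2}X_u$, which is exactly Lemma \ref{bdad} with $a=2^h-1$, $b=2^d-2^h$; this is the base case (note $b>0$, i.e. $h<d$, a condition that persists throughout the induction below).

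For the inductive step I factor $X_u^{2^d-2^h}=(X_u^{2^{d-1}-2^{h-1}})^2$ and apply the recursion to obtain
$$Y=X_{i_r}\bigl(\phi_{(i;I')}(X^{2^{h-1}-1})\,X_u^{2^{d-1}-2^{h-1}}\bigr)^2.$$
The bracketed monomial is an instance of the lemma with data $(r-1,h-1,d-1)$ — the hypotheses $r-1<h-1\leqslant d-1$, $i<u$ and $u\notin I'$ all persist — so by the induction hypothesis it is $\simeq_{r+1}\phi_{(i;I'\cup u)}(X^{2^{d-1}-1})$. Since $\omega_j(X_{i_r})=0$ for $j>1$ and $X_{i_r}\simeq_1 X_{i_r}$, Proposition \ref{mdcb4}(ii) (with $s=1$) lifts this relation through the squaring and the spectator factor $X_{i_r}$, yielding
$$Y\simeq_{r+2}X_{i_r}\bigl(\phi_{(i;I'\cup u)}(X^{2^{d-1}-1})\bigr)^2.$$

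It remains to identify the right-hand side with $\phi_{(i;I\cup u)}(X^{2^d-1})$. If $u<i_r$, then $i_r$ is still the largest index of $I\cup u$ and $(I\cup u)\setminus\{i_r\}=I'\cup u$, so the recursion applied to $(i;I\cup u)$ shows the two monomials are \emph{equal}. If $u>i_r$, the recursion applied to $(i;I\cup u)$ instead peels off $u$, and a direct exponent count shows that $X_{i_r}\bigl(\phi_{(i;I'\cup u)}(X^{2^{d-1}-1})\bigr)^2$ and $\phi_{(i;I\cup u)}(X^{2^d-1})$ coincide in every variable except that the exponents $2^d-2$ and $2^d-3$ are interchanged between $x_{i_r}$ and $x_u$, all remaining exponents being odd. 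Writing $m_0$ for the monomial obtained by replacing both of these exponents by $2^d-3$, the Cartan formula gives $Sq^1(m_0)=M+M'+\sum_v m_v$, where $M,M'$ are the two monomials to be compared and each $m_v$ is obtained by raising one (odd) exponent of the common factor by $1$. A weight-vector count shows every $m_v$ has the same $\omega_1$ but strictly smaller $\omega_2$ than $M$, hence $m_v\in P_k^-(\omega)$; since $Sq^1\in\mathcal A^+_{r+2}$, this gives $M\simeq_{r+2}M'$ and closes the induction.

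The main obstacle is the case $u>i_r$: carrying out the $Sq^1(m_0)$ computation and verifying, through careful bookkeeping of the weight vectors, that all the correction monomials $m_v$ fall into $P_k^-(\omega)$. The companion point is that one must stay in the range $h<d$, where Lemma \ref{bdad} applies with $b=2^d-2^h>0$; the degenerate boundary $h=d$, where the factor $X_u^{2^d-2^h}$ collapses to $1$, has to be treated separately. Everywhere else the proof is routine manipulation with the product formula (\ref{rela22}) and the multiplicative properties of $\simeq$ from Proposition \ref{mdcb4}.
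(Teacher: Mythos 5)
There is a genuine gap: your induction only covers $u\notin I$, while the lemma (and, crucially, the paper's use of it) includes the case $u\in I$, where $I\cup u=I$ by the definition preceding Definition \ref{dfn1}. You list ``$u\notin I'$'' among hypotheses that ``persist'', but no such hypothesis exists in the statement; correspondingly, your final case analysis distinguishes only $u<i_r$ and $u>i_r$ and silently omits $u=i_r$. That omitted case is exactly where your identification step collapses: for $u=i_r$ the induction hypothesis turns the bracket into $\phi_{(i;I'\cup i_r)}(X^{2^{d-1}-1})=\phi_{(i;I)}(X^{2^{d-1}-1})$, and the monomial $X_{i_r}\big(\phi_{(i;I)}(X^{2^{d-1}-1})\big)^2$ has exponents $2^{r+1}-1$ at $x_i$, $2^{d}-2^{r-t+1}-1$ at $x_{i_t}$ for $t<r$, and $2^{d}-4$ at $x_{i_r}$, whereas the target $\phi_{(i;I)}(X^{2^{d}-1})$ has $2^{r}-1$, $2^{d}-2^{r-t}-1$ and $2^{d}-2$ there. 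The two monomials differ at every index of $I\cup\{i\}$, not by a transposition of two exponents, so the $Sq^1(m_0)$ device is unavailable; closing this case needs a further exponent-shuffling step of the kind the paper performs, namely Lemma \ref{bdad} applied inside a $2^{r-1}$-th power ($X_{i_1}^{3}X_i^{2^{d-r+1}-4}\simeq_2 X_{i_1}X_i^{2^{d-r+1}-2}$) together with Proposition \ref{mdcb4}. This is not a dispensable corner: the paper invokes the lemma with $u\in I$ and $h<d$ in the proofs of Lemmas \ref{bdbss} and \ref{bdbss0}, e.g. $\phi_{(j_1;J\cup k\setminus j_1)}(X^{2^{i+1}-1})X_k^{2^{d}-2^{i+1}}\simeq_{k}\phi_{(j_1;J\cup k\setminus j_1)}(X^{2^{d}-1})$ with $u=k$ lying in the index set.

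On the range you do cover ($u\notin I$, $h<d$) your argument is correct and genuinely different from the paper's: you peel off the largest index via $\phi_{(i;I)}(X^{2^{h}-1})=X_{i_r}\big(\phi_{(i;I')}(X^{2^{h-1}-1})\big)^2$, whereas the paper factors from the other end, $\phi_{(i;I)}(X^{2^{h}-1})=\phi_{(i_1;I\setminus i_1)}(X^{2^{r}-1})\big(X_i^{2^{h-r}-1}\big)^{2^{r}}$, applies Lemma \ref{bdad} inside the $2^{r}$-th power first and only then the induction hypothesis; and your $Sq^1$-plus-weight-vector argument for the transposition $u>i_r$ is a clean substitute for the paper's second appeal to Lemma \ref{bdad} (indeed $Y$, $M$, $M'$ all have weight vector $(k-1)^{(d)}$, and each correction term $m_v$ keeps $\omega_1=k-1$ while its $\omega_2$ drops to $k-3$, so $m_v\in P_k^-(\omega(Y))$ as you claim). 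But the boundary $h=d$ that you defer cannot be ``treated separately'' within your framework: for $u\notin I$ and $h=d$ the asserted relation is in general false --- take $k=3$, $d=2$, $i=1$, $I=\emptyset$, $u=2$: then $Y=x_2^{3}x_3^{3}$ and $\phi_{(1;2)}(X^{3})=x_1x_2^{2}x_3^{3}$ are distinct elements of the minimal generating set $B_3(6)=\Phi(B_2(6))$ of Theorem \ref{mdkmk}, so their difference is not even in $\mathcal A^{+}P_3$, let alone in $\mathcal A_2^{+}P_3+P_3^-(\omega(Y))$. The lemma is only ever applied at $h=d$ when $u\in I$, where it is a tautology --- one more reason the $u\in I$ case cannot be assumed away.
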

\begin{proof} We prove the lemma by induction on $r$. If $r=0$, then using Lemma \ref{bdad}, we have
$Y = X_i^{2^h-1}X_u^{2^{d }-2^{h}}\simeq_2 X_i^{2^d-2}X_u=\phi_{(i;u)}(X^{2^d-1}).$ So, the lemma holds. 

Suppose that $r > 0$, $I= (i_1,i_2,\ldots, i_r)$ and the lemma is true for $r-1$. A direct computation, using Lemma \ref{bdad} and Proposition \ref{mdcb4}(ii), shows
\begin{align*}Y &= \phi_{(i_1;I\setminus i_1)}(X^{2^{r}-1})(X_i^{2^{h-r}-1}X_u^{2^{d-r }-2^{h-r}})^{2^{r}}\\ &\simeq_{r+2} \phi_{(i_1;I\setminus i_1)}(X^{2^{r}-1})(X_uX_i^{2^{d-r}-2})^{2^{r}}\\ &= \Big(\phi_{(i_1;I\setminus i_1)}(X^{2^{r}-1})X_u^{2^{r}}\Big)X_i^{2^{d}-2^{r+1}}:= Z.
\end{align*}
If $u < i_1$, then $Z = \phi_{(i;I\cup u)}(X^{2^d-1})$. Suppose $u \geqslant i_1$.

If $u \notin I$, then $u>i_1$. By the inductive hypothesis and Proposition \ref{mdcb4}(i), we have
\begin{align*}  Z \simeq_{r+1} \phi_{(i_1;I\cup u\setminus i_1)}(X^{2^{r+1}-1})X_i^{2^{d}-2^{r+1}} = \phi_{(i;I\cup u)}(X^{2^d-1}).
\end{align*}
If $u \in I$ and $r =1$, then $u=i_1$. Using Lemma \ref{bdad}, we have
\begin{align*}
 Z = X_i^{2^{d}-2^{r+1}}X_{i_1}^{2^{r+1}-1}\simeq_2 X_i^{2^d-2}X_{i_1}= \phi_{(i;I\cup u)}(X^{2^d-1}).
\end{align*}
Suppose that $u \in I$ and $r>1$. If $u=i_1$, then 
$Z = \phi_{(i_1;I\setminus i_1)}(X^{2^{r+1}-1})X_i^{2^{d}-2^{r+1}}.$
If $u >i_1$, then using the inductive hypothesis and Proposition \ref{mdcb4}(i), we obtain
\begin{align*} Z \simeq_{r+1} \phi_{(i_1;I\cup u\setminus i_1)}(X^{2^{r+1}-1})X_i^{2^{d}-2^{r+1}} = \phi_{(i_1;I\setminus i_1)}(X^{2^{r+1}-1})X_i^{2^{d}-2^{r+1}}.
\end{align*}
Now, applying Lemma \ref{bdad} and Proposition \ref{mdcb4}(ii), one gets
\begin{align*}
&\phi_{(i_1;I\setminus i_1)}(X^{2^{r+1}-1})X_i^{2^{d}-2^{r+1}}= \phi_{(i_2;I\setminus \{i_1,i_2\})}(X^{2^{r-1}-1})(X_{i_1}^{3}X_i^{2^{d-r+1}-4})^{2^{r-1}} \\
&\simeq_{r+1} \phi_{(i_2;I\setminus \{i_1,i_2\})}(X^{2^{r-1}-1})(X_{i_1}X_i^{2^{d-r+1}-2})^{2^{r-1}} 
=\phi_{(i;I\cup u)}(X^{2^{d}-1}).
\end{align*}
The above equalities imply $Y\simeq_{r+2} \phi_{(i;I\cup u)}(X^{2^{d}-1})$. The lemma is proved.
\end{proof}

\begin{proof}[Proof of Lemma \ref{hq0}] We prove the lemma by induction on $d$. For $d=1$, the lemma holds with $i=j_0$ and $I= \emptyset$.

Let  $d = 2$. If $j_0 = j_1=i$, then $x = \phi_{(i,\emptyset)}(X^3)$. If $j = j_0 > j_1=i$, then $x = X_i^2X_j = \phi_{(i,j)}(X^3)$. If $i = j_0 < j_1=j$, then 
$$x = X_iX_j^2\simeq_0 Sq^1(X_\emptyset X_{\{i,j\}}^2) + X_i^2X_j \simeq_1 X_i^2X_j = \phi_{(i,j)}(X^3).$$
So, the lemma holds for $d = 2$. 

Suppose $d > 2$ and the lemma holds for $d-1$. By the inductive hypothesis, there is $(h;H) \in \mathcal N_k$ such that 
$$\prod_{0 \leqslant t <d-1}X_{j_t}^{2^t} \simeq_{d-2} \phi_{(h;H)}(X^{2^{d-1}-1}),$$
where $h = \min\{j_0,j_1,\ldots , j_{d-2}\}$. If $j_{d-1} = h$, then the lemma holds with $(i;I) = (h;H)$. Assume that $j_{d-1} \ne h$. 
If $H =\emptyset $, then $j_0 = j_1=\ldots = j_{d-2}=h$. Using Proposition \ref{mdcb4}(i) and Lemma \ref{bdad}, we obtain
$$x \simeq_{d-1} X_{h}^{2^{d-1}-1} X_{j_{d-1}}^{2^{d-1}} \simeq_2   X_{i}^{2^{d}-1} X_{i_1}=\phi_{(i;i_1)}(X^{2^d-1}),$$ 
where $i = \min\{h, j_{d-1}\} = \min\{j_0,j_1,\ldots , j_{d-1}\}$, $i_1 = \max\{h, j_{d-1}\}$. So, the lemma is true. 

Suppose $H = (h_1, \ldots, h_s)$ with $\{h_1,\ldots, h_s\} =\{j_0,\ldots , j_{d-2}\}\setminus \{h\}$ and $0 < s < d-1$. Then, we have 
$\phi_{(h;H)}(X^{2^{d-1}-1}) = \phi_{(h_1;H\setminus h_1)}(X^{2^{s}-1})X_{h}^{2^{d-1}-2^{s}}.$
If $h < j_{d-1}$ and $s <d-2$, then by Lemma \ref{bdbss2}, we have 
\begin{align*}x \simeq_{d-2} \phi_{(h;H)}(X^{2^{d-1}-1})X_{j_{d-1}}^{2^{d-1}}\simeq_{s+2} \phi_{(h;H\cup j_{d-1})}(X^{2^{d}-1}).
\end{align*}
Since $s+2 \leqslant d-1$, the lemma is true with $i=h$ and $I = H\cup j_{d-1}$.

Suppose that $h < j_{d-1}$ and $s = d-2$. From the proof of Lemma \ref{bdad}, we have  $X_{h}X_{j_{d-1}}^{2}\simeq_{1}X_{h}^{2}X_{j_{d-1}} $. Hence, using Proposition \ref{mdcb4}, one gets
\begin{align*}x &\simeq_{d-2} \phi_{(h_1;H\setminus h_1)}(X^{2^{d-2}-1})(X_{h}X_{j_{d-1}}^{2})^{2^{d-2}}\\ 
&\simeq_{d-1} \phi_{(h_1;H\setminus h_1)}(X^{2^{d-2}-1})(X_{h}^{2}X_{j_{d-1}})^{2^{d-2}}\\ 
&= \phi_{(h_1;H\setminus h_1)}(X^{2^{d-2}-1})X_{j_{d-1}}^{2^{d-2}}X_{h}^{2^{d-1}} .
\end{align*}

By the inductive hypothesis, we have
\begin{align*} \phi_{(h_1;H\setminus h_1)}(X^{2^{d-2}-1})X_{j_{d-1}}^{2^{d-2}}\simeq_{d-2} \phi_{(j;J)}(X^{2^{d-1}-1}),
\end{align*}
where $j = \min\{h_1, \ldots, h_s,j_{d-1}\} = \min(\{j_0,\ldots, j_{d-1}\}\setminus \{h\})$, $J = (H\cup j_{d-1})\setminus j$.

If $j_{d-1} \notin H$, then from the above equalities and Proposition \ref{mdcb4}(i), we get 
$$ x\simeq_{d-1} \phi_{(j;J)}(X^{2^{d-1}-1})X_{h}^{2^{d-1}}= \phi_{(h;J\cup j)}(X^{2^{d}-1}).$$ 
The lemma holds with $i=h$, $I = J\cup j = H\cup j_{d-1}$. 

If $j_{d-1} \in H$, then $\ell(J) = d-3$. By Lemma \ref{bdad}, $X_j^3X_h^4 \simeq_{2} X_jX_h^6$. Using Proposition \ref{mdcb4}, we obtain
\begin{align*}x &\simeq_{d-1} \phi_{(j;J)}(X^{2^{d-1}-1})X_{h}^{2^{d-1}}= \phi_{(j_1;J\setminus j_1)}(X^{2^{d-3}-1}) (X_j^3X_h^4)^{2^{d-3}}\\ & \simeq_{d-1}\phi_{(j_1;J\setminus j_1)}(X^{2^{d-3}-1}) (X_jX_h^6)^{2^{d-3}} = \phi_{(h;J\cup j)}(X^{2^{d}-1}) .
\end{align*}
Here $j_1 = \min J$. The lemma holds with $i=h$ and $I = J\cup j =H= H\cup j_{d-1}$.

If $h > j_{d-1}$ and $s = d-2$, then using  Proposition \ref{mdcb4}(i), we have
\begin{align*}x \simeq_{d-1} \phi_{(h;H)}(X^{2^{d-1}-1})X_{j_{d-1}}^{2^{d-1}}
= \phi_{(j_{d-1};H\cup h)}(X^{2^{d}-1}) .
\end{align*}

Suppose that $h > j_{d-1}$ and $s <d-2$. By Lemma \ref{bdad}, 
$X_{h}^{2^{d-s-1}-1}X_{j_{d-1}}^{2^{d-s-1}}\simeq_{2} X_{h}X_{j_{d-1}}^{2^{d-s}-2}.$ 
Since $s + 2<d$,  using Proposition \ref{mdcb4}, we obtain
\begin{align*}x &\simeq_{d-1} \phi_{(h_1;H\setminus h_1)}(X^{2^{s}-1})(X_{h}^{2^{d-s-1}-1}X_{j_{d-1}}^{2^{d-s-1}})^{2^s}\\
&\simeq_{d-1} \phi_{(h_1;H\setminus h_1)}(X^{2^{s}-1})(X_{h}X_{j_{d-1}}^{2^{d-s}-2})^{2^s}
= \phi_{(j_{d-1};H\cup h)}(X^{2^{d}-1}).
\end{align*}
The lemma holds with $i = j_{d-1}$, $I= H\cup h$.
\end{proof}

\begin{proof}[Proof of Lemma \ref{bdcbs}]  
Applying the Cartan formula, we have
$$Sq^1(X_\emptyset^{2^c-1}y_0^{2^c}) = \sum _{1\leqslant j \leqslant k} X_j^{2^c-1}y_j^{2^c},$$
where $c$ is a positive integer.  From this, we obtain
\begin{equation}\label{ctpbs}X_{i}^{2^{c}-1}y_i^{2^c} \equiv \sum _{1\leqslant j < i} X_j^{2^c-1}y_j^{2^c} + \sum _{i < j \leqslant k} X_j^{2^c-1}y_j^{2^c}.\end{equation}

If $r=0$, then $t_j = j$ and $I^{(j)} = \emptyset$ for $j>i$. Then, the first part of the lemma follows from the relation (\ref{ctpbs}) with $c =d$.

If $d>r>0$, then $\phi_{(i;I)}(X^{2^{d}-1})y_i^{2^d} = \phi_{(i_1;I\setminus i_1)}(X^{2^{r}-1})(X_i^{2^{c}-1}y_i^{2^c})^{2^{r}}$, with $c = d - r>0$ and $i_1 = \min I$. Hence,  using Lemma \ref{bdcb3} and the relation (\ref{ctpbs}), we get
\begin{align*}\phi_{(i;I)}(X^{2^{d}-1})y_i^{2^d} &\equiv \sum _{1\leqslant j < i}\phi_{(i_1;I\setminus i_1)}(X^{2^{r}-1})( X_j^{2^c-1}y_j^{2^c})^{2^{r}}\\ 
&\quad+ \sum _{i < j \leqslant k} \phi_{(i_1;I\setminus i_1)}(X^{2^{r}-1})(X_j^{2^c-1}y_j^{2^c})^{2^{r}}. 
\end{align*}
A simple computation, using Lemmas \ref{bdbss2} and \ref{bdcb3}, shows
\begin{align*} &\phi_{(i_1;I\setminus i_1)}(X^{2^{r}-1})( X_j^{2^c-1}y_j^{2^c})^{2^{r}} = \phi_{(j;I)}(X^{2^{d}-1})y_j^{2^d}, \text{ for $j < i$},\\
&\phi_{(i_1;I\setminus i_1)}(X^{2^{r}-1})( X_j^{2^c-1}y_j^{2^c})^{2^{r}}\equiv \phi_{(t_j;I^{(j)})}(X^{2^{d}-1})y_j^{2^d}, \text{ for $j > i$}.
\end{align*}
Hence, the first part of the lemma follows.

If $d>r+1$, then $\phi_{(i;I)}(X^{2^{d}-1})y_i^{2^d} = \phi_{(i;I)}(X^{2^{r+1}-1})(X_i^{2^{c}-1}y_i^{2^c})^{2^{r+1}}$, with $c = d - r - 1>0$. Hence,  using the relation (\ref{ctpbs}) and Lemma \ref{bdcb3}, we get
\begin{align*}\phi_{(i;I)}(X^{2^{d}-1})y_i^{2^d} &\equiv \sum _{1\leqslant j < i}\phi_{(i;I)}(X^{2^{r+1}-1})( X_j^{2^c-1}y_j^{2^c})^{2^{r+1}}\\ 
&\quad + \sum _{i < j \leqslant k} \phi_{(i;I)}(X^{2^{r+1}-1})(X_j^{2^c-1}y_j^{2^c})^{2^{r+1}}. 
\end{align*}
Applying Lemmas \ref{bdbss2} and \ref{bdcb3}, we have
\begin{align*} &\phi_{(i;I)}(X^{2^{r+1}-1})( X_j^{2^c-1}y_j^{2^c})^{2^{r+1}} = \phi_{(j;I\cup i)}(X^{2^{d}-1})y_j^{2^d}, \text{ for $j < i$},\\
&\phi_{(i;I)}(X^{2^{r+1}-1})( X_j^{2^c-1}y_j^{2^c})^{2^{r+1}}\equiv \phi_{(i;I\cup j)}(X^{2^{d}-1})y_j^{2^d}, \text{ for $j > i$}.
\end{align*}
So, the second part of the lemma is proved.
\end{proof}

\subsection{Proof of Lemma \ref{hq4}}\label{sub33}\

\medskip

In this subsection, we denote $\omega_{(t,k,d)} = \omega((x_{t+1}\ldots x_k)^{2^d-1}x_t^{2^d})$ with $d \geqslant k-t +1$,  $P_{(t,k)} = \mathbb F_2[x_t,\ldots ,x_k]\subset P_{(1,k)} = P_k$, and $n_t =\deg(\omega_{(t,k,d)}) = (k-t)(2^d-1) +2^d.$ 
\begin{lems}\label{bdbs3} Let $x$ be a monomial of degree $n_t$ in $P_{(t,k)}$. If $x \notin P_{(t,k)}^-(\omega_{(t,k,d)})$, then $\omega(x) = \omega_{(t,k,d)}$.
\end{lems}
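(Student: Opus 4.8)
The plan is to prove that $\omega_{(t,k,d)}$ is the maximum, in the left lexicographic order, among the weight vectors of all monomials of degree $n_t$ in $P_{(t,k)}$. Since this order is total on the weight vectors of monomials of a fixed degree, and since a single monomial $x$ of degree $n_t$ lies in $P_{(t,k)}^-(\omega_{(t,k,d)})$ exactly when $\omega(x) < \omega_{(t,k,d)}$, the assertion ``$x \notin P_{(t,k)}^-(\omega_{(t,k,d)})$ implies $\omega(x) = \omega_{(t,k,d)}$'' is equivalent to the single inequality $\omega(x) \leqslant \omega_{(t,k,d)}$ holding for every monomial $x$ of degree $n_t$. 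So it suffices to establish this maximality.

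First I would record the explicit shape of the target vector. Writing $\ell = k-t+1$ for the number of variables of $P_{(t,k)}$, reading off the dyadic digits of $x_t^{2^d}(x_{t+1}\cdots x_k)^{2^d-1}$ gives $\omega_{(t,k,d)} = (\ell-1,\ldots,\ell-1,1,0,0,\ldots)$, with $\ell-1$ in the first $d$ positions and a single $1$ in position $d+1$; equivalently $n_t = \ell\,2^d-(\ell-1)$. Note also that each coordinate of any weight vector in $\ell$ variables is at most $\ell$. Now suppose, for contradiction, that a monomial $x$ of degree $n_t$ satisfies $\omega(x) > \omega_{(t,k,d)}$, and let $j$ be the first index where the two vectors differ, so $\omega_i(x) = [\omega_{(t,k,d)}]_i$ for $i<j$ and $\omega_j(x) > [\omega_{(t,k,d)}]_j$. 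I split into three cases on the size of $j$. If $j > d+1$, then the matched coordinates $\omega_i(x)$ for $i < j$ already contribute the full degree $n_t$, forcing $\omega_j(x)=0$, contrary to $\omega_j(x)>0$. If $j = d+1$, then $\omega_i(x) = \ell-1$ for all $i \leqslant d$, so the remaining degree equals $n_t-(\ell-1)(2^d-1)=2^d$, which leaves room only for $\omega_{d+1}(x)=1$, contradicting $\omega_{d+1}(x) > 1$.

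The remaining case $j \leqslant d$ is the crux. Here $\omega_i(x) = \ell-1$ for $i<j$ and, since $\omega_j(x) > \ell-1$ and no coordinate exceeds $\ell$, necessarily $\omega_j(x) = \ell$. Put $R = \sum_{i>j} 2^{i-1}\omega_i(x)$, so that $2^j \mid R$. The degree identity reads $n_t = (\ell-1)(2^{j-1}-1) + \ell\,2^{j-1} + R$, and combining it with $n_t = \ell\,2^d-(\ell-1)$ and the fact that $2^j \mid \ell\,2^d$ (because $j \leqslant d$), I would reduce modulo $2^j$ to obtain $R \equiv -\bigl[(\ell \bmod 2)+((\ell-1)\bmod 2)\bigr]2^{j-1} \equiv 2^{j-1} \pmod{2^j}$, using that exactly one of $\ell$ and $\ell-1$ is odd. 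This contradicts $2^j \mid R$. The delicate point is precisely this step: the naive bound $R \geqslant 0$ is not contradictory, and one genuinely needs the $2$-adic computation, where the parity identity $(\ell \bmod 2)+((\ell-1)\bmod 2)=1$ produces the nonzero residue $2^{j-1}$. With all three cases excluded, $\omega(x) \leqslant \omega_{(t,k,d)}$ for every $x$, which is the claim. The standing hypothesis $d \geqslant k-t+1$ guarantees $\mu(n_t)=\ell-1$, so $\omega_{(t,k,d)}$ is in fact the weight vector of the minimal spike of degree $n_t$, and the statement can be read as the familiar principle that the minimal spike carries the largest weight vector.
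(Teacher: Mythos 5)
Your proof is correct, and it is essentially the argument the paper has in mind: the paper gives no details for this lemma, deferring to the analogous Lemma \ref{bdbs1} (and to the proof of Lemma 2.14 in \cite{su}), whose proof is exactly this analysis of the first index where $\omega(x)$ could exceed $\omega_{(t,k,d)}$ combined with a dyadic-digit computation on the degree. Your congruence modulo $2^j$ in the case $j \leqslant d$ is just a repackaging of the paper's $\alpha_{j-1}(\cdot)$ digit argument (the same device appears in the proof of Proposition \ref{mdcb4}), so the two routes coincide.
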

The proof of the lemma is similar to the one of Lemma \ref{bdbs1} (see the proof of Lemma 2.14 in \cite{su} for $t=1$).

Denote by $R_{(t,k,d)}$ the subspace of $P_{(t,k)}$ spanned by all $Sq^{2^i}(y)$ with $0\leqslant i \leqslant k-t$, where $y$ is a monomial of degree $n_t-2^i$ in $P_{(t,k)}$ such that $\omega_{j}(y) >0$ for some $j>d$.

For $J= (j_1,j_2, \ldots, j_{i})$, $t\leqslant j_1 < j_2 < \ldots <j_{i}\leqslant k$, $0\leqslant i \leqslant k-t+1$, we set 
$$y_{(J,t,k,d)} =(x_tx_{t+1}\ldots x_k)^{2^d-1}/ (x_{j_1}^{2^{i-1}}x_{j_2}^{2^{i-2}}\ldots x_{j_{i}}^{2^0}).$$ 
Here, by convention, $x_{j_1}^{2^{i-1}}x_{j_2}^{2^{i-2}}\ldots x_{j_{i}}^{2^0} = 1$ for $i=0$. Denote 
$$Y_{(t,k,d)} = \sum_{u=t}^kx_t^{2^{k-t}-1}y_{(I_t,t+1,k,d)}x_u^{2^d}.$$

\begin{lems}\label{bdbss} For $d \geqslant k\geqslant 2$, $\omega = \omega_{(1,k,d)}$, and $J^* = (1,2,\ldots, k-1)$,
$$\phi_{(1;I_1)}(X^{2^d-1})x_k^{2^d} \simeq_{(0,\omega)} Y_{(1,k-1,d)}x_k^{2^d-1} + Sq^{2^{k-1}}(y_{(J^*,1,k,d)})\ \text{\rm mod}(R_{(1,k,d)}).$$
\end{lems}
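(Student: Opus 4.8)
The plan is to reduce the statement to an explicit Cartan-formula computation organized by the left-lexicographic weight order, exploiting the dichotomy of Lemma~\ref{bdbs3}. First I would record the three objects explicitly. By the relation (\ref{rela22}) with $r=\ell(I_1)=k-1<d$ and $X_{\mathbb N_k}=1$,
$$\phi_{(1;I_1)}(X^{2^d-1})x_k^{2^d}=x_1^{2^{k-1}-1}\Big(\prod_{j=2}^{k-1}x_j^{2^d-2^{k-j}-1}\Big)x_k^{2^{d+1}-2},$$
while the definition of $y_{(J^*,1,k,d)}$ gives $y_{(J^*,1,k,d)}=y'\,x_k^{2^d-1}$ with $y'=\prod_{s=1}^{k-1}x_s^{2^d-1-2^{k-1-s}}\in P_{(1,k-1)}$. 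A check of dyadic digits shows $\omega(\phi_{(1;I_1)}(X^{2^d-1})x_k^{2^d})=\omega_{(1,k,d)}=\omega$, and every monomial occurring below has degree $n_1=\deg\omega$. By Lemma~\ref{bdbs3}, a monomial of this degree either lies in $P_k^-(\omega)$ or has weight exactly $\omega$; hence, working in the $\simeq_{(0,\omega)}$ sense, I only have to keep track of the weight-$\omega$ monomials.

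Next I would expand $Sq^{2^{k-1}}(y_{(J^*,1,k,d)})$ by the Cartan formula after splitting off $x_k^{2^d-1}$. Since $0\le b\le 2^{k-1}\le 2^{d-1}$, every bit of $b$ lies below $d$, so $\binom{2^d-1}{b}\equiv 1$ and $Sq^b(x_k^{2^d-1})=x_k^{2^d-1+b}$; therefore
$$Sq^{2^{k-1}}(y_{(J^*,1,k,d)})=\sum_{b=0}^{2^{k-1}}Sq^{2^{k-1}-b}(y')\,x_k^{2^d-1+b}.$$
I would then treat the pieces $b=0$ and $b>0$ separately.

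For the $b=0$ summand $Sq^{2^{k-1}}(y')\,x_k^{2^d-1}$ I would establish the companion congruence $Sq^{2^{k-1}}(y')\simeq_{(0,\omega_{(1,k-1,d)})}Y_{(1,k-1,d)}$ in $P_{(1,k-1)}$ (modulo $R_{(1,k-1,d)}$), a statement of the same flavour in one fewer variable and an outright equality when $k=2$, since $Sq^2(x_1^{2^d-2})=x_1^{2^d}=Y_{(1,1,d)}$. Multiplication by $x_k^{2^d-1}$ sends $R_{(1,k-1,d)}$ into $R_{(1,k,d)}$ up to strictly lower weight, so this summand accounts for $Y_{(1,k-1,d)}x_k^{2^d-1}$. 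The summands with $b>0$ must then reproduce the single weight-$\omega$ monomial $\phi_{(1;I_1)}(X^{2^d-1})x_k^{2^d}$, and here $R_{(1,k,d)}$ is indispensable: its $x_k$-exponent $2^{d+1}-2$ exceeds every exponent $2^d-1+b$ produced by the expansion. I would bridge this gap by chaining relations $Sq^{2^i}(y'')\in R_{(1,k,d)}$, where $y''$ has degree $n_1-2^i$ and $\omega_j(y'')>0$ for some $j>d$; each such relation equates $0$ with an alternating sum of weight-$\omega$ monomials (for instance $x_1x_k^{2^{d+1}-2}\equiv x_1^2x_k^{2^{d+1}-3}$ modulo $R_{(1,k,d)}$), and telescoping them—using Proposition~\ref{mdcb4} and Lemma~\ref{bdad} to rewrite the leading monomials—collapses the $b>0$ part onto $\phi_{(1;I_1)}(X^{2^d-1})x_k^{2^d}$. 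Adding the two pieces yields $Sq^{2^{k-1}}(y_{(J^*,1,k,d)})\simeq_{(0,\omega)}Y_{(1,k-1,d)}x_k^{2^d-1}+\phi_{(1;I_1)}(X^{2^d-1})x_k^{2^d}$ modulo $R_{(1,k,d)}$, which is the assertion.

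The principal obstacle is the combinatorial control of this telescope: tracking the carries in the binary additions $e_s\mapsto e_s+a_s$ of the surviving Cartan terms (whose parities are dictated by the dyadic expansions), and selecting exactly the family of $R_{(1,k,d)}$-relations that converts the modest powers $x_k^{2^d-1+b}$ into the large power $x_k^{2^{d+1}-2}$. A secondary delicate point is the companion identity $Sq^{2^{k-1}}(y')\simeq Y_{(1,k-1,d)}$ and the verification that multiplication by $x_k^{2^d-1}$ carries $R_{(1,k-1,d)}$ into $R_{(1,k,d)}$ modulo lower weight; once both are in hand, the $b=0$/$b>0$ decomposition completes the proof.
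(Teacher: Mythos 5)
Your framework is sound as far as it goes: the explicit formulas, the weight computation $\omega(\phi_{(1;I_1)}(X^{2^d-1})x_k^{2^d})=\omega_{(1,k,d)}$, the use of Lemma~\ref{bdbs3} to reduce everything to tracking weight-$\omega$ monomials, and the Cartan splitting $Sq^{2^{k-1}}(y_{(J^*,1,k,d)})=\sum_{b}Sq^{2^{k-1}-b}(y')x_k^{2^d-1+b}$ are all correct. But the proof stops exactly where the lemma begins. The two claims on which everything rests are left unproven: (i) the ``companion congruence'' $Sq^{2^{k-1}}(y')\simeq_{(0,\omega_{(1,k-1,d)})}Y_{(1,k-1,d)}$ mod $R_{(1,k-1,d)}$, and (ii) the collapse of the $b>0$ terms onto $\phi_{(1;I_1)}(X^{2^d-1})x_k^{2^d}$ modulo $R_{(1,k,d)}$. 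Claim (i) is not a lemma of the paper and does not follow from one: Lemma~\ref{bdbss0} expresses $Y_{(1,k-1,d)}$ as a sum of \emph{many} terms $Sq^{2^i}(y_{(J,1,k-1,d)})$ with $0\leqslant i\leqslant k-2$, which is a different statement from equating it with the single operation $Sq^{2^{k-1}}$ applied to the full-length element $y'$. It happens to be checkable in tiny cases ($k=2$ it is an equality; for $k=3$, $d=3$ one can exhibit a chain of $Sq^1,Sq^2$ relations in $R_{(1,2,3)}$ joining $x_1^5x_2^{10}$ to $x_1x_2^{14}$), but a general proof would require an induction and chain construction of essentially the same difficulty as Lemma~\ref{bdbss} itself, so deferring to it is circular in substance. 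Claim (ii) you yourself label ``the principal obstacle'': the target monomial has $x_k$-exponent $2^{d+1}-2$, strictly larger than every exponent $2^d-1+b$ with $b\leqslant 2^{k-1}$ occurring in your expansion, so nothing short of an explicit, $d$- and $k$-dependent family of $R_{(1,k,d)}$-relations will produce it, and no such family is constructed. Since (i) and (ii) together are equivalent to the lemma, what remains unproven is the entire content.

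It is worth noting that the paper avoids both problems by running the computation in the opposite direction: it expands the left-hand side $\phi_{(1;I_1)}(X^{2^d-1})x_k^{2^d}$ by the Cartan formula, first in the base case $d=k$ (where the corrections $Sq^{2^i}(W_i)$, $i\leqslant k-2$, visibly lie in $R_{(1,k,d)}$ and $W_{k-1}=y_{(J^*,1,k,k)}$ supplies the $Sq^{2^{k-1}}$ term), and then bootstraps to $d>k$ by multiplying by $(X_1x_k)^{2^d-2^k}$ and reducing, via Lemmas~\ref{hq0}, \ref{bdad}, \ref{bdbss2} and Proposition~\ref{mdcb4}, to a second Cartan expansion of $\phi_{(1;I_1^*)}(X^{2^{k-1}-1})X_k^{2^d-2^{k-1}}x_k^{2^d}$, in which $Sq^{2^{k-1}}(y_{(J^*,1,k,d)})$ and $Y_{(1,k-1,d)}x_k^{2^d-1}$ appear directly as the surviving terms. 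In that direction no telescoping is needed, because the large exponent $2^{d+1}-2$ is present from the start and is only ever broken apart, never reassembled. If you want to salvage your route, you would have to prove (i) by an induction on $k$ parallel to Lemma~\ref{bdbss0} and then give an explicit algorithm producing the chains in (ii); as written, the proposal is a plan, not a proof.
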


\begin{proof} We observe that $\phi_{(1;I_1)}(X^{2^d-1})x_k^{2^d} = \big(\phi_{(1;I_1)}(X^{2^k-1})x_k^{2^k}\big)(X_1x_k)^{2^d-2^k}$ and $\phi_{(1;I_1)}(X^{2^k-1})x_k^{2^k} = \Big(\prod_{j=1}^kX_j^{2^{k-j}}\Big)x_k^{2^k}$. By a direct computation using the Cartan formula, we get
\begin{align*}\phi_{(1;I_1)}(X^{2^k-1})x_k^{2^k} &\simeq_{(0,\omega_{(1,k,k)})} \sum_{u=1}^{k-1}\Big(\prod_{j=1}^{k-1}X_j^{2^{k-j-1}}\Big)X_u^{2^{k-1}}x_u^{2^k} + \sum_{i=0}^{k-1}Sq^{2^i}(W_i),
\end{align*}
where $W_i = (\prod_{j=1}^{ k-i-1}X_j^{2^{k-j}})(\prod_{j=k-i }^{k-1}X_j^{2^{k-j-1}})X_\emptyset^{2^i}x_k^{2^k-2^{i+1}}$. Using Lemma \ref{hq0}, we have $\Big(\prod_{j=1}^{k-1}X_j^{2^{k-j-1}}\Big)X_u^{2^{k-1}}\simeq_{k-1} \phi_{(1;I_1^*)}(X^{2^k-1})$, for $1< u <k$ and $I_1^* = (2,3, \ldots, k-1)$. Hence,
\begin{align*}\Big(\prod_{j=1}^{k-1}X_j^{2^{k-j-1}}\Big)X_u^{2^{k-1}} \simeq_{(0,\omega_{(1,k,k)})}
 \phi_{(1;I_1^*)}(X^{2^k-1}) + \sum_{j=0}^{k-2}Sq^{2^j}(g_j), 
\end{align*}
where $g_j$ are suitable polynomials in $P_k$.
 
A simple computation shows that 
$\Big(\prod_{j=1}^{k-1}X_j^{2^{k-j-1}}\Big)X_1^{2^{k-1}} = \phi_{(1;I_1^*)}(X^{2^k-1})$; 
if $g \in P_k^-(\omega_{(1,k,k)})$, then $(X_1x_k)^{2^d-2^k}g  \in P_k^-(\omega)$; $W_{k-1} = y_{(J^*,1,k,k)}$;
\begin{align*}
&Sq^{2^i}(W_i)(X_1x_k)^{2^d-2^k} = Sq^{2^i}(W_i(X_1x_k)^{2^d-2^k})  \in R_{(1,k,d)}, 0\leqslant i \leqslant k-2;\\
&Sq^{2^j}(g_j)x_u^{2^k}(X_1x_k)^{2^d-2^k} = Sq^{2^j}(g_jx_u^{2^k}(X_1x_k)^{2^d-2^k}) \in R_{(1,k,d)},  0\leqslant j \leqslant k-2.
\end{align*}

From the above equalities, we obtain
\begin{align*}\phi_{(1;I_1)}(X^{2^d-1})x_k^{2^d} \simeq_{(0,\omega)} &\sum_{u=1}^{k-1}\phi_{(1;I_1^*)}(X^{2^k-1})x_u^{2^k}(X_1x_k)^{2^d-2^k}\\
 &+ Sq^{2^{k-1}}(y_{(J^*,1,k,k)})(X_1x_k)^{2^d-2^k}\ \text{\rm mod}(R_{(1,k,d)}), 
\end{align*}
Since $\sum_{u=1}^{k-1}\phi_{(1;I_1^*)}(X^{2^k-1})x_u^{2^k} = Y_{(1,k-1,k)}x_k^{2^k-1}$, the lemma is true for $d=k$. 

Suppose that $d>k$. Then, $Sq^{2^{k-1}}(y_{(J^*,1,k,k)}(X_1x_k)^{2^d-2^k})\in R_{(1,k,d)}$. According to Lemma \ref{bdbs3}, $\phi_{(1;I_1^*)}(X^{2^k-1})x_u^{2^k}(X_1x_k)^{2^d-2^k} \in P_k^-(\omega)$, for $1 <u<k$. 

For $u=1$, we have
\begin{align*}
&\phi_{(1;I_1^*)}(X^{2^k-1})x_1^{2^k}(X_1x_k)^{2^d-2^k} = \phi_{(2;I_2^*)}(X^{2^{k-2}-1})(X_1^{2^{d-k+2}-3}X_k^2)^{2^{k-2}}x_k^{2^{d}}.
\end{align*}  
Here $I_2^* = (3,\ldots,k-1)$. By Lemma \ref{bdad}, 
$X_1^{2^{d-k+2}-3}X_k^2\simeq_{2} X_1X_k^{2^{d-k+2}-2}.$
So, using Proposition \ref{mdcb4}, we obtain
\begin{align*}\phi_{(2;I_2^*)}(X^{2^{k-2}-1})(X_1^{2^{d-k+2}-3}X_k^2)^{2^{k-2}} &\simeq_{k} \phi_{(2;I_2^*)}(X^{2^{k-2}-1})(X_1X_k^{2^{d-k+2}-2})^{2^{k-2}}\\
&= \phi_{(1;I_1^*)}(X^{2^{k-1}-1})X_k^{2^{d}-2^{k-1}}.
\end{align*}
This equality implies
\begin{align*}\phi_{(1;I_1^*)}(X^{2^k-1})x_1^{2^k}&(X_1x_k)^{2^d-2^k} \simeq_{(0,\omega)}\\ &\Big(\phi_{(1;I_1^*)}(X^{2^{k-1}-1})X_k^{2^{d}-2^{k-1}} + \sum_{i=0}^{k-1}Sq^{2^i}(p_i)\Big)x_k^{2^d},
\end{align*}
where $p_i$ are suitable polynomials in $P_k$. It is easy to see that $Sq^{2^i}(p_i)x_k^{2^d} = Sq^{2^i}(p_ix_k^{2^d}) \in R_{(1,k,d)}$. So, combining the above equalities gives
\begin{align*}\phi_{(1;I_1)}&(X^{2^d-1})x_k^{2^d} \simeq_{(0,\omega)} \phi_{(1;I_1^*)}(X^{2^{k-1}-1})X_k^{2^{d}-2^{k-1}}x_k^{2^d} \ \text{ mod}(R_{(1,k,d)}).
\end{align*}
Using the Cartan formula, we have
\begin{align*}\phi_{(1;I_1^*)}(X^{2^{k-1}-1})X_k^{2^{d}-2^{k-1}}x_k^{2^{d}} &\simeq_{(0,\omega)} Sq^{2^{k-1}}\Big(\phi_{(1;I_1^*)}(X^{2^{k-1}-1})X_\emptyset^{2^{d}-2^{k-1}}\Big)\\
&\qquad + \sum_{u=1}^{k-1}\phi_{(1;I_1^*)}(X^{2^{k-1}-1})X_u^{2^{d}-2^{k-1}}x_u^{2^{d}}. 
\end{align*}
A simple computation shows: $\phi_{(1;I_1^*)}(X^{2^{k-1}-1})X_\emptyset^{2^{d}-2^{k-1}} = y_{(J^*,1,k,d)}$;
\begin{align*} 
\phi_{(1;I_1^*)}(X^{2^{k-1}-1})X_u^{2^{d}-2^{k-1}}&=\phi_{(1;I_1^*)}\big(X_{\{k-1,k\}}^{2^{k-1}-1}\big)X_{\{u,k\}}^{2^{d}-2^{k-1}}x_k^{2^d-1};\\
\phi_{(1;I_1^*)}\big(X_{\{k-1,k\}}^{2^{k-1}-1}\big)X_{\{1,k\}}^{2^{d}-2^{k-1}}&= \phi_{(1;I_1^*)}\big(X_{\{k-1,k\}}^{2^{d}-1}\big).
\end{align*} 
For $1<u<k$, $I_1^*\cup u = I_1^*$.  Since $\ell(I_1^*)=k-2$, applying Lemma \ref{bdbss2} for $P_{k-1}$, 
$$\phi_{(1;I_1^*)}\big(X_{\{k-1,k\}}^{2^{k-1}-1}\big)X_{\{u,k\}}^{2^{d}-2^{k-1}}\simeq_{k} \phi_{(1;I_1^*)}\big(X_{\{k-1,k\}}^{2^{d}-1}\big).$$ 
This equality implies
\begin{align*}&\phi_{(1;I_1^*)}(X^{2^{k-1}-1})X_u^{2^{d}-2^{k-1}}x_u^{2^{d}}
\simeq_{(0,\omega)} \Big(\phi_{(1;I_1^*)}\big(X_{\{k-1,k\}}^{2^{d}-1}\big)+\sum_{i=0}^{k-1}Sq^{2^i}(h_i)\Big)x_u^{2^{d}}x_k^{2^d-1},
\end{align*}
where $h_i$ are suitable polynomials in $P_{k-1}$. Using the Cartan formula and Lemma \ref{bdbs3}, we obtain
$Sq^{2^i}(h_i)x_u^{2^{d}}x_k^{2^d-1}  \in R_{(1,k,d)}+P_k^-(\omega)$.

 Combining the above equalities gives
\begin{align*}&\phi_{(1;I_1)}(X^{2^{d}-1})x_k^{2^{d}} \simeq_{(0,\omega)} \phi_{(1;I_1^*)}(X^{2^{k-1}-1})X_k^{2^{d}-2^{k-1}}x_k^{2^{d}}\ \text{ mod}(R_{(1,k,d)})\\
&\simeq_{(0,\omega)} \Big(\sum_{u=1}^{k-1}\phi_{(1;I_1^*)}\big(X_{\{k-1,k\}}^{2^{d}-1}\big)x_u^{2^{d}}\Big)x_k^{2^d-1}+ Sq^{2^{k-1}}(y_{(J^*,1,k,d)}) \ \text{ mod}(R_{(1,k,d)}) \\
&= Y_{(1,k-1,d)}x_k^{2^d-1} + Sq^{2^{k-1}}(y_{(J^*,1,k,d)}) \ \text{ mod}(R_{(1,k,d)}).
\end{align*}
The lemma is proved.
\end{proof}

 Set $\mathcal M_{(t,k,i)} = \{J= (j_1,j_2, \ldots, j_{i}) : t\leqslant j_1 < j_2 < \ldots <j_{i}\leqslant k\}, 0\leqslant i \leqslant k-t+1$.

\begin{lems}\label{bdbss0} For $d \geqslant k-t+1$, 
$$Y_{(t,k,d)}\simeq_{(0,\omega_{(t,k,d)})} \sum_{0\leqslant i\leqslant k-t}\sum_{J \in \mathcal M_{(t,k,i)}} Sq^{2^i}(y_{(J,t,k,d)}) \ \text{\rm mod}(R_{(t,k,d)}).$$
\end{lems}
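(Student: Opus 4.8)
The plan is to reduce to the case $t=1$ and then induct on $k$. The substitution $x_{t+j}\mapsto x_{1+j}$ for $0\leqslant j\leqslant k-t$ is an $\mathbb F_2$-algebra isomorphism $P_{(t,k)}\cong P_{k-t+1}$ commuting with the $Sq^i$, and it carries $\omega_{(t,k,d)}$, $Y_{(t,k,d)}$, $y_{(J,t,k,d)}$, $R_{(t,k,d)}$, $\mathcal M_{(t,k,i)}$ and $P_{(t,k)}^-(\omega_{(t,k,d)})$ to the corresponding objects with $t$ replaced by $1$ and $k$ by $k-t+1$; all these are defined purely in terms of exponents, so the statement for $(t,k)$ is the image of the statement for $(1,k-t+1)$. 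Hence it suffices to treat $t=1$, all $k\geqslant 1$, $d\geqslant k$. The base case $k=1$ is immediate: $Y_{(1,1,d)}=x_1^{2^d}$, and the right-hand side is the single term $Sq^1(x_1^{2^d-1})=x_1^{2^d}$.

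For the inductive step I would write $Y_{(1,k,d)}=\sum_{u=1}^{k}\phi_{(1;I_1)}(X^{2^d-1})x_u^{2^d}$ (using the identity $x_1^{2^{k-1}-1}y_{(I_1,2,k,d)}=\phi_{(1;I_1)}(X^{2^d-1})$), split off the top summand $u=k$, and apply Lemma \ref{bdbss} to it, obtaining, with $\omega=\omega_{(1,k,d)}$ and $J^*=(1,\ldots,k-1)$,
$$\phi_{(1;I_1)}(X^{2^d-1})x_k^{2^d} \simeq_{(0,\omega)} Y_{(1,k-1,d)}x_k^{2^d-1} + Sq^{2^{k-1}}\big(y_{(J^*,1,k,d)}\big) \pmod{R_{(1,k,d)}}.$$
The key combinatorial observation is that $\mathcal M_{(1,k,i)}$ splits according to whether $k\in J$: the $J$ with $k\notin J$ are exactly $\mathcal M_{(1,k-1,i)}$ and satisfy $y_{(J,1,k,d)}=y_{(J,1,k-1,d)}x_k^{2^d-1}$, while those with $k\in J$ have the form $J'\cup\{k\}$ with $J'\in\mathcal M_{(1,k-1,i-1)}$. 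Applying the inductive hypothesis to $Y_{(1,k-1,d)}$, multiplying by $x_k^{2^d-1}$, and re-expanding each $Sq^{2^i}(y_{(J,1,k-1,d)})x_k^{2^d-1}$ by the Cartan formula as $Sq^{2^i}(y_{(J,1,k,d)})$ plus lower cross-terms, I would recover precisely the part of the target sum indexed by the $J$ with $k\notin J$. Here the term $Sq^{2^{k-1}}(y_{(J^*,1,k,d)})$ coming from Lemma \ref{bdbss} supplies exactly the top index $i=k-1$, $J=J^*\in\mathcal M_{(1,k-1,k-1)}$, which the inductive hypothesis (running only up to $i=k-2$) omits.

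It then remains to show that the untouched summands $\sum_{u<k}\phi_{(1;I_1)}(X^{2^d-1})x_u^{2^d}$ together with the Cartan cross-terms $\sum_{b\geqslant 1}Sq^{2^i-b}(y_{(J,1,k-1,d)})x_k^{2^d-1+b}$ reproduce, modulo $R_{(1,k,d)}+P_k^-(\omega)$, the part of the target sum indexed by the $J$ with $k\in J$. This is the heart of the argument and the step I expect to be the main obstacle. It is a Cartan-formula bookkeeping: one expands each $Sq^{2^i}(y_{(J'\cup\{k\},1,k,d)})$, pairs off the terms that cancel in characteristic $2$, and shows that every remaining discrepancy either drops the weight vector strictly below $\omega_{(1,k,d)}$, landing it in $P_k^-(\omega)$, or produces a factor whose dyadic expansion exceeds the $d$-th block, i.e. $\omega_j>0$ for some $j>d$, so that it is rewritable through the generators $Sq^{2^i}(y)$ defining $R_{(1,k,d)}$. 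Lemma \ref{bdbs3} is essential at this point, since it forces every monomial of degree $n_1$ surviving outside $P_k^-(\omega)$ to have weight vector exactly $\omega_{(1,k,d)}$, which pins down the admissible exponent patterns and makes the cancellations transparent. Assembling the three blocks — the $k\notin J$ part, the $k\in J$ part, and the discarded error space — yields $Y_{(1,k,d)}\simeq_{(0,\omega)}\sum_{i,J}Sq^{2^i}(y_{(J,1,k,d)})$ modulo $R_{(1,k,d)}$, completing the induction.
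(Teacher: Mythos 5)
Your reduction to $t=1$, the induction on $k$, the base case, and your treatment of the summand $u=k$ all coincide with the paper's argument: Lemma \ref{bdbss}, the inductive hypothesis applied to $Y_{(1,k-1,d)}$, and the Cartan expansion $Sq^{2^{i}}(y_{(J,1,k-1,d)})x_k^{2^d-1}\simeq_{(0,\omega)}Sq^{2^{i}}(y_{(J,1,k,d)})+y_{(J,1,k,d)}x_k^{2^{i}}$ do yield the part of the target sum with $k\notin J$. But the step you yourself flag as the main obstacle is a genuine gap, and the ``Cartan-formula bookkeeping'' you describe cannot close it: showing that $\sum_{i}\sum_{J\ni k}Sq^{2^{i}}(y_{(J,1,k,d)})$ matches $\sum_{u<k}\phi_{(1;I_1)}(X^{2^d-1})x_u^{2^d}$ plus the cross-terms, modulo $R_{(1,k,d)}+P_k^-(\omega)$, is (after unsquaring) equivalent to the lemma itself at parameters $(k-1,d-1)$, so it is not a finite cancellation check. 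The ingredient your outline never uses, and which the paper's proof turns on, is the identity
$$\sum_{u=1}^{k-1}\phi_{(1;I_1)}(X^{2^d-1})x_u^{2^d}\;=\;X_k\,Y_{(1,k-1,d-1)}^2\,x_k^{2^d-2},$$
which permits a \emph{second} application of the inductive hypothesis, at parameters $(k-1,d-1)$ (legitimate since $d-1\geqslant k-1$), inside the square. Proposition \ref{mdcb1} and the Cartan formula then give
$$X_k\big(Sq^{2^{i}}(y_{(J,1,k-1,d-1)})\big)^2x_k^{2^d-2}\simeq_{(0,\omega)}Sq^{2^{i+1}}\big(y_{(J\cup k,1,k,d)}\big)+y_{(J\cup k,1,k,d)}x_k^{2^{i+1}},$$
using $X_ky_{(J,1,k-1,d-1)}^2x_k^{2^d-2}=y_{(J\cup k,1,k,d)}$; this, and nothing short of it, produces the $k\in J$ part of the target sum, with the index shift $i\mapsto i+1$ matching $\mathcal M_{(1,k,i+1)}=\{J\cup k:J\in\mathcal M_{(1,k-1,i)}\}\cup\mathcal M_{(1,k-1,i+1)}$.

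There is also a flaw in the error-handling you propose. Your dichotomy — every discrepancy either has weight strictly below $\omega_{(1,k,d)}$ or has $\omega_j>0$ for some $j>d$ and is therefore ``rewritable through the generators of $R_{(1,k,d)}$'' — is unsound: $R_{(1,k,d)}$ is spanned by the \emph{images} $Sq^{2^i}(y)$ of monomials with a high dyadic digit, and a monomial possessing such a digit does not by itself lie in $R_{(1,k,d)}$. This matters precisely for the cross-terms: neither $y_{(J,1,k,d)}x_k^{2^i}$ nor $y_{(J\cup k,1,k,d)}x_k^{2^{i+1}}$ lies in $R_{(1,k,d)}+P_k^-(\omega)$ individually, since by Lemma \ref{bdbss2} each is congruent, modulo $\mathcal A_k^+P_k+P_k^-(\omega)$, to the same monomial $\phi_{(j_1;J\cup k\setminus j_1)}(X^{2^d-1})$, whose class is nonzero. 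Only their \emph{pairwise sums} are negligible, because in the sum these leading monomials cancel and what survives has the form $\sum_jSq^{2^j}(z_j)x_k^{2^d}=\sum_jSq^{2^j}(z_jx_k^{2^d})\in R_{(1,k,d)}$. So the leftover terms from the two halves of the argument must be matched against each other across the two inductive applications; discarding them one family at a time, as your outline suggests, would fail.
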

\begin{proof} Note that $Y_{(t,k,d)} = Y_1(x_t,\ldots , x_k) \in P_{(t,k)}$. So, we need only to prove the lemma for $Y_1 = Y_{(1,k,d)}$ and $\omega = \omega_{(1,k,d)} = \omega(X_1^{2^d-1}x_1^{2^d})$. The proof proceeds by induction on $k$. 

It is easiy to see that $Y_{(1,1,d)} = x_1^{2^d} = Sq^1(x_1^{2^d-1})$, $y_{(\emptyset,1,1,d)} = x_1^{2^d-1}$, $\mathcal M_{(1,1,0)} = \{\emptyset\}$ and $R_{(1,1,d)} = 0$. Hence, the lemma is true for $k=1$. 

Suppose that $k > 1$ and the lemma is true for $k-1$. We have 
$$Y_{(1,k,d)} = X_kY_{(1,k-1,d-1)}^2x_k^{2^d-2} + \phi_{(1;I_1)}(X^{2^d-1})x_k^{2^d}.$$
It is easy to see that if $h \in P_{k-1}^-(\omega_{(1,k-1,d-1)})$, then $X_kx_k^{2^d-2}h^2 \in P_{k}^-(\omega)$. So, using the inductive hypothesis, we get
\begin{align*}
X_kY_{(1,k-1,d-1)}^2&x_k^{2^d-2} \simeq_{(0,\omega)} \sum_{j=0}^{k-2} X_k(Sq^{2^j}(h_j))^2x_k^{2^d-2}\\ 
&+\sum_{0\leqslant i\leqslant k-2}\sum_{J \in \mathcal M_{(1,k-1,i)}} X_k\big(Sq^{2^{i}}(y_{(J,1,k-1,d-1)})\big)^2x_k^{2^d-2},
\end{align*}
where $h_j$ are suitable polynomials in $P_{k-1}$. 

By a direct computation using Proposition \ref{mdcb1}, Lemma \ref{bdbs3} and the Cartan formula, we obtain $X_k(Sq^{2^j}(h_j))^2x_k^{2^d-2}\in R_{(1,k,d)} + P_k^-(\omega),$ and
\begin{align*}
 X_k\big(Sq^{2^{i}}&(y_{(J,1,k-1,d-1)})\big)^2x_k^{2^d-2}\simeq_{(0,\omega)}\\ &Sq^{2^{i+1}}(X_ky_{(J,1,k-1,d-1)}^2x_k^{2^d-2}) + (X_ky_{(J,1,k-1,d-1)}^2x_k^{2^d-2})x_k^{2^{i+1}}.
\end{align*}
A simple computation shows that $X_ky_{(J,1,k-1,d-1)}^2x_k^{2^d-2} = y_{(J\cup k,1,k,d)}$. 

Observe that if $g \in P_{k-1}^-(\omega_{(1,k-1,d)})$, then $gx_k^{2^d-1} \in P_{k}^-(\omega)$. Hence, using Lemma \ref{bdbss} and the inductive hypothesis, we have
\begin{align*}
\phi_{(1;I_1)}(X^{2^d-1})x_k^{2^d} &\simeq_{(0,\omega)} \sum_{j=0}^{k-2} Sq^{2^j}(g_j)x_k^{2^d-1} + Sq^{2^{k-1}}(y_{(J^*,1,k,d)})\\ 
&\qquad+\sum_{0\leqslant i\leqslant k-2}\sum_{J \in \mathcal M_{(1,k-1,i)}} Sq^{2^{i}}(y_{(J,1,k-1,d)})x_k^{2^d-1},
\end{align*}
where $g_j$ are suitable polynomials in $P_{k-1}$. Using Lemma \ref{bdbs3} and the Cartan formula, we get $Sq^{2^j}(g_j)x_k^{2^d-1} \in R_{(1,k,d)}+P_k^-(\omega)$ and
$$Sq^{2^{i}}(y_{(J,1,k-1,d)})x_k^{2^d-1} \simeq_{(0,\omega)} Sq^{2^{i}}(y_{(J,1,k,d)}) + y_{(J,1,k,d)}x_k^{2^i}.$$
If $J =\emptyset$, then $i=0$ and $y_{(J,1,k,d)}x_k^{2^i} = y_{(J\cup k,1,k,d)}x_k^{2^{i+1}} = X_k^{2^d-1}x_k^{2^d}$. If $J \ne \emptyset$, then
\begin{align*}y_{(J,1,k,d)}x_k^{2^i}&=\phi_{(j_1;J\setminus j_1)}(X^{2^{i}-1})X_k^{2^d-2^{i}}x_k^{2^d}\\
y_{(J\cup k,1,k,d)}x_k^{2^{i+1}}&=\phi_{(j_1;J\cup k\setminus j_1)}(X^{2^{i+1}-1})X_k^{2^d-2^{i+1}}x_k^{2^{d}}.\end{align*}
Here $ j_1 = \min J$. 
Since $0\leqslant \ell(J\setminus j_1) < \ell(J\cup k\setminus j_1) = i \leqslant k-2$, using Lemma \ref{bdbss2}, we obtain
\begin{align*}y_{(J,1,k,d)}x_k^{2^i}&=\phi_{(j_1;J\setminus j_1)}(X^{2^{i}-1})X_k^{2^d-2^{i}}\simeq_{k} \phi_{(j_1;J\cup k\setminus j_1)}(X^{2^{d}-1}),\\
y_{(J\cup k,1,k,d)}x_k^{2^{i+1}}&=\phi_{(j_1;J\cup k\setminus j_1)}(X^{2^{i+1}-1})X_k^{2^d-2^{i+1}}\simeq_{k} \phi_{(j_1;J\cup k\setminus j_1)}(X^{2^{d}-1}).\end{align*}
This implies
$$y_{(J,1,k,d)}x_k^{2^i} + y_{(J\cup k,1,k,d)}x_k^{2^{i+1}} \simeq_{(0,\omega)} \sum_{j=0}^{k-1}Sq^{2^j}(z_j)x_k^{2^d},$$
where $z_j$ are suitable polynomials in $P_k$. 
By using the Cartan formula, we have $Sq^{2^j}(z_j)x_k^{2^d}= Sq^{2^j}(z_jx_k^{2^d})\in R_{(1,k,d)}$. It is easy to see that  $\mathcal M_{(1,k,0)}= \mathcal M_{(1,k-1,0)} = \{\emptyset\}$, $\mathcal M_{(1,k-1,k-1)}=\{J^*\}$ and
\begin{align*}&\mathcal M_{(1,k,i)} = \{J\cup k : J \in \mathcal M_{(1,k-1,i-1)}\}\cup \mathcal M_{(1,k-1,i)},\ 1\leqslant i \leqslant k-1.
\end{align*} 
Now, the lemma follows from the above equalities.
\end{proof}

\begin{proof}[Proof of Lemma \ref{hq4}] For $t=1$, the lemma follows from Lemma \ref{bdbss0}. 
For $t>1$, we have $Y_t = Z^{2^d-1}Y_{(t,k,d)}$ with $Z = x_1x_2\ldots x_{t-1}$.

Observe that if $h \in P_{(t,k)}^-(\omega_{(t,k,d)})$, then $Z^{2^d-1}h \in P_k^-(\omega).$

Let $Sq^{2^i}(y) \in R_{(t,k,d)}$ with $y$ a monomial in $P_{(t,k)}$, $0\leqslant i \leqslant k-t$, and $g:=Z^{2^d-1}Sq^{2^i}(y) $. According to the Cartan formula, 
\begin{align*}  g = Sq^{2^i}\big(Z^{2^d-1}y\big) + \sum _{1 \leqslant v \leqslant 2^i}Sq^v(Z^{2^d-1})Sq^{2^i - v}(y).
\end{align*} 
For $1\leqslant v < 2^i$, by using the Cartan formula, we can easily show that if a monomial $z$ appears as a term of the polynomial $Sq^v(Z^{2^d-1})Sq^{2^i - v}(y)$, then $\omega_u(z) < k-1$ for some $u\leqslant d$. So, $Sq^v(Z^{2^d-1})Sq^{2^i - v}(y) \in P_k^-(\omega)$. Hence, using the Cartan formula and Lemma \ref{bdbs3}, we get
$$g \simeq_{(i+1,\omega)} Sq^{2^i}(Z^{2^d-1})y  \simeq_{(0,\omega)} \sum_{1\leqslant j <t} Z^{2^d-1}x_j^{2^i}y \in P_k^-(\omega).$$

Let $f := Z^{2^d-1}Sq^{2^i}(y_{(J,t,k,d)})$ with $J \in \mathcal M_{(t,k,i)}$, $0\leqslant i \leqslant k-t$. By an argument analogous to the previous one, we have
\begin{align*}f \simeq_{(i+1,\omega)} \sum _{1\leqslant j < t}Z^{2^d-1}x_j^{2^i}y_{(J,t,k,d)}= \sum _{1\leqslant j < t}\phi_{(j;J)}(X^{2^d-1})x_j^{2^d}.
\end{align*}
Since $i+1 \leqslant k-t+1$, using Lemma \ref{bdbss0} and the above equalities, we obtain
$$Y_t = Z^{2^d-1}Y_{(t,k,d)} \simeq_{(k-t+1,\omega)} \sum_{1\leqslant j < t}\sum_{J \in \mathcal M_{(t,k)}} \phi_{(j;J)}(X^{2^d-1})x_j^{2^d}.$$ 
Here  $\mathcal M_{(t,k)} = \bigcup_{i=0}^{k-t} \mathcal M_{(t,k,i)}$. Obviously $J\subset I_{t-1}$ and $J \ne I_{t-1}$ for all $J \in \mathcal M_{(t,k)}$. The lemma follows.
\end{proof}

\subsection{Proof of Lemma \ref{bdc1}}\label{sub34}\

\medskip
First, we prepare some lemmas. 
\begin{lems}\label{bddl} Let $d$ be a positive integer, $\omega = \omega(X^{2^d-1})$ and $(j;J), (i;I) \in \mathcal N_k$ with $\ell(I) < d$. Then
$$ p_{(j;J)}\phi_{(i;I)}(X^{2^d-1}) \simeq_{(0,\omega)} \begin{cases}X^{2^d-1}, &\text{ if }\ (i;I)\subset (j;J),\\ 0, &\text{ if }\ (i;I)\not\subset (j;J). \end{cases}$$
\end{lems}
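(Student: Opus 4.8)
The plan is to reduce the whole statement to a single mod-$2$ coefficient computation. First I would record that the target weight vector is $\omega=\omega(X^{2^d-1})=(\underbrace{k-1,\dots,k-1}_{d},0,0,\dots)$, of degree $(k-1)(2^d-1)$, and observe that \emph{every} monomial $y\in P_{k-1}$ of this degree satisfies $\omega(y)\leqslant\omega$, with equality if and only if $y=X^{2^d-1}$. Indeed $\omega_i(y)\leqslant k-1$ for all $i$ since there are only $k-1$ variables; if $\omega_i(y)=k-1$ for every $i\leqslant d$ then each variable carries the bits $0,\dots,d-1$, forcing exponent $2^d-1$ and hence $y=X^{2^d-1}$, while otherwise the first index $i_0\leqslant d$ with $\omega_{i_0}(y)<k-1$ already gives $\omega(y)<\omega$. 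Since both $\phi_{(i;I)}$ and the algebra map $p_{(j;J)}$ preserve degree, the polynomial $p_{(j;J)}\phi_{(i;I)}(X^{2^d-1})$ is homogeneous of degree $\deg\omega$, so every monomial occurring in it with weight $<\omega$ lies in $P_{k-1}^-(\omega)$. Hence $p_{(j;J)}\phi_{(i;I)}(X^{2^d-1})\simeq_{(0,\omega)}c\,X^{2^d-1}$, where $c\in\mathbb F_2$ is the coefficient of $X^{2^d-1}$, and the lemma becomes the assertion that $c=1$ precisely when $\{i\}\cup I\subseteq\{j\}\cup J$.

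Next I would compute $c$ explicitly. By (\ref{rela22}) with trivial second factor, $\phi_{(i;I)}(X^{2^d-1})=\prod_{\ell=1}^{k}x_\ell^{a_\ell}$ with $a_i=2^r-1$, $a_{i_t}=2^d-1-2^{r-t}$ for $i_t\in I$, and $a_\ell=2^d-1$ for $\ell\notin\{i\}\cup I$, where $r=\ell(I)$. Applying the multiplicative map $p_{(j;J)}$ leaves $x_\ell^{a_\ell}$ ($\ell<j$) fixed, shifts $x_\ell^{a_\ell}\mapsto x_{\ell-1}^{a_\ell}$ ($\ell>j$), and replaces $x_j^{a_j}$ by $\bigl(\sum_{s\in J}x_{s-1}\bigr)^{a_j}$. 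Over $\mathbb F_2$ this last factor expands, by Lucas' theorem, into the sum of all monomials $\prod_{s\in J}x_{s-1}^{b_s}$ for which the binary digits of the $b_s$ are pairwise disjoint and union to those of $a_j$, each with coefficient $1$. Thus $c$ counts, modulo $2$, the ways to distribute the bits of $a_j$ among the indices $s\in J$ so that for every target variable $x_m$ ($1\leqslant m\leqslant k-1$) the total exponent equals $2^d-1$.

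I would then read off these constraints. For $m<j$ the only contribution is $a_m$, forcing $a_m=2^d-1$, i.e. $\{1,\dots,j-1\}\cap(\{i\}\cup I)=\emptyset$, equivalently $i\geqslant j$. For $m\geqslant j$ the variable $x_m$ receives the fixed shift exponent $a_{m+1}$ together with the bits of $a_j$ assigned to $s=m+1$ when $m+1\in J$; matching to $2^d-1$ forces $m+1\in J$ whenever $m+1\in\{i\}\cup I$ (otherwise $a_{m+1}\ne 2^d-1$ could not be corrected), which together with $i\geqslant j$ is exactly $\{i\}\cup I\subseteq\{j\}\cup J$. Conversely, under this containment the amounts the bits of $a_j$ must supply are $2^d-2^r$ to $s=i$ (absent when $i=j$, since then $a_j=2^r-1$), $2^{r-t}$ to $s=i_t$, and $0$ to $s\in J\setminus(\{i\}\cup I)$; these are single powers of two on disjoint bit ranges summing to exactly $a_j$, so the distribution exists and is \emph{unique}, whence $c=1$. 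This proves $c=1$ if and only if the containment holds.

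The main obstacle I anticipate is the bookkeeping in the matching step: correctly tracking the index shift built into $p_{(j;J)}$ (so that the baseline exponent of $x_m$ for $m\geqslant j$ is $a_{m+1}$, not $a_m$), and verifying that the required subset of bit positions is forced and unique, so that the mod-$2$ count is genuinely $1$ rather than some larger even number. Splitting into the two subcases $i=j$ and $i>j$ (which differ only in whether $a_j=2^r-1$ or $a_j=2^d-1$) keeps this transparent; everything else is routine once the weight-vector reduction has collapsed the answer onto the single monomial $X^{2^d-1}$.
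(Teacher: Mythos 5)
Your proof is correct, and both of its ingredients hold up under scrutiny: the weight-vector reduction is valid (since $\omega_i(y)\leqslant k-1$ for every monomial in the $(k-1)$-variable algebra, any $y$ of degree $(k-1)(2^d-1)$ other than $X^{2^d-1}$ has a first index $i_0\leqslant d$ with $\omega_{i_0}(y)<k-1$, hence lies in $P_{k-1}^-(\omega)$), and the Lucas-theorem count is right, with the hypothesis $\ell(I)<d$ used exactly where it must be, namely to guarantee $2^r-1\ne 2^d-1$ so that the deficient exponents at $i$ and at the $i_t$ cannot pass for full ones.

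The paper's proof has the same computational core --- it too expands $\bigl(\sum_{s\in J}x_{s-1}\bigr)^{a_j}$ over $\mathbb F_2$ and isolates the unique surviving bit distribution $b_{i}=2^d-2^r$, $b_{i_t}=2^{r-t}$ --- but it is organized differently, and this is where your route genuinely diverges. Where you dispose of all unwanted monomials at once by the maximality observation (only $X^{2^d-1}$ attains weight $\omega$ in its degree, so the whole lemma collapses to one coefficient mod $2$), the paper dispatches them term by term: for each failure mode ($i\notin(j;J)$, some $i_t\notin(j;J)$, $c_j>0$, $b_t=0$, lowest bit of $b_t$ not at position $r-t$, \ldots) it exhibits an explicit digit position where the weight of the resulting monomial drops below $k-1$, split further into the subcases $i=j$ and $i\in J$. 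Your reduction eliminates essentially all of that case analysis, which constitutes the bulk of the paper's argument; what it buys is brevity and robustness, since the forced uniqueness of the bit assignment does the rest. It is worth noting that the paper proves a statement of exactly the flavor of your reduction --- Lemma \ref{bdbs3}, for the degrees $n_t$ --- but does not invoke it in its proof of this lemma; your proposal in effect shows that doing so streamlines the argument.
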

\begin{proof} Suppose that $(i;I)\not\subset (j;J).$ If $i \notin (j;J)$, then  from (\ref{rela22}), we see that $p_{(j;J)}(\phi_{(i;I)}(X^{2^d-1}))$ is a sum of monomials of the form
$$w = x_{i'}^{2^r-1}f_{k-1;i'}(z),$$
for suitable monomial $z$ in $P_{k-2}$. Here $i' =i$ if $j>i$ and $i' = i-1$ if $j<i$. In this case, we have $\alpha_r(2^r-1) = 0$ and $\omega_{r+1}(w) <k-1$. Hence, $w \in P_{k-1}^-(\omega)$.  Suppose that $i \in (j;J)$. Since $(i;I)\not\subset (j;J)$, there is $1 \leqslant t \leqslant r$, such that $i_t \notin (j;J)$, then  from (\ref{rela22}), we see that $p_{(j;J)}(\phi_{(i;I)}(X^{2^d-1}))$ is a sum of monomials of the form
$$w = x_{i_t-1}^{2^r-2^{r-t}-1}f_{k-1;i_t-1}(z),$$
for some monomial $z$ in $P_{k-2}$. It is easy to see that $\alpha_{r-t}(2^r-2^{r-t}-1) = 0$ and $\omega_{r-t+1}(w) < k-1$. Hence, $w \in P_{k-1}^-(\omega)$.  

Suppose that $(i;I)\subset (j;J).$ If $i = j$, then  from (\ref{rela22}), we see that the polynomial $p_{(j;J)}(\phi_{(i;I)}(X^{2^d-1}))$ is a sum of monomials of the form 
$$w = \Big(\prod_{1 \leqslant t \leqslant r}x_{i_t-1}^{2^r-2^{r-t}-1+ b_t}\Big)\Big(\prod _{j+1\in J\setminus I}x_j^{2^d-1+c_j}\Big)\Big(\prod _{j+1\notin J}x_j^{2^d-1}\Big),$$
where $b_1+ b_2 +\ldots + b_r + \sum_{j+1\in J\setminus I}c_j = 2^r-1.$ If $c_j >0$, then $\alpha_{u_j}(2^d-1+c_j) = 0$ with $u_j$ the smallest index such that $\alpha_{u_j}(c_j) = 1$. Hence, $w \in P_{k-1}^-(\omega)$.   
If $b_t=0$ for suitable $1 \leqslant t \leqslant r$, then $\alpha_{r-t}(2^r-2^{r-t}-1) = 0$ and $\omega_{r-t+1}(w) < k-1$. Hence, $w \in P_{k-1}^-(\omega)$.   Suppose that $b_t > 0$ for any $t$. Let $v_t$ be the smallest index such that $\alpha_{v_t}(b_t) = 1$. If $v_t \ne r-t$, then $\alpha_{v_t}(2^r-2^{r-t}-1+ b_t) = 0$ and  $w \in P_{k-1}^-(\omega)$.   So, $u_t = r-t$ and $b_t = 2^{r-t} + b_t'$ with $b_t' \geqslant 0$. If $b'_t >0$, then $\alpha_{v_t'}(2^r-2^{r-t}-1+ b_t) = \alpha_{v_t'}(2^r-1+ b_t') = 0$  with $v_t'$ the smallest index such that $\alpha_{v_t'}(b_t') = 1$. Hence, $w \in P_{k-1}^-(\omega)$.
If $b_t' = 0$ for $1 \leqslant t \leqslant r$, then $w = X^{2^d-1}$. 

 If $i \in J$, then  from (\ref{rela22}), we see that the polynomial $p_{(j;J)}(\phi_{(i;I)}(X^{2^d-1}))$ is a sum of monomials of the form 
$$w = x_{i-1}^{2^r-1+b_0}\Big(\prod_{1 \leqslant t \leqslant r}x_{i_t-1}^{2^r-2^{r-t}-1+ b_t}\Big)\Big(\prod _{j+1\in J\setminus (i;I)}x_j^{2^d-1+c_j}\Big)\Big(\prod _{j+1\notin J}x_j^{2^d-1}\Big),$$
where $b_0 + b_1+ b_2 +\ldots + b_r + \sum_{j+1\in J\setminus (i;I)}c_j = 2^d-1.$ By a same argument as above, we see that $w \in P_{k-1}^-(\omega)$ if  either $c_j >0$ or $b_t \ne 2^{r-t}$ for some $j, t$ with $t >0$. Suppose $c_j = 0$ and $b_t = 2^{r-t}$ with all $j$ and $t>0$. Then, $2^d-1 = b_0 + b_1+ b_2 +\ldots + b_r + \sum_{j+1\in J\setminus (i;I)}c_j = b_0 + 2^r-1$ and $w = X^{2^d-1}$.
The lemma is proved.
\end{proof}

The following is easily proved by a direct computation.

\begin{lems}\label{bdtn} The following diagram is commutative: 
\begin{displaymath}
\xymatrix{P_{k-1} \ar[rr]^{f_{k,i}} \ar[d]^{p_{(i;I_{i}^{*})}}&& P_k \ar[d]^{p_{(i+1;I_{i+1})}}\\
P_{k-2} \ar[rr]^{f_{k-1,i}} && P_{k-1}.}
\end{displaymath}
Here $I_{i}^{*} = (i+1,\ldots, k-1)$ for $1\leqslant i < k$.
\end{lems}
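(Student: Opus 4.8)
The plan is to use that all four arrows in the square are homomorphisms of algebras, so the two composites $p_{(i+1;I_{i+1})}\circ f_{k,i}$ and $f_{k-1,i}\circ p_{(i;I_i^*)}$ are themselves algebra homomorphisms $P_{k-1}\to P_{k-1}$. Hence it suffices to check that they agree on the algebra generators $x_1,\ldots,x_{k-1}$, and the verification naturally breaks into the three ranges $1\le j<i$, $j=i$, and $i<j\le k-1$. Here I read $I_{i+1}=(i+2,\ldots,k)$ from the convention $I_t=(t+1,\ldots,k)$, and $I_i^*=(i+1,\ldots,k-1)$ as in the statement; the maps $p_{(i;I_i^*)}\colon P_{k-1}\to P_{k-2}$ and $f_{k-1,i}\colon P_{k-2}\to P_{k-1}$ are the evident analogues of the given definitions with $k$ replaced by $k-1$.

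First I would compute the upper-right composite. On the two \emph{generic} ranges the maps simply shift indices: for $j<i$ both $f_{k,i}$ and $p_{(i+1;I_{i+1})}$ fix $x_j$, while for $i<j\le k-1$ one has $f_{k,i}(x_j)=x_{j+1}$ with $j+1>i+1$, so $p_{(i+1;I_{i+1})}$ sends it back to $x_j$. The single nontrivial case is $j=i$, where $f_{k,i}(x_i)=x_{i+1}$ hits the distinguished index of $p_{(i+1;I_{i+1})}$, giving $\sum_{s\in I_{i+1}}x_{s-1}=x_{i+1}+x_{i+2}+\cdots+x_{k-1}$.

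Next I would compute the lower-left composite in the same three ranges. For $j<i$ both maps fix $x_j$; for $i<j\le k-1$ one has $p_{(i;I_i^*)}(x_j)=x_{j-1}$ with $j-1\ge i$, so $f_{k-1,i}$ raises it to $x_j$; and for $j=i$, $p_{(i;I_i^*)}(x_i)=\sum_{s\in I_i^*}x_{s-1}=x_i+\cdots+x_{k-2}$, after which $f_{k-1,i}$ raises every index by one to produce $x_{i+1}+\cdots+x_{k-1}$. Comparing the two lists term by term shows the composites coincide on each generator, which proves commutativity.

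The computation involves no Steenrod action or admissibility input, so the only real care needed is the index bookkeeping: one must track the insertion shift in the two maps $f$ and the contraction shift in the two maps $p$ simultaneously, and in particular confirm that the \emph{merging} index set $I_{i+1}=(i+2,\ldots,k)$ used on the top edge corresponds, after the degree shift, to $I_i^*=(i+1,\ldots,k-1)$ on the bottom edge. This matching of indices is the whole of the (otherwise routine) difficulty.
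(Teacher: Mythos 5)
Your proof is correct and is exactly the ``direct computation'' the paper invokes: both composites are algebra homomorphisms, so checking them on the generators $x_j$ in the three ranges $j<i$, $j=i$, $i<j\leqslant k-1$ suffices, and your index bookkeeping (in particular $x_i\mapsto x_{i+1}+\cdots+x_{k-1}$ along both routes) is accurate.
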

\begin{proof}[Proof of Lemma \ref{bdc1}] \

i) Suppose that either $d \geqslant k$ or $d = k -1$ and $I\ne I_1$, then $\phi_{(i;I)}(z) = \phi_{(i;I)}(X^{2^d-1})f_i(\bar z)^{2^d}$. Hence,  the first part of the lemma follows from  Lemmas \ref{bddl} and \ref{bdcb3}. 

ii) Let $z \in \mathcal C$ and $d=k-1$. According to the relation (\ref{ctth}), $\phi_{(1;I_1)}(z) = \phi_{(2;I_2)}(X^{2^d-1})f_{k,1}(\bar z)^{2^d}$.  Hence,  from  Lemmas \ref{bddl},  \ref{bdtn} and \ref{bdcb3}, we have
\begin{align*}p_{(i;I)}(\phi_{(1;I_1)}(z)) 
&\equiv p_{(i;I)}(\phi_{(2;I_2)}(X^{2^d-1}))p_{(i;I)}(f_{k,1}(\bar z)^{2^d})\\
&\equiv \begin{cases}z &\text{if }  (i;I) = (1;I_1),\\
X^{2^d-1}f_{k-1,1}(p_{(1;I_{1}^*)}(\bar z^{2^d}))\in \langle \mathcal D \cup \mathcal E \rangle, &\text{if }  (i;I) = (2;I_2),\\
0,  &\text{otherwise}.
\end{cases}
\end{align*}

iii) Let $z \in \mathcal D$ and $d=k-1$. Since $\nu_1(z) = 2^{k-1}-1$, we have $\nu_1(\bar z) =0.$ Hence, using the relation (\ref{ctth}), Lemmas \ref{bddl},  \ref{bdtn} and \ref{bdcb3}, we get
\begin{align*}p_{(i;I)}(\phi_{(1;I_1)}(z)) 
&\equiv p_{(i;I)}(\phi_{(3;I_3)}(X^{2^d-1}))p_{(i;I)}(f_{k,2}(\bar z)^{2^d})\\
&\equiv \begin{cases}z &\text{if }  I_2\subset I,\\
X^{2^d-1}f_{k-1,2}(p_{(2;I_{2}^*)}(\bar z)^{2^d})\in \langle \mathcal E \rangle, &\text{if }  (i;I) = (3;I_3),\\
0,  &\text{otherwise}.
\end{cases}
\end{align*}

iv)  Let $z \in \mathcal E$ and $d=k-1$. Using the relation (\ref{ctth}),  
Lemmas \ref{bddl} and \ref{bdcb3}, we have $p_{(i;I)}(\phi_{(1;I_1)}(z))  \equiv 0$, if $(4;I_4) \not\subset (i;I)$. Suppose $(4;I_4) \subset (i;I)$. If $I_3= (4;I_4) \not\subset I$, then $(i;I) =(4;I_4)$. Using Lemmas \ref{bddl}, \ref{bdtn} and  \ref{bdcb3}, one gets 
\begin{align*}p_{(4;I_4)}(\phi_{(1;I_1)}(z)) &= p_{(4;I_4)}(\phi_{(4;I_4)}(X^{2^d-1}))p_{(4;I_4)}(f_{k,3}(\bar z)^{2^d})\\
 &\equiv X^{2^d-1}f_{k-1,3}(p_{(3;I_{3}^*)}(\bar z^{2^d})).
\end{align*}

Suppose that the monomial $y$ is a term of $f_{k-1,3}(p_{(3;I_{3}^*)}(\bar z^{2^d}))$. Since $\nu_1(\bar z) = \nu_2(\bar z) = 0$, we have $\omega_1(y) < k-3$. According to Theorem \ref{dlsig}, $X^{2^d-1}y \equiv 0$. Hence, $X^{2^d-1}f_{k-1,3}(p_{(3;I_{3}^*)}(\bar z^{2^d})) \equiv 0$. Now, using Lemmas \ref{bddl}, \ref{bdtn} and \ref{bdcb3}, we get
\begin{align*}p_{(i;I)}(\phi_{(1;I_1)}(z)) 
&\equiv p_{(i;I)}(\phi_{(4;I_4)}(X^{2^d-1}))p_{(i;I)}(f_3(\bar z)^{2^d})
\equiv \begin{cases}z &\text{if }  I_3\subset I,\\
0,  &\text{otherwise }.
\end{cases}
\end{align*}
The lemma is completely proved.
\end{proof}

\section{The cases $k \leqslant 3$}\label{s3a}

In this section and the next sections, we denote by $B_{k}(n)$ the set of all admissible monomials of degree $n$  in $P_k$. For a subset $B \subset P_k$, we denote $B^0 = B\cap P_k^0$, $B^+ = B\cap P_k^+$. For an $\omega$-vector $\omega =(\omega_1,\omega_2,\ldots,\omega_m)$ of degree $n$, we set $B_k(\omega) = B_{k}(n)\cap P_k(\omega)$. 

 If  there is $i_0=0, i_1, i_2, \ldots , i_r > 0$ such that $i_1 + i_2 + \ldots + i_r  = m$ and $\omega_{i_1+\ldots +i_{s-1} + t} = a_s, 1 \leqslant t \leqslant i_s, 1 \leqslant s \leqslant  r$, then we denote 
$\omega = (a_1^{(i_1)},a _2^{(i_2)},\ldots , a_r^{(i_r)})$. If $i_u = 1$, then we denote $a_u^{(1)} = a_u$.  

Using Lemma \ref{b53}(i)  in Subsection \ref{s6} and Theorem \ref{dlcb1}, we easily obtain the following.
\begin{prop}\label{md41} For any $s \geqslant 1$, 
$$B_k(1^{(s)}) = \big\{x_{i_1}x_{i_2}^2\ldots x_{i_{m-1}}^{2^{m-2}}x_{i_m}^{2^s-2^{m-1}};\ 1 \leqslant i_1< \ldots <i_m \leqslant k, 1\leqslant m \leqslant \min\{s,k\}\big\}.
$$
\end{prop}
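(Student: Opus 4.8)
The plan is to pin down $B_k(1^{(s)})$ as the admissible monomials of degree $2^s-1$ whose weight vector equals $1^{(s)}$, and then to classify these by locating reducible local patterns. First I would observe that the minimal spike of degree $2^s-1=\sum_{0\le i<s}2^i$ is $z=x_1^{2^s-1}$, with $\omega(z)=1^{(s)}$. By Singer's criterion (Theorem \ref{dlsig}) every monomial of degree $2^s-1$ with $\omega(x)<1^{(s)}$ is then hit, so $B_k(1^{(s)})=B_k(2^s-1)\cap P_k(1^{(s)})$ is exactly the set of admissible monomials $x$ with $\omega(x)=1^{(s)}$. Such an $x$ has $\omega_i(x)=1$ for $1\le i\le s$ and $0$ otherwise; equivalently, in each dyadic position $0,1,\dots,s-1$ precisely one variable carries a $1$, so $x=\prod_{0\le i<s}x_{c(i)}^{2^i}$ for a unique $c\colon\{0,\dots,s-1\}\to\mathbb N_k$. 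A direct inspection shows that the monomials in the statement are exactly those for which $(c(0),\dots,c(s-1))$ is strictly increasing and then constant, i.e. $c(0)<c(1)<\dots<c(m-1)=c(m)=\dots=c(s-1)$ with $\{c(0),\dots,c(m-1)\}=\{i_1<\dots<i_m\}$; in particular they all lie in $P_k(1^{(s)})$.

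For the inclusion $B_k(1^{(s)})\subseteq$ (the listed set) I would prove the contrapositive: if $c$ is not increasing-then-constant, then $x$ is inadmissible. If $c$ has a descent, take the least $i$ with $a:=c(i)>c(i+1)=:b$ and write $x=u\,w_0^{2^{i}}\,v^{2^{i+2}}$, where $u=\prod_{0\le t<i}x_{c(t)}^{2^t}$ carries the bits below $i$, $w_0=x_ax_b^2$, and $v^{2^{i+2}}$ carries the bits above $i+1$. Otherwise $c$ is nondecreasing with a plateau that is not final: choosing a maximal run $c(i_-)=\dots=c(i_+)=v$ of length $\ell=i_+-i_-+1\ge 2$ followed by $c(i_++1)=w>v$, I write $x=u\,w_0^{2^{i_-}}\,v'^{\,2^{i_-+\ell+1}}$ with $w_0=x_v^{2^{\ell}-1}x_w^{2^{\ell}}$. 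In both cases $\omega_j(u)=0$ for $j>i$ (resp.\ $j>i_-$), and $w_0$ (namely $x_ax_b^2$ with $a>b$, resp.\ $x_v^{2^{\ell}-1}x_w^{2^{\ell}}$ with $v<w$ and $\ell\ge 2$) is strictly inadmissible by Lemma \ref{b53}(i). Since $\omega(w_0)=1^{(2)}$, resp.\ $1^{(\ell+1)}$, has its top entry in position $2$, resp.\ $\ell+1$, the monomial $x$ has precisely the shape to which Theorem \ref{dlcb1}(ii) applies, and that theorem yields that $x$ is inadmissible.

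It remains to prove the reverse inclusion, that every listed monomial is admissible; equivalently, that the classes of the listed monomials are linearly independent in $QP_k$. I would first note that this already suffices: by the previous paragraph together with Singer's criterion, a listed monomial that were inadmissible would be congruent modulo $\mathcal A^+P_k$ to a sum of strictly smaller listed monomials (every other weight $1^{(s)}$ monomial being inadmissible and hence reducible, and every monomial of smaller weight being hit), contradicting independence. For the independence I would induct on $k$ via the splitting $QP_k=QP_k^0\oplus QP_k^+$ of Proposition \ref{2.7}: the listed monomials using fewer than $k$ variables are exactly $\Phi^0$ applied to the listed monomials of $P_{k-1}$, which are admissible and independent in $QP_k^0$ by the inductive hypothesis together with the fact recalled before Proposition \ref{mdc1} that $\Phi^0(B_{k-1}(n))$ is a minimal generating set for $P_k^0$; the listed monomials using all $k$ variables occur only for $s\ge k$ and reduce to the single monomial $x_1x_2^2\cdots x_{k-1}^{2^{k-2}}x_k^{2^{s}-2^{k-1}}$ in $P_k^+$ (for $s<k$ there is none, consistently with $P_k^+\cap P_k(1^{(s)})=0$).

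The hard part will be this last point: certifying that the listed monomials genuinely survive in $QP_k$, since Theorem \ref{dlcb1} only ever produces inadmissibility and so gives no positive information. The independence of the $P_k^0$-part is handed to us by the induction, but the nontriviality of the single $P_k^+$-monomial (and the base case of the induction, e.g.\ $k=1$ and the two-variable situation) must come from the explicit computation in Lemma \ref{b53}(i), whose relations I would then propagate through the $2^{i}$-th power factorizations using Proposition \ref{mdcb4}. Checking that no listed monomial is congruent to a combination of strictly smaller listed monomials is exactly where the care is required.
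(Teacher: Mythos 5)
Your handling of the forward inclusion (non-listed monomials of weight $1^{(s)}$ are inadmissible) is, in substance, exactly the paper's proof, which consists precisely of citing Lemma \ref{b53}(i) and Theorem \ref{dlcb1}; your descent/plateau analysis is a correct elaboration of that citation, up to two repairable slips. First, for a maximal run of length $\ell\geqslant 3$ your block $x_v^{2^{\ell}-1}x_w^{2^{\ell}}$ is not among the monomials of Lemma \ref{b53}(i); you should excise only the top two steps of the run, so that the block becomes $(x_v^{3}x_w^{4})^{2^{i_+-1}}$. Second, when the offending block occupies the lowest dyadic positions the low factor $u$ is trivial, and Theorem \ref{dlcb1}(ii) does not apply as stated (it requires $r>0$); that case must be settled directly from the definition of strict inadmissibility, by multiplying the defining relation of $w_0$ by the remaining $2^{s_0}$-th power and using the Cartan formula together with Proposition \ref{mdcb1}(ii).

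The genuine gap is the reverse inclusion. Your reduction there is sound: admissibility of all listed monomials follows from linear independence of their classes, and independence reduces---via Proposition \ref{2.7} and induction on $k$ as you propose, or even more directly by applying, for each support $S$, the $\mathcal A$-algebra map $P_k\to\mathbb F_2[x_i : i\in S]$ killing the other variables, since each support carries exactly one listed monomial---to the single claim that $x_1x_2^2\cdots x_{k-1}^{2^{k-2}}x_k^{2^s-2^{k-1}}$ is not hit. But you never prove this claim, and the source you designate for it cannot in principle supply it: Lemma \ref{b53}(i) and Proposition \ref{mdcb4} only manufacture elements of $\mathcal A^+P_k$, so propagating such relations can show that classes vanish or coincide, never that a class is nonzero. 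What is needed is a positive certificate: the known answers for $k\leqslant 3$ (Peterson \cite{pe}, Kameko \cite{ka}, recalled in Section \ref{s3a}), Mothebe's admissibility results \cite{mo} (a version of which the paper records as the corollary closing Subsection \ref{sub31}), a dual $\mathcal A$-annihilated class in homology pairing nontrivially with the monomial, or a non-hitness computation of the kind the paper performs elsewhere, such as the $(Sq^2)^3$ argument showing $[d_1+d_{11}]\ne 0$ in the proof of Proposition \ref{dlc4}. To be fair, the paper's one-sentence proof is silent on this half as well; but a complete argument must supply it, and the route you sketch for it is one that would necessarily fail, so as written your proposal establishes only one of the two inclusions.
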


It is well known that if $n \ne 2^u-1$ then $B_1(n) = \emptyset$. If $n = 2^u-1$ for $u \geqslant 0$, then $B_1(n) = B_1(1^{(u)}) = \{x^{2^u-1}\}.$ 
 It is easy to see that $\Phi(B_1(0)) = \{1\} = B_2(0)$, \  $\Phi(B_1(1)) = \{x_1, x_2\}= B_2(1)$. According to a result in Peterson \cite{pe}, for $u > 1$, we have
$$B_2(2^u-1) = \Phi(B_1(2^u-1)) = 
\{x_1^{2^u-1}, x_2^{2^u-1}, x_1x_2^{2^u-2}\}, 
$$

By Theorem \ref{dlmd1}, $B_2(n) = \emptyset$ if $n \ne 2^{t+u} + 2^t - 2$ for all nonnegative integers $t, u$. We define the $\mathbb F_2$-linear map $\psi: (P_k)_m  \to (P_k)_{2m+k}$ by $\psi(y) = X_\emptyset y^2$ for any monomial $y \in (P_k)_m$. From Theorem \ref{dlmd2}, we have 

\begin{thm}[Peterson \cite{pe}] If $n = 2^{t+u}+2^t-2$, with $t, u$ positive integers, then 
\begin{align*}B_2(n) &= \psi^t(\Phi(B_1(2^u-1)))\\ 
&= \begin{cases} \{(x_1x_2)^{2^t-1}\}, & u= 0,\\ 
\{x_1^{2^{t+1}-1}x_2^{2^t-1},\ x_1^{2^{t}-1}x_2^{2^{t+1}-1}\},\ &u =1,\\ 
\{x_1^{2^{t+u}-1}x_2^{2^t-1}, x_1^{2^{t}-1}x_2^{2^{t + u}-1}, x_1^{2^{t+1}-1}x_2^{2^{t+u} - 2^t -1}\}, & u > 1.
\end{cases}\end{align*}
 \end{thm}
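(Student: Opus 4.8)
The plan is to reduce the computation in degree $n=2^{t+u}+2^t-2$ to the already-known computation in degree $2^u-1$ by iterating Kameko's isomorphism, transporting the admissible basis along the map $\psi$, which is a one-sided inverse of $\widetilde{Sq}^0_*$.

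First I would record the arithmetic. Writing $n_s=2^{s+u}+2^s-2$, one has $n=n_t$, $n_0=2^u-1$, and the recursion $n_s=2n_{s-1}+2$. Since $n_s=(2^{s+u}-1)+(2^s-1)$ one gets $\mu(n_s)\leqslant 2$, and it cannot equal $1$ because $2^{s+u}+2^s-1$ is odd and $>1$, so $\mu(n_s)=2=k$ for every $s\geqslant 1$. Thus Theorem~\ref{dlmd2} applies at each level and $(\widetilde{Sq}^0_*)_{n_{s-1}}\colon (QP_2)_{n_s}\to (QP_2)_{n_{s-1}}$ is an isomorphism. The minimal spike in degree $n_s$ is $z_s=x_1^{2^{s+u}-1}x_2^{2^s-1}$, with weight vector $\omega(z_s)=(2^{(s)},1^{(u)})$.

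The heart of the matter is a single descent step: if $\mu(2m+2)=2$, then $\psi$ restricts to a bijection $B_2(m)\xrightarrow{\ \sim\ }B_2(2m+2)$. Two ingredients are needed. First, every admissible monomial $w$ of degree $2m+2$ has the form $\psi(v)$: by Theorem~\ref{dlsig} any non-hit monomial satisfies $\omega(w)\geqslant\omega(z)$ for the minimal spike $z$, and since $\omega_1(z)=2$ while $\omega_1(w)\leqslant k=2$, we get $\omega_1(w)=2$; hence both exponents of $w$ are odd and $w=x_1x_2v^2$ with $v=\widetilde{Sq}^0_*(w)$. Second, $v$ is admissible if and only if $\psi(v)$ is. One direction is exactly Theorem~\ref{dlcb1}(i) applied with the monomial $x_1x_2$ (which has $\omega_i=0$ for $i>1$) and $r=1$: if $v$ is inadmissible, then $\psi(v)=x_1x_2v^2$ is inadmissible. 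For the converse I would exploit that $\psi$ is strictly order-preserving, since $\omega(\psi(v))=(2,\omega(v))$ and $\sigma(\psi(v))$ is monotone in $\sigma(v)$; so if $\psi(v)$ were inadmissible we could write $\psi(v)\equiv\sum_j w_j$ with admissible $w_j<\psi(v)$, whence each $w_j=\psi(v_j)$ with $v_j<v$ (by the first ingredient and monotonicity), and applying $\widetilde{Sq}^0_*$, which carries hit elements to hit elements because $\widetilde{Sq}^0_*Sq^{2i}=Sq^i\widetilde{Sq}^0_*$ and $\widetilde{Sq}^0_*Sq^{2i+1}=0$, would give $v\equiv\sum_j v_j$ with $v_j<v$, contradicting the admissibility of $v$.

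Iterating this descent $t$ times yields $B_2(n)=\psi^t\big(B_2(2^u-1)\big)$, and since $B_2(2^u-1)=\Phi(B_1(2^u-1))$ by the preceding results (together with $B_1(2^u-1)=\{x^{2^u-1}\}$), this is precisely $\psi^t(\Phi(B_1(2^u-1)))$. The explicit table then follows from the closed form $\psi^t(x_1^ax_2^b)=x_1^{2^ta+2^t-1}x_2^{2^tb+2^t-1}$ evaluated on the one, two, or three generators of $B_2(2^u-1)$ according as $u=0$, $u=1$, or $u>1$. The main obstacle is part two of the descent step, namely proving that $\psi$ both preserves and reflects admissibility; the key inputs there are the order-preservation of $\psi$ and the commutation of $\widetilde{Sq}^0_*$ with the Steenrod squares, while the $\mu$-computation and the evaluation of $\psi^t$ are routine.
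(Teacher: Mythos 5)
Your proposal is correct and follows essentially the same route as the paper, which obtains this theorem directly from Kameko's isomorphism (Theorem \ref{dlmd2}) with $\psi$ serving as the inverse on representatives, exactly as in your descent step. The only difference is one of detail: you spell out the admissibility transport (that $\psi$ preserves and reflects admissibility, via Theorem \ref{dlcb1}, Singer's criterion and the order-preservation of $\psi$), which the paper leaves implicit in the phrase ``From Theorem \ref{dlmd2}, we have.''
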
 

By Theorems \ref{dlmd1} and \ref{dlmd2}, for $k = 3$, we need only to consider the cases of degree $n = 2^s -2, \ n = 2^s -1$ and $n  = 2^{s+t} + 2^s - 2$ with $s, t$ positive integers. By a direct computation using Theorem \ref{dl1} we have 

\begin{thm}[Kameko \cite{ka}] \label{mdkmk}\

 {\rm i)} If $n = 2^{s} - 2$, then
$B_3(2^s-2) = \Phi(B_2(2^s-2)).$

{\rm ii)} If $n = 2^{s} - 1$, then
$B_3(2^s-1) = B_3(1^{(s)})\cup\psi(\Phi(B_2(2^{s-1}-2))).$

{\rm iii)} If $n = 2^{s+t} + 2^s - 2$, then
$$B_3(n) = \begin{cases}\Phi(B_2(8))\cup\{x_1^3x_2^4x_3\}, &\text{ if } s=1, t=2,\\ \Phi(B_2(2^{s+t} + 2^s -2)), &\text{otherwise.}
 \end{cases}$$
 \end{thm}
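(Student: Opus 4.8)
The three families $n=2^s-2$, $n=2^s-1$ and $n=2^{s+t}+2^s-2$ are exactly the degrees left for $k=3$ once Wood's theorem (Theorem \ref{dlmd1}) and the iterated Kameko isomorphism (Theorem \ref{dlmd2}) have been applied, so the plan is to resolve each family on its own through the splitting $QP_3=QP_3^0\oplus QP_3^+$ of Proposition \ref{2.7}. The summand $QP_3^0$ is handled uniformly: $\Phi^0(B_2(n))=\bigcup_{1\leqslant i\leqslant 3}f_i(B_2(n))$ is always a minimal set of $\mathcal A$-generators for $P_3^0$ (the remark preceding Proposition \ref{mdc1}), and since each $f_i$ carries an admissible monomial of $P_2$ to an admissible monomial of $P_3$ lying in $P_3^0$, matching these two bases of $QP_3^0$ gives $B_3(n)\cap P_3^0=\Phi^0(B_2(n))$. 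Hence the work concentrates in $QP_3^+$, where Singer's criterion (Theorem \ref{dlsig}) together with Lemma \ref{bdbs1} and Proposition \ref{mdcb3} confines the admissible monomials to a short band of weight vectors and makes the computation finite.

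For parts (i) and (iii) we have $\mu(n)=2$ and $n=(2^{d_1}-1)+(2^{d_2}-1)$ with $d_1\geqslant d_2$, and I would prove that $\Phi(B_2(n))$ is a minimal generating set by running the $k=3$ analogue of Proposition \ref{mdc1}. First, admissibility of each $\phi_{(i;I)}(z)$: the $\Phi^0$-elements as above, and the finitely many $\Phi^+$-elements by writing them out from Definition \ref{dfn1} and excluding strict inadmissibility through Theorem \ref{dlcb1}. Second, spanning: any monomial $x$ with $[x]\neq 0$ has, by Lemma \ref{bdbs1}, the form $\bigl(\prod_t X_{j_t}^{2^t}\bigr)\bar y^{\,2^{d_2}}$, so Lemma \ref{hq0} and Lemma \ref{bdcb3} replace $[x]$ by $[\phi_{(i;I)}(X^{2^{d_2}-1})\bar y^{\,2^{d_2}}]$, and the transfer relations of Lemma \ref{bdcbs} (the short $k=3$ version of the long case analysis of Subsection \ref{sub31}) push $[x]$ into $[\Phi(B_2(n))]$. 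Third, the count: the $k=3$ form of Theorem \ref{dl1} gives $\dim (QP_3)_n=7\,\dim (QP_2)_m$ with $m=2^{d_1-d_2}-1$ (valid for $d_1=d_2\geqslant 3$ in (i) and $d_1>d_2\geqslant 2$ in (iii)), while the projections $p_{(i;I)}$ of Lemma \ref{bdc1} show the $\phi_{(i;I)}$-images are pairwise disjoint, so $|\Phi(B_2(n))|=7\,|B_2(n)|=\dim(QP_3)_n$ and $\Phi(B_2(n))=B_3(n)$. The finitely many small degrees excluded by the inequalities (for instance $s\leqslant 3$ in (i)) are checked directly.

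For part (ii), with $n=2^s-1$ and $\mu(n)=1$, I would use Kameko's homomorphism $\widetilde{Sq}^0_*\colon (QP_3)_{2^s-1}\to (QP_3)_{2^{s-1}-2}$, which is an epimorphism but, since $\mu(2^s-1)=1\neq 3$, not an isomorphism. As $n$ is odd, $\omega_1(x)\in\{1,3\}$ for every monomial of degree $n$. For $\omega_1=1$, Theorem \ref{dlsig} and Proposition \ref{mdcb3} force the weight vector to be $(1^{(s)})$, so those admissibles are precisely $B_3(1^{(s)})$ of Proposition \ref{md41}, and they span $\mathrm{Ker}\,\widetilde{Sq}^0_*$. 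For $\omega_1=3$ I would write $x=X_\emptyset y^2$ with $y=\widetilde{Sq}^0_*(x)$; the commutation relations $\widetilde{Sq}^0_*Sq^{2t}=Sq^{t}\widetilde{Sq}^0_*$ and $\widetilde{Sq}^0_*Sq^{2t+1}=0$ of Section \ref{s1}, together with the fact that $\psi$ and $\widetilde{Sq}^0_*$ respect the order on monomials, show that $x$ is admissible iff $y$ is. Thus the $\omega_1=3$ admissibles are $\psi(B_3(2^{s-1}-2))$, and part (i) rewrites $B_3(2^{s-1}-2)=\Phi(B_2(2^{s-1}-2))$, which yields the stated union; the two strata are disjoint by the value of $\omega_1$.

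The hard part is the exceptional degree $s=1,\,t=2$ (that is, $n=8$) in part (iii). There $d_2=s=1$ falls below the threshold $d_2\geqslant 2$ needed for the $k=3$ dimension formula, the reduction of Lemma \ref{bdcbs} degenerates, and $\Phi(B_2(8))$ no longer exhausts $B_3(8)\cap P_3^+$: the admissible monomial $x_1^3x_2^4x_3$ survives outside the image of $\Phi$. Verifying that $x_1^3x_2^4x_3$ is admissible and that $\Phi(B_2(8))\cup\{x_1^3x_2^4x_3\}$ is simultaneously spanning, linearly independent, and of the correct cardinality is the delicate step, and one must separately inspect the remaining small values of $s$ and $t$ to confirm that this is the only genuine exception. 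This finite but intricate bookkeeping, rather than any single conceptual move, is where the difficulty lies.
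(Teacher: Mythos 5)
Your overall architecture (reduce to the three families via Theorems \ref{dlmd1} and \ref{dlmd2}, split off $QP_3^0$, run a $k=3$ analogue of Proposition \ref{mdc1} for the $\mu=2$ degrees, and recover part (ii) from part (i) via the $\psi$/$\widetilde{Sq}^0_*$ correspondence) is the natural one, and matches in spirit what the paper does — note the paper gives no detailed proof here, attributing the result to Kameko and remarking it follows ``by a direct computation using Theorem \ref{dl1}''. Your part (ii) argument is essentially sound: one direction of ``$y$ admissible $\Leftrightarrow X_\emptyset y^2$ admissible'' follows from Theorem \ref{dlcb1}(i), the other by applying the linear map $\widetilde{Sq}^0_*$ to an inadmissibility relation for $X_\emptyset y^2$ and using $\widetilde{Sq}^0_*Sq^{2t}=Sq^t\widetilde{Sq}^0_*$ and $\widetilde{Sq}^0_*Sq^{2t+1}=0$.

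The genuine gap is in part (iii). The $k=3$ form of Theorem \ref{dl1}/Proposition \ref{mdc1} — whether quoted or re-proved along your lines — is only available when $d_1>d_2\geqslant 2$, i.e. $s\geqslant 2$ (the paper states exactly this restriction, and the machinery of Subsection \ref{sub31}, Lemmas \ref{bdcbs} and \ref{bdc1}, is built around $d=d_{k-1}\geqslant k-1$). The degrees this excludes are therefore not ``finitely many small values'': they are the entire infinite family $s=1$, $n=2^{t+1}+2^1-2=2^{t+1}$ for all $t\geqslant 1$, and these degrees cannot be reduced away by Theorem \ref{dlmd2} either, since $\mu(2^{t+1})=2\neq 3$. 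So your main machinery never touches them, yet the theorem asserts something about each of them — including the unique genuine exception $n=8$. What is missing is a separate argument for $s=1$: for instance, show via Singer's criterion and Proposition \ref{mdcb3} that any admissible monomial of degree $2^{t+1}$ has weight vector $(2,1^{(t)})$, hence factors as $x_ix_jy^2$ with $y\in B_3(2^t-1)$ (a degree covered by part (ii)), and then decide by a strictly-inadmissible-monomial analysis in the style of Theorem \ref{dlcb1} and Section \ref{s4} which of these finitely many products per degree are admissible; it is exactly this inductive analysis in $t$ that produces $x_1^3x_2^4x_3$ at $t=2$ and shows nothing new appears for $t\neq 2$. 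Without it, your proof of (iii) covers only $s\geqslant 2$. A lesser caveat: if you literally quote ``the $k=3$ form of Theorem \ref{dl1}'' rather than re-derive it, the argument becomes circular, since the paper obtains that form from Peterson's and Kameko's results — i.e. from the statement being proved; your stated plan to re-run Proposition \ref{mdc1} for $k=3$ avoids this, but again only on the range $d_2\geqslant 2$.
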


\section{Proof of Theorem \ref{dl3}} \label{s4}
\setcounter{equation}{0}
\renewcommand{\theequation}{\arabic{section}.\arabic{subsection}.\arabic{thms}.\arabic{equation}}

For $1 \leqslant i \leqslant k$, define $\varphi_i: QP_k \to QP_k$, the homomorphism induced by the $\mathcal{A}$-homomorphism  $\overline{\varphi}_i:P_k \to P_k$, which is determined by  $\overline{\varphi}_1(x_1) = x_1+x_2$,  $\overline{\varphi}_1(x_j) = x_j$ for $j > 1,$ and $\overline{\varphi}_i(x_i) = x_{i-1}, \overline{\varphi}_i(x_{i-1}) = x_i$, $\overline{\varphi}_i(x_j) = x_j$ for $j \ne i, i-1,\ 1 < i \leqslant k$.  Note that the general linear group $GL_k$ is generated by $\overline{\varphi}_i,\ 0 < i \leqslant k$ and the symmetric group $\Sigma_k$ is generated by $\overline{\varphi}_i,\ 1 < i \leqslant k$.

Let $B$ be a finite subset of $P_k$ consisting of some monomials of degree $n$. To prove the set $[B]$ is linearly independent in $QP_k$, we order the set $B$ by the order as in Definition \ref{defn3} and denote the elements of $B$ by $d_i = d_{n,i}, 0< i \leqslant b =|B|$ in such a way that $d_{n,i} < d_{n,j} $ if and only if $i < j$. Suppose there is a linear relation 
$$\mathcal S = \sum_{1 \leqslant j \leqslant b}\gamma_jd_{n,j} \equiv 0,$$
with $\gamma_j \in \mathbb F_2$. For $(i;I) \in \mathcal N_k$, we explicitly compute $p_{(i;I)}(\mathcal S)$ in terms of a minimal set of $\mathcal A$-generators in $P_{k-1}$.  Computing from some relations $p_{(i;I)}(\mathcal S) \equiv 0$ with $(i;I) \in \mathcal N_k$ and $\overline{\varphi}_i(\mathcal S) \equiv 0$, we will obtain $\gamma_j =0$ for all $j$.

\medskip
\subsection{The case of degree $n = 2^{s+1}-3$}\label{sub1}\
\setcounter{equation}{0}

\medskip
In this subsection we prove the following.

\begin{props}\label{dlc3} For any $s \geqslant 1$,  $\Phi(B_3(2^{s+1}-3))$ is a minimal set of generators for $\mathcal A$-module $P_4$ in degree $2^{s+1}-3$. 
\end{props}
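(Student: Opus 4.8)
The plan is to reduce everything to the known structure of $(QP_3)_n$ and to the already-established behaviour of $\Phi^0$. For $s\geqslant 2$ one has $n=(2^{s}-1)+(2^{s-1}-1)+(2^{s-1}-1)$, so $\mu(n)=3$, the minimal spike is $z_1=x_1^{2^{s}-1}x_2^{2^{s-1}-1}x_3^{2^{s-1}-1}$ with $\omega(z_1)=(3^{(s-1)},1)$, and iterating Kameko's isomorphism (Theorem \ref{dlmd2}) from degree $n$ down to degree $1$ gives $\dim(QP_3)_n=3$; since the three spikes obtained by placing the exponent $2^{s}-1$ on $x_1,x_2,x_3$ in turn are admissible and linearly independent in $QP_3$, we get $B_3(n)=\{z_1,z_2,z_3\}$. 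By Proposition \ref{2.7} it is enough to show that $\Phi^0(B_3(n))$ is a basis of $(QP_4^0)_n$ — which is the general fact recalled just after the definition of $\Phi$ — and that $[\Phi^+(B_3(n))]$ is a basis of $(QP_4^+)_n$. When $s\geqslant 4$ we have $d_3=s-1\geqslant 3$, so $n$ falls under Proposition \ref{mdc1} with $k=4$ and the whole statement follows at once; the case $s=1$ is trivial because $\Phi(B_3(1))=\{x_1,x_2,x_3,x_4\}$. Thus only $s=2,3$ (degrees $5$ and $13$) remain, and these I would treat directly.

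For the spanning part when $s\in\{2,3\}$, I would start from an arbitrary monomial $x$ of degree $n$ in $P_4^+$ and follow the reduction scheme of Subsection \ref{sub31}: by Lemma \ref{bdbs1} any $x$ with $[x]\neq 0$ satisfies $\omega_i(x)=3$ for $1\leqslant i\leqslant d=s-1$, so $x=\bigl(\prod_{0\leqslant t<d}X_{j_t}^{2^t}\bigr)\bar y^{2^d}$, and Lemmas \ref{hq0} and \ref{bdcb3} put $x\equiv\phi_{(i;I)}(X^{2^d-1})\bar y^{2^d}$. Running through the analogues of Cases \ref{th01}--\ref{th12}, using Singer's criterion (Theorem \ref{dlsig}) to kill every monomial with weight vector below $\omega(z_1)$ and the strict-inadmissibility reductions of Theorem \ref{dlcb1} for the borderline ones, brings each such $x$ into $[\Phi^+(B_3(n))]$. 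Because $n$ is small this terminates after an explicit enumeration; together with the $12$ classes of $\Phi^0(B_3(n))$ one recovers the full dimensions $15$ and $35$.

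For linear independence I would use the method announced at the beginning of Section \ref{s4}: order $\Phi(B_3(n))$ by Definition \ref{defn3}, suppose $\mathcal S=\sum\gamma_{(i;I),z}\phi_{(i;I)}(z)\equiv 0$, and apply the $\mathcal A$-homomorphisms $p_{(i;I)}:P_4\to P_3$ together with the substitutions $\overline{\varphi}_i$. For $s\geqslant 4$ this is exactly the argument in the second half of the proof of Proposition \ref{mdc1}, where Lemma \ref{bdc1} shows $p_{(j;J)}(\mathcal S)$ collapses to a combination of $z_1,z_2,z_3$ whose independence in $(QP_3)_n$ forces the coefficients to vanish, processed in increasing order of $\ell(I)$. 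For $s=2,3$ the hypothesis $d=k-1$ of Lemma \ref{bdc1} fails (here $d=s-1\leqslant 2$), so I would instead compute $p_{(i;I)}(\mathcal S)$ and $\overline{\varphi}_i(\mathcal S)$ explicitly in the basis $B_3(n)$ of $(QP_3)_n$, extracting the relations $\gamma_{(i;I),z}=0$ one at a time and using the $\Sigma_4$-symmetry of $\{z_1,z_2,z_3\}$ carried by the $\overline{\varphi}_i$ to shorten the bookkeeping.

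The principal obstacle is the spanning step for $s=2,3$: precisely because $d_3<3$ one cannot invoke Proposition \ref{mdc1}, so the reduction of a general degree-$n$ monomial of $P_4^+$ to $\Phi^+(B_3(n))$ has to be carried out by hand, and the delicate point is controlling the low-weight monomials — those strictly below $\omega(z_1)$ vanish by Theorem \ref{dlsig}, but the monomials on the boundary require a careful application of Theorem \ref{dlcb1}. A secondary difficulty appears in the independence argument for $s=3$, where several images $p_{(i;I)}(\phi_{(j;J)}(z))$ with $\ell(I)=3$ overlap and must be disentangled individually, in the spirit of parts (ii)--(iv) of Lemma \ref{bdc1}.
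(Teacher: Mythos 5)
Your proposal is correct and follows essentially the same route as the paper: reduction to Theorem \ref{dl1}/Proposition \ref{mdc1} for $s\geqslant 4$, the trivial case $s=1$, and a direct treatment of $s=2,3$ in which spanning is forced by weight-vector and inadmissibility arguments and independence is obtained by applying the homomorphisms $p_{(i;I)}$ to a hypothetical relation and reading off coefficients against the basis $B_3(n)$. The only cosmetic difference is that for $s=2,3$ the paper classifies the admissible monomials directly via Lemmas \ref{3.1} and \ref{3.2} rather than re-running the Section \ref{s3} reduction algorithm (whose lemmas largely require $d\geqslant k-1$), but the fallback you describe---Singer's criterion (Theorem \ref{dlsig}) plus strict inadmissibility (Theorem \ref{dlcb1}) plus explicit enumeration---is exactly what the paper does.
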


We can see that $\Phi(B_3(2^{s+1}-3))$ is the set of all the admissible monomials of degree $n = 2^{s+1}-3$, $|\Phi(B_3(1))| =4$, $|\Phi(B_3(5))| = 15$,  $|\Phi(B_3(13))| = 35$ and $|\Phi(B_3(2^{s+1}-3))| = 45$, for $s \geqslant 4$. We need the following lemmas for the proof of the proposition.

\begin{lems}\label{3.1} If $x$ is an admissible monomial of degree $2^{s+1}-3$ in $P_4$, then $\omega(x) = (3^{(s-1)},1)$.
\end{lems}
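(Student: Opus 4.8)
The plan is to prove Lemma \ref{3.1} by first using Singer's criterion to constrain $\omega(x)$ from below, and then using admissibility (via Proposition \ref{mdcb3}) to pin it down exactly. First I would locate the minimal spike $z$ of degree $n = 2^{s+1}-3$. Writing $n = 2^{s+1}-3 = (2^s-1)+(2^{s-1}-1)+\ldots$ is not quite the form I want; instead observe $\mu(n)$ for $n=2^{s+1}-3$. Since $2^{s+1}-3 = 2^s + 2^{s-1} + \ldots + 2^1 + 2^0 - \text{(something)}$, the cleanest route is to compute $\mu(2^{s+1}-3)$ directly: one checks $2^{s+1}-3 = (2^s-1) + (2^{s-1}-1) + 1 = \ldots$, and more efficiently that $\mu(2^{s+1}-3)=3$ for $s\geqslant 2$, with the minimal spike being $z = x_1^{2^s-1}x_2^{2^{s-1}-1}x_3^{2^0-1}$-type data. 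Concretely the minimal spike of degree $2^{s+1}-3$ in $P_4$ is $z$ with exponents $(2^s-1, 2^{s-1}-1, 1)$ so that $\omega(z) = (3^{(s-1)},1)$: the first $s-1$ weight-coordinates equal $3$ (contributions from the three exponents while all three dyadic digits are $1$) and the $s$-th coordinate equals $1$ (only $x_1^{2^s-1}$ still contributes a digit at position $s$).

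Next I would invoke Theorem \ref{dlsig} (Singer's criterion): since $\mu(n)=3\leqslant 4=k$ and $z$ is the minimal spike, any monomial $x$ with $\omega(x) < \omega(z)$ is hit, hence $[x]=0$. Therefore if $x$ is admissible (so in particular $[x]\neq 0$) we must have $\omega(x) \geqslant \omega(z) = (3^{(s-1)},1)$ in the left lexicographic order. This gives the lower bound. For the upper bound I would use the degree constraint together with Proposition \ref{mdcb3}: the weight vector satisfies $\deg \omega(x) = \sum_{i\geqslant 1} 2^{i-1}\omega_i(x) = n = 2^{s+1}-3$ and each $\omega_i(x) \leqslant k = 4$. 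By Proposition \ref{mdcb3}(i), once some coordinate $\omega_{i_0}(x)$ vanishes all later ones vanish, and by Proposition \ref{mdcb3}(ii), once $\omega_{i_0}(x) < 4$ all later coordinates are $< 4$. The combination of $\omega(x)\geqslant(3^{(s-1)},1)$ with the fixed total degree $2^{s+1}-3$ forces the coordinates to be exactly $(3^{(s-1)},1)$: any strictly larger weight vector (in lex order, subject to $\omega_i\leqslant 4$) would have $\deg\omega(x)$ strictly exceeding $2^{s+1}-3$, contradicting $\deg x = n$.

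The key arithmetic step, which I would carry out explicitly, is the identity
\begin{equation*}
\deg(3^{(s-1)},1) = \sum_{i=1}^{s-1} 3\cdot 2^{i-1} + 1\cdot 2^{s-1} = 3(2^{s-1}-1) + 2^{s-1} = 2^{s+1}-3,
\end{equation*}
which confirms both that $z$ indeed has this weight vector and that $(3^{(s-1)},1)$ is degree-consistent. I would then argue that this is the \emph{only} admissible weight vector: since $\omega(x)\geqslant(3^{(s-1)},1)$ lexicographically and $\deg\omega(x)=\deg(3^{(s-1)},1)$, the first place where $\omega(x)$ could exceed $(3^{(s-1)},1)$ would increase the degree by a positive amount that cannot be compensated by the later (necessarily non-negative, and by Proposition \ref{mdcb3} non-increasing past a drop) coordinates without overshooting $n$; hence equality holds coordinatewise.

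The main obstacle I anticipate is the careful verification that $(3^{(s-1)},1)$ is genuinely the minimum admissible weight vector and that no weight vector both dominating it lexicographically and having the same degree can occur under the admissibility constraints. This requires handling the interaction between the lex order on $\omega$ and the fixed total degree with the bound $\omega_i \leqslant 4$, and in particular ruling out candidates such as those beginning with a $4$ in some early coordinate. I expect to dispose of these by the structural monotonicity in Proposition \ref{mdcb3} together with the exactness of the degree computation above, so that the lemma follows without a lengthy case analysis.
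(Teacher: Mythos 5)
Your overall strategy (Singer's criterion against the minimal spike for the lower bound, then Proposition \ref{mdcb3} plus the degree to pin $\omega(x)$ down exactly) is the same as the paper's, but two of your steps are wrong as written. First, the minimal spike of degree $2^{s+1}-3$ in $P_4$ is $z=x_1^{2^s-1}x_2^{2^{s-1}-1}x_3^{2^{s-1}-1}$, not a monomial with exponents $(2^s-1,\,2^{s-1}-1,\,1)$: for $s\geqslant 3$ the latter has degree $2^s+2^{s-1}-1\neq 2^{s+1}-3$, and its weight vector is $(3,2^{(s-2)},1)$, not $(3^{(s-1)},1)$. The weight vector you assert for the spike is correct, but only because it is the weight vector of the correct spike, not of the monomial you wrote down. (Your argument also implicitly assumes $s\geqslant 2$; the case $s=1$ must be checked directly, as the paper does.)

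Second, and more seriously, your closing deduction rests on a false principle: it is not true that a weight vector strictly larger than $(3^{(s-1)},1)$ in the left lexicographic order, subject only to $\omega_i\leqslant 4$, must have degree exceeding $2^{s+1}-3$. Lexicographic order and degree are unrelated: for $s=4$ the vector $(3,4,2,1)$ is lex-larger than $(3,3,3,1)$ but has degree $27<29$, and $(4)$ is lex-larger with degree $4$. What actually eliminates such vectors is parity, not magnitude: since $n=2^{s+1}-3$ is odd and $\deg x\equiv\omega_1(x)\pmod 2$, one has $\omega_1(x)\in\{1,3\}$, and Singer's criterion then forces $\omega_1(x)=3$. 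Only at that point can Proposition \ref{mdcb3}(ii) be applied (with $i_0=1$ and $\omega_1(x)=3<4$) to conclude $\omega_i(x)\leqslant 3$ for all $i$, and only under this cap does your lex-plus-degree argument close: a first lexicographic difference at a position $j\leqslant s-1$ would require $\omega_j(x)\geqslant 4$, which is now impossible, while a first difference at a position $j\geqslant s$ genuinely overshoots the degree. You cite Proposition \ref{mdcb3}(ii) but never apply it: you never establish $\omega_1(x)=3$, and your "anticipated obstacle" paragraph defers precisely the coordinate-equal-to-$4$ cases to a mechanism (monotonicity plus degree exactness) that cannot dispose of $\omega_1(x)=4$, since the monotonicity statement only constrains coordinates after a given one and your degree claim is the false one above. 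The paper makes the parity step explicit ("since $2^{s+1}-3$ is odd, either $\omega_1(x)=1$ or $\omega_1(x)=3$") before invoking Theorem \ref{dlsig} and Proposition \ref{mdcb3}; your proposal needs that same parity input, inserted before the appeal to Proposition \ref{mdcb3}(ii), in order to be repaired.
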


\begin{proof} It is easy to see that the lemma holds for $s=1$. Suppose $s\geqslant 2$. Obviously, $z=x_1^{2^s-1}x_2^{2^{s-1}-1}x_3^{2^{s-1}-1}$ is the minimal spike of degree $2^{s+1}-3$ in $P_4$ and $\omega(z) = (3^{(s-1)},1)$. Since  $2^{s+1}-3$ is odd, we get either $\omega_1(x)=1$ or $\omega_1(x)=3$. If $\omega_1(x)=1$, then $\omega(x) < \omega(z)$. By Theorem \ref{dlsig}, $x$ is hit. This contradicts the fact that $x$ is admissible. Hence,  we have $\omega_1(x) =3$. Using Proposition \ref{mdcb3} and Theorem \ref{dlsig}, we obtain $\omega_i(x) = 3, \ i=1,2,\ldots , s-1$. Since $\deg x = 2^{s+2}-3$, from this it implies $\omega_{s}(x)=1$ and $\omega_i(x) = 0$ for $i>s$. The lemma is proved.   
\end{proof}
 By a direct computation, we have the following.
\begin{lems}\label{3.2} The following monomials are strictly inadmissible: 
$$X_1x_1^2, X_iX_j^2, \ 1 \leqslant i < j \leqslant 4.$$
\end{lems}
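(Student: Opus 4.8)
The plan is to verify strict inadmissibility straight from the definition, exhibiting for each listed monomial $x$ an explicit relation $x \equiv \sum_j y_j \pmod{\mathcal A_s^+P_4}$ with every $y_j < x$ in the order of Definition \ref{defn3} and $s = \max\{i : \omega_i(x) > 0\}$. First I would record the weight vectors: a short computation gives $\omega(X_1x_1^2) = (3,1)$ and $\omega(X_iX_j^2) = (3,3)$ for every pair $1 \leqslant i < j \leqslant 4$, so in all cases $s = 2$ and the available hit-ideal is $\mathcal A_2^+P_4 = \langle Sq^1, Sq^2, Sq^3\rangle P_4$. In particular $Sq^1 \in \mathcal A_2^+$, and this single operation will suffice.

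The key point is that $Sq^1$ is a derivation with $Sq^1(x_\ell^a) = \alpha_0(a)x_\ell^{a+1}$, so on a monomial it raises, one at a time, each odd exponent. For the first monomial, apply $Sq^1$ to $x_1x_2x_3x_4$; since all four exponents are odd,
$$Sq^1(x_1x_2x_3x_4) = x_1^2x_2x_3x_4 + x_1x_2^2x_3x_4 + x_1x_2x_3^2x_4 + x_1x_2x_3x_4^2,$$
whose leading term is $X_1x_1^2$. For the family $X_iX_j^2 = x_i^2x_jx_a^3x_b^3$, where $\{a,b\} = \mathbb N_4\setminus\{i,j\}$, set $w = x_ix_jx_a^3x_b^3 = X_iX_j^2/x_i$; all its exponents $(1,1,3,3)$ are odd, so
$$Sq^1(w) = x_i^2x_jx_a^3x_b^3 + x_ix_j^2x_a^3x_b^3 + x_ix_jx_a^4x_b^3 + x_ix_jx_a^3x_b^4,$$
the first term being $X_iX_j^2$. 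Transposing each identity writes the target monomial as a sum of the remaining monomials plus the term $Sq^1(\,\cdot\,) \in \mathcal A_2^+P_4$.

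It then remains to check that every monomial on the right other than the target is strictly smaller in the order of Definition \ref{defn3}. In the first relation all four terms share $\omega = (3,1)$, and among the $\sigma$-vectors the target's $\sigma = (2,1,1,1)$ is the largest in the left lexicographic order, the other three beginning with $1$. In the second relation the term $x_ix_j^2x_a^3x_b^3$ again has $\omega = (3,3)$, but its $\sigma$-vector carries a $1$ in position $i$ against the target's $2$ there, so (as $i<j$) it is smaller; the two remaining terms each contain a fourth power, hence have $\omega = (3,1,1) < (3,3)$ and lie in $P_4^-(\omega)$. This yields $x \equiv \sum_j y_j \pmod{\mathcal A_2^+P_4}$ with all $y_j < x$, which is precisely strict inadmissibility. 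The only real labor is this ordering bookkeeping, carried out uniformly over the six pairs $(i,j)$ together with the single monomial $X_1x_1^2$; I expect no genuine obstacle, since once the correct application of $Sq^1$ is selected the verification is entirely mechanical.
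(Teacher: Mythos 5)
Your proof is correct and is exactly the ``direct computation'' the paper invokes for this lemma (which it states without writing out): the weight vectors $(3,1)$ and $(3,3)$ give $s=2$, the single relation $Sq^1(x_1x_2x_3x_4)$, resp.\ $Sq^1(x_ix_jx_a^3x_b^3)$, isolates the target as leading term, and your ordering checks (lex comparison of $\sigma$-vectors for the terms of equal weight, strictly smaller weight $(3,1,1)<(3,3)$ for the terms containing a fourth power) are all accurate. Nothing is missing.
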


\begin{proof}[Proof of Proposition \ref{dlc3}] We have $n= 2^{s+1}-3 = 2^s+2^{s-1} + 2^{s-1} - 3$. Hence,  the proposition follows from Theorem \ref{dl1} for $s \geqslant 4$. According to Theorem \ref{mdkmk},  
$$B_3(n) = \{v_{1}= X^{2^{s-1}-1}x_3^{2^{s-1}}, v_{2} = X^{2^{s-1}-1}x_2^{2^{s-1}}, v_{3} = X^{2^{s-1}-1}x_1^{2^{s-1}}\},$$
where $X = x_1x_2x_3.$

It is easy to see that $\Phi (B_3(1)) = \{x_1, x_2, x_3, x_4\}$. Hence,  the proposition holds for $s = 1$. For $s = 2$, using Lemma \ref{3.2}, we see that 
$$\Phi^+(B_3(5)) = \{x_1x_2x_3x_4^2,\ x_1x_2x_3^2x_4, \ x_1x_2^2x_3x_4\}$$
is a minimal set of generators for $(P_4^+)_5$. Using Theorem \ref{mdkmk}, it is easy to see that, for $s = 3$, $\Phi^+(B_3(13))$ is the set of  23 monomials:

\medskip
\centerline{\begin{tabular}{lllll}
$d_{1} =  x_1x_2^{2}x_3^{3}x_4^{7}$,& $d_{2} =  x_1x_2^{2}x_3^{7}x_4^{3}$,& $d_{3} =  x_1x_2^{3}x_3^{2}x_4^{7}$,& $d_{4} =  x_1x_2^{3}x_3^{3}x_4^{6}$,\cr 
$d_{5} =  x_1x_2^{3}x_3^{6}x_4^{3}$,& $d_{6} =  x_1x_2^{3}x_3^{7}x_4^{2}$,& $d_{7} =  x_1x_2^{6}x_3^{3}x_4^{3}$,& $d_{8} =  x_1x_2^{7}x_3^{2}x_4^{3}$,\cr 
$d_{9} =  x_1x_2^{7}x_3^{3}x_4^{2}$,& $d_{10} =  x_1^{3}x_2x_3^{2}x_4^{7}$,& $d_{11} =  x_1^{3}x_2x_3^{3}x_4^{6}$,& $d_{12} =  x_1^{3}x_2x_3^{6}x_4^{3}$,\cr 
$d_{13} =  x_1^{3}x_2x_3^{7}x_4^{2}$,& $d_{14} =  x_1^{3}x_2^{3}x_3x_4^{6}$,& $d_{15} =  x_1^{3}x_2^{3}x_3^{3}x_4^{4}$,& $d_{16} =  x_1^{3}x_2^{3}x_3^{4}x_4^{3}$,\cr 
$d_{17} =  x_1^{3}x_2^{3}x_3^{5}x_4^{2}$,& $d_{18} =  x_1^{3}x_2^{5}x_3^{2}x_4^{3}$,& $d_{19} =  x_1^{3}x_2^{5}x_3^{3}x_4^{2}$,& $d_{20} =  x_1^{3}x_2^{7}x_3x_4^{2}$,\cr 
$d_{21} =  x_1^{7}x_2x_3^{2}x_4^{3}$,& $d_{22} =  x_1^{7}x_2x_3^{3}x_4^{2}$,& $d_{23} =  x_1^{7}x_2^{3}x_3x_4^{2}$.&
\end{tabular}}

\medskip
By a direct computation using Lemmas \ref{3.1}, \ref{3.2} and Theorem \ref{dlsig}, if $x$ is an admissible monomial of degree 13 in $P_4^+$, then $x \in \Phi^+(B_3(13))$. Hence, $(QP_4^+)_{13}$ is spanned by $[\Phi^+(B_3(13))]$.

 Now we prove that the set $[\Phi^+(B_3(13))]$ is linearly independent. 
Suppose there is a linear relation
\begin{equation} \mathcal S = \sum_{j=1}^{23} \gamma_{j}d_{j} \equiv 0,\label{ct411} 
\end{equation}
where $\gamma_j \in \mathbb F_2, 1\leqslant j \leqslant 23$. 

Consider the homomorphisms $p_{(1;i)}: P_4 \to P_3, i = 2,3,4$. By a direct computation from (\ref{ct411}), we have
\begin{align*}
p_{(1;2)}(\mathcal S ) &\equiv \gamma_{1}v_{1} +  \gamma_{2}v_{2} +   \gamma_{7}v_{3}\equiv 0,\\
p_{(1;3)}(\mathcal S ) &\equiv \gamma_{3}v_{1} + (\gamma_{5}+\gamma_{16})v_{2} +  \gamma_{8}v_{3}\equiv 0,\\
p_{(1;4)}(\mathcal S )&\equiv (\gamma_{4}+\gamma_{15})v_{1} + \gamma_{6}v_{2} +  \gamma_{9}v_{3}\equiv 0 . 
\end{align*}
From the above equalities it implies 
\begin{equation}\begin{cases} 
\gamma_j = 0, \ j = 1,2,3,6,7, 8, 9, \\
\gamma_{5}= \gamma_{16}, \ \gamma_{4}= \gamma_{16}. 
\end{cases} \label{ct412} 
\end{equation}

Substituting (\ref{ct412}) into the relation (\ref{ct411}), we have
\begin{equation} \mathcal S = \gamma_4d_{4} + \gamma_5d_{5} + \sum_{10\leqslant j\leqslant 23} \gamma_{j}d_{j} \equiv 0.\label{ct413}\end{equation}
Applying the homomorphisms $p_{(2;3)}, p_{(2;4)}, p_{(3;4)} : P_4 \to P_3$ to (\ref{ct413}), we get
\begin{align*} 
 p_{(2;3)}(\mathcal S) &\equiv \gamma_{10}v_{1} + (\gamma_{12}+\gamma_{16}+\gamma_{18})v_{2} +   \gamma_{21}v_{3} \equiv 0,\\
 p_{(2;4)}(\mathcal S) &\equiv (\gamma_{11}+\gamma_{15}+\gamma_{19})v_{1} +  \gamma_{13}v_{2} +  \gamma_{22}v_{3} \equiv 0,\\
 p_{(3;4)}(\mathcal S) &\equiv (\gamma_{14}+\gamma_{15}+\gamma_{16}+\gamma_{17})v_{1} +  \gamma_{20}v_{2} +  \gamma_{23}v_{3} \equiv 0.  
\end{align*}

Hence,  we get 
\begin{equation}\begin{cases} 
\gamma_j = 0, \ j = 10, 13, 20, 21, 22, 23, \\
\gamma_{12}+\gamma_{16}+\gamma_{18} = 
\gamma_{11}+\gamma_{15}+\gamma_{19}=0,\\
\gamma_{14}+\gamma_{15}+\gamma_{16}+\gamma_{17}=0.
\end{cases} \label{ct414} 
\end{equation}

Substituting (\ref{ct414}) into the relation (\ref{ct413}) we get
\begin{equation} \mathcal S = \gamma_4d_{4} + \gamma_5d_{5} + \gamma_{11}d_{11}  + \gamma_{12}d_{12} + \sum_{14\leqslant j\leqslant 19} \gamma_{j}d_{j} \equiv 0.\label{ct415}\end{equation}

The homomorphisms $p_{(1;(2,3))}, p_{(1;(2,4))}, p_{(1;(3,4))} : P_4 \to P_3$, send (\ref{ct415}) respectively to
\begin{align*} 
p_{(1;(2,3))}( \mathcal S)&\equiv (\gamma_{5}+\gamma_{12}+\gamma_{16})v_{2} + \gamma_{18}v_{3}\equiv 0,\\
p_{(1;(2,4))}(\mathcal S)&\equiv (\gamma_{4}+\gamma_{11}+\gamma_{15})v_{1} + \gamma_{19}v_{3} \equiv 0\\
p_{(1;(3,4))}( \mathcal S) &\equiv (\gamma_{4}+\gamma_{14}+\gamma_{15})v_{1} +  (\gamma_{5}+\gamma_{16}+\gamma_{17})v_{2} \equiv 0. 
 \end{align*} 
 From this we obtain 
\begin{equation}\begin{cases} 
\gamma_{18} = \gamma_{19} = \gamma_{5}+\gamma_{12}+\gamma_{16} = 0,\\
\gamma_{4}+\gamma_{11}+\gamma_{15} = \gamma_{4}+\gamma_{14}+\gamma_{15} = \gamma_{5}+\gamma_{16}+\gamma_{17} = 0.
\end{cases} \label{ct416} 
\end{equation}

Combining (\ref{ct412}), (\ref{ct414}) and (\ref{ct416}), we obtain $\gamma_j = 0, \ j = 1,2,\ldots , 23.$
 The proposition is proved.    
\end{proof}

\subsection{The case of degree $n = 2^{s+1}-2$}\label{s5}\
\setcounter{equation}{0}

\medskip
It is well-known that Kameko's homomorphism 
$$ \widetilde{Sq}^0_* :(QP_k)_{2m+k} \to (QP_k)_{m}$$  
is  an epimorphism. Hence,  we have
$$ (QP_k)_{2m+k}\cong (QP_k)_{m}\oplus (QP_k^0)_{2m+k}\oplus (\text{Ker}\widetilde{Sq}^0_*\cap (QP_k^+)_{2m+k}),$$
 and $(QP_k)_{m} \cong \langle[\psi (B_{k}(m))]\rangle \subset (QP_k)_{2m+k}$. 

For $k=4$, from Theorem \ref{mdkmk}, it is easy to see that 
$$\Phi(B_3(2)) = \Phi^0(B_3(2)) =\{x_ix_j \ : \ 1 \leqslant i < j \leqslant 4\}.$$
For $m = 2^s-3, \ s \geqslant 2$, we have
\begin{align*}&|\Phi^0(B_3(6))| = 18,\ |\Phi^0(B_3(2^{s+1}-2))| = 22,\  \text{for } s \geqslant 3, \\
& |\psi (B_{4}(1))| = 4,\ \text{Ker}\widetilde{Sq}^0_*\cap [B_4^+(6)] = \{[x_1x_2^2x_3x_4^2], [x_1x_2x_3^2x_4^2]\}.
\end{align*}
Hence, $\dim(QP_4)_{2}= 6$, $\dim(QP_4)_{6}= 24$.

\medskip 
The main result of this subsection is:

\begin{props}\label{dlc4}\
 For any $s \geqslant 3$, $(QP_4^+)_{2^{s+1}-2}\cap \text{\rm Ker}\widetilde{Sq}^0_*$  is  an $\mathbb F_2$-vector space of dimension $13$ with a basis consisting of all the classes represented by the following admissible monomials:

\medskip
 \centerline{\begin{tabular}{lll}
$d_{1} = x_1x_2x_3^{2^{s} - 2}x_4^{2^{s} - 2}$,&
$d_{2} = x_1x_2^{2}x_3^{2^{s} - 4}x_4^{2^{s} -1}$,&
$d_{3} = x_1x_2^{2}x_3^{2^{s} - 3}x_4^{2^{s} - 2}$,\cr
$d_{4} = x_1x_2^{2}x_3^{2^{s} -1}x_4^{2^{s} - 4}$,&
$d_{5} = x_1x_2^{3}x_3^{2^{s} - 4}x_4^{2^{s} - 2}$,&
$d_{6} = x_1x_2^{3}x_3^{2^{s} - 2}x_4^{2^{s} - 4}$,\cr
$d_{7} = x_1x_2^{2^{s} - 2}x_3x_4^{2^{s} - 2}$,&
$d_{8} = x_1x_2^{2^{s} -1}x_3^{2}x_4^{2^{s} - 4}$,&
$d_{9} = x_1^{3}x_2x_3^{2^{s} - 4}x_4^{2^{s} - 2}$,\cr
$d_{10} = x_1^{3}x_2x_3^{2^{s} - 2}x_4^{2^{s} - 4}$,&
$d_{12} = x_1^{3}x_2^{2^{s} - 3}x_3^{2}x_4^{2^{s} - 4}$,&
$d_{13} = x_1^{2^{s} -1}x_2x_3^{2}x_4^{2^{s} - 4}$.
\end{tabular}}

 {\begin{tabular}{ll}
\ \! $d_{11} = x_1^{3}x_2^{3}x_3^{4}x_4^{4}$, for $s = 3,$&
$d_{11} = x_1^{3}x_2^{5}x_3^{2^{s} - 6}x_4^{2^{s} - 4}$, for $s > 3,$\cr
\end{tabular}}
\end{props}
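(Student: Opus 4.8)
The plan is to reduce Proposition \ref{dlc4} to the determination of the admissible monomials of weight vector $(2^{(s)})$ in $P_4^+$, and then to settle the stated list by the usual two-step scheme of a spanning argument followed by a linear-independence argument. First I record the data of the degree: since $n=2^{s+1}-2=(2^s-1)+(2^s-1)$ we have $\mu(n)=2\leqslant 4$, the minimal spike is $z=x_1^{2^s-1}x_2^{2^s-1}$, and $\omega(z)=(2^{(s)})$ with $\deg\omega(z)=n$. The key reduction is that a kernel class is represented by an admissible monomial with $\omega_1=2$: if $x\in(P_4^+)_n$ is admissible with $[x]\neq 0$ then $n$ even forces $\omega_1(x)$ even, and Theorem \ref{dlsig} forces $\omega(x)\geqslant\omega(z)$, so $\omega_1(x)\in\{2,4\}$; if $\omega_1(x)=4$ all four exponents are odd, so $x=X_\emptyset y^2$ represents a class in the summand $\langle[\psi(B_4(m))]\rangle$ with $m=2^s-3$, hence not in $\text{Ker}\,\widetilde{Sq}^0_*$. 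Thus the space in question is spanned by the admissible monomials of $(P_4^+)_n$ with $\omega_1=2$, and conversely any such monomial has $\widetilde{Sq}^0_*([x])=0$ because $x$ is not of the form $X_\emptyset y^2$.

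Next I would pin down the weight vector exactly. For admissible $x$ with $\omega_1(x)=2$, Proposition \ref{mdcb3} shows the support of $\omega(x)$ is an initial segment and every entry is at most $3$. Suppose $\omega(x)>\omega(z)$; let $i_0$ be the first index where they differ. Because $\omega(z)_j=2$ for $j\leqslant s$ and $0$ afterwards, and because matching $\omega(z)$ on the first $s$ entries already exhausts the degree $n$, one checks $i_0\leqslant s$ and hence $\omega_{i_0}(x)=3$. Then $\omega_j(x)=2$ for $j<i_0$, and a direct degree count gives $\sum_{i>i_0}2^{i-1}\omega_i(x)=2^{s+1}-5\cdot 2^{i_0-1}=2^{i_0-1}(2^{s+2-i_0}-5)$. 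The left side is divisible by $2^{i_0}$, while $2^{s+2-i_0}-5$ is odd, so the right side is not; this contradiction forces $\omega(x)=(2^{(s)})$.

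The heart of the proof, and the step I expect to be the main obstacle, is the spanning assertion: every admissible monomial in $(P_4^+)_n$ of weight $(2^{(s)})$ is one of $d_1,\dots,d_{13}$, and each $d_i$ is indeed admissible. The approach is to compile a finite list of strictly inadmissible monomials extending Lemma \ref{3.2} (patterns built from factors such as $X_iX_j^2$ and $X_1x_1^2$), and then to apply Theorem \ref{dlcb1}, which propagates strict inadmissibility through products $xw^{2^r}$ and $xw^{2^r}y^{2^{r+s}}$, so that any weight-$(2^{(s)})$ monomial containing such a pattern is inadmissible. One must show that a monomial of this weight either reduces via the Cartan formula to a sum of strictly smaller monomials or is $<$-equivalent to some $d_i$, and then verify by the same reduction rules that none of the $d_i$ can be rewritten in terms of smaller monomials. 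The difficulty is that this case analysis must be carried out uniformly in $s$, tracking the binary patterns of the exponents; the single exceptional value $s=3$ (where $d_{11}=x_1^3x_2^3x_3^4x_4^4$ replaces the generic $x_1^3x_2^5x_3^{2^s-6}x_4^{2^s-4}$) signals where the generic reduction degenerates and must be treated separately.

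Finally, for linear independence I would mimic the argument already used in Proposition \ref{dlc3}. Order $d_1,\dots,d_{13}$ by the order of Definition \ref{defn3}, suppose $\mathcal S=\sum_{j}\gamma_j d_j\equiv 0$, and apply the $\mathcal A$-module homomorphisms $p_{(i;I)}\colon P_4\to P_3$. Each $p_{(i;I)}(\mathcal S)$ is a polynomial of degree $n$ in $P_3$, which I rewrite modulo hit elements in the explicit basis $B_3(n)$ supplied by Theorem \ref{mdkmk}; reading off coefficients yields linear relations among the $\gamma_j$. Running over all $(i;I)\in\mathcal N_4$ and supplementing with the maps $\varphi_i$ generating $GL_4$ to resolve the remaining symmetric combinations, I expect to force $\gamma_j=0$ for all $j$. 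Together with the spanning step and the weight determination, this establishes that $[d_1],\dots,[d_{13}]$ form a basis of $(QP_4^+)_{2^{s+1}-2}\cap\text{Ker}\,\widetilde{Sq}^0_*$, proving the proposition.
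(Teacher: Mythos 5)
Your overall architecture coincides with the paper's: reduce to admissible monomials of weight vector $(2^{(s)})$, prove spanning via a list of strictly inadmissible monomials combined with Theorem \ref{dlcb1}, and test independence with the homomorphisms $p_{(i;I)}$. Your divisibility argument pinning down $\omega(x)=(2^{(s)})$ is a correct and slightly different route from the paper's Lemma \ref{4.1}, which instead factors $x=x_ix_jy^2$ and inducts on $s$ using Theorem \ref{dlcb1}; either works. The spanning step in your proposal is only a strategy statement (the explicit analogue of the paper's Lemma \ref{4.2} and the induction on $s$ still have to be produced), but the method you describe is exactly the right one.

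The genuine gap is in the linear-independence step, precisely at the exceptional value $s=3$ that you flagged on the spanning side but not here. For $s=3$ the homomorphisms $p_{(i;I)}$ do \emph{not} force all coefficients to vanish: they yield $\gamma_j=0$ for $j\neq 1,11$ together with the single constraint $\gamma_1=\gamma_{11}$, leaving the relation $\gamma_1\,[d_1+d_{11}]=0$ with $d_1+d_{11}=x_1x_2x_3^6x_4^6+x_1^3x_2^3x_3^4x_4^4$. Your proposed remedy, applying the $GL_4$-generators $\varphi_i$, cannot close this: a one-parameter relation only transforms into $\gamma_1\,[\varphi_i(d_1+d_{11})]=0$, and modulo hit elements this gives nothing new (for instance $\varphi_1(d_1)=d_1+x_2^2x_3^6x_4^6$ and $\varphi_1(d_{11})=d_{11}+(\text{squares and monomials of weight}<(2^{(3)}))$, all of which are hit by Proposition \ref{mdcb1} and Theorem \ref{dlsig}, so $\varphi_1$ fixes the class; the permutations behave similarly after reduction). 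What is actually required is a \emph{detection} argument showing the specific polynomial $d_1+d_{11}$ is not hit, and this is of a different nature from the $p_{(i;I)}$/$\varphi_i$ machinery: one writes a putative hit expression as $Sq^1(A)+Sq^2(B)+Sq^4(C)$ (no higher squares contribute in degree $14$ by the unstable condition), applies $(Sq^2)^3$, uses $(Sq^2)^3Sq^1=(Sq^2)^3Sq^2=0$ to reduce to $(Sq^2)^3(d_1+d_{11})=(Sq^2)^3Sq^4(C)$, and verifies by direct computation that no $C\in(P_4^+)_{10}$ satisfies this. Without this $(Sq^2)^3$-annihilation step your plan stalls at $s=3$; for $s>3$ your plan does go through, since there the $p_{(i;I)}$ alone kill all coefficients, exactly as in the paper.
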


The proof of this proposition is based on some lemmas.

\begin{lems}\label{4.1} If $x$ is an admissible monomial of degree $2^{s+1}-2$ in $P_4$ and $[x] \in \text{\rm Ker}\widetilde{Sq}^0_*$, then $\omega(x) = (2^{(s)})$.
\end{lems}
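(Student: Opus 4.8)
The plan is to pin down $\omega(x)$ by squeezing it between a lower bound coming from Singer's criterion and an upper constraint coming from a congruence on the degree, with the kernel hypothesis entering only to discard the one competing value $\omega_1(x)=4$. First I would record the arithmetic of the degree: since $n=2^{s+1}-2=(2^s-1)+(2^s-1)$ we have $\mu(n)=2$, and the minimal spike is $z=x_1^{2^s-1}x_2^{2^s-1}$, whose weight vector is $\omega(z)=(2^{(s)})$. As $x$ is admissible it is not hit, so Theorem \ref{dlsig} forces $\omega(x)\geq\omega(z)$ in the left lexicographic order. Since $n$ is even, $\omega_1(x)$ is even, hence $\omega_1(x)\in\{0,2,4\}$; the value $0$ is excluded at once, as it would give $\omega(x)<\omega(z)$.

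Next I would eliminate the value $\omega_1(x)=4$, and this is the only place the kernel hypothesis is used. If $\omega_1(x)=4$ then every exponent of $x$ is odd, so $x=X_\emptyset y^2$ for a monomial $y$ of degree $2^s-3$; consequently $[x]=\psi([y])$ and $\widetilde{Sq}^0_*([x])=[y]$, because $\psi$ is a section of $\widetilde{Sq}^0_*$. If $[x]$ lay in $\mathrm{Ker}\,\widetilde{Sq}^0_*$ we would then get $[y]=0$, whence $[x]=\psi([y])=0$, contradicting the fact that the admissible monomial $x$ has nonzero class. Therefore $\omega_1(x)=2$ (for the very small $s$ where degree forbids all four exponents being positive this step is vacuous).

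It remains to show that $\omega_1(x)=2$ already forces $\omega(x)=(2^{(s)})$, and this is the technical heart. Suppose $\omega(x)\neq\omega(z)$ and let $t$ be the first index at which they differ; then $\omega_i(x)=2$ for $i<t$ (note $t\geq 2$), and since $\omega(x)>\omega(z)$ we have $\omega_t(x)>\omega_t(z)$. For $t\leq s$ I would read off the degree modulo $2^t$: the head $\sum_{i<t}2^{i-1}\cdot 2=2^t-2$ together with the tail $\sum_{i>t}2^{i-1}\omega_i(x)$, which is divisible by $2^t$, gives $n\equiv -2+2^{t-1}\omega_t(x)\pmod{2^t}$, while $n\equiv -2\pmod{2^t}$; hence $2^{t-1}\omega_t(x)\equiv 0\pmod{2^t}$, i.e. $\omega_t(x)$ is even. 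Combined with $\omega_t(x)\leq 3$ from Proposition \ref{mdcb3}(ii) (applicable since $\omega_1(x)=2<4$), this yields $\omega_t(x)\in\{0,2\}$, contradicting $\omega_t(x)>\omega_t(z)=2$. For $t>s$ the equalities $\omega_i(x)=2$ for $i\leq s$ already exhaust the degree, forcing $\omega_i(x)=0$ for $i>s$ and hence $\omega(x)=\omega(z)$, against the choice of $t$. Either way we reach a contradiction, so $\omega(x)=(2^{(s)})$.

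The main obstacle is precisely this last step: a priori an admissible weight vector could jump to $\omega_t(x)=3$ at some intermediate $t\leq s$, and one must rule this out. The cleanest device is the parity-of-$\omega_t$ congruence above, which turns the exclusion into a one-line modular computation rather than a case-by-case analysis of monomials; the supporting inputs are only Singer's criterion for the lower bound, Proposition \ref{mdcb3} for the bound $\omega_t(x)\leq 3$, and the section property of $\psi$ to dispose of the $\omega_1(x)=4$ case.
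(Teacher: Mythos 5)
Your proof is correct in its overall structure, but it reaches the conclusion by a genuinely different route than the paper. Both proofs begin identically: parity of the degree plus Theorem \ref{dlsig} kills $\omega_1(x)=0$, and the kernel hypothesis kills $\omega_1(x)=4$. After that the paper argues by induction on $s$: writing $x=x_ix_jy^2$ with $\deg y=2^s-2$, it invokes Theorem \ref{dlcb1} to see that $y$ is admissible and applies the inductive hypothesis to $y$. You instead give a direct, non-inductive argument: Singer's criterion gives $\omega(x)\geqslant\omega(z)$ in the left-lexicographic order, and at the first index $t\leqslant s$ of disagreement the congruence $n\equiv 2^t-2+2^{t-1}\omega_t(x)\pmod{2^t}$ forces $\omega_t(x)$ to be even, while Proposition \ref{mdcb3}(ii) (applicable since $\omega_1(x)=2<4$) gives $\omega_t(x)\leqslant 3$; together these contradict $\omega_t(x)>2$. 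Your version buys something real: the kernel hypothesis is used exactly once, whereas the paper's induction tacitly needs $[y]\in\operatorname{Ker}\widetilde{Sq}^0_*$ to apply the inductive hypothesis, a point it does not verify (it follows from Proposition \ref{mdcb3}(ii), by essentially your argument). The paper's version is shorter once Theorem \ref{dlcb1} is in hand.

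One step of yours needs repair. To exclude $\omega_1(x)=4$ you argue $[y]=\widetilde{Sq}^0_*([x])=0$ and then $[x]=\psi([y])=0$; the second implication uses that $\psi$ descends to a well-defined map on $QP_4$, i.e.\ that $y$ hit implies $X_\emptyset y^2$ hit. That statement is true (it is the well-definedness of Kameko's duplication map, proved in Kameko's thesis), but it is nontrivial and is neither stated nor proved anywhere in this paper, so as written this step leans on an external fact. The fix stays entirely inside the paper's toolkit: since $x=X_\emptyset y^2$ is admissible, Theorem \ref{dlcb1}(i) shows that $y$ is admissible, hence $[y]\neq 0$, directly contradicting $[y]=\widetilde{Sq}^0_*([x])=0$. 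With that substitution your proof of Lemma \ref{4.1} is complete and correct.
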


\begin{proof}  We prove the lemma by induction on $s$. Obviously, the lemma holds for $s = 1$. Observe  that $z = (x_1x_2)^{2^s-1}$ is the minimal spike of degree $2^{s+1}-2$ in $P_4$ and $\omega(z) = (2^{(s)})$. Since $2^{s+1}-2$ is even, using Theorem \ref{dlsig} and the fact that $[x] \in \text{Ker}\widetilde{Sq}^0_*$, we obtain $\omega_1(x)=2$. Hence, $x =x_ix_jy^2$, where $y$ is a monomial of degree $2^s-2$ and $1\leqslant i < j \leqslant 4$. Since $x$ is admissible, by Theorem \ref{dlcb1}, $y$ is also admissible. Now, the lemma follows from the inductive hypothesis.
\end{proof}  

The following lemma is proved by a direct computation.

\begin{lems}\label{4.2} The following monomials are strictly inadmissible:

\rm{i)} $x_i^2x_jx_k^3, \ x_i^3x_j^4x_k^7,\ i < j, k\ne i, j$,
 $x_1^2x_2^2x_3x_4, x_1^2x_2x_3^2x_4, x_1^2x_2x_3x_4^2, x_1x_2^2x_3^2x_4.$

\rm{ii)} $x_1x_2^6x_3^3x_4^4, x_1^3x_2^4x_3x_4^6, x_1^3x_2^4x_3^3x_4^4.$

\rm{iii)} $x_1x_2^7x_3^{10}x_4^{12}, x_1^7x_2x_3^{10}x_4^{12}, x_1^3x_2^3x_3^{12}x_4^{12}, x_1^3x_2^5x_3^8x_4^{14}, x_1^3x_2^5x_3^{14}x_4^8, x_1^7x_2^7x_3^8x_4^8$.
\end{lems}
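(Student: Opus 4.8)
The plan is to verify each monomial $x$ in the list by exhibiting it, modulo $\mathcal A_s^+P_k$ with $s=\max\{i:\omega_i(x)>0\}$, as a sum of monomials strictly smaller than $x$ in the order of Definition \ref{defn3}. It is convenient to recast this through the relation $\simeq_{(s,\omega(x))}$ of Definition \ref{dfn2}: if I can show $x\simeq_{(s,\omega(x))}\sum_j y_j$, where each $y_j$ is a monomial with $\omega(y_j)=\omega(x)$ and $\sigma(y_j)<\sigma(x)$, then $x-\sum_j y_j$ lies in $\mathcal A_s^+P_k+P_k^-(\omega(x))$. Since every monomial occurring in $P_k^-(\omega(x))$ has weight vector $<\omega(x)$ and is therefore itself $<x$ by Definition \ref{defn3}(i), those smaller-weight terms may be absorbed into the right-hand side, leaving $x$ congruent mod $\mathcal A_s^+P_k$ to a sum of monomials all $<x$; this is exactly strict inadmissibility. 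Thus it suffices to produce, for each $x$, one such congruence. The computational engine is the Cartan formula combined with the single-variable rule $Sq^i(x_\ell^a)=\binom ai x_\ell^{a+i}$ from Proposition \ref{mdcb1} and the vanishing of even binomial coefficients mod $2$.

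I would first compute the pair $(\omega(x),s)$ for every listed monomial. The degree-$6$ members of part (i), namely $x_i^2x_jx_k^3$ and the four monomials $x_1^2x_2^2x_3x_4,\ x_1^2x_2x_3^2x_4,\ x_1^2x_2x_3x_4^2,\ x_1x_2^2x_3^2x_4$, all have weight $(2,2)$, hence $s=2$ and only $Sq^1,Sq^2,Sq^3$ are allowed. For these a single low square already suffices: applying $Sq^1$ to the predecessor obtained by lowering the leading (even-exponent) variable by one returns, after the mod-$2$ cancellations, the target together with two or three monomials whose $\sigma$ has a strictly smaller first entry; for the remaining degree-$6$ cases one uses $Sq^2$ on the square-free predecessor instead. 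In each case I then read off $\omega$ and $\sigma$ of every error term and confirm via Definition \ref{defn3} that it is $<x$, either because its weight vector drops (so it lies in $P_k^-(\omega(x))$) or because its weight vector is unchanged and its $\sigma$ is lexicographically smaller.

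For the heavier monomials—the degree-$14$ family $x_i^3x_j^4x_k^7$ of part (i), all of part (ii), and all of part (iii)—the weight is $(2^{(s)})$ with $s=3$ or $s=4$, matching the ambient stratum identified by Lemma \ref{4.1}, and a single square will not isolate $x$. Here I proceed in two ways. Whenever $x$ factors as $w\,y^{2^r}$ with $\omega_i(w)\le1$ for $i>r$ and $w$ a monomial already shown to be strictly inadmissible (typically a handled lower-degree case), I invoke Proposition \ref{mdcb4}(i) to transport the congruence for $w$ up to one for $x$, the proposition guaranteeing that both the $\mathcal A_r^+$-relation and the order relation survive multiplication by $y^{2^r}$. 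When no factorization through an already-treated strictly inadmissible core is available—and this genuinely occurs, for instance the natural factor $x_1^3x_3^3$ of $x_1^3x_2^4x_3^3x_4^4$ is itself \emph{admissible}, so the shortcut fails—I instead build the congruence directly from a short combination of the squares $Sq^{2^i}$ with $2^i<2^s$, expanding by Cartan and checking the order of each resulting term as before.

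The main obstacle is precisely this last point: for the $s\ge3$ monomials there is no purely formal reduction, so the explicit $\mathcal A_s^+$-relations must be found by hand, with the auxiliary polynomials and the squares chosen so that the Cartan expansion yields, after mod-$2$ cancellation, only monomials of smaller weight or smaller $\sigma$. The bulk of the labour is bookkeeping—tracking which binomial coefficients survive mod $2$, verifying the cancellations that leave $x$ alone on the left, and checking the order relation for every error term. Singer's criterion (Theorem \ref{dlsig}) is a useful shortcut throughout, since any term whose weight vector falls below that of the minimal spike $(x_1x_2)^{2^s-1}$ is automatically hit and may be discarded, trimming the list of error terms that must be compared against $x$.
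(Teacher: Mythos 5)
Your proposal is correct and is essentially the paper's own argument: the paper disposes of this lemma with the single remark that it ``is proved by a direct computation,'' and your plan---recasting strict inadmissibility as a congruence $x\simeq_{(s,\omega(x))}\sum_j y_j$ with each $y_j$ of equal weight and smaller $\sigma$, absorbing the $P_4^-(\omega(x))$ terms as monomials smaller than $x$ by Definition~\ref{defn3}(i), and producing the congruences case by case from Cartan-formula computations with $Sq^j$, $1\leqslant j<2^s$ (transported along Proposition~\ref{mdcb4}(i) when a suitable factorization through an already-treated core exists)---is precisely such a direct computation, correctly organized. One caution: do not literally ``discard'' lower-weight error terms via Singer's criterion (Theorem~\ref{dlsig}), since being hit only places them in $\mathcal A^+P_4$ rather than in the restricted $\mathcal A_s^+P_4$ that strict inadmissibility requires; they must instead be kept on the right-hand side, where they are already smaller than $x$ because their weight vector drops---your first paragraph handles this correctly, so the later remark is a harmless redundancy rather than a gap.
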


\begin{proof}[Proof of Proposition \ref{dlc4}] Let $x$ be an admissible monomial in $P_4$ and $[x] \in \text{\rm Ker}\widetilde{Sq}^0_*$. By Lemma \ref{4.1}, $\omega_i(x) = 2$, for $1 \leqslant i \leqslant s$. By induction on $s$, we see that if $x \ne d_i$, for $i = 1, 2, \ldots , 13,$ then there is a monomial $w$, which is given in Lemma \ref{4.2} such that $x = wy^{2^u}$ for some monomial $y$ and positive integer $u$. By Theorem \ref{dlcb1}, $x$ is inadmissible. Hence,  $\text{\rm Ker}\widetilde{Sq}^0_*\cap (QP_4^+)$ is spanned by the classes $[d_i]$ with $i = 1, 2, \ldots , 13$. 

Now, we prove that the classes $[d_i]$ with $i = 1, 2, \ldots , 13,$ are linearly independent. 
Suppose there is a linear relation
\begin{equation}\mathcal S = \sum_{1\leqslant i\leqslant 13}\gamma_id_{i} \equiv 0, \label{ct4.2.1}
\end{equation}
with $\gamma_i \in \mathbb F_2$. 
According to Theorem \ref{mdkmk}, for $s \geqslant 3$, $B_3(n)\cap (P_3^+)_{n}$ is the set consisting of 4 monomials:

\centerline{\begin{tabular}{ll}
$w_{1} = x_1x_2^{2^s-2}x_3^{2^s-1},$& $w_{2} = x_1x_2^{2^s-1}x_3^{2^s-2}$,\cr 
$w_{3} = x_1^3x_2^{2^s-3}x_3^{2^s-2},$&$w_{4} = x_1^{2^s-1}x_2x_3^{2^s-2}$.\cr  
\end{tabular}}
 
\medskip
Apply the homomorphisms $p_{(1;2)}, p_{(3;4)}: P_4 \to P_3$ to the relation (\ref{ct4.2.1}) to obtain
\begin{align*} 
p_{(1;2)}(\mathcal S) &\equiv \gamma_{2}w_{1} + \gamma_{4}w_{2} +   \gamma_{3}w_{3} +  \gamma_{7}w_{4} \equiv 0.\\
p_{(3;4)}(\mathcal S) &\equiv \gamma_{7}w_{1} + \gamma_{8}w_{2} +   \gamma_{12}w_{3} +  \gamma_{13}w_{4} \equiv 0.
\end{align*}

From these relations, we get $\gamma_i = 0, \ i = 2,3,4,7,8,12, 13$. Then, the relation (\ref{ct4.2.1}) becomes
\begin{equation}\mathcal S=\gamma_1d_{1} + \gamma_5d_{5} + \gamma_6d_{6} + \gamma_{9}d_{9} + \gamma_{10}d_{10} + \gamma_{11}d_{11}\equiv 0. \label{ct4.2.2}
\end{equation}
Apply the homomorphisms $p_{(1;4)}, p_{(2;3)}: P_4 \to P_3$ to the relation (\ref{ct4.2.2}) to get
\begin{align*} 
p_{(1;4)}(\mathcal S) &\equiv (\gamma_{1}+\gamma_{5}+\gamma_{10}+\gamma_{11})w_{1} + \gamma_{6}w_{3}  \equiv 0,\\
p_{(2;3)}(\mathcal S) &\equiv (\gamma_{1}+\gamma_{5}+\gamma_{10}+\gamma_{11})w_{2} + \gamma_{9}w_{3}   \equiv 0.
\end{align*} 
These equalities imply $\gamma_{6} = \gamma_{9} = \gamma_{1}+\gamma_{5}+\gamma_{10}+\gamma_{11} = 0.$ Hence,  we obtain
\begin{equation}\mathcal S=\gamma_1d_{1} + \gamma_5d_{5} + \gamma_{10}d_{10} + \gamma_{11}d_{11}\equiv 0. \label{ct4.2.3}
\end{equation}
For $s > 3$, applying the homomorphisms $p_{(1;3)}, p_{(2;4)} : P_4 \to P_3$ to (\ref{ct4.2.3}), we get 
\begin{align*} 
p_{(1;3)}(\mathcal S) &\equiv\gamma_{1}w_{2} + \gamma_{5}w_{3} \equiv 0,\\
p_{(2;4)}(\mathcal S) &\equiv\gamma_{1}w_{1} + \gamma_{10}w_{3} \equiv 0.
\end{align*} 
From the above equalities, we get $\gamma_i = 0, i = 1,2,\ldots , 13.$

For $s = 3$, applying the homomorphisms $p_{(1;3)}, p_{(2;4)} : P_4 \to P_3$ to (\ref{ct4.2.3}), we get 
\begin{align*} 
p_{(1;3)}(\mathcal S) &\equiv (\gamma_{1}+\gamma_{11})w_{2} + \gamma_{8}w_{3} \equiv 0,\\
p_{(2;4)}(\mathcal S) &\equiv (\gamma_{1}+\gamma_{11})w_{1} + \gamma_{10}w_{3} \equiv 0.
\end{align*} 
From the above equalities, we get $\gamma_i = 0, i = 2,\ldots , 10,12,13$ and $\gamma_{1} = \gamma_{11}$.  So, the relation (\ref{ct4.2.3}) becomes
$$\gamma_{1}(d_{1} + d_{11}) \equiv 0.$$
Now, we prove that $[d_{1} + d_{11}] \ne 0$. Suppose the contrary, that  the polynomial $d_{1} + d_{11}= x_1x_2x_3^6x_4^6 + x_1^3x_2^3x_3^4x_4^4$ is hit. Then, by the unstable property of the action of $\mathcal A$ on the polynomial algebra, we have
$$x_1x_2x_3^6x_4^6 + x_1^3x_2^3x_3^4x_4^4 = Sq^1(A) + Sq^2(B) + Sq^4(C),$$
for some polynomials $A \in (P_4^+)_{13}, B \in (P_4^+)_{12}, C \in (P_4^+)_{10}$. 
Let $(Sq^2)^3$ act on the both sides of the above equality. Since $(Sq^2)^3Sq^1 = 0$ and $(Sq^2)^3Sq^2 = 0$, we get
$$(Sq^2)^3(x_1x_2x_3^6x_4^6 + x_1^3x_2^3x_3^4x_4^4) = (Sq^2)^3Sq^4(C).$$
On the other hand, by a direct computation, it is not difficult to check that 
$$(Sq^2)^3(x_1x_2x_3^6x_4^6 + x_1^3x_2^3x_3^4x_4^4) \ne (Sq^2)^3Sq^4(C),$$
for all $C \in (P_4^+)_{10}$. This is a contradiction. Hence, $[d_{1} + d_{11}] \ne 0 $ and $\gamma_{1} = \gamma_{11} = 0$.
The proposition is proved.     
\end{proof}

\subsection{The case of degree $n = 2^{s+1}-1$}\label{s6}\
\setcounter{equation}{0}

\medskip
First, we determine the $\omega$-vector of an admissible monomial of degree $2^{s+1}-1$ in $P_4$. 

\begin{lems}\label{b51} If $x$ is an admissible monomial of degree $2^{s+1}-1$ in $P_4$,  then either $\omega(x)= (1^{(s+1)})$ or $\omega(x) = (3,2^{(s-1)})$ or  $\omega(x) = (1,3)$ for $s = 2$.
\end{lems}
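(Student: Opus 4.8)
Lemma \ref{b51} asserts that an admissible monomial $x$ of degree $2^{s+1}-1$ in $P_4$ has one of exactly three weight vectors. Let me plan the proof.

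The plan is to argue by induction on $s$, splitting according to the parity of the leading weight. Since $n=2^{s+1}-1$ is the single term $2^{s+1}-1$, its minimal spike is $z=x_1^{2^{s+1}-1}$, whose weight vector is $\omega(z)=(1^{(s+1)})$; a short check shows this is the lexicographically smallest weight vector of degree $n$. Because $n$ is odd and $k=4$, any admissible $x$ of degree $n$ has $\omega_1(x)$ odd and at most $4$, hence $\omega_1(x)\in\{1,3\}$. I would dispose of the base cases $s=1,2$ (degrees $3$ and $7$) by the explicit description of $B_4$ in these low degrees, and then treat the two values of $\omega_1(x)$ separately in the inductive step.

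Suppose first $\omega_1(x)=3$. Writing $x=x_{j_1}x_{j_2}x_{j_3}u^2$ with $u$ of degree $2^{s}-2$, Theorem \ref{dlcb1}(i) applied with the degree-$3$ left factor (whose weight sits entirely in position $1$) forces $u$ to be admissible, and $\omega(x)=(3,\omega(u))$. I then run a downward descent: Proposition \ref{mdcb3}(ii) gives $\omega_i(x)\leqslant 3$ for all $i$ since $\omega_1(x)=3<4$, while a parity/degree count on $\deg u=2^{s}-2$ shows that after dividing out the leading part the remaining degree stays even, so each successive weight is even, and Proposition \ref{mdcb3}(i) forbids an intermediate $0$ before the degree is exhausted. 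Combining these, $\omega_i(x)=2$ for $2\leqslant i\leqslant s$ and $\omega_i(x)=0$ for $i>s$, that is $\omega(x)=(3,2^{(s-1)})$. No monomials are discarded in this branch.

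Next suppose $\omega_1(x)=1$, so that $x=x_ju^2$ with $u$ admissible of degree $2^{s}-1$ (again by Theorem \ref{dlcb1}(i)) and $\omega(x)=(1,\omega(u))$. The induction hypothesis, applied to $u$ in degree $2^{s}-1$, gives $\omega(u)\in\{(1^{(s)}),\,(3,2^{(s-2)})\}$, together with the extra possibility $(1,3)$ exactly when $s=3$. The value $\omega(u)=(1^{(s)})$ yields the desired $\omega(x)=(1^{(s+1)})$, and for $s=2$ the value $\omega(u)=(3)$ yields the admissible exceptional weight $(1,3)$. The crux is therefore to show that for $s\geqslant 3$ the remaining possibilities $\omega(u)=(3,2^{(s-2)})$ and $\omega(u)=(1,3)$ produce \emph{inadmissible} $x$, so that $\omega(x)=(1,3,2^{(s-2)})$ and $(1,1,3)$ never occur. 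I emphasize that Singer's criterion (Theorem \ref{dlsig}) is powerless here, because both of these weight vectors \emph{exceed} $\omega(z)=(1^{(s+1)})$ in the left order; the elimination must come from strict inadmissibility instead.

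The main obstacle is precisely this last elimination. I expect to handle it by first recording a short list of strictly inadmissible monomials in low degree, in the spirit of Lemmas \ref{3.2} and \ref{4.2}, and then invoking the propagation statement Theorem \ref{dlcb1}(ii): writing $u=x_ax_bx_cw^2$ with $\omega(w)=(2^{(s-2)})$ exhibits $x=x_jx_a^2x_b^2x_c^2w^4$ with a strictly inadmissible inner factor of the shape to which Theorem \ref{dlcb1}(ii) applies, forcing $x$ to be inadmissible; the sub-exceptional weight $(1,1,3)$ at $s=3$ is dealt with in the same manner. Granting this list, the only surviving weight vectors are $(1^{(s+1)})$, $(3,2^{(s-1)})$, and $(1,3)$ for $s=2$, which completes the induction.
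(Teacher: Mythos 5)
Your proposal follows essentially the same route as the paper's proof: induction on $s$ with a parity split on $\omega_1(x)$, a descent via Proposition \ref{mdcb3} in the branch $\omega_1(x)=3$, and elimination of the spurious weights $(1,3,2^{(s-2)})$ and $(1,1,3)$ by exhibiting strictly inadmissible factors of weight $(1,3,2)$ and $(1,1,3)$ (checked by direct computation) and propagating inadmissibility via Theorem \ref{dlcb1}. The only cosmetic difference is that you apply the inductive hypothesis directly to the factor $u$ of degree $2^{s}-1$, whereas the paper splits once more on $\omega_1(y)$ before invoking it; both reduce to the same strictly inadmissible list.
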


\begin{proof} Obviously, the lemma holds for $s=1$. Suppose $s\geqslant 2$. By a direct computation we see that if $w$ is a monomial in $P_4$ such that $\omega(w) =(1,3,2)$ or $\omega(w) = (1,1,3)$, then $w$ is strictly inadmissible. 

Since $2^{s+1}-1$ is odd, we have either $\omega_1(x) =1$ or $\omega_1(x) =3$. 
If $\omega_1(x) =1$, then $x= x_iy^2$, where $y$ is a monomial of degree $2^s-1$. Hence,  either $\omega_1(y) = 1$ or $\omega_1(y) = 3$. So, the lemma holds for $s = 2$. 
Suppose that $s \geqslant 3$. If  $\omega_1(y) = 3$, then $y = X_iy_1^2$, where $y_1$ is a monomial of degree $2^{s-1}-2$. Since $y_1$ is admissible, using Proposition \ref{mdcb3}, one gets $\omega_1(y_1) = 2$. Hence, $x$ is inadmissible. If $\omega_1(y) = 1$, then $y = x_jy_1^2$, where $y_1$ is an admissible monomial of degree $2^{s -1}-1$. By the inductive hypothesis $\omega(y_1) = (1^{(s-1)})$. So, we get $\omega(x) = (1^{(s+1)})$.

Suppose that $\omega_1(x) = 3$. Then, $x = X_{i}y^2$, where  $y$ is an admissible monomial of degree $2^s-2$. Since $x$ and $y$ are admissible, by Lemma \ref{4.2} and Proposition \ref{mdcb3}, $\omega(y) = (2^{(s-1)})$.  The lemma is proved. 
\end{proof}

For $s = 1$, we have $(QP_4)_3 = (QP_4^0)_3$. Hence, $B_4(3) = \Phi^0(B_3(3))$. Using Proposition \ref{md41} and Theorem \ref{mdkmk}, we have
\begin{align*} |&\Phi^0(B_3(3))| = 14, \  |\Phi^0(B_3(7))|\ = 26, \ |\Phi^0(B_3(15))| =  38,\\ 
&|\Phi^0(B_3(2^{s+1}-1))| = 42, \text{ for } s \geqslant 4.
\end{align*}

For $s = 2$, $B_4(7) = B_4(1^{(3)})\cup B_4(1,3)\cup B_4(3,2)$. By a direct computation, we have
$B_4(1,3) = \{ x_1X_1^2\}, \ B_4(3,2) =\Phi(B_3(7))$.

Recall that
$$B_3(2^{s+1}-1)  = B_3(1^{(s+1)})\cup \psi(\Phi(B_2(2^s-2))),$$
where $B_2(2^s-2) = \{x_1^{2^{s-1}-1}x_2^{2^{s-1}-1}\}$. Hence, $B_3(3,2^{(s-1)}) = \psi(\Phi(B_2(2^s-2)))$.

\begin{props}\label{dlc5} 
For any $s \geqslant 3$, $B_4(3,2^{(s-1)}) = \Phi(B_3(3,2^{(s-1)}))\cup A(s)$, where $A(s)$ is determined as follows:
\begin{align*}
&A(3) = \{x_1^{3}x_2^{4}x_3x_4^{7},x_1^{3}x_2^{4}x_3^{7}x_4,
x_1^{3}x_2^{7}x_3^{4}x_4,x_1^{7}x_2^{3}x_3^{4}x_4,
x_1^{3}x_2^{4}x_3^{3}x_4^{5}\},\\
&A(4) = \{x_1^{3}x_2^{4}x_3^{11}x_4^{13},
x_1^{3}x_2^{7}x_3^{8}x_4^{13},
x_1^{7}x_2^{3}x_3^{8}x_4^{13},
x_1^{7}x_2^{7}x_3^{8}x_4^{9},
x_1^{7}x_2^{7}x_3^{9}x_4^{8},\},\\
&A(s) = \{x_1^3x_2^4x_3^{2^s-5}x_4^{2^s-3}\}, s \geqslant 5.
\end{align*}
\end{props}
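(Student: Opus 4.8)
The plan is to follow the two-step template used throughout Section~\ref{s4}: first show that the proposed set is \emph{complete}, i.e. every admissible monomial of degree $n=2^{s+1}-1$ with weight vector $(3,2^{(s-1)})$ lies in $\Phi(B_3(3,2^{(s-1)}))\cup A(s)$, and then show that the classes of these monomials are linearly independent in $QP_4$. Together these two facts identify the listed set with $B_4(3,2^{(s-1)})$: completeness gives the inclusion $B_4(3,2^{(s-1)})\subset \Phi(B_3(3,2^{(s-1)}))\cup A(s)$, while linear independence forbids any listed monomial from being inadmissible, since an inadmissible monomial would, after reduction to admissibles, yield a nontrivial relation among the classes. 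By Lemma~\ref{b51} the vector $(3,2^{(s-1)})$ is, for $s\geqslant 3$, the only weight besides $(1^{(s+1)})$ occurring in this degree, so it suffices to work inside $P_4(3,2^{(s-1)})$.

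For the completeness step I would first record, by direct computation, a short finite list of strictly inadmissible monomials in $P_4$ (the analogue of Lemmas~\ref{3.2} and \ref{4.2}), arranged so that every monomial of weight $(3,2^{(s-1)})$ outside the claimed set can be written in one of the forms $u\,w^{2^r}$ or $u\,w^{2^r}v^{2^{r+t}}$ of Theorem~\ref{dlcb1}, with $w$ (strictly) inadmissible; the theorem then forces the monomial to be inadmissible. The $P_4^0$ part is automatic, since $\Phi^0(B_3(3,2^{(s-1)}))$ is the minimal generating set for $P_4^0$ in this weight, so only $P_4^+$ needs work. Here I expect an induction on $s$: because $\omega_1(x)=3$, every monomial in question factors as $x=X_iy^2$ with $\deg y=2^s-2$ and $\omega(y)=(2^{(s-1)})$, and Theorem~\ref{dlcb1}(i) shows $x$ is admissible only if $y$ is. The admissible $y$ are pinned down by Proposition~\ref{dlc4} (applied in degree $2^s-2$) together with the Kameko isomorphism of Theorem~\ref{dlmd2} and Theorem~\ref{mdkmk}, and the residual monomials collapse onto $A(s)$. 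The low cases $s=3,4$, where $A(s)$ takes its exceptional five-element form, I would simply enumerate by hand.

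The linear independence step uses the standard mechanism. Ordering the monomials by Definition~\ref{defn3} and assuming a relation $\mathcal S=\sum_j\gamma_j d_j\equiv 0$, I would apply the algebra maps $p_{(i;I)}:P_4\to P_3$ for the various $(i;I)\in\mathcal N_4$, exploiting that $B_3(3,2^{(s-1)})=\psi(\Phi(B_2(2^s-2)))$ is already known, to read off families of vanishing coefficients; the homomorphisms $\varphi_i$ induced by $GL_4$ then supply the relations needed to separate the monomials of $A(s)$ from their $\Phi^+$ neighbours. For $s=3,4$ a few coefficients are expected to survive the $p_{(i;I)}$ and $\varphi_i$ tests and must be killed by letting an iterated operation such as $(Sq^2)^3$ act, exactly as in the closing paragraph of the proof of Proposition~\ref{dlc4}.

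The main obstacle will be the completeness direction in $P_4^+$: one must confirm both that the exceptional monomials of $A(s)$ are genuinely admissible and that no further weight-$(3,2^{(s-1)})$ monomial escapes the strictly-inadmissible-factor test, which demands a correct finite list of strictly inadmissible monomials and a careful matching of every remaining monomial against it. The three distinct shapes of $A(s)$ for $s=3$, $s=4$, and $s\geqslant 5$ indicate that the inductive reduction stabilises only for $s\geqslant 5$; thus the delicate points are the two low-degree base cases and the verification that the stabilised residue $\{x_1^3x_2^4x_3^{2^s-5}x_4^{2^s-3}\}$ is exact for all larger $s$.
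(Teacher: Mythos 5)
Your proposal follows essentially the same route as the paper's own proof: completeness is obtained exactly as in the paper by exhibiting, for each weight-$(3,2^{(s-1)})$ monomial outside the claimed list, a strictly inadmissible factor from a finite list (the paper's Lemma \ref{b53}) and invoking Theorem \ref{dlcb1}, while linear independence is proved by the same combination of the homomorphisms $p_{(i;I)}$, the maps $\varphi_i$, and the $(Sq^2)^3$-detection of non-hit sums, with the exceptional low cases $s=3,4$ treated separately. The only cosmetic difference is that you organize completeness as an induction on $s$ via the factorization $x = X_iy^2$ with $y$ admissible of degree $2^s-2$ (which is how the paper proceeds in its later subsections), whereas the paper's proof of this particular proposition matches monomials directly against Lemma \ref{b53}; this changes nothing essential.
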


Combining Lemma \ref{b51} and Propositions \ref{md41}, \ref{dlc5}, we have 
$$B_4(2^{s+1}-1) = B_4(1^{(s+1)})\cup \Phi(B_3(3,2^{(s-1)}))\cup A(s).$$

The following can easily be proved by a direct computation.

\begin{lems}\label{b53} The following monomials are strictly inadmissible:

\medskip
\rm{i)} $x_i^2x_j, x_i^3x_j^4,\ 1 \leqslant i < j \leqslant 4.$

\rm{ii)} $X_2x_1^2x_2^2, \ X_1x_1^2x_i^2, \ i=  2, 3, 4.$

\rm{iii)}  $x_i^3x_j^{12}x_kx_\ell^{15},\ x_i^3x_j^{4}x_k^9x_\ell^{15}, x_i^3x_j^{5}x_k^8x_\ell^{15}, \ i < j < k, \ \ell \ne i,\ j,\ k.$

\rm{iv)} $x_1^7x_2^{11}x_3^{12}x_4$,\ $ x_1^3x_2^{12}x_3^3x_4^{13}$,\ $X_jx_1^2x_2^4x_3^8x_4^8x_j^6,\ x_1^7x_2^{11}x_3^4x_4^8x_j,$\ 

\quad\ $ x_1^3x_2^{3}x_3^{12}x_4^{8}x_i^4x_j, \  x_1^3x_2^{3}x_3^{24}x_4^{29}x_i^4, \ i=1,2, \ j=3, 4.$
\end{lems}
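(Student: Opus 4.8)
The plan is to check each monomial against the definition of strict inadmissibility directly. For a monomial $x$ put $s=\max\{i:\omega_i(x)>0\}$; I will produce, for each $x$ in the list, polynomials $z_\ell$ and integers $1\leqslant a_\ell<2^{s}$ together with monomials $y_1,\dots,y_t$ all satisfying $y_u<x$ in the sense of Definition \ref{defn3}, so that $x-\sum_u y_u=\sum_\ell Sq^{a_\ell}(z_\ell)\in\mathcal A_s^+P_4$. It is convenient to phrase this through the relation $\simeq$ of Definition \ref{dfn2}: since $\mathcal A_{s'}^+P_4\subseteq\mathcal A_s^+P_4$ for $s'\leqslant s$, and since a monomial of weight strictly below $\omega(x)$ is automatically $<x$ by Definition \ref{defn3}, it suffices to establish $x\simeq_{s'}g$ for some $s'\leqslant s$ with $g$ a sum of monomials each strictly less than $x$; the part of $x-g$ lying in $P_4^-(\omega(x))$ is then a sum of lower-weight monomials, which are $<x$ and are simply adjoined to the $y_u$'s.

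The mechanism behind every relation is the Cartan formula together with Proposition \ref{mdcb1}. Given $x$, I choose a variable whose exponent lets $x$ appear as a term of $Sq^{a}(z)$ for a monomial $z$ of degree $\deg x-a$, where $a$ is the smallest power of $2$ dictated by the dyadic digits of that exponent (typically $a=1$, occasionally $a=2$ or $4$). Expanding $Sq^{a}(z)$ and solving for $x$ writes $x$ modulo $\mathcal A_{s'}^+P_4$ as the sum of the remaining terms of $Sq^{a}(z)$; those already below $x$ are kept, and any term exceeding $x$ is reduced again by the same device, the process being iterated. The two atomic relations of part (i) display the pattern and seed the rest: from $Sq^1(x_ix_j)=x_i^2x_j+x_ix_j^2$ one gets $x_i^2x_j\simeq_1 x_ix_j^2$ with $x_ix_j^2<x_i^2x_j$, and combining $Sq^1(x_i^3x_j^3)=x_i^3x_j^4+x_i^4x_j^3$ with $Sq^2(x_i^2x_j^3)=x_i^4x_j^3+x_i^2x_j^5$ yields $x_i^3x_j^4\simeq_2 x_i^2x_j^5$ with $x_i^2x_j^5<x_i^3x_j^4$.

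For parts (ii)--(iv) I would run the same routine on each monomial for every admissible assignment of indices, the order comparisons being settled mechanically: first compare weight vectors in the left lexicographic order, and only when these coincide compare the $\sigma$-vectors. Two shortcuts trim the bookkeeping. When a listed monomial factors as $u\,v^{2^{s_0}}$ with $u$ one of the small inadmissible pieces already treated, I would, when the hypotheses of Proposition \ref{mdcb4} hold, transport an atomic relation $u\simeq_{t}(\text{smaller})$ across the square, so that the smaller monomials times $v^{2^{s_0}}$ stay below $u\,v^{2^{s_0}}$; and for the factors built from the full products $X_i$, Lemma \ref{bdad} supplies the corresponding atomic relation $X_i^aX_j^b\simeq_2 X_i^{2^d-2}X_j$. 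For example $X_2x_1^2x_2^2=x_1^3x_2^2x_3x_4$ is dispatched in one step by $Sq^1(x_1^3x_2x_3x_4)$, whose residual terms $x_1^4x_2x_3x_4,\ x_1^3x_2x_3^2x_4,\ x_1^3x_2x_3x_4^2$ are each $<x_1^3x_2^2x_3x_4$ (the first by weight, the other two by $\sigma$).

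The main obstacle is purely combinatorial. For the degree-$31$ and degree-$63$ monomials in (iii) and (iv) the Cartan expansions are long, and reducing an oversized residual term can itself spawn new oversized terms, so the reductions must be arranged so that the procedure terminates with every surviving monomial strictly below $x$; this is the one genuinely delicate point, and it is exactly where a careless choice of the base monomial $z$ yields a term equal to or larger than $x$ and must be replaced by a sharper one. Two routine side-conditions are monitored throughout: that every Steenrod degree used satisfies $a<2^{s}$, which is comfortable here because the degrees that arise are small powers of $2$ while $2^{s}\geqslant 16$ for all the high-degree monomials on the list, and that in the borderline equal-weight cases the $\sigma$-comparison indeed falls on the correct side.
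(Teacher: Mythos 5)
Your general framework is the right one and is exactly the paper's (the paper prints no details at all, asserting only that the lemma ``is proved by a direct computation''): to show $x$ strictly inadmissible you must exhibit monomials $y_u<x$, in the order of Definition \ref{defn3}, with $x-\sum_u y_u\in\mathcal A_s^+P_4$ for $s=\max\{i:\omega_i(x)>0\}$, and your translation through $\simeq_{s'}$ is legitimate since monomials in $P_4^-(\omega(x))$ are automatically $<x$. The atomic computations you actually perform are correct: $Sq^1(x_ix_j)$ handles $x_i^2x_j$; $Sq^1(x_i^3x_j^3)+Sq^2(x_i^2x_j^3)$ handles $x_i^3x_j^4$ with residual $x_i^2x_j^5$ of lower weight; and $Sq^1(x_1^3x_2x_3x_4)$ handles $X_2x_1^2x_2^2$ with the three residual terms correctly compared (one by weight, two by $\sigma$).

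The genuine gap is that parts (iii) and (iv) --- the degree-$31$ and degree-$63$ monomials, which are the cases this lemma actually exists for --- are never verified. For these you only describe a reduction procedure (``I would run the same routine \ldots'') and you yourself concede that arranging the iteration so that it terminates with every surviving monomial strictly below $x$ is ``the one genuinely delicate point,'' without resolving it for a single such monomial. But strict inadmissibility of, say, $x_1^3x_2^{3}x_3^{24}x_4^{29}x_i^4$ (where $\omega=(3,2,2,2,2)$, so the relation must lie in $\mathcal A_5^+P_4$) is precisely the assertion that such a terminating reduction exists; describing the search space does not establish existence, and an unlucky expansion really can produce residual terms $\geqslant x$ that cannot be absorbed. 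A complete proof must do for each of the roughly two dozen monomials in (ii)--(iv) what you did for $x_i^2x_j$, $x_i^3x_j^4$ and $X_2x_1^2x_2^2$: write down the explicit sum of $Sq^{j}(z_\ell)$ with $j<2^s$ and check the weight/$\sigma$ comparison for every residual term. Until those expressions are exhibited (or a termination argument is supplied), the lemma is proved only for part (i) and one monomial of part (ii).
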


\begin{proof}[Proof of Proposition \ref{dlc5}] By a direct computation using Lemmas \ref{b51},  \ref{b53} and Theorem \ref{dlcb1} we see that if $x$ is a monomial of degree $2^{s+1} - 1$ in $P_4$ and $x \notin  \Phi(B_3(3,2^{(s-1)}))\cup A(s)$, then there is a monomial $w$ which is given in Lemma \ref{b53} such that $ x = wy^{2^u}$ for some monomial $y$ and integer $u > 1$. Hence,  $x$ is inadmissible.

Now we prove that the set $[\Phi^+(B_3(3,2^{(s-1)}))\cup A(s)]$ is linearly independent in $QP_4^+$.  For $s = 3$, we have $|\Phi^+(B_3(3,2,2))\cup A(3)| = 36$. Suppose there is a linear relation
\begin{equation}\mathcal S = \sum_{1 \leqslant i \leqslant 36}\gamma_id_i\equiv 0,\label{ct431}\end{equation}
with $\gamma_i \in \mathbb F_2$ and $d_i = d_{15,i}$.

By a simple computation, we see that $B_3(3;2,2) = \psi(\Phi(B_2(6)))$ is the set consisting of 6 monomials:
$$v_1 = x_1x_2^7x_3^7, \ v_2 = x_1^3x_2^5x_3^7, v_3 = x_1^3x_2^7x_3^5, v_4 = x_1^7x_2x_3^7, v_5 = x_1^7x_2^3x_3^5, v_6 = x_1^7x_2^7x_3.$$
By a direct computation, we have
\begin{align*}
p_{(1;2)}(\mathcal S ) &\equiv  \gamma_{3}v_{2} +  \gamma_{4}v_{3} +   (\gamma_{9}+\gamma_{22})v_{4} +  (\gamma_{10}+\gamma_{23})v_{5} +  (\gamma_{11}+\gamma_{24})v_{6}\equiv 0,\\
p_{(1;3)}(\mathcal S ) &\equiv  (\gamma_{1}+\gamma_{16})v_{1} +  \gamma_{5}v_{2} +  (\gamma_{7}+\gamma_{20})v_{3} +  \gamma_{13}v_{5} +  (\gamma_{15}+\gamma_{30})v_{6}\equiv 0,\\
p_{(1;4)}(\mathcal S ) &\equiv   (\gamma_{2}+\gamma_{19})v_{1} +  (\gamma_{6}+\gamma_{21}+\gamma_{27})v_{2} +  \gamma_{8}v_{3} +  (\gamma_{12}+\gamma_{29})v_{4} +  \gamma_{14}v_{5}\equiv 0,\\
p_{(2;3)}(\mathcal S ) &\equiv  (\gamma_{1}+\gamma_{3}+\gamma_{5}+\gamma_{9})v_{1} + (\gamma_{16}+\gamma_{22})v_{2}\\
&\quad +  (\gamma_{18}+\gamma_{20}+\gamma_{23}+\gamma_{26})v_{3} +  \gamma_{32}v_{5} +  (\gamma_{34}+\gamma_{36})v_{6}\equiv 0,\\
p_{(2;4)}(\mathcal S ) &\equiv  (\gamma_{2}+\gamma_{4}+\gamma_{8}+\gamma_{11})v_{1}\\
&\quad + (\gamma_{17}+\gamma_{21})v_{2} +  (\gamma_{19}+\gamma_{24}v_{3} +  \gamma_{31}+\gamma_{35})v_{4} +  \gamma_{33}v_{5} \equiv 0,\\
p_{(3;4)}(\mathcal S ) &\equiv   (\gamma_{12}+\gamma_{13}+\gamma_{14}+\gamma_{15})v_{1} + (\gamma_{25}+\gamma_{26}+\gamma_{27}+\gamma_{28})v_{2}\\
&\quad + (\gamma_{29}+\gamma_{30})v_{3} +  (\gamma_{31}+\gamma_{32}+\gamma_{33}+\gamma_{34})v_{4} +  (\gamma_{35}+\gamma_{36})v_{5} \equiv 0. 
 \end{align*}
From these equalities, we obtain 
\begin{align}\label{ct62}\begin{cases}
\gamma_{j}= 0 , \ j = 3,4,5,8,13,14,32,33,\\ 
\gamma_{1} = \gamma_{9} = \gamma_{16} = \gamma_{22},\  
\gamma_{2} = \gamma_{11} = \gamma_{19} = \gamma_{24},\ \gamma_{7} = \gamma_{20},\\ 
\gamma_{1} = \gamma_{9} = \gamma_{16} = \gamma_{22},\ \ 
 \gamma_{10} = \gamma_{23}, \
\gamma_{17} = \gamma_{21},\\ 
\gamma_{12} = \gamma_{15} = \gamma_{29} = \gamma_{30},\ \ 
\gamma_{31} = \gamma_{34} = \gamma_{35} = \gamma_{36},\\
\gamma_{6}+\gamma_{21}+\gamma_{27} = \gamma_{7}+\gamma_{10}+\gamma_{18}+\gamma_{26}= \gamma_{25}+\gamma_{26}+\gamma_{27}+\gamma_{28} = 0. 
\end{cases}
 \end{align}

By a direct computation using (\ref{ct62}) and Theorem \ref{dlsig},  we get
 \begin{align*}
p_{(1;(2,3))}(\mathcal S ) &\equiv \gamma_{18}w_{3} + \gamma_{26}w_{5} +   \gamma_{28}w_{6}\equiv 0,\\
p_{(1;(2,4))}(\mathcal S ) &\equiv (\gamma_{6}+\gamma_{10}+\gamma_{27})w_{2} +  \gamma_{25}w_{4} +  \gamma_{27}w_{5}\equiv 0,\\
p_{(1;(3,4))}(\mathcal S ) &\equiv  (\gamma_{17}+\gamma_{18})w_{1}\\
&\quad + (\gamma_{6}+\gamma_{7}+\gamma_{17}+\gamma_{25}+\gamma_{26}+\gamma_{27})w_{2} +  (\gamma_{17}+\gamma_{28})w_{3}\equiv 0.
 \end{align*}
Combining the above equalities and (\ref{ct62}), one gets $\gamma_j = 0$ for $j \ne$ 1, 2 , 9, 11, 12, 15, 16, 19, 22, 24 , 29, 30, 31 and $\gamma_{1} = \gamma_{9} = \gamma_{16} = \gamma_{22}$, $\gamma_{2} = \gamma_{11} = \gamma_{19} = \gamma_{24}$, $\gamma_{12} = \gamma_{15} = \gamma_{29} = \gamma_{30}$, $\gamma_{31} = \gamma_{34} = \gamma_{35} = \gamma_{36}$. Hence, the relation (\ref{ct431}) becomes
\begin{equation}\gamma_1\theta_1 + \gamma_2\theta_2 + \gamma_{12}\theta_3 + \gamma_{31}\theta_{4}\equiv 0,\label{ct432}
\end{equation}
where
\begin{align*} 
&\theta_{1} =  d_{1}+ d_{9}+ d_{16}+ d_{22}, \ \ 
\theta_{2} =  d_{2}+ d_{11}+ d_{19}+ d_{24}, \\ 
&\theta_{3} =  d_{12}+ d_{15}+ d_{29}+ d_{30}, \ \ 
\theta_{4} =  d_{31}+ d_{34}+ d_{35}+ d_{36}.
\end{align*}

Now, we prove that $\gamma_{1} = \gamma_{2} = \gamma_{12} = \gamma_{31} = 0.$

The proof is divided into 4 steps.

{\it Step 1.}  Under the homomorphism $\varphi _1$, the image of (\ref{ct432}) is
\begin{equation} \gamma_{1}\theta_{1}+ \gamma_{2}\theta_{2}  +\gamma_{12}\theta_{3} + \gamma_{31}(\theta_{4} + \theta_{3})\equiv 0.\label{ct433}
\end{equation}
Combining (\ref{ct432}) and (\ref{ct433}), we get
\begin{equation} \gamma_{31}\theta_{3} \equiv 0.\ \label{ct434}
\end{equation}
If the polynomial $\theta_3$ is hit, then we have
$$\theta_3 = Sq^1(A) +Sq^2(B) + Sq^4(C),$$
for some polynomials $A \in (P_4^+)_{14}, B \in (P_4^+)_{13}, C \in (P_4^+)_{11}$. Let $(Sq^2)^3$ act on the both sides of this equality. We get
$$(Sq^2)^3(\theta_3) = (Sq^2)^3Sq^4(C),$$
By a direct calculation, we see that the monomial $x=x_1^8x_2^7x_3^4x_4^2$ is a term of $(Sq^2)^3(\theta_3)$. If this monomial is a term of $(Sq^2)^3Sq^4(y)$ for a monomial $y \in (P_4^+)_{11}$, then $y = x_2^7f_2(z)$ with $z \in P_3$ and $\deg z = 4$. Using the Cartan formula, we see that $x$ is a term of $x_2^7(Sq^2)^3Sq^4(z) = x_2^7(Sq^2)^3(z^2) = 0$. Hence, 
$$(Sq^2)^3(\theta_3) \ne (Sq^2)^3Sq^4(C),$$
for all $C \in (P_4^+)_{11}$ and we have a contradiction.
 So, $[\theta_3] \ne 0$ and  $\gamma_{31} = 0.$ 

{\it Step 2.} Since $\gamma_{31} = 0$, the homomorphism $\varphi_2$ sends (\ref{ct432}) to
\begin{equation} \gamma_{1}\theta_{1}+ \gamma_{2}\theta_{2}  + \gamma_{12}\theta_{4} \equiv 0.\label{ct435}
\end{equation}
Using the relation (\ref{ct435}) and by the same argument as given in Step 1, we get $\gamma_{12} = 0$.

{\it Step 3.} Since $\gamma_{31}  = \gamma_{12}=0$, the homomorphism $\varphi_3$ sends (\ref{ct432}) to
\begin{equation} 
\gamma_{1}[\theta_{1}]+ \gamma_{2}[\theta_{3}] = 0.\label{ct436}
\end{equation}
 Using the relation (\ref{ct436}) and by the same argument as given in Step 2, we obtain $\gamma_{3}=0$. 

{\it Step 4.} Since $\gamma_{31}  = \gamma_{12}= \gamma_{2} = 0$, the homomorphism $\varphi_4$ sends (\ref{ct432}) to
$$\gamma_{1}\theta_{2} =0.$$
 Using this relation and by the same argument as given in Step 3, we obtain   $\gamma_{1} = 0.$  

For $s \geqslant 4$,  $B_3(3,2^{(s-1)})) = \psi(\Phi(B_2(2^{s-1}-2)))$ is the set consisting of 7 monomials:
\begin{align*}&v_{1} = x_1x_2^{2^{s}-1}x_3^{2^{s}-1},\ v_{2} = x_1^{3}x_2^{2^{s}-3}x_3^{2^{s}-1},\ v_{3} = x_1^{3}x_2^{2^{s}-1}x_3^{2^{s}-3},\ v_{4} = x_1^{7}x_2^{2^{s}-5}x_3^{2^{s}-3},\\
& v_{5} = x_1^{2^{s}-1}x_2x_3^{2^{s}-1},\ v_{6} = x_1^{2^{s}-1}x_2^{3}x_3^{2^{s}-3},\ v_{7} = x_1^{2^{s}-1}x_2^{2^{s}-1}x_3.\end{align*}

Let $s=4$. Then, we have $|\Phi^+(B_3(3,2,2,2))\cup A(4)| = 46$. Suppose there is a linear relation
\begin{equation}\mathcal S = \sum_{1\leqslant j \leqslant 46}\gamma_jd_{j} = 0, \label{ct437}
\end{equation}
with $\gamma _j \in \mathbb F_2$ and $d_i = d_{31,i}$. 

By a direct computation using Theorem \ref{dlsig}, we have 
\begin{align*}
p_{(1;2)}(\mathcal S )  &\equiv \gamma_{3}w_{2} + \gamma_{4}w_{3} +   (\gamma_{9}+\gamma_{25})w_{4} +  \gamma_{12}w_{5} +  \gamma_{13}w_{6} +  \gamma_{14}w_{7}  \equiv 0,\\
p_{(1;3)}(\mathcal S )  &\equiv  (\gamma_{1}+\gamma_{19})w_{1} + \gamma_{5}w_{2} +  (\gamma_{7}+\gamma_{23}+\gamma_{37}+\gamma_{39})w_{3}\\
&\quad +  (\gamma_{10}+\gamma_{28})w_{4} +  \gamma_{16}w_{6} +  \gamma_{18}w_{7} \equiv 0,\\
p_{(1;4)}(\mathcal S )  &\equiv  (\gamma_{2}+\gamma_{22})w_{1} +  (\gamma_{6}+\gamma_{24}+\gamma_{27}+\gamma_{29}+\gamma_{32}+\gamma_{40})w_{2} \\
&\quad +  \gamma_{8}w_{3}+  \gamma_{11}w_{4} +  (\gamma_{15}+\gamma_{34})w_{5} +  \gamma_{17}w_{6}  \equiv 0.
\end{align*}
From these equalities, we get
\begin{align}\label{ct63}\begin{cases}
\gamma_j = 0, \ j = 3, 4, 5, 8, 11, 12, 13, 14, 16, 17, 18,\\ 
\gamma_{9}=\gamma_{25},\ \gamma_{1}=\gamma_{19},\  \gamma_{7}+\gamma_{23}+\gamma_{37}+\gamma_{39}=0,\  \gamma_{10}=\gamma_{28},\\ \gamma_{2}=\gamma_{22},\ \gamma_{6}+\gamma_{24}+\gamma_{27}+\gamma_{29}+\gamma_{32}+\gamma_{40} = 0,\ \gamma_{15}+\gamma_{34} = 0.       
\end{cases}
 \end{align}

Using the relations (\ref{ct63}) and Theorem \ref{dlsig}, we obtain
\begin{align*}
p_{(2;3)}(\mathcal S )  &\equiv \gamma_{1}w_{1} +  \gamma_{1}w_{2} +   (\gamma_{9}+\gamma_{10}+\gamma_{21}+\gamma_{23}+\gamma_{26}+\gamma_{31}+\gamma_{39})w_{3}\\
&\quad +  (\gamma_{35}+\gamma_{37})w_{4} +  \gamma_{43}w_{6}  \equiv 0,\\
p_{(2;4)}(\mathcal S )  &\equiv   \gamma_{2}w_{1} + \gamma_{45}w_{7}   + (\gamma_{20}+\gamma_{24}+\gamma_{38}+\gamma_{40})w_{2} +  \gamma_{2}w_{3} +  \gamma_{36}w_{4}\\
&\quad +  (\gamma_{42}+\gamma_{46})w_{5} +  \gamma_{44}w_{6}  \equiv 0,\\
p_{(3;4)}(\mathcal S )  &\equiv  \gamma_{15}w_{1} +  (\gamma_{30}+\gamma_{31}+\gamma_{32}+\gamma_{33})w_{2}\\
&\quad +  \gamma_{15}w_{3} +  \gamma_{41}w_{4} +  (\gamma_{42}+\gamma_{43}+\gamma_{44}+\gamma_{45})w_{5} +  \gamma_{42}w_{6}  \equiv 0.
\end{align*}
From these equalities, we get
\begin{align}\label{ct64}\begin{cases}
\gamma_j = 0, \ j = 1, 2, 15, 36, 41, 42, 43, 44, 45, 46,\\ 
\gamma_{10}+\gamma_{21}+\gamma_{23}+\gamma_{26}+\gamma_{31}+\gamma_{39} = 0,\\
\gamma_{35}=\gamma_{37},\  \gamma_{20}+\gamma_{24}+\gamma_{38}+\gamma_{40}= 0,\\ \gamma_{30}+\gamma_{31}+\gamma_{32}+\gamma_{33}  =0.      \end{cases}
 \end{align}
By a direct computation using (\ref{ct63}), (\ref{ct64}) and  Theorem \ref{dlsig}, we have 
\begin{align*}
p_{(1;(2,3))}(\mathcal S )  &\equiv  (\gamma_{7}+\gamma_{21}+\gamma_{23}+\gamma_{39})w_{3} + \gamma_{26}w_{4} +   \gamma_{31}w_{6} +  \gamma_{33}w_{7}  \equiv 0,\\
p_{(1;(2,4))}(\mathcal S )  &\equiv  (\gamma_{6}+\gamma_{9}+\gamma_{20}+\gamma_{24}+\gamma_{27}+\gamma_{29}+\gamma_{32}+\gamma_{38}+\gamma_{40})w_{2}\\
&\quad +  \gamma_{27}w_{4} +  \gamma_{30}w_{5} +  \gamma_{32}w_{6}  \equiv 0,\\
p_{(1;(3,4))}(\mathcal S )  &\equiv  (\gamma_{6}+\gamma_{10}+\gamma_{23}+\gamma_{24}+\gamma_{26}+\gamma_{27}+\gamma_{29}+\gamma_{30}+\gamma_{31}+\gamma_{32})w_{2}\\ &\quad  +  (\gamma_{7}+\gamma_{23}+\gamma_{24}+\gamma_{33}+\gamma_{35}+\gamma_{38}+\gamma_{39}+\gamma_{40})w_{3}\\
&\quad  + (\gamma_{20}+\gamma_{21}+\gamma_{35})w_{1} +  \gamma_{29}w_{4}   \equiv 0,\\
p_{(2;(3,4))}(\mathcal S )  &\equiv (\gamma_{10}+\gamma_{20}+\gamma_{23}+\gamma_{24}+\gamma_{29}+\gamma_{30}+\gamma_{35}+\gamma_{38}+\gamma_{39}+\gamma_{40})w_{2}\\
&\quad  +  (\gamma_{9}+\gamma_{10}+\gamma_{21}+\gamma_{23}+\gamma_{24}+\gamma_{26}+\gamma_{27}+\gamma_{29}+\gamma_{31}+\gamma_{32})w_{3}\\  &\quad + (\gamma_{6}+\gamma_{7}+\gamma_{9}+\gamma_{10})w_{1} +    \gamma_{38}w_{4}   \equiv 0.
\end{align*}

Combining the above equalities, (\ref{ct63}) and (\ref{ct64}), we get
\begin{align}\label{ct65}\begin{cases}
\gamma_j = 0, \ j \ne 7, 10, 21, 23, 24, 28, 35, 37, 39, 40,\\
\gamma_{7}=\gamma_{10}=\gamma_{28},\  \gamma_{21}=\gamma_{35}=\gamma_{37},\\
\gamma_{7}+\gamma_{21}+\gamma_{23}+\gamma_{39} = 0.
\end{cases}
 \end{align}
 Hence,  we obtain 
\begin{equation}
\gamma_{7}\theta_{1} + \gamma_{21}\theta_{2} + \gamma_{39}\theta_{3}  +\gamma_{24}\theta_{4} \equiv 0,\label{ct438}
\end{equation}
where
\begin{align*} 
&\theta_{1}= d_{7}+ d_{10}+ d_{23}+ d_{28},\\
&\theta_{2} = d_{21}+ d_{23}+ d_{35}+ d_{37},\\
&\theta_{3} = d_{23} + d_{39},\ \
\theta_{4} = d_{24} + d_{40}.
\end{align*}

Now, we prove $\gamma_{7} = \gamma_{21} = \gamma_{24} = \gamma_{39} = 0.$ The proof is divided into 4 steps.

{\it Step 1.} The homomorphism $\varphi_1$ sends (\ref{ct438}) to
\begin{equation}
 \gamma_{7}\theta_{1} + \gamma_{21}(\theta_{2}+ \theta_{1}) + \gamma_{24}\theta_{3}  +\gamma_{39}\theta_{4} \equiv 0.\label{ct439}
\end{equation}
Combining (\ref{ct438}) and (\ref{ct439}) gives
\begin{equation}
 \gamma_{25}\theta_{1} \equiv 0.\label{ct4310}
\end{equation}

By an analogous argument as given in the proof of the proposition for the case $s = 3$,  $[\theta_1] \ne 0$. So, we get $\gamma_{21} = 0.$

{\it Step 2.} Applying the homomorphism $\varphi_2$ to (\ref{ct437}), we obtain
\begin{equation}
 \gamma_{7}\theta_{2} + \gamma_{24}\theta_{3} +\gamma_{39}\theta_{4} =0.\label{ct4311}
\end{equation}
Using (\ref{ct4311})  and by a same argument as given in Step 1, we get $\gamma_{7} = 0$.

{\it Step 3.} Under the homomorphism $\varphi_3$, the image of (\ref{ct437}) is
\begin{equation}
  \gamma_{24}[\theta_{2}]  +\gamma_{39}[\theta_{4}] = 0.\label{ct4313}
\end{equation}
Using (\ref{ct4313})  and by a same argument as given in Step 3, we obtain $\gamma_{24}=0$. 

{\it Step 4.} Since $\gamma_{7} =\gamma_{22}= \gamma_{24}=0$, the homomorphism $\varphi_3$ sends (\ref{ct437}) to
$$\gamma_{39}[\theta_{3}] = 0.$$
From this equality and by a same argument as given in Step 3, we get $ \gamma_{39}=0.$

\medskip
For $s \geqslant 5$, $|\Phi^+(B_3(3,2^{(s-1)}))\cup A(s)| = 43$. Suppose that there is a linear relation
\begin{equation}\mathcal S = \sum_{1\leqslant j \leqslant 43}\gamma_jd_{j} \equiv 0, \label{ct4312}\end{equation}
with $\gamma _j \in \mathbb F_2$.

Using the relations $p_{(j;J)}(\mathcal S) \equiv 0,$  for $(j;J) \in \mathcal N_4$ and the admissible monomials $v_i, i= 1, 2,\ldots, 7,$ we obtain $\gamma_j = 0$ for any $j$.
The proposition is proved.  
\end{proof}

\subsection{The case of degree $2^{s+t+1} + 2^{s+1}-3$}\label{s7} \
\setcounter{equation}{0}

\medskip
First of all, we determine the $\omega$-vector of an admissible monomial of degree $n = 2^{s+t+1}+2^{s+1} -3$ for any positive integers $s, t$.

\begin{lems}\label{6.1.1} Let  $x$ be a  monomial of degree $2^{s+t+1}+2^{s+1}-3$ in $P_4$ with $s,t$ positive integers.  If $x$ is admissible, then either $\omega (x) = (3^{(s)},1^{(t+1)})$ or $\omega (x) = (3^{(s+1)},2^{(t-1)})$.
\end{lems}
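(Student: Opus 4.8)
The plan is to induct on $s$, peeling off one squarefree factor at each step and combining Singer's criterion (Theorem~\ref{dlsig}) with Theorem~\ref{dlcb1}. First I would locate the minimal spike. Writing $n+3 = 2^{s+t+1}+2^{s+1} = 2^{s+t+1}+2^{s}+2^{s}$ shows that $\mu(n)=3\leqslant 4$ and that the minimal spike of degree $n$ is $z = x_1^{2^{s+t+1}-1}x_2^{2^{s}-1}x_3^{2^{s}-1}$, whose weight vector is $\omega(z)=(3^{(s)},1^{(t+1)})$, the first vector in the statement. Since $\mu(n)\leqslant 4$, Theorem~\ref{dlsig} is available.

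Because $n$ is odd we have $\omega_1(x)\in\{1,3\}$. If $\omega_1(x)=1$ then $\omega_1(x)<\omega_1(z)=3$, so $\omega(x)<\omega(z)$ in the left lexicographic order and $x$ is hit by Theorem~\ref{dlsig}, contradicting admissibility; hence $\omega_1(x)=3$. I may therefore write $x=X_iy^2$ for a unique $i$ and a monomial $y$ of degree $(n-3)/2 = 2^{(s-1)+t+1}+2^{(s-1)+1}-3$. Comparing dyadic digits gives $\omega(x)=(3,\omega_1(y),\omega_2(y),\dots)$, and Theorem~\ref{dlcb1}(i), applied with base $X_i$ (for which $\omega_p(X_i)=0$ when $p>1$, so $r=1$) and upper block $y$, shows that $y$ is admissible whenever $x$ is.

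For $s\geqslant 2$ the degree of $y$ is again of the form in the statement, now for the pair $(s-1,t)$ with $s-1\geqslant 1$, so the induction hypothesis restricts $\omega(y)$ to $(3^{(s-1)},1^{(t+1)})$ or $(3^{(s)},2^{(t-1)})$; prepending the entry $3$ yields exactly the two asserted vectors for $\omega(x)$. The base case is $s=1$, where $y$ has degree $2^{t+1}-1$; here I would invoke Lemma~\ref{b51}, which forces $\omega(y)$ to be $(1^{(t+1)})$, $(3,2^{(t-1)})$, or---only when $t=2$---$(1,3)$. The first two possibilities again give precisely the two vectors claimed for $\omega(x)$, so the induction closes on those branches.

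The main obstacle is the spurious third branch: when $s=1,\ t=2$ (degree $17$) Lemma~\ref{b51} permits $\omega(y)=(1,3)$, which would give $\omega(x)=(3,1,3)$, a weight vector of the correct degree that is not on our list. I would eliminate it in the style of Lemmas~\ref{3.2}, \ref{4.2} and~\ref{b53}: every monomial with $\omega=(3,1,3)$ has the form $X_a x_b^2 X_c^4$, and a direct computation exhibits each such monomial as strictly inadmissible, hence inadmissible by Theorem~\ref{dlcb1}, so no admissible monomial realizes $(3,1,3)$. It is worth noting that the analogous higher-degree spurious vectors (such as $(3^{(s)},1^{(t-1)},3)$) never intervene, since they would require $\omega(y)$ to lie outside the inductively allowed set; thus the only genuine computation needed is this single base case. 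Once $(3,1,3)$ is discarded, the two listed weight vectors are the only ones, completing the proof.
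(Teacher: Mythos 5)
Your proposal is correct and takes essentially the same route as the paper's proof: both use Singer's criterion against the minimal spike $z$ to force the leading weight entries to be $3$, Theorem \ref{dlcb1} to pass admissibility to the quotient monomial, Lemma \ref{b51} to classify the tail of degree $2^{t+1}-1$, and an unavoidable direct computation to eliminate the spurious weight vector coming from the $(1,3)$ branch. The only difference is packaging: you peel one layer at a time by induction on $s$, whereas the paper strips all $s$ layers at once (using Theorem \ref{dlsig} together with Proposition \ref{mdcb3} to get $\omega_i(x)=3$ for $i\leqslant s$) and then applies Lemma \ref{b51} directly to the quotient $x'$.
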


\begin{proof} Observe  that the monomial $z=x_1^{2^{s+t+1}-1}x_2^{2^s-1}x_3^{2^s-1}$ is the minimal spike of degree $2^{s+t+1} + 2^{s+1} - 3$ in $P_4$ and 
$ \omega (z) = (3^{(s)},1^{(t+1)}) $. 
Since $x$ is admissible and $2^{s+t+1}+2^{s+1}-3$ is odd, using Theorem \ref{dlsig}, we obtain $\omega_1(x)=3$.  Using Theorem \ref{dlsig} and Proposition \ref{mdcb3}, we get $\omega_i(x) = 3$ for $i=1,2,\ldots , s$. 

Let $x' = \prod_{i\geqslant 1}X_{I_{i+s-1}(x)}^{2^{i-1}}$. Then, $\omega_i(x') = \omega_{i+s}(x), i\geqslant 1$ and $\deg (x') = 2^{t+1}-1$.
Since $x$ is admissible, using Theorem \ref{dlcb1}, we see that $x'$ is also admissible. By Lemmas \ref{b51}, either $\omega(x') = (1^{(t+1)})$ or $\omega(x') = (3,2^{(t-1)})$ or $\omega(x') = (1,3)$ for $t=2$.  By a direct computation we see that if  $\omega(x') = (1,3)$, then $x$ is inadmissible. So, the lemma is proved. 
\end{proof} 

Using Theorem \ref{dl1}, we easily obtain the following. 

\begin{props}\label{dlc6} For any positive integers $s, t$ with $ s \geqslant 3$, $\Phi(B_3(n))$ is a minimal set of generators for $\mathcal A$-module $P_4$ in degree $n = 2^{s+t+1}+2^{s+1}-3$. 
\end{props}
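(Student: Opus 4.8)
The plan is to observe that the degree $n=2^{s+t+1}+2^{s+1}-3$ already carries exactly the arithmetic shape required by Proposition \ref{mdc1}, so that the statement follows from that proposition with no new work beyond an elementary check. First I would record the decomposition
$$n = 2^{s+t+1}+2^{s+1}-3 = (2^{s+t+1}-1)+(2^{s}-1)+(2^{s}-1),$$
that is, $n=\sum_{1\leqslant i\leqslant 3}(2^{d_i}-1)$ with $d_1=s+t+1$ and $d_2=d_3=s$. This identification is forced, since $x_1^{2^{s+t+1}-1}x_2^{2^s-1}x_3^{2^s-1}$ is precisely the minimal spike of degree $n$ used in Lemma \ref{6.1.1}.

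Next I would verify the hypotheses of Proposition \ref{mdc1} in the case $k=4$, namely $d_1>d_2\geqslant d_3$ and $d_{k-1}=d_3\geqslant k-1\geqslant 3$. Since $t\geqslant 1$ we have $d_1=s+t+1>s=d_2=d_3$, and the standing assumption $s\geqslant 3$ gives $d_3=s\geqslant 3=k-1$. Here it matters that the proposition permits $d_{k-2}\geqslant d_{k-1}$ rather than strict inequality, so the equal exponents $d_2=d_3=s$ are legitimate input; moreover $s=3$ realizes the boundary value $d_{k-1}=k-1$. This is exactly why the hypothesis is $s\geqslant 3$ and not something weaker.

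With the hypotheses in hand I would simply invoke Proposition \ref{mdc1}. The set $B_3(n)$ of all admissible monomials of degree $n$ in $P_3$ is automatically a minimal set of $\mathcal A$-generators for $P_3$ in that degree (as noted immediately after the definition of admissibility), and the full structure of $QP_3$ is available from the $k\leqslant 3$ results collected in Section \ref{s3a}. Feeding this minimal generating set into Proposition \ref{mdc1} yields that $\Phi(B_3(n))$ is a minimal set of generators for $P_4$ in degree $n$, which is the assertion. Both halves of the claim — that $\Phi(B_3(n))$ spans $(QP_4)_n$ and that its classes are linearly independent in $QP_4$ — are already contained in the proof of Proposition \ref{mdc1}, carried out there through the maps $\phi_{(i;I)}$ and $p_{(i;I)}$.

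Consequently there is no genuine obstacle at this stage: the entire difficulty was absorbed into Proposition \ref{mdc1}. The only point demanding care is confirming that the degenerate decomposition with $d_2=d_3$, together with the boundary value $d_3=k-1$, lies within the range of that proposition; Lemma \ref{6.1.1}, which determines the two possible $\omega$-vectors of an admissible monomial of degree $n$ in $P_4$, may additionally be used to check that $\Phi(B_3(n))$ coincides with the full set of admissible monomials. Once these are confirmed the conclusion is immediate, and it is consistent with the dimension count $\dim(QP_4)_n=15\,\dim(QP_3)_{2^{t+1}-1}$ obtained from Theorem \ref{dl1} with $m=2^{t+1}-1$.
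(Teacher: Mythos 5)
Your proof is correct and follows essentially the paper's own route: the paper obtains this proposition by applying Theorem \ref{dl1} (whose proof rests on Proposition \ref{mdc1}) to the decomposition $n=(2^{s+t+1}-1)+(2^{s}-1)+(2^{s}-1)$, i.e.\ $d_1=s+t+1$, $d_2=d_3=s$, exactly as you do. Your explicit verification of the hypotheses of Proposition \ref{mdc1} for $k=4$ — in particular that the non-strict inequality $d_2\geqslant d_3$ and the boundary value $d_3=s\geqslant 3=k-1$ are admissible — is precisely the check the paper leaves implicit in the phrase ``Using Theorem \ref{dl1}, we easily obtain the following.''
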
 

Hence,  it suffices to consider the subcases $s = 1$ and $s = 2$.

\medskip\noindent
{5.4.1. \bf The subcase $s=1$.}\label{s61}\

\medskip

For  $s = 1$, $n = 2^{t+2} + 1=  (2^{t+2} - 1) + (2-1) + (2-1)$.   Hence,  $\mu(2^{t+2} + 1) = 3$ and  Kameko's homomorphism 
$$ \widetilde{Sq}^0_* :(QP_3)_{2^{t+2} + 1} \to (QP_3)_{2^{t+1}-1}$$ 
is an isomorphism. So, we get
$$B_3(n) = \psi(B_3(2^{t+1}-1)) = \psi(B_3(1^{(t+1)}))\cup \psi(B_3(3,2^{(t-1)})) .$$

\begin{props}  For any positive integer $t$, $C_4(n) = \Phi(B_3(n))\cup B(t)$ is the set of all the admissible monomials for $\mathcal A$-module $P_4$ in degree $n = 2^{t+2} + 1$, where the set $B(t)$ is determined as follows:
\begin{align*}B(1) &= \{x_1^3x_2^4x_3x_4\}, \quad  B(2) =\{x_1^3x_2^5x_3^8x_4\},\\
B(3) &= \{x_1^{3}x_2^{7}x_3^{11}x_4^{12}, x_1^{7}x_2^{3}x_3^{11}x_4^{12}, x_1^{7}x_2^{11}x_3^{3}x_4^{12}, x_1^{7}x_2^{7}x_3^{8}x_4^{11}, x_1^{7}x_2^{7}x_3^{11}x_4^{8}\},\\
B(t) &= \{ x_1^{3}x_2^{7}x_3^{2^{t+1}-5}x_4^{2^{t+1}-4}, x_1^{7}x_2^{3}x_3^{2^{t+1}-5}x_4^{2^{t+1}-4},\\
&\hskip 2cm x_1^{7}x_2^{2^{t+1}-5}x_3^{3}x_4^{2^{t+1}-4}, x_1^{7}x_2^{7}x_3^{2^{t+1}-8}x_4^{2^{t+1}-5}\}, \text{ for $t > 3$}. 
\end{align*}
\end{props}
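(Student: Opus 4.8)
The plan is to follow the strategy used for the earlier degrees in this section: first pin down the admissible weight vectors, then show $C_4(n)$ spans $(QP_4)_n$ by discarding every other monomial as inadmissible, and finally prove that $[C_4(n)]$ is linearly independent by pushing a hypothetical relation through the homomorphisms $p_{(i;I)}$ and the automorphisms $\varphi_i$.

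First I would apply Lemma \ref{6.1.1} with $s = 1$: every admissible monomial $x$ of degree $n = 2^{t+2}+1$ has $\omega(x) = (3,1^{(t+1)})$ or $\omega(x) = (3^{(2)},2^{(t-1)})$. Since $\mu(n) = 3$, Theorem \ref{dlmd2} makes $(\widetilde{Sq}^0_*): (QP_3)_n \to (QP_3)_{2^{t+1}-1}$ an isomorphism, so $B_3(n) = \psi(B_3(2^{t+1}-1))$ is explicit via Proposition \ref{md41} and Proposition \ref{dlc5}, and hence so is $\Phi(B_3(n))$. Splitting $QP_4 = QP_4^0 \oplus QP_4^+$ by Proposition \ref{2.7}, the set $\Phi^0(B_3(n))$ is already a minimal set of generators for $P_4^0$, so all the work is confined to $P_4^+$, namely to $\Phi^+(B_3(n)) \cup B(t)$.

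For the spanning statement I would first record, by direct computation, a short list of strictly inadmissible monomials in the spirit of Lemmas \ref{3.2}, \ref{4.2}, \ref{b53}. Then, for any monomial $x$ carrying one of the two admissible weight vectors but lying outside $C_4(n)$, I would exhibit a factorization through such a strictly inadmissible factor $w$ to which Theorem \ref{dlcb1} applies, forcing $x$ to be inadmissible; the small cases $t = 1,2,3$ are finite enumerations, while $t > 3$ is the generic pattern. This shows $(QP_4^+)_n$ is spanned by $[\Phi^+(B_3(n)) \cup B(t)]$.

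The main obstacle is linear independence. Given a relation $\mathcal S = \sum_j \gamma_j d_j \equiv 0$, applying the $\mathcal A$-homomorphisms $p_{(i;I)}: P_4 \to P_3$ over all $(i;I) \in \mathcal N_4$ and expanding each image in a fixed admissible basis of $(QP_3)_n$ will annihilate the coefficients of $\Phi^+(B_3(n))$ together with most of those coming from $B(t)$, leaving only a few symmetric combinations $\theta_1, \dots, \theta_r$ of the $B(t)$-monomials. To finish, I would apply the automorphisms $\varphi_i$ generating $GL_4$ to isolate one coefficient at a time, and at the critical step prove that the relevant $\theta_j$ is not hit by letting an iterated operation such as $(Sq^2)^3$ act, checking that a distinguished monomial survives on the left while it cannot occur in $(Sq^2)^3 Sq^4(C)$ for any $C \in (P_4^+)$, exactly as in Steps 1--4 of the proof of Proposition \ref{dlc5}. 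The delicate point is carrying out these $p_{(i;I)}$ reductions and the non-hit-ness verifications uniformly in $t$ for the generic range $t > 3$, the finitely many small values being dispatched by hand.
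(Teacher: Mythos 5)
Your proposal follows essentially the same route as the paper's proof: Lemma \ref{6.1.1} (with $s=1$) to pin down the weight vectors and write $x = X_iy^2$, strictly inadmissible monomials plus Theorem \ref{dlcb1} for the spanning step, and the $p_{(i;I)}$ reductions combined with a $(Sq^2)^3$ non-hitness check for linear independence. The only cosmetic difference is that the paper never needs the automorphisms $\varphi_i$ for this particular degree, since after the $p_{(i;I)}$ step at most one symmetric combination $\theta$ survives (for $t \leqslant 3$; for $t > 3$ the $p_{(i;I)}$ relations already kill every coefficient), so a single non-hitness verification finishes the proof.
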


The following lemma is proved by a direct computation.

\begin{lems}\label{b610}  The following monomials are strictly inadmissible: 

\medskip
{\rm i)} $X_2x_1^{2}x_2^{12},\  X_3^3x_3^4x_i^4,\ i = 1, 2, \ X_jx_1^2x_2^4x_j^8, \   X_2^3x_2^4x_j^4, \ j = 3, 4.$

{\rm ii)} $X_3x_1^2x_2^4x_3^{24}, \ X_3x_1^2x_2^4x_j^{8}x_4^{16}, j= 3,4.$

{\rm iii)} $X_3X_2^2x_1^4x_2^8x_4^{12}, \ X_4X_2^2x_1^4x_2^8x_3^{12},\ X_4X_3^2x_1^4x_2^{12}x_3^8,\ X_4X_3^2x_1^{12}x_2^4x_3^8$.

{\rm iv)} $X_j^3x_i^{4}x_j^{8}x_m^{12},\ 1\leqslant i < j \leqslant 4,\ m \ne i, j$.

{\rm v)} $X_jX_2^2x_1^4x_3^4x_2^8x_4^{8}, j = 3, 4,\ X_j^32x_1^4x_3^4x_2^8x_4^{8}, \ j = 2, 4$.

{\rm vi)} $X_3^3x_1^4x_2^4x_3^{24}x_4^{24}, \  X_3^3x_1^4x_2^4x_i^8x_4^8x_3^{16}x_4^{16}, \ X_4X_2^2x_1^4x_2^4x_i^8x_4^8x_3^{16}x_4^{16},\ i = 1, 2,$ \ 

\quad \  $X_j^3x_1^{12}x_2^{12}x_3^{16}x_4^{16},\ j= 3, 4, \  X_4X_3^2x_1^{12}x_2^{12}x_3^{16}x_4^{16}.$
\end{lems}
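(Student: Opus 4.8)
The plan is to verify each monomial in the list directly against the definition of strict inadmissibility, case by case. Fix one monomial $x$ from sublists (i)--(vi) and put $s=\max\{i:\omega_i(x)>0\}$. By the definition of strict inadmissibility it suffices to produce monomials $y_1,\ldots,y_t$ with $y_j<x$ in the order of Definition \ref{defn3} and with $x-\sum_j y_j\in\mathcal A^+_sP_4$. In fact it is enough to establish a congruence $x\simeq_s\sum_j y_j$ in the sense of Definition \ref{dfn2}, where the $y_j$ are monomials with $y_j<x$: the slack term then lies in $P_4^-(\omega(x))$, and every monomial of $P_4^-(\omega(x))$ has weight vector strictly below $\omega(x)$, hence is itself $<x$ and may be absorbed into the collection $\{y_j\}$. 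So first I would compute $\omega(x)$ from the dyadic expansions of the four exponents (these are compatible with the admissible weight vectors described in Lemma \ref{6.1.1}), which pins down $s$ and hence the bound $2^s$ on the Steenrod degrees allowed to appear.

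The computational engine is the Cartan formula applied to suitably chosen preimages, exactly in the style of the proof of Lemma \ref{bdad}. For a given $x$ I would search for an exponent $2^i$ with $i<s$ and a monomial $z$ of degree $\deg x-2^i$ such that, after expanding $Sq^{2^i}(z)$ by the Cartan formula and Proposition \ref{mdcb1}, the monomial $x$ occurs with coefficient one and every other term is $<x$; then $x\simeq_s\sum(\text{other terms})$ since $Sq^{2^i}\in\mathcal A^+_s$. Because only terms of the \emph{same} weight vector $\omega(x)$ need to be tracked carefully (those of lower weight sit in $P_4^-(\omega(x))$, and many are hit outright by Theorem \ref{dlsig} because their weight vector drops below that of the minimal spike), the effective bookkeeping is small. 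When a single square does not present $x$ as the leading term I would combine two or three operations $Sq^{2^{i_1}}(z_1)+Sq^{2^{i_2}}(z_2)+\cdots$, again mirroring the $Sq^1$/$Sq^2$ combination in Lemma \ref{bdad}. To control the case count I would exploit Proposition \ref{mdcb4}(i): once a prototype congruence is verified for the shortest representative of a family, the remaining members, which differ by a $2^t$-th power factor, inherit the congruence, so only a handful of genuinely distinct computations remain (for instance the entries of (iv) and of (vi) are governed by one or two prototypes each).

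The hard part will be the ordering bookkeeping rather than the algebra. After each Cartan expansion I must confirm that every produced monomial other than $x$ is strictly smaller, which means comparing weight vectors first and, when these coincide, comparing sigma vectors, and this has to be carried out for every term across all six sublists. A secondary obstacle is locating the correct preimage $z$ and exponent $2^i$, especially for the high-degree monomials in (vi), where the degree must be split so that the relevant square has degree below $2^s$ and so that the attached square factors interact correctly with Proposition \ref{mdcb4}(i); here one must also check that the whole correction genuinely lands in $\mathcal A^+_sP_4$ and not merely in $\mathcal A^+P_4$. Once these verifications are assembled, strict inadmissibility of each listed monomial follows at once, and the monomials then feed into Theorem \ref{dlcb1} to eliminate the larger monomials in the proof of the $s=1$ subcase.
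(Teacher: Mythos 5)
Your proposal is sound and follows essentially the same route as the paper: the paper disposes of Lemma \ref{b610} with the remark that it "is proved by a direct computation," and your plan — reducing strict inadmissibility to a congruence $x \simeq_{s} \sum_j y_j$ with monomials $y_j < x$ (correctly absorbing the $P_4^-(\omega(x))$ slack as monomials of strictly smaller weight vector) and producing that congruence by Cartan-formula expansions of $Sq^{2^i}(z)$ with $i<s$, in the style of Lemma \ref{bdad} — is precisely that computation, with the definitional reduction justified in the right way. The one caution, which you already partly flag, is that passing from a prototype to a family member multiplied by a $2^t$-th power must be done through Proposition \ref{mdcb4}(i) together with the check that the smaller terms remain smaller after multiplication, since Theorem \ref{dlcb1}(ii) by itself would only yield inadmissibility, not the strict inadmissibility the lemma asserts.
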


\begin{proof}[Proof of Proposition \ref{dlc6}] Let $x$ be an admissible monomial of degree $n = 2^{t+2}+1$. According to Lemma \ref{6.1.1}, $x = X_iy^2$ with $y$ a monomial of degree $2^{t+1}-1$. Since $x$ is admissible, by Theorem \ref{dlsig}, $y$ is admissible. By a direct computation, we see that if $y \in B_4(2^{t+1}-1)$ and $X_iy^2\notin C_4(n)$, then there is a monomial $w$ which is given in one of Lemmas \ref{3.2}, \ref{b53}, \ref{b610} such that $X_iy^2 = wz^{2^u}$ with some positive integer $u$ and monomial $z$. By Theorem \ref{dlcb1}, $x$ is inadmissible.

 Now, we prove the set $[C_4(n)]$ is linearly independent in $QP_4$. 

For $t=1$, we have $|C_4^+(9)| = 18$. Suppose there is a linear relation
\begin{equation}
\mathcal S = \sum_{i=1}^{18}\gamma_id_i \equiv 0,\label{ct441}
\end{equation}
with $\gamma_i \in \mathbb F_2$. A direct computation from the relations $p_{(r;j)}(\mathcal S) \equiv 0,$  for $1 \leqslant r < j \leqslant 4$, we obtain $\gamma_i = 0$ for $i \ne 1, 4, 9, 10, 11, 12$ and $\gamma_1 = \gamma_2 = \gamma_3 = \gamma_{10} = \gamma_{11} = \gamma_{12}$. Hence, the relation (\ref{ct441}) becomes $\gamma_1\theta \equiv 0$ where $\theta = d_1 + d_4 + d_9 + d_{10} + d_{11} + d_{12}$. 

We prove $\gamma_{1} = 0$.  Suppose $\theta$ is hit. Then we get
$$\theta = Sq^1(A) + Sq^2 (B) +Sq^4(C),$$
for some polynomials $A\in (P_4^+)_8, B \in (P_4^+)_7, C \in (P_4^+)_5$. Let $(Sq^2)^3$ act on the both sides of this equality. It is easy to check that $(Sq^2)^3Sq^4(C)$ $ = 0$ for all $C\in (P_4^+)_5$. Since $(Sq^2)^3$ annihilates $Sq^1$ and $Sq^2$, the right hand side is sent to zero. On the other hand,  a direct computation shows
$$ (Sq^2)^3(\theta) = (1,2,4,8) + \text{symmetries} \ne 0.$$ 
Hence,  we have a contradiction. So, we obtain $\gamma_{1} = 0.$

For $t = 2$, $|C_4^+(17)| = 47$. Suppose there is a linear relation

\begin{equation}
\mathcal S = \sum_{i=1}^{47}\gamma_id_i \equiv 0,\label{ct442}
\end{equation}
with $\gamma_i \in \mathbb F_2$ and $d_i = d_{17,i}$. A direct computation from the relations $p_{(j;J)}(\mathcal S) \equiv 0,$  for $(j;J) \in \mathcal N_4$, we obtain $\gamma_i = 0$ for $i \ne 1, 4, 8, 9, 10, 11, 17, 18$ and $\gamma_1 = \gamma_2 = \gamma_8 = \gamma_{9} = \gamma_{10} = \gamma_{11}  = \gamma_{17} = \gamma_{18}$. Hence, the relation (\ref{ct442}) becomes $\gamma_1\theta \equiv 0$ where $\theta = d_1 + d_4 + d_8 + d_{11} + d_{13} + d_{16} + d_{17} + d_{18}$. 

By a same argument as given in the proof of the proposition for $t=1$, we see that  $[\theta] \ne 0$. Hence,   $\gamma_{1} = 0.$

We have $|C_4^+(33)| = 84$ for $t=3$, and $|C_4^+(2^{t+2}+1)| = 94$ for $t >3$. Suppose there is a linear relation
\begin{equation}
\mathcal S = \sum_{i=1}^{84}\gamma_id_i \equiv 0,\label{ct443}
\end{equation}
with $\gamma_i \in \mathbb F_2$ and $d_i = d_{33,i}$. A direct computation from the relations $p_{(j;J)}(\mathcal S) \equiv 0,$  for $(j;J) \in \mathcal N_4$, we obtain $\gamma_i = 0$ for all $i \notin E$ with $E= \{$1, 3, 8, 9, 13, 14, 17, 24, 25, 42, 43, 59, 60, 65, 66, 67$\}$ and $\gamma_{i} =\gamma_{1}$ for all $i \in E$. Hence, the relation (\ref{ct443}) become $\gamma_1\theta \equiv 0$ with $\theta = \sum_{i \in E}d_i$.

By a same argument as given in the proof of the proposition for $t=1$, we see that $[\theta] \ne 0$. Therefore  $\gamma_{1} = 0.$

Now, we prove the set $[C_4^+(n)]$ is linearly independent for $t> 3$. Suppose there is a linear relation
\begin{equation}
\mathcal S = \sum_{i=1}^{94}\gamma_id_i \equiv 0, \label{ct444}
\end{equation}
with $\gamma_i \in \mathbb F_2$ and $d_i = d_{n,i}$. By a direct computation from the relations $p_{(j;J)}(\mathcal S) \equiv 0,$  for $(j;J) \in \mathcal N_4$, we obtain $\gamma_i = 0$ for all $i$.
\end{proof}

\medskip\noindent
{5.4.2. \bf The subcase $s = 2$.}\label{s62}\ \medskip
\setcounter{equation}{0}

For $s=2$, we have $n = 2^{t+3} + 5$.  According to Theorem \ref{dlmd2},  the iterated Kameko homomorphism 
$$ (\widetilde{Sq}^0_*)^2 :(QP_3)_{2^{t+3} + 5} \to (QP_3)_{2^{t+1}-1}$$ 
is an isomorphism. So, we get
$$B_3(n) = \psi^2(B_3(2^{t+1}-1)) = \psi^2(B_3(1^{(t+1)}))\cup \psi^2(\Phi(B_3(3,2^{(t-1)}))) .$$

\begin{props}\label{mdc42}  \

{\rm i)} $C_4(21)=\Phi(B_3(21)) \cup \{x_1^{7}x_2^{9}x_3^{2}x_4^{3},\ 
x_1^{7}x_2^{9}x_3^{3}x_4^{2},\ x_1^{3}x_2^{7}x_3^{8}x_4^{3},\ x_1^{7}x_2^{3}x_3^{8}x_4^{3}\}$  is the set of all the admissible monomials for $\mathcal A$-module $P_4$ in degree $21$.

{\rm ii)} For any integer $t > 1$, $C_4(n) = \Phi(B_3(n))$ is the set of all the admissible monomials  for $\mathcal A$-module $P_4$ in degree $n = 2^{t+3} + 5$.
\end{props}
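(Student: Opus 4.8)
The plan is to follow the two-step template already used for the subcase $s=1$: first pin down the admissible $\omega$-vectors by Lemma~\ref{6.1.1}, use them to produce a spanning set, and then establish linear independence by pushing a hypothetical relation through the homomorphisms $p_{(j;J)}$ and $\varphi_i$. Here $n = 2^{t+3}+5$, so $\mu(n)=3<4$ and $n$ is odd; consequently neither Theorem~\ref{dl1} (which requires $d_{k-1}\geqslant k-1=3$, whereas the minimal spike $x_1^{2^{t+3}-1}x_2^{3}x_3^{3}$ has $d_3=2$) nor Kameko's isomorphism on $P_4$ is available, and everything must be carried out by the direct reduction.

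First I would prove the spanning statement. By Lemma~\ref{6.1.1} an admissible monomial $x$ of degree $n$ has $\omega_1(x)=3$, so $x = X_i\,y^2$ for some $1\leqslant i\leqslant 4$ and a monomial $y$ of degree $(n-3)/2 = 2^{t+2}+1$. By Theorem~\ref{dlcb1}, if $y$ is inadmissible then so is $x$; hence we may assume $y$ lies in the set $C_4(2^{t+2}+1)$ of admissible monomials already determined in the subcase $s=1$. The task then reduces to deciding, for each such $y$ and each $i$, whether $X_iy^2$ lies in $C_4(n)$. I would show that every $X_iy^2$ not on the listed set contains a factor $w$ among the strictly inadmissible monomials of Lemmas~\ref{3.2}, \ref{b53} and \ref{b610}, that is, $X_iy^2 = w\,z^{2^u}$ with the $\omega$-conditions of Theorem~\ref{dlcb1} satisfied, forcing $x$ inadmissible. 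The monomials that survive are exactly $\Phi(B_3(n))$ together with the four extra monomials listed when $t=1$, and exactly $\Phi(B_3(n))$ when $t>1$.

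Next I would prove linear independence of $[C_4(n)]$. Writing a relation $\mathcal S=\sum_j\gamma_j d_j\equiv 0$ with the $d_j$ ordered as in Definition~\ref{defn3}, I apply each $p_{(j;J)}$, $(j;J)\in\mathcal N_4$, and rewrite $p_{(j;J)}(\mathcal S)$ in the basis of $(QP_3)_n$ furnished by $B_3(n)=\psi^2(B_3(2^{t+1}-1))$; since that basis consists of spikes, each relation $p_{(j;J)}(\mathcal S)\equiv 0$ yields a batch of vanishing coefficients. For part~(ii), $t>1$, I expect these relations alone to give $\gamma_j=0$ for all $j$. For part~(i), $t=1$, a few symmetric combinations $\theta$ survive; these I would kill by applying the $\varphi_i$, which generate $GL_4$, to isolate a single multiple $\gamma\,\theta\equiv 0$, and then showing $[\theta]\neq 0$ by the unstable-module argument: assuming $\theta$ hit one writes $\theta = Sq^1(A)+Sq^2(B)+Sq^4(C)$, acts by $(Sq^2)^3$ (which annihilates $Sq^1$ and $Sq^2$), and exhibits a monomial surviving in $(Sq^2)^3(\theta)$ but not in $(Sq^2)^3Sq^4(C)$ for any admissible $C$, a contradiction.

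The main obstacle will be the bookkeeping in the first step: enumerating the images $X_iy^2$ over all $y\in C_4(2^{t+2}+1)$ and all $i$, and matching every inadmissible one with a strictly inadmissible witness $w$ uniformly in $t$, with the small boundary cases $t=1,2,3$ requiring the separate lists supplied by Lemma~\ref{b610}. A secondary difficulty is the $t=1$ linear independence, where the $p_{(j;J)}$-calculus does not suffice and one must verify by hand that $(Sq^2)^3(\theta)$ is genuinely nonzero and unmatchable; this is the only point at which the unstable structure, rather than the homomorphisms $p_{(j;J)}$ and $\varphi_i$, is essential.
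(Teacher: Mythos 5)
Your overall strategy coincides with the paper's: the spanning step via Lemma \ref{6.1.1}, the factorization $x = X_iy^2$ with $y$ admissible of degree $2^{t+2}+1$, strictly inadmissible witnesses plus Theorem \ref{dlcb1}, and then linear independence through the $p_{(j;J)}$ and the $\varphi_i$. However, there is a genuine gap at the crux of part (i), the non-hitness verification in degree $21$. You write the hit decomposition as $\theta = Sq^1(A)+Sq^2(B)+Sq^4(C)$, but in degree $21$ the unstable condition does not kill $Sq^8$: one must allow $\theta = Sq^1(A)+Sq^2(B)+Sq^4(C)+Sq^8(D)$ with $D$ of degree $13$, so after applying $(Sq^2)^3$ you are left with $(Sq^2)^3Sq^4(C)+(Sq^2)^3Sq^8(D)$ and both terms must be excluded. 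Worse, the single-monomial obstruction you propose ("a monomial surviving in $(Sq^2)^3(\theta)$ but not in $(Sq^2)^3Sq^4(C)$ for any $C$") does not exist in the naive sense here: the paper's witness $x_1^7x_2^{12}x_3^2x_4^6$ \emph{can} occur as a term of $(Sq^2)^3Sq^4(C)$, namely when $C$ contains $x_1^7x_2^6x_3x_4^3$ or $x_1^7x_2^5x_3^2x_4^3$. This is precisely why the paper's Step 1 for $t=1$ is an iterative argument: it first rules out $(Sq^2)^3Sq^8(D)$ (using $y = x_1^7f_1(z)$ and $x_1^7(Sq^2)^3Sq^8(f_1(z))=0$), then successively modifies $C$ to $C'$ and $C''$ by adding the offending monomials, producing at each stage a new monomial (e.g.\ $x_1^{16}x_2^6x_3^2x_4^3$) that forces the other offending monomial to appear, until a contradiction. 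As sketched, your argument would break down exactly at this point, which is the only nontrivial content of part (i) beyond the routine $p_{(j;J)}$ reductions.

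Two smaller corrections. The strictly inadmissible witnesses for this degree are those of Lemma \ref{b611} (tailored to $n=2^{t+3}+5$), not Lemma \ref{b610}, whose list serves the degree $2^{t+2}+1$ of the subcase $s=1$; quoting \ref{b610} here would leave inadmissible monomials unmatched. Also, your parenthetical claim that the basis $B_3(n)$ "consists of spikes" is false (for instance $x_1^3x_2^7x_3^{11}\in B_3(21)$ is not a spike); what the $p_{(j;J)}$-calculus actually needs is only that $[B_3(n)]$ is linearly independent in $QP_3$, which holds because $B_3(n)$ is the set of admissible monomials.
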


The following lemma is proved by a direct computation.

\begin{lems}\label{b611}  The following monomials are strictly inadmissible:

\medskip
{\rm i)} $ X_2^3x_3^4,\   X_i^4X_j^3, \ 1 \leqslant i < j \leqslant 4,\ X_2^3x_1^4x_2^8.$

{\rm ii)} $X_3^3x_i^4x_3^{24},\  X_3^3x_i^4x_3^8x_4^{16},\ X_4^3x_i^4x_3^8x_4^{16}, \  X_4^7x_i^8x_4^{8}, i=1 , 2.$

 {\rm iii)}  $x_1^7x_2^{11}x_3^{17}x_4^2,\ X_j^3x_2^8x_j^{16}, X_j^7x_3^8x_4^8, j = 3,4$. 

{\rm iv)} $x_1^{15}x_2^{15}x_3^{16}x_4^{23},\ 
x_1^{15}x_2^{15}x_3^{23}x_4^{16},\ 
x_1^{15}x_2^{15}x_3^{17}x_4^{22}$.\ 
\end{lems}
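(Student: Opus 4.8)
The plan is to prove each listed monomial strictly inadmissible by producing, for the given monomial $x$, an explicit congruence exhibiting $x$ as a sum of strictly smaller monomials modulo $\mathcal A^+_sP_4$, where $s=\max\{i:\omega_i(x)>0\}$; equivalently, $2^{s-1}$ is the highest power of two occurring in the binary expansions of the exponents of $x$. Since every monomial of weight strictly below $\omega(x)$ is automatically smaller than $x$ in the order of Definition~\ref{defn3}, it suffices by Definition~\ref{dfn2} to establish a relation $x\simeq_{s}\sum_j y_j$ in which each $y_j$ is a monomial with $\omega(y_j)=\omega(x)$ and $\sigma(y_j)<\sigma(x)$: the part of $x-\sum_j y_j$ absorbed into $P_4^-(\omega(x))$ is a sum of monomials of lower weight, hence of monomials $<x$, so combining these with the $y_j$ yields the decomposition required by the definition of strict inadmissibility. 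First I would record $\omega(x)$, and therefore $s$ and the ceiling $2^s$ on the available Steenrod degrees, for each monomial in (i)--(iv).

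The computational engine is the Cartan formula together with the unstable relations of Proposition~\ref{mdcb1}. For the low-degree atoms in (i) I would exhibit the relation directly: applying a small square $Sq^{u}$ with $u<2^s$ to a monomial of degree $\deg x-u$ and expanding by Cartan rewrites $x$ as a sum of monomials in which a bit of $\sigma$ has been transported to an earlier variable, all of weight $\leqslant\omega(x)$. The elementary moves are the two-variable trades underlying Lemma~\ref{bdad} (where $X_i^aX_j^b\simeq_2 X_i^{2^d-2}X_j$ is obtained using only $Sq^1$ and $Sq^2$) and the total-square identity $Sq^1(X_\emptyset^{2^c-1}y_0^{2^c})=\sum_{j}X_j^{2^c-1}y_j^{2^c}$ from the proof of Lemma~\ref{bdcbs}; chaining two or three of these produces the congruence for each monomial of part (i). For the higher-degree monomials of (ii)--(iv) I would factor $x=u\,v^{2^r}$ (or $x=u\,v^{2^r}w^{2^{r+r'}}$) so that $u$ is one of the atoms already handled --- either an item of part (i) or a strictly inadmissible monomial from Lemmas~\ref{3.2}, \ref{4.2}, \ref{b53} or \ref{b610} --- with $r=\max\{i:\omega_i(u)>0\}$, and then invoke Proposition~\ref{mdcb4}(i) to lift $u\simeq_t f$ (with $t\leqslant r$) to $u\,v^{2^r}\simeq_t f\,v^{2^r}$. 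Because $\simeq_{s}$ is precisely the congruence encoding strict inadmissibility, this lifting preserves strict inadmissibility, whereas Theorem~\ref{dlcb1}(ii) would yield only inadmissibility; this is exactly why a separate direct verification is needed. Any low-weight residual term would be discarded by Singer's criterion (Theorem~\ref{dlsig}), since a monomial of weight below that of the minimal spike is hit.

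The main obstacle is organizational rather than conceptual: each of the listed monomials requires its own explicit identity, and the delicate point in every case is to arrange the Cartan expansion so that (a) only operations $Sq^u$ with $u<2^s$ appear, which is what upgrades ``inadmissible'' to ``strictly inadmissible,'' and (b) every error monomial is checked to be $<x$ by comparing weight vectors first and, when these coincide, the $\sigma$-vectors entry by entry. I expect the reductions in part (i) to carry the genuine content, with (ii)--(iv) following almost mechanically once the atoms are established and Proposition~\ref{mdcb4}(i) is applied; the $\sigma$-comparison bookkeeping across all cases is the most error-prone step, and I would organize it in displays analogous to those already used in Lemmas~\ref{3.2}, \ref{4.2}, \ref{b53} and \ref{b610}.
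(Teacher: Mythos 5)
Your general framework is sound: the reformulation of strict inadmissibility as a relation $x\simeq_{s}\sum_j y_j$ with $\omega(y_j)=\omega(x)$, $\sigma(y_j)<\sigma(x)$ (absorbing lower-weight terms into $P_4^-(\omega(x))$, which consists of monomials $<x$) is correct, and your observation that Proposition \ref{mdcb4}(i) lifts such a relation from $u$ to $u\,v^{2^r}$ — hence preserves strict inadmissibility, unlike Theorem \ref{dlcb1}(ii) — is also correct as a principle. The paper itself offers no more than the phrase ``proved by a direct computation,'' so for part (i) your plan of chaining the two-variable trades of Lemma \ref{bdad} with Cartan expansions is essentially what the paper intends.

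The genuine gap is in your treatment of parts (ii)--(iv), which you claim ``follow almost mechanically'' from the lifting. For every monomial listed there, the factorization $x=u\,v^{2^r}$ you need simply does not exist: the low-bit truncations of these monomials are spikes, hence admissible, and in particular are never among the atoms of part (i) or of Lemmas \ref{3.2}, \ref{4.2}, \ref{b53}, \ref{b610}. Concretely, take $X_3^3x_1^4x_3^{24}=x_1^{7}x_2^{3}x_3^{24}x_4^{3}$, with $\omega(x)=(3,3,1,1,1)$ and $s=5$. Its truncations at levels $r=1,2,3$ are $x_1x_2x_4$, $x_1^{3}x_2^{3}x_4^{3}$ and $x_1^{7}x_2^{3}x_4^{3}$ — all spikes (recall a spike never occurs as a term of $Sq^i(f)$ for $i>0$, so it can never be inadmissible) — and the level-$4$ truncation $x_1^{7}x_2^{3}x_3^{8}x_4^{3}$ occurs in none of the earlier lemmas; the candidate high parts ($x_3^{3}$, $x_1x_3^{6}$, \ldots) are likewise admissible. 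The same failure occurs for every entry of (ii)--(iv), e.g.\ $x_1^{15}x_2^{15}x_3^{16}x_4^{23}$ truncates to the spikes $x_1^{7}x_2^{7}x_4^{7}$ and $x_1^{15}x_2^{15}x_4^{7}$. This is not an accident: these monomials are listed in Lemma \ref{b611} precisely because they are \emph{not} obtainable from previously established strictly inadmissible monomials by the multiplicative mechanisms of Theorem \ref{dlcb1} or Proposition \ref{mdcb4}. Each of them requires a fresh Cartan-formula computation in which Steenrod squares of degree up to $2^{s}-1$ (for the full weight length $s$ of $x$) act across both the low and the high dyadic blocks simultaneously — this interaction between blocks is exactly what your factor-and-lift strategy cannot see. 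So the ``almost mechanical'' half of your plan is where the real work lies, and as written your proof of (ii)--(iv) would stall at its first step.
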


\begin{proof}[Proof of Proposition \ref{mdc42}] Let $x$ be an admissible monomial of degree $n = 2^{t+3}+ 5$. According to Lemma \ref{6.1.1}, $x = X_iy^2$ with $y$ a monomial of degree $2^{t+2}+1$. Since $x$ is admissible, by Theorem \ref{dlsig}, $y$ is admissible. 

By a direct computation, we see that if $y \in B_4(2^{t+2}+1)$ and $X_iy^2$ does not belong to $C_4(n)$, then there is a monomial $w$ which is given in one of Lemmas \ref{3.2}, \ref{b53}, \ref{b611} such that $X_iy^2 = wz^{2^u}$ with some positive integer $u$ and monomial $z$. 
By Theorem \ref{dlcb1}, $x$ is inadmissible. Hence,  $QP_4(n)$ is generated by the set given in the proposition.  

Now, we prove the set $[C_4^+(n)]$ is linearly independent in $QP_4$.

 For $t=1$, we have $|C_4^+(21)| = 66$. Suppose there is a linear relation
\begin{equation}
\mathcal S = \sum_{i=1}^{66}\gamma_id_i \equiv 0,\label{ct445}
\end{equation}
with $\gamma_i \in \mathbb F_2$ and $d_i = d_{21,i}$. 

 By a simple computation, we see that $B_3(21)$ is the set consisting of 7 monomials:
\begin{align*}
&v_{1} = x_1^{3}x_2^{3}x_3^{15},\ v_{2} = x_1^{3}x_2^{7}x_3^{11}, \ v_{3} = x_1^{3}x_2^{15}x_3^{3}, \ v_{4} = x_1^{7}x_2^{3}x_3^{11}, \\ 
&\ v_{5} = x_1^{7}x_2^{11}x_3^{3}, \ v_{6} = x_1^{15}x_2^{3}x_3^{3}, \ v_{7} = x_1^{7}x_2^{7}x_3^{7}. 
\end{align*}

A direct computation shows
\begin{align*}
 p_{(1;2)}(\mathcal S) &\equiv \gamma_{1}v_{1} + \gamma_{2}v_{2} +   \gamma_{3}v_{3} +  \gamma_{10}v_{4} +  \gamma_{11}v_{5} +  \gamma_{16}v_{6} +  \gamma_{57}v_{7} \equiv 0,\\
p_{(1;3)}(\mathcal S) &\equiv \gamma_{4}v_{1} +  \gamma_{6}+\gamma_{27}v_{2} +  (\gamma_{8}+\gamma_{30}+\gamma_{49})v_{3} +  \gamma_{12}v_{4}\\
&\quad +  (\gamma_{14}+\gamma_{38}v_{5} +  \gamma_{17})v_{6} +  \gamma_{58}v_{7} \equiv 0,\\
p_{(1;4)}(\mathcal S) &\equiv  (\gamma_{5}+\gamma_{26}+\gamma_{48})v_{1} +  (\gamma_{7}+\gamma_{29}v_{2} +  \gamma_{9})v_{3} +  (\gamma_{13}+\gamma_{37})v_{4}\\
&\quad  +  \gamma_{15}v_{5} +  \gamma_{18}v_{6} +  \gamma_{59}v_{7} \equiv 0,\\
p_{(2;3)}(\mathcal S) &\equiv   \gamma_{19}v_{1} + (\gamma_{21}+\gamma_{27}+\gamma_{32}+\gamma_{60})v_{2} +  (\gamma_{23}+\gamma_{30}+\gamma_{34}+\gamma_{38}+\gamma_{40})v_{3}\\
&\quad  +  \gamma_{43}v_{4} +  (\gamma_{45}+\gamma_{49}+\gamma_{51})v_{5} +  \gamma_{54}v_{6} +  \gamma_{63}v_{7} \equiv 0,\\
p_{(2;4)}(\mathcal S) &\equiv  (\gamma_{20}+\gamma_{26}+\gamma_{33}+\gamma_{37}+\gamma_{41})v_{1} + \gamma_{22}+\gamma_{29}+\gamma_{35}+\gamma_{61})v_{2}\\
&\quad  +  \gamma_{24}v_{3} + (\gamma_{44}+\gamma_{48}+\gamma_{52})v_{4} + \gamma_{46}v_{5} +  \gamma_{55}v_{6} +   \gamma_{64}v_{7} \equiv 0,\\
p_{(3;4)}(\mathcal S) &\equiv   (\gamma_{25}+\gamma_{26}+\gamma_{27}+\gamma_{28}+ \gamma_{29}+\gamma_{30}+\gamma_{31})v_{1}\\ &\quad +  (\gamma_{36}+\gamma_{37}+\gamma_{38}+\gamma_{39}+\gamma_{62})v_{2}  +  \gamma_{42}v_{3}\\
&\quad +  (\gamma_{47}+\gamma_{48}+\gamma_{49}+\gamma_{50}+\gamma_{65})v_{4} +  \gamma_{53}v_{5} +  \gamma_{56}v_{6} +  \gamma_{66}v_{7} \equiv 0.    
\end{align*}

From the above equalities, we get
$\gamma_i = 0$, for $i =$ 1, 2, 3, 4, 9, 10, 11, 12, 15, 16, 17, 18, 19, 24, 42, 43, 46, 53, 54, 55, 56, 57, 58, 59, 63, 64, 66 and 
$\gamma_{6} = \gamma_{27}.\
 \gamma_{8}+\gamma_{30}+\gamma_{49}= 0,\ 
 \gamma_{14} = \gamma_{38},\
 \gamma_{5}+\gamma_{26}+\gamma_{48}= 0,\ 
 \gamma_{7}=\gamma_{29},\
 \gamma_{13}= \gamma_{37},\
 \gamma_{6}+\gamma_{21}+\gamma_{32}+\gamma_{60}= 0,\ 
 \gamma_{14}+\gamma_{23}+\gamma_{30}+\gamma_{34}+\gamma_{40} \gamma_{45}+\gamma_{49}+\gamma_{51}= 0,\ 
 \gamma_{20}+\gamma_{26}+\gamma_{33}+\gamma_{37}+\gamma_{41}= 0,\ 
 \gamma_{7}+\gamma_{22}+\gamma_{35}+\gamma_{61} = 0,\ 
\gamma_{44}+\gamma_{48}+\gamma_{52} = 0,\ 
\gamma_{6}+\gamma_{7}+\gamma_{25}+\gamma_{26}+\gamma_{28}+\gamma_{30}+\gamma_{31} = 0,\ 
\gamma_{14}+\gamma_{36}+\gamma_{37}+\gamma_{39}+\gamma_{62} = 0,\ 
\gamma_{47}+\gamma_{48}+\gamma_{49}+\gamma_{50}+\gamma_{65} = 0.$

\medskip
With the aid of the above equalities, we have
\begin{align*}
p_{(1;(2,3))}(\mathcal S) &\equiv \gamma_{21}v_{2} +  (\gamma_{8}+\gamma_{23}+\gamma_{30}+\gamma_{45}+\gamma_{49})v_{3} +   \gamma_{32}v_{4}\\
&\quad + (\gamma_{34}+\gamma_{45}+\gamma_{49}+\gamma_{51})v_{5} +  (\gamma_{40}+\gamma_{51})v_{6} +  \gamma_{60}v_{7}\equiv 0,\\ 
p_{(1;(2,4))}(\mathcal S) &\equiv  (\gamma_{5}+\gamma_{20}+\gamma_{26}+\gamma_{44}+\gamma_{48})v_{1} +  \gamma_{22}v_{2}\\
&\quad +  (\gamma_{33}+\gamma_{44}+\gamma_{48}+\gamma_{52})v_{4}  +  \gamma_{35}v_{5} +  (\gamma_{41}+\gamma_{52})v_{6} +  \gamma_{61}v_{7} \equiv 0.       
\end{align*}
From this, we obtain $\gamma_i = 0$, for $i =$ 21, 22, 32, 35, 60, 61 and 
$\gamma_{8}+\gamma_{23}+\gamma_{30}+\gamma_{45}+\gamma_{49} = 0,\ 
\gamma_{34}+\gamma_{45}+\gamma_{49}+\gamma_{51}  = 0,\ 
\gamma_{40} = \gamma_{51},\
 \gamma_{5}+\gamma_{20}+\gamma_{26}+\gamma_{44}+\gamma_{48}= 0,\ 
 \gamma_{33}+\gamma_{44}+\gamma_{48}+\gamma_{52} = 0,\ 
\gamma_{41} = \gamma_{52}.$ By a direct computation using the above equalities, one gets
\begin{align*}
p_{(1;(3,4))}(\mathcal S) &\equiv (\gamma_{5}+\gamma_{25}+\gamma_{26}+\gamma_{47}+\gamma_{48})v_{1} +  (\gamma_{28}+\gamma_{47}+\gamma_{48}+\gamma_{49}+\gamma_{50})v_{2}\\
&\quad  +  (\gamma_{8}+\gamma_{30}+\gamma_{31}+\gamma_{49}+\gamma_{50})v_{3} +  \gamma_{36}v_{4} +  \gamma_{39}v_{5} +  \gamma_{62}v_{7}     \equiv 0,\\ 
p_{(2;(3,4))}(\mathcal S) &\equiv   (\gamma_{13}+\gamma_{20}+\gamma_{25}+\gamma_{26}+\gamma_{33}+\gamma_{36}+\gamma_{40}+\gamma_{41})v_{1} +  (\gamma_{6}+\gamma_{7}\\
&\quad +\gamma_{13}+\gamma_{14}+\gamma_{28}+\gamma_{33}+\gamma_{34}+\gamma_{36}+\gamma_{39})v_{2}  +  (\gamma_{14}+\gamma_{23}+\gamma_{30}+\gamma_{31}\\
&\quad+\gamma_{34}+\gamma_{39}+\gamma_{40}+\gamma_{41})v_{3}  +  (\gamma_{44}+\gamma_{47}+\gamma_{48}+\gamma_{51}+\gamma_{52})v_{4} \\
&\quad +  (\gamma_{45}+\gamma_{49}+\gamma_{50}+\gamma_{51}+\gamma_{52})v_{5} +  \gamma_{65}v_{7}  \equiv 0.       
\end{align*}
So, we obtain $ \gamma_{36} =  \gamma_{39} =  \gamma_{62} =  \gamma_{65} = 0,$ $\gamma_{5}+\gamma_{25}+\gamma_{26}+\gamma_{47}+\gamma_{48}  = 0,\ 
\gamma_{28}+\gamma_{47}+\gamma_{48}+\gamma_{49}+\gamma_{50}  = 0,\ 
 \gamma_{8}+\gamma_{30}+\gamma_{31}+\gamma_{49}+\gamma_{50}  = 0,\ 
\gamma_{13}+\gamma_{20}+\gamma_{25}+\gamma_{26}+\gamma_{33}+\gamma_{40}+\gamma_{41} = 0,\ 
 \gamma_{6}+\gamma_{7}+\gamma_{13}+\gamma_{14}+\gamma_{28}+\gamma_{33}+\gamma_{34}  = 0,\ 
\gamma_{14}+\gamma_{23}+\gamma_{30}+\gamma_{31}+\gamma_{34}+\gamma_{40}+\gamma_{41}  = 0,\ 
\gamma_{44}+\gamma_{47}+\gamma_{48}+\gamma_{51}+\gamma_{52}  = 0,\ 
\gamma_{45}+\gamma_{49}+\gamma_{50}+\gamma_{51}+\gamma_{52}  = 0.$

Combining the above equalities, one gets $\gamma_i = 0$ for $i \ne$ 5, 8, 13, 14, 20, 23, 25, 26, 30, 31, 37, 38, 40, 41, 44, 45, 47, 48, 49, 50, 51, $\gamma_i = \gamma_5$ for $i = 8, 13, 14, 37, 38$, $\gamma_i = \gamma_{20}$ for $i = 23, 44, 45$, $\gamma_i = \gamma_{25}$ for $i = 40, 47, 51$, $\gamma_i = \gamma_{31}$ for $i = 41, 50, 52$, 
$\gamma_{20}+\gamma_{25}+\gamma_{49} = 0,\ 
\gamma_{5}+\gamma_{20}+\gamma_{26}+\gamma_{31} = 0,\  \gamma_{20}+\gamma_{31}+\gamma_{48} = 0,\ 
\gamma_{5}+\gamma_{20}+\gamma_{25}+\gamma_{30}  = 0.$

\medskip
Substituting the above equalities into the relation (\ref{ct445}), we have
\begin{equation} \gamma_{25}[\theta_{1}] + \gamma_{31}[\theta_{2}] + \gamma_{5}[\theta_{3}] + \gamma_{20}[\theta_{4}] =0, \label{ct4451}
\end{equation}
where
\begin{align*}
\theta_{1} &=  d_{25} + d_{30} + d_{40} + d_{47} + d_{49} + d_{51}, \\
\theta_{2} &=   d_{26} + d_{31} + d_{41} + d_{48} + d_{50} + d_{52}, \\
\theta_{3}  &= d_{5} + d_{8} + d_{13} + d_{14} + d_{26} + d_{30} + d_{37} + d_{38}, \\
\theta_{4} &=   d_{20} + d_{23} + d_{26} + d_{30} + d_{44} + d_{45} + d_{48} + d_{49}. 
\end{align*}

We need to show that $\gamma_{5} = \gamma_{20} = \gamma_{25} = \gamma_{31} = 0.$ The proof is divided into 4 steps.

{\it Step 1.} The homomorphism $\varphi_1$ sends (\ref{ct4451}) to
\begin{equation} \gamma_{25}[\theta_{1}] + \gamma_{31}[\theta_{2}] + (\gamma_{5}+\gamma_{20})[\theta_{3}] + \gamma_{20}[\theta_{4}]  =0. \label{ct4452}
\end{equation}
Combining (\ref{ct4451}) and (\ref{ct4452}) gives
$$\gamma_{20}[\theta_{3}] =0.$$
We prove $[\theta_3] \ne 0$. We have $\varphi_2\varphi_3([\theta_1]) = [\theta_3].$ So, we need only to prove that $[\theta_1] \ne 0$. Suppose $[\theta_1] = 0$. Then the polynomial $\theta_1$ is hit and we have
$$\theta_1 = Sq^1(A) +Sq^2(B) + Sq^4(C) + Sq^8(D),$$
for some polynomials $A \in (P_4^+)_{20}, B\in (P_4^+)_{19}, C \in (P_4^+)_{17}, D\in (P_4^+)_{13}$.

Let $(Sq^2)^3$ act on the both sides of this  equality. Since $(Sq^2)^3Sq^1=0$ and $(Sq^2)^3Sq^2=0$, we get
$$(Sq^2)^3(\theta_3) = (Sq^2)^3Sq^4(C) + (Sq^2)^3Sq^8(D).$$

By a direct computation, we see that the monomial $x = x_1^7x_2^{12}x_3^2x_4^6$ is a term of $(Sq^2)^3(\theta_1)$. If this monomial is a term of $(Sq^2)^3Sq^8(y)$, then $y=x_1^7f_1(z)$ with $z$ a monomial of degree 6 in $P_3$ and $x$ is a term of $x_1^7(Sq^2)^3Sq^8(f_1(z)) = 0$. So, the monomial $x$ is not a term of $(Sq^2)^3Sq^8(D)$ for all $D\in (P_4^+)_{13}$.

If this monomial is a term of $(Sq^2)^3Sq^4(y)$, where the monomial $y$ is a term of $C$, then $y=x_1^7f_1(z)$ with $z$ a monomial of degree 10 in $P_3$ and $x$ is a term of $x_1^7(Sq^2)^3Sq^4(f_1(z)) = 0$. By a direct computation, we see that either $x_1^7x_2^6x_3x_4^3$ or $x_1^7x_2^5x_3^2x_4^3$ is a term of $C$.

If $x_1^7x_2^6x_3x_4^3$ is a term of $C$ then 
$$(Sq^2)^3(\theta_1+Sq^4(x_1^7x_2^6x_3x_4^3)) = (Sq^2)^3(Sq^4(C') + Sq^8(D)),$$
where $C' = C + x_1^7x_2^6x_3x_4^3$. The monomial $x'=x_1^{16}x_2^6x_3^2x_4^3$ is a term of the polynomial $ (Sq^2)^3(\theta_1+Sq^4(x_1^7x_2^6x_3x_4^3))$. If $x'$ is a term of the polynomial $(Sq^2)^3Sq^8(y')$, with $y'$ a monomial in $(P_4^+)_{13}$. Then $y' = x_1^ax_2^bx_3^cx_4^3$ with $a \geqslant 7, b\geqslant 3, c>0$. This contradicts with the fact that $\deg y' = 13$. So, $x'$ is not a term of $(Sq^2)^3Sq^8(D)$ for all $D\in (P_4^+)_{13}$. Hence,  $x'$ is a term of $(Sq^2)^3(Sq^4(C')$. By a direct computation, we see that either $x_1^7x_2^6x_3x_4^3$ or $x_1^7x_2^5x_3^2x_4^3$ is a term of $C'$. Since $x_1^7x_2^6x_3x_4^3$ is not a term of $C'$, the monomial $x_1^7x_2^5x_3^2x_4^3$ is a term of $C'$. Then we have 
$$(Sq^2)^3(\theta_1+Sq^4(x_1^7x_2^6x_3x_4^3+x_1^7x_2^5x_3^2x_4^3)) = (Sq^2)^3(Sq^4(C'') + Sq^8(D)),$$
where $C'' = C' + x_1^7x_2^5x_3^2x_4^3 = C + x_1^7x_2^6x_3x_4^3 + x_1^7x_2^5x_3^2x_4^3$. Now the monomial $x = x_1^7x_2^{12}x_3^2x_4^6$ is a term of 
$$(Sq^2)^3(\theta_1+Sq^4(x_1^7x_2^6x_3x_4^3+x_1^7x_2^5x_3^2x_4^3)).$$ 
Hence,  either $x_1^7x_2^6x_3x_4^3$ or $x_1^7x_2^5x_3^2x_4^3$ is a term of $C''$. On the other hand, the two monomials $x_1^7x_2^6x_3x_4^3$ and $x_1^7x_2^5x_3^2x_4^3$ are  not  the  terms of $C''$. We have a contradiction. So,  one gets $\gamma_{20} = 0.$

{\it Step 2.} Since $\gamma_{20} = 0$, the homomorphism $\varphi_2$ sends (\ref{ct4452}) to
\begin{equation} \gamma_{25}[\theta_{1}] + \gamma_{31}[\theta_{2}] + \gamma_{5}[\theta_{3}] =0. \label{ct4453}
\end{equation}
Using (\ref{ct4453}) and the result in Step 1, we get  $\gamma_{5} = 0$.

{\it Step 3.} The homomorphism $\varphi_3$ sends (\ref{ct4452}) to
\begin{equation}\gamma_{25}[\theta_{4}] +  \gamma_{31}[\theta_{2}] =0. \label{ct4454}
\end{equation}
Using the relation (\ref{ct4454}) and the result in Step 2, we obtain $\gamma_{25} = 0$. 

{\it Step 4.} Since $\varphi_4([\theta_2]) = [\theta_1]$, we have
$$  \gamma_{31}[\theta_{1}] =0.$$
Using this equality and by a same argument as given in Step 3, we get $\gamma_{31}=0.$ 

 For $t>1$, we have $|C_4^+(n)| = m(t)$ with $m(2) = 95, m(3) = 128$ and $m(t) = 139$ for $t \geqslant 4$. Suppose there is a linear relation
\begin{equation}
\mathcal S = \sum_{i=1}^{m(t)}\gamma_id_i \equiv 0,\label{ct446}
\end{equation}
with $\gamma_i \in \mathbb F_2$ and $d_i = d_{n,i}$. A direct computation from the relations $p_{(j;J)}(\mathcal S) \equiv 0,$  for $(j;J) \in \mathcal N_4$, we obtain $\gamma_i = 0$ for all $i$. The proposition is proved.
\end{proof}

\subsection{The case of degree $n=2^{s+t} + 2^{s}- 2$}\label{s8} \
\setcounter{equation}{0}

\medskip
For $s \geqslant 1$ and $t \geqslant 2$, the space $(QP_4)_n$ has been determined in \cite{su}. Hence,
in this subsection we need  only to compute $(QP_4)_n$ for $n= 2^{s+1} +2^s - 2$ with $s>1$.

Recall that, the homomorphism 
$$ \widetilde{Sq}^0_* :(QP_4)_{2^{s+1} +2^s - 2} \to (QP_4)_{2^{s} +2^{s-1} - 3}$$  
is  an epimorphism. Hence,  we have
$$ (QP_4)_{2m+4}\cong (QP_4)_{m}\oplus (QP_4^0)_{2m+4}\oplus (\text{Ker}\widetilde{Sq}^0_*\cap (QP_4^+)_{2m+4}),$$
where $m = 2^{s} +2^{s-1} - 3$.  So, it suffices to compute 
$ \text{Ker}\widetilde{Sq}^0_*\cap (QP_4^+)_{n}$ for $s > 1$.

\medskip
For $ s > 1$, denote by $C(s)$ the set of all the following monomials:

\medskip
\centerline{\begin{tabular}{lll}
$x_1x_2x_3^{2^{s}-2}x_4^{2^{s+1}-2}$,& 
$x_1x_2x_3^{2^{s+1}-2}x_4^{2^{s}-2}$,& 
$x_1x_2^{2^{s}-2}x_3x_4^{2^{s+1}-2}$,\cr 
$x_1x_2^{2^{s+1}-2}x_3x_4^{2^{s}-2}$,& 
$x_1x_2^{2}x_3^{2^{s}-4}x_4^{2^{s+1}-1}$,& 
$x_1x_2^{2}x_3^{2^{s+1}-1}x_4^{2^{s}-4}$,\cr 
$x_1x_2^{2^{s+1}-1}x_3^{2}x_4^{2^{s}-4}$,& 
$x_1^{2^{s+1}-1}x_2x_3^{2}x_4^{2^{s}-4}$,& 
$x_1x_2^{2}x_3^{2^{s+1}-3}x_4^{2^{s}-2}$,\cr 
$x_1x_2^{3}x_3^{2^{s+1}-4}x_4^{2^{s}-2}$,& 
$x_1^{3}x_2x_3^{2^{s+1}-4}x_4^{2^{s}-2}$.&\cr 
\end{tabular}}

\medskip
For $s > 2$, denote by $D(s)$ the set of all the following monomials:

\medskip
\centerline{\begin{tabular}{lll}
$x_1x_2^{2}x_3^{2^{s}-3}x_4^{2^{s+1}-2}$,& 
$x_1x_2^{2}x_3^{2^{s}-1}x_4^{2^{s+1}-4}$,& 
$x_1x_2^{2}x_3^{2^{s+1}-4}x_4^{2^{s}-1}$,\cr 
$x_1x_2^{2^{s}-1}x_3^{2}x_4^{2^{s+1}-4}$,& 
$x_1^{2^{s}-1}x_2x_3^{2}x_4^{2^{s+1}-4}$,& 
$x_1x_2^{3}x_3^{2^{s}-4}x_4^{2^{s+1}-2}$,\cr 
$x_1x_2^{3}x_3^{2^{s+1}-2}x_4^{2^{s}-4}$,& 
$x_1^{3}x_2x_3^{2^{s}-4}x_4^{2^{s+1}-2}$,& 
$x_1^{3}x_2x_3^{2^{s+1}-2}x_4^{2^{s}-4}$,\cr 
$x_1x_2^{3}x_3^{2^{s}-2}x_4^{2^{s+1}-4}$,& 
$x_1^{3}x_2x_3^{2^{s}-2}x_4^{2^{s+1}-4}$,& 
$x_1^{3}x_2^{2^{s+1}-3}x_3^{2}x_4^{2^{s}-4}$,\cr 
$x_1^{3}x_2^{2^{s}-3}x_3^{2}x_4^{2^{s+1}-4}$,& 
$x_1^{3}x_2^{5}x_3^{2^{s+1}-6}x_4^{2^{s}-4}$.&\cr 
\end{tabular}}

\medskip
Set $E(2) = C(2)\cup\{x_1^3x_2^4x_3x_4\}$, $E(3) = C(3)\cup D(3)\cup\{x_1^3x_2^5x_3^6x_4^8\}$ and $E(s) = C(s)\cup D(s) \cup \{x_1^{3}x_2^{5}x_3^{2^{s}-6}x_4^{2^{s+1}-4}\},$ for $s > 3$.  

\begin{props}\label{mdc52}  For any integer $s > 1$, $E(s)\cup\Phi^0(B_3(n))\cup \psi(B_4(m))$ is the set of all the admissible monomials for $\mathcal A$-module $P_4$ in degree $n = 2m +4$ with $m= 2^{s}+2^{s-1} -3$.
\end{props}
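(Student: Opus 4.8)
The plan is to follow the template already established for Propositions \ref{dlc4}, \ref{dlc5} and \ref{mdc42}. Since Kameko's homomorphism is an epimorphism in this degree, there is the decomposition recorded at the start of the subsection,
$$(QP_4)_{2m+4}\cong (QP_4)_m\oplus (QP_4^0)_{2m+4}\oplus\big(\mathrm{Ker}\,\widetilde{Sq}^0_*\cap (QP_4^+)_{2m+4}\big),$$
in which the first summand is carried by $\psi(B_4(m))$ (via the isomorphism of Theorem \ref{dlmd2} applied to the known basis $B_4(m)$ of $(QP_4)_m$, the degree $m=2^s+2^{s-1}-3$ having been treated earlier), and the second by $\Phi^0(B_3(n))$, which is a basis of $(QP_4^0)_n$ by the reasoning of Proposition \ref{mdc1} together with Proposition \ref{2.7}. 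Thus the genuine content of the statement is that $E(s)$ is an admissible basis of the third summand $\mathrm{Ker}\,\widetilde{Sq}^0_*\cap (QP_4^+)_n$.

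First I would prove a weight-vector lemma in the spirit of Lemma \ref{4.1}: every admissible $x$ of degree $n=2^{s+1}+2^s-2$ with $[x]\in\mathrm{Ker}\,\widetilde{Sq}^0_*$ satisfies $\omega(x)=(2^{(s)},1)$. Since $n$ is even and $z=x_1^{2^{s+1}-1}x_2^{2^s-1}$ is the minimal spike, with $\omega(z)=(2^{(s)},1)$, Theorem \ref{dlsig} forces $\omega_1(x)=2$ (the kernel hypothesis excludes $\omega_1(x)=4$, which would place $[x]$ in the $\psi$-image); then writing $x=x_ix_jy^2$ and using Theorem \ref{dlcb1} to see that $y$ is admissible, an induction on $s$ together with Proposition \ref{mdcb3} pins down the remaining coordinates. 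For the spanning statement I would then compile, exactly as in Lemmas \ref{4.2} and \ref{b611}, a finite list of strictly inadmissible monomials of the relevant shapes, verify each directly, and invoke Theorem \ref{dlcb1}: any admissible monomial of degree $n$ whose class lies in the kernel and which is not one of the monomials of $E(s)$ must, by an induction on $s$, carry a factor $w^{2^r}$ with $w$ strictly inadmissible, hence be inadmissible. Matching the families $C(s)$, $D(s)$ and the single extra monomial against the forced weight vector $(2^{(s)},1)$ will show that $[E(s)]$ spans the third summand; the cases $s=2$ and $s=3$ require separate lists, since the generic pattern of $D(s)$ degenerates for small $s$.

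The main obstacle is the linear independence of $[E(s)]$. I would order the monomials, suppose $\mathcal S=\sum_i\gamma_id_i\equiv 0$, and apply the homomorphisms $p_{(j;J)}$ for $(j;J)\in\mathcal N_4$, expressing each $p_{(j;J)}(\mathcal S)$ in the admissible basis of $(QP_3)_\bullet$. As in the earlier proofs this kills most of the coefficients and leaves a small system of relations whose surviving coefficients are concentrated on a few explicit combinations $\theta_1,\theta_2,\dots$ that the operators $p_{(j;J)}$ cannot separate. To finish, I would apply the $GL_4$-homomorphisms $\varphi_1,\dots,\varphi_4$ to reduce these $\theta$'s cyclically to one another, and then establish $[\theta]\ne 0$ by the unstable-action argument used in Step 1 of the proof of Proposition \ref{dlc5} and throughout Proposition \ref{mdc42}: letting $(Sq^2)^3$ act, discarding the low operations via $(Sq^2)^3Sq^1=(Sq^2)^3Sq^2=0$, and exhibiting a monomial that survives in $(Sq^2)^3(\theta)$ but cannot occur in $(Sq^2)^3Sq^4(C)$ for any admissible $C$ of the appropriate degree.

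I expect this final non-hit verification, together with the degenerate low cases $s=2$ and $s=3$ (where $|E(s)|$ and the residual combinations $\theta$ genuinely differ from the stable pattern $s>3$), to be the delicate part of the argument; by contrast, the bulk of the computation, namely the evaluation of the $p_{(j;J)}(\mathcal S)$ and the resulting coefficient bookkeeping, is lengthy but entirely mechanical and parallels the displays in Propositions \ref{dlc5} and \ref{mdc42}.
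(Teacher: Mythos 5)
Your proposal is correct and follows essentially the same route as the paper's proof: the Kameko decomposition, a weight-vector lemma forcing $\omega(x)=(2^{(s)},1)$ on the kernel (Lemma \ref{b451}), a list of strictly inadmissible monomials (Lemma \ref{b452}) combined with Theorem \ref{dlcb1} and induction on $s$ for spanning, and the homomorphisms $p_{(j;J)}$ for linear independence. The one place you over-anticipate is the final step: in this degree the relations $p_{(j;J)}(\mathcal S)\equiv 0$ alone already force every coefficient $\gamma_i$ to vanish (for $s=2$ with $|E(2)|=12$, and for $s>2$ with $|E(s)|=26$ using $1\leqslant r<j\leqslant 4$), so no residual combinations $\theta$ survive and neither the $GL_4$-homomorphisms $\varphi_i$ nor the $(Sq^2)^3$ non-hit argument is needed here.
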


\begin{lems}\label{b451} Let $s$ be a positive integer and let $x$ be an admissible monomial of degree $n = 2^{s+ 1}+ 2^{s} - 2$ in $P_4$. If $[x] \in  \text{\rm Ker}\widetilde{Sq}^0_*$, then $\omega(x) =  (2^{(s)},1)$. 
\end{lems}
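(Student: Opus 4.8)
The plan is to induct on $s$, following the pattern of Lemma \ref{4.1}, the only new feature being the trailing weight $\omega_{s+1}=1$ produced by the odd part of the minimal spike. First I would record the spike data. Since $n=(2^{s+1}-1)+(2^{s}-1)$ we have $\mu(n)=2$, and the minimal spike of degree $n$ in $P_4$ is $z=x_1^{2^{s+1}-1}x_2^{2^{s}-1}$; the exponents $2^{s+1}-1$ and $2^{s}-1$ agree in bits $0,\dots,s-1$ and only the first contributes in bit $s$, so $\omega(z)=(2^{(s)},1)$ and $\omega_1(z)=2$. For the base case $s=1$ I would take $n=4$: the three-way dichotomy below forces $\omega_1(x)=2$, and then the residual factor has degree $1$, so $\omega(x)=(2,1)$ at once.

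For the inductive step I would use that $n$ is even, so $\omega_1(x)\in\{0,2,4\}$, and eliminate the extreme values. If $\omega_1(x)=0$ then $\omega(x)<\omega(z)$ in the left lexicographical order, whence Theorem \ref{dlsig} makes $x$ hit, contradicting admissibility. If $\omega_1(x)=4$ then all four variables occur to odd exponent, i.e. $x=X_\emptyset w^2=\psi(w)$ with $w$ of positive degree $(n-4)/2$; the contrapositive of Theorem \ref{dlcb1}(i), applied with prefix $X_\emptyset$ (so $r=1$), shows $w$ is admissible, hence $[w]\ne 0$, so $\widetilde{Sq}^0_*([x])=[w]\ne 0$, contradicting $[x]\in\text{Ker}\,\widetilde{Sq}^0_*$. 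Therefore $\omega_1(x)=2$, and I may write $x=x_ix_jw^2$ with $1\leqslant i<j\leqslant 4$ and $\deg w=(n-2)/2=2^{s}+2^{s-1}-2$, a degree of exactly the same shape with $s$ replaced by $s-1$.

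It remains to feed $w$ into the inductive hypothesis, and this is the one delicate point. Theorem \ref{dlcb1}(i) again gives that $w$ is admissible. To invoke the hypothesis I must check that $[w]\in\text{Ker}\,\widetilde{Sq}^0_*$, that is, $\omega_1(w)\neq 4$; but $\omega_1(w)=\omega_2(x)$, and since $\omega_1(x)=2<4=k$, Proposition \ref{mdcb3}(ii) forces $\omega_i(x)<4$ for every $i>1$, in particular $\omega_2(x)<4$. Hence $\widetilde{Sq}^0_*([w])=0$, the inductive hypothesis applies and yields $\omega(w)=(2^{(s-1)},1)$, and therefore $\omega(x)=(2,2^{(s-1)},1)=(2^{(s)},1)$. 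I expect the main obstacle to be exactly this bookkeeping—verifying that the reduced monomial $w$ stays inside $\text{Ker}\,\widetilde{Sq}^0_*$—which is resolved cleanly by Proposition \ref{mdcb3}(ii); the rest is the same parity-and-Singer dichotomy as in Lemma \ref{4.1} and requires no further computation.
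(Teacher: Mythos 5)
Your proof is correct and follows essentially the same route as the paper's: induction on $s$, the parity trichotomy $\omega_1(x)\in\{0,2,4\}$ with the extremes eliminated (the paper dismisses $\omega_1(x)=0$ by noting $x$ is then a square, hence hit, rather than via Theorem \ref{dlsig}, but this is immaterial), the reduction $x=x_ix_jy^2$ with $y$ admissible by Theorem \ref{dlcb1}, and finally Proposition \ref{mdcb3} plus the inductive hypothesis. Your explicit verification that $[y]\in\text{Ker}\,\widetilde{Sq}^0_*$ via Proposition \ref{mdcb3}(ii) is exactly the bookkeeping the paper compresses into its last sentence.
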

\begin{proof} We prove the lemma by induction on $s$. Since $n = 2^{s+1} + 2^s - 2$ is even, we get  either $\omega_1(x) =0$ or $\omega_1(x) =2$ or $\omega_1(x) =4$. If $\omega_1(x) =0$, then $x = Sq^1(y)$ for some monomial $y$. If $\omega_1(x) =4$, then $x = X_\emptyset y^2$ for some monomial $y$. Since $x$ is admissible, $y$ also is admissible. This implies $\text{\rm Ker}\widetilde{Sq}^0_*([x]) = [y] \ne 0$ and we have a contradiction. 
So, $\omega_1(x) =2$ and $x = x_ix_jy^2$ with $1 \leqslant i < j \leqslant 4$, and $y$ a monomial of degree $2^{s} + 2^{s-1} - 2$ in $P_4$.  Since $x$ is admissible, $y$ is also admissible. 

If $s=1$, then $\deg y =1$. Hence, the lemma holds. Now, the lemma follows from Proposition \ref{mdcb3} and the inductive hypothesis.
\end{proof}

 The following is proved by a direct computation.

\begin{lems}\label{b452} The following monomials are strictly inadmissible:

\rm{i)} $x_i^2x_jx_m,\  x_i^3x_j^4x_m^3,\ x_i^7x_j^7x_m^8,\ 1\leqslant i < j < m \leqslant 4$.

\rm{ii)} $x_1x_2^{7}x_3^{10}x_4^{4}$, $x_1^{7}x_2x_3^{10}x_4^{4}$, $x_1x_2^{6}x_3^{7}x_4^{8}$, $x_1x_2^{7}x_3^{6}x_4^{8}$, $x_1^{7}x_2x_3^{6}x_4^{8}$, $x_1^{3}x_2^{3}x_3^{4}x_4^{12}$, $x_1^{3}x_2^{3}x_3^{12}x_4^{4}$, 
$x_1^{7}x_2^{9}x_3^{2}x_4^{4}$, $x_1^{7}x_2^{8}x_3^{3}x_4^{4}$, $x_1^{3}x_2^{5}x_3^{8}x_4^{6}$. 
\end{lems}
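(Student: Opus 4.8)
The statement asserts that each listed monomial $w$ is strictly inadmissible, which by the definition preceding Theorem~\ref{dlcb1} means that
\[
w \equiv \sum_{j} y_j \pmod{\mathcal A_s^+P_4}, \qquad s=\max\{i:\omega_i(w)>0\},
\]
for some monomials $y_j<w$ in the order of Definition~\ref{defn3}. Thus for each $w$ the plan is to exhibit an explicit element $h=\sum_u Sq^{j_u}(z_u)\in\mathcal A_s^+P_4$ (that is, with $1\le j_u<2^s$) such that, after expansion by the Cartan formula, the monomial $w$ occurs exactly once in $w+h$ and every other monomial occurring there is strictly smaller than $w$.

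The computational engine is the Cartan formula together with $Sq^j(x_\ell^a)=\binom{a}{j}x_\ell^{a+j}$, the binomial coefficients being read modulo $2$ by Lucas' theorem; monomials are compared by first comparing weight vectors and, when these agree, sigma vectors, as prescribed by Definition~\ref{defn3}. For a given $w$ one locates the carries in the binary expansions of its exponents, lowers them to form a base monomial $z$, and applies a single square $Sq^{j}$ to $z$; one Cartan term of $Sq^{j}(z)$ is then $w$ itself. Among the remaining Cartan terms, those whose weight vector drops below $\omega(w)$, and those whose exponent at the smallest-index variable present is smaller than in $w$, are already $<w$ and are absorbed into the sum $\sum_j y_j$. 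For instance $Sq^1(x_ix_jx_m)=x_i^2x_jx_m+x_ix_j^2x_m+x_ix_jx_m^2$ settles $x_i^2x_jx_m$ at once.

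The main obstacle is that a single square invariably also produces cross terms larger than $w$ — most often the term that pushes the extra square onto the smallest-index variable, raising its exponent and hence its sigma vector. Such a monomial $w'$ must not survive, and I would cancel it by adjoining a further term $Sq^{j'}(z')$ whose own leading Cartan term equals $w'$ while all its other Cartan terms are $<w$. This is feasible because $\mathcal A_s^+$ supplies every $Sq^1,\dots,Sq^{2^s-1}$; for the high-degree members of part (ii), and for $x_i^7x_j^7x_m^8$ where $s=4$, the full range $Sq^1,\dots,Sq^{15}$ is on hand. A typical two-step instance is $x_i^3x_j^4x_m^3=Sq^1(x_i^3x_j^3x_m^3)+Sq^2(x_i^2x_j^3x_m^3)+\big(x_i^3x_j^3x_m^4+x_i^2x_j^5x_m^3+x_i^2x_j^3x_m^5+x_i^2x_j^4x_m^4\big)$, where the $Sq^2$ term is chosen precisely to kill the oversized $x_i^4x_j^3x_m^3$ produced by $Sq^1$, and the four remaining monomials are all $<w$. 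Iterating this cancellation strictly lowers the largest monomial still present, so the procedure terminates with an expression of the required shape; pinning down the mod-$2$ binomials so that the dangerous terms cancel at each stage is where the bulk of the advertised ``direct computation'' resides.

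Organizationally, the three families in part (i) are treated uniformly with generic indices $i<j<m$, since all the auxiliary terms produced keep the variable set and are compared at the smallest present index; the finitely many explicit monomials of part (ii) are handled one at a time. Several entries of part (ii) differ only by a permutation of the variables and admit entirely parallel reductions; when a term $y_j$ beats $w$ by a strict drop in weight the comparison survives any permutation, weight vectors being permutation-invariant, so those reductions transfer verbatim, and only the sigma-level comparisons need to be rechecked for the permuted cases.
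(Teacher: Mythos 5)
You have the right framework, and it is the same one the paper's ``direct computation'' refers to: for each listed monomial $w$, with $s=\max\{i:\omega_i(w)>0\}$, one exhibits $h\in\mathcal A_s^+P_4$ such that every monomial of $w+h$ is smaller than $w$ in the order of Definition~\ref{defn3}. Your two worked cases are correct: $Sq^1(x_ix_jx_m)$ disposes of $x_i^2x_jx_m$, and your identity
$x_i^3x_j^4x_m^3=Sq^1(x_i^3x_j^3x_m^3)+Sq^2(x_i^2x_j^3x_m^3)+x_i^3x_j^3x_m^4+x_i^2x_j^5x_m^3+x_i^2x_j^3x_m^5+x_i^2x_j^4x_m^4$
checks out, with $1,2<2^3$ and all four leftover monomials strictly smaller than $x_i^3x_j^4x_m^3$.

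The gap lies in everything else, namely the family $x_i^7x_j^7x_m^8$ (where $s=4$) and all ten monomials of part (ii), which you cover only by the claim that any oversized Cartan term $w'>w$ can be cancelled by a further square whose top term is $w'$ and whose remaining terms are $<w$, and that iterating this ``strictly lowers the largest monomial still present, so the procedure terminates with an expression of the required shape.'' That principle is false: each cancellation step requires $w'$ itself to admit exactly the kind of strict reduction you are trying to build, and this can be impossible. Concretely, for $w=x_1x_2^2$ (here $\omega(w)=(1,1)$, so $s=2$), the only positive square whose expansion contains the oversized term $x_1^2x_2$ is $Sq^1(x_1x_2)$, and its other term is $w$ itself; the process cycles rather than terminates, consistent with the fact that $x_1x_2^2$ is admissible. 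If your termination argument were valid as stated, it would prove every non-spike monomial strictly inadmissible, which is absurd. So for the monomials carrying the real content of the lemma, the success of the cancellation scheme is precisely what must be demonstrated, and that can only be done by performing the finitely many explicit Cartan-formula computations (which is what the paper's unprinted verification consists of). As written, your proposal proves two families of part (i) and describes a search strategy for the rest, but it does not establish the lemma where the work actually lies.
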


\begin{proof}[Proof of Proposition \ref{mdc52}] Let $x$ be an admissible monomial  of degree $n=2^{s+1} + 2^s - 2$ in  $P_4$ and $[x] \in \text{\rm Ker}\widetilde{Sq}^0_*$. By Lemma \ref{b451}, $\omega_i(x) = 2$, for $1 \leqslant i \leqslant s$, $\omega_{s+1}(x) = 1$ and $\omega_i(x) = 0$ for $i > s+1$. By induction on $s$, we see that if $x \notin E(s)\cup\Phi^0(B_3(n))$ then there is a monomial $w$ which is given in one of Lemmas \ref{4.2}, \ref{b452} such that $x = wy^{2^u}$ for some monomial $y$ and positive integer $u$. By Theorem \ref{dlcb1}, $x$ is inadmissible. Hence,  $\text{\rm Ker}\widetilde{Sq}^0_*$ is spanned by the set $[E(s)\cup\Phi^0(B_3(n))]$. Now, we prove that set $[E(s)\cup\Phi^0(B_3(n))]$ is linearly independent. 

It suffices to prove that the set $[E(s)]$ is linearly independent. For $s = 2, |E(2)| = 12$. Suppose there is a linear relation
\begin{equation}
\mathcal S = \sum_{i=1}^{12}\gamma_id_i \equiv 0,\label{ct451}
\end{equation}
with $\gamma_i \in \mathbb F_2$ and $d_i = d_{10,i}$. By a direct computation from the relations $p_{(1;j)}(\mathcal S) \equiv 0,$  for $j = 1, 2, 3$, we obtain $\gamma_i = 0$ for all $i$.

For $s > 2, |E(s)| = 26$. Suppose there is a linear relation
\begin{equation}
\mathcal S = \sum_{i=1}^{26}\gamma_id_i \equiv 0,\label{ct452}
\end{equation}
with $\gamma_i \in \mathbb F_2$ and $d_i = d_{n,i}$. By a direct computation from the relations $p_{(r;j)}(\mathcal S) \equiv 0,$  for $1 \leqslant r < j \leqslant 4$, we obtain $\gamma_i = 0$ for all $i$. The proposition is proved.
\end{proof}

\subsection{The case of degree $ 2^{s + t + u} + 2^{s + t} + 2^s - 3$} \label{sub9} \
\setcounter{equation}{0}

\medskip
First, we determine the $\omega$-vector of an admissible monomial of degree $n =  2^{s+t + u} + 2^{s+t} + 2^s - 3$.

\begin{lems}\label{b81}  If $x$ is an admissible monomial of degree $2^{s+t+u}+ 2^{s+t} + 2^s - 3$ in $P_4$ then
$\omega(x) = (3^{(s)},2^{(t)},1^{(u)})$.
\end{lems}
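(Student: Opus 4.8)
The plan is to follow closely the proof strategy already established for the analogous Lemmas~\ref{3.1}, \ref{4.1}, \ref{b51}, \ref{6.1.1} and \ref{b451}, namely to pin down the weight vector by combining Singer's hit criterion (Theorem~\ref{dlsig}) with the monotonicity statement of Proposition~\ref{mdcb3} and the reduction afforded by Theorem~\ref{dlcb1}. First I would identify the minimal spike of degree $n = 2^{s+t+u}+2^{s+t}+2^s-3$. Writing $n$ in the form~(\ref{ct1.1}), one checks that $z = x_1^{2^{s+t+u}-1}x_2^{2^{s+t}-1}x_3^{2^s-1}$ is a spike of degree $n$ with $s_1 = s+t+u > s_2 = s+t > s_3 = s$, so it is the \emph{minimal} spike and a direct computation of its weight gives $\omega(z) = (3^{(s)},2^{(t)},1^{(u)})$. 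Since $n$ is odd, every monomial $x$ has $\omega_1(x)$ odd, so $\omega_1(x)\in\{1,3\}$; if $\omega_1(x)=1$ then $\omega(x)<\omega(z)$ and Theorem~\ref{dlsig} forces $x$ to be hit, contradicting admissibility. Hence $\omega_1(x)=3$, and Proposition~\ref{mdcb3} together with Theorem~\ref{dlsig} propagates this to $\omega_i(x)=3$ for $1\leqslant i\leqslant s$.

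The core of the argument is then an induction that strips off the leading block of $3$'s and reduces to a lower-degree statement already handled in the excerpt. Concretely, after establishing $\omega_i(x)=3$ for $i\leqslant s$, I would write $x = X_\emptyset^{\,2^s-1}\bar x^{\,2^s}$ up to the equivalence $\equiv$ (using that the first $s$ weight-coordinates are maximal, so $x$ is divisible in the appropriate sense by the spike part), and consider the residual monomial $x' = \prod_{i\geqslant 1}X_{\mathbb J_{i+s-1}(x)}^{2^{i-1}}$ of degree $2^{t+u+1}+2^{t+1}-3$, exactly as in the proof of Lemma~\ref{6.1.1}. By Theorem~\ref{dlcb1}, admissibility of $x$ forces admissibility of $x'$, and $x'$ falls under the already-proved Lemma~\ref{6.1.1} (with the parameters $s,t$ there replaced by $t,u$): its weight vector must be either $(3^{(t)},1^{(u+1)})$ or $(3^{(t+1)},2^{(u-1)})$. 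The target weight $(3^{(s)},2^{(t)},1^{(u)})$ corresponds to the second of these options for $x'$, namely $\omega(x')=(2^{(t)},1^{(u)})$ shifted—so I must rule out the spurious possibility and confirm the surviving one yields precisely $(3^{(s)},2^{(t)},1^{(u)})$.

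Here the bookkeeping requires care: the reduction to $x'$ in Lemma~\ref{6.1.1} peels off one block of $3$'s, whereas I have peeled off $s$ of them, so I would either iterate the peeling $s$ times or invoke Lemma~\ref{6.1.1} once and then apply the $(t,u)$-analogue of the \emph{present} lemma's lower case. The cleanest route is a direct induction on $s$: the base case $s=0$ (or $s=1$) is exactly Lemma~\ref{6.1.1}, and the inductive step writes $x = X_i y^2$ with $i$ chosen so that $\omega_1(x)=3$, observes that $y$ is admissible of degree $2^{(s-1)+t+u}+2^{(s-1)+t}+2^{s-1}-3$, and applies the induction hypothesis to conclude $\omega(y)=(3^{(s-1)},2^{(t)},1^{(u)})$, whence $\omega(x)=(3,3^{(s-1)},2^{(t)},1^{(u)})=(3^{(s)},2^{(t)},1^{(u)})$. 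As in Lemma~\ref{6.1.1}, I expect a small number of low-weight configurations (analogous to the $\omega(x')=(1,3)$ case there, and to the monomials flagged in Lemmas like \ref{b452}) to appear as apparent alternatives; these must be eliminated by exhibiting an explicit strictly-inadmissible sub-monomial and invoking Theorem~\ref{dlcb1}.

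The main obstacle, as in the earlier weight-vector lemmas, will be this last step of excluding the exceptional weight vectors that are numerically compatible with degree $n$ and with the constraints $\omega_1=3$, $\omega_i\in\{1,2,3\}$ decreasing after Proposition~\ref{mdcb3}, yet do not equal $(3^{(s)},2^{(t)},1^{(u)})$. The candidate intruders arise precisely where Lemma~\ref{6.1.1} offered the branch $(3^{(t+1)},2^{(u-1)})$ for the residual monomial, which would translate into a weight vector with a block of $2$'s of the wrong length or a $3$ appearing too late; ruling these out needs the strict-inadmissibility computations of the type collected in Lemmas~\ref{3.2}, \ref{4.2} and \ref{b452}. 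I would therefore prepare, by direct computation, a short list of strictly inadmissible monomials whose weight patterns are exactly the forbidden ones, and then argue that any admissible $x$ with a non-target weight must contain one such monomial as a factor of the form $w z^{2^u}$, contradicting admissibility via Theorem~\ref{dlcb1}. This is routine but computation-heavy, and is the only genuinely laborious part of the proof.
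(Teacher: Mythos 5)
Your opening moves are sound and agree with the paper: the minimal spike is $z = x_1^{2^{s+t+u}-1}x_2^{2^{s+t}-1}x_3^{2^s-1}$ with $\omega(z)=(3^{(s)},2^{(t)},1^{(u)})$, and $\omega_i(x)=3$ for $1\leqslant i\leqslant s$ follows from Theorem \ref{dlsig} plus Proposition \ref{mdcb3} (the paper gets this block in one stroke from Lemma \ref{bdbs1}, but the two routes are equivalent here). The gap is in the reduction that follows. After stripping the $s$ leading blocks of $3$'s, the residual monomial $y$ with $\omega_j(y)=\omega_{j+s}(x)$ has degree $\bigl(n-3(2^s-1)\bigr)/2^s = 2^{t+u}+2^t-2$, which is \emph{even}; it is not $2^{t+u+1}+2^{t+1}-3$, and it is not of the odd form $2^{a+b+1}+2^{a+1}-3$ covered by Lemma \ref{6.1.1}. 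So the dichotomy you import from that lemma (``either $(3^{(t)},1^{(u+1)})$ or $(3^{(t+1)},2^{(u-1)})$'') has no basis. Neither of your fallback routes repairs this: in the induction on $s$ the inductive step is fine for $s\geqslant 2$, but the base case $s=1$ contains the entire difficulty --- there $x=X_iy^2$ with $y$ admissible of the same even degree $2^{t+u}+2^t-2$, again outside the scope of Lemma \ref{6.1.1}; and the ``$s=0$'' version of the present lemma is simply false: for $t=u=1$ the monomial $x_1x_2x_3x_4$ is admissible of degree $4=2^{t+u}+2^t-2$ with weight $(4)$, not $(2,1)$, because in even degrees classes with $\omega_1=4$ survive Kameko's homomorphism and cannot be excluded by spike comparison.

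What is actually needed, and what the paper supplies at exactly this point, is an argument for the even-degree residual: one shows $\omega(y)=(2^{(t)},1^{(u)})$ by combining (a) parity together with Proposition \ref{mdcb3} --- since $\omega_1(x)=3<4$, every later entry of $\omega(x)$ is at most $3$, so $\omega_1(y)\in\{0,2\}$, the value $0$ is impossible, and this propagates through the $t$ blocks of $2$'s; (b) Lemma \ref{b51}, applied to the odd-degree core of degree $2^u-1$ left after the $2$-blocks are removed; and (c) a direct computation that any monomial of weight $(3,2,3)$ is inadmissible, fed into Theorem \ref{dlcb1}, to kill the branches of Lemma \ref{b51} that would reinsert a $3$ after a $2$. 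Your closing paragraph correctly anticipates that strictly inadmissible configurations plus Theorem \ref{dlcb1} must eliminate exceptional weight vectors, so the right instinct is present; but as written it is aimed at excluding branches of a lemma that does not apply to your residual, so the proof does not close. (A further slip worth fixing: the factorization $x=X_\emptyset^{2^s-1}\bar x^{2^s}$ is inconsistent with $\omega_i(x)=3$, since $X_\emptyset^{2^s-1}$ has weight $4$ in each of the first $s$ slots; the correct shape is $x=\bigl(\prod_{0\leqslant i<s}X_{j_i}^{2^i}\bigr)y^{2^s}$ with each factor $X_{j_i}$ of degree $3$.)
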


\begin{proof} Observe that $z = x_1^{2^{s+t+u}-1}x_2^{2^{s+t}-1}x_3^{2^s-1}$ is the minimal spike of degree $2^{s+t+u} + 2^{s+t} + 2^s-3$ and 
$\omega(z) = (3^{(s)},2^{(t)},1^{(u)})$.
Since $2^{s+t+u} + 2^{s+t} + 2^s - 3$ is odd and $x$ is admissible, using Lemma \ref{bdbs1}, we get  $\omega_i(x) =3$ for $1 \leqslant i \leqslant s$. Set $x' = \prod_{1\leqslant i \leqslant s}X_{I_{i-1}(x)}^{2^{i-1}}$.  Then, $x = x'y^{2^s}$ for some monomial $y$. We have $\omega_j(y) = \omega_{j+s}(x)$ for all $j \geqslant 1$ and
$\deg y = 2^{t+u} + 2^u -2$. 

Since $x$ is admissible, using  Theorem \ref{dlcb1}, we see that $y$ is also admissible. By a direct computation we see that if $w$ is a monomial such that $\omega(w) = (3,2,3)$, then $w$ is inadmissible. Combining this fact, Lemma \ref{b51}, Proposition \ref{mdcb3} and Theorem \ref{dlcb1}, we obtain  $\omega(y) = (2^{(t)},1^{(u)})$.
The lemma is proved.
\end{proof}

Applying Theorem \ref{dl1}, we obtain the following.
\begin{props}\label{dl810} Let $s,t,u$ be positive integers. If $s \geqslant 3$, then $\Phi(B_3(n))$ is a minimal set of generators for $\mathcal A$-module $P_4$ in degree $n = 2^{s+t+u}+ 2^{s+t} + 2^s-3$.
\end{props}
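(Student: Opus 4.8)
The plan is to obtain this statement as an immediate specialization of Proposition~\ref{mdc1} for $k=4$; no new argument beyond a verification of hypotheses should be needed. First I would put the degree into the normal form demanded by that proposition. Since $t$ and $u$ are positive integers,
\[
n = 2^{s+t+u}+2^{s+t}+2^s-3 = (2^{d_1}-1)+(2^{d_2}-1)+(2^{d_3}-1),\qquad d_1=s+t+u,\ d_2=s+t,\ d_3=s,
\]
so that $n=\sum_{1\leqslant i\leqslant 3}(2^{d_i}-1)$ has exactly the shape $\sum_{1\leqslant i\leqslant k-1}(2^{d_i}-1)$ with $k=4$. In particular this is the minimal spike representation, so $\mu(n)=3$, consistent with Lemma~\ref{b81}.

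Next I would check the inequalities required in Proposition~\ref{mdc1}, namely $d_1>d_2>\cdots>d_{k-2}\geqslant d_{k-1}\geqslant k-1\geqslant 3$, which for $k=4$ reads $d_1>d_2>d_3\geqslant 3$. Because $u\geqslant 1$ we get $d_1=s+t+u>s+t=d_2$; because $t\geqslant 1$ we get $d_2=s+t>s=d_3$; and the hypothesis $s\geqslant 3$ gives $d_3=s\geqslant 3$. Thus the full chain of inequalities holds, and it is precisely here that the assumption $s\geqslant 3$ enters, exactly as in the analogous Propositions~\ref{dlc6} and \ref{mdc42}.

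Finally I would invoke the definition of $B_3(n)$ as the set of all admissible monomials of degree $n$ in $P_3$, which by the remark following the definition of admissibility is a minimal set of $\mathcal A$-generators for $P_3$ in degree $n$. Feeding $B_{k-1}(n)=B_3(n)$ into Proposition~\ref{mdc1} with $k=4$ then yields that $B_4(n)=\Phi(B_3(n))$ is a minimal set of generators for the $\mathcal A$-module $P_4$ in degree $n$, which is the assertion.

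Since the entire combinatorial and linear-algebraic load — the spanning argument of Cases~\ref{th01}--\ref{th12} together with the independence of $[\Phi(B_3(n))]$ — is already carried by Proposition~\ref{mdc1}, there is no genuine obstacle left in the present statement. The only point to watch is that the three exponents form a strictly decreasing chain bounded below by $3$; once that is confirmed the proposition is immediate, and the dimension count $\dim(QP_4)_n=(2^4-1)\dim(QP_3)_m$ of Theorem~\ref{dl1}, with $m=2^{t+u}+2^t-2$, follows in tandem.
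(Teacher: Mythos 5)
Your proposal is correct and follows essentially the same route as the paper, which obtains the proposition by "applying Theorem \ref{dl1}" --- i.e.\ the Section \ref{s3} machinery whose substance is exactly Proposition \ref{mdc1} with $k=4$, $d_1=s+t+u$, $d_2=s+t$, $d_3=s$. If anything, your citation of Proposition \ref{mdc1} is the more apt one (the statement asserts a minimal generating set, not merely a dimension), and your only slip is cosmetic: for $k=4$ the hypothesis reads $d_1>d_2\geqslant d_3\geqslant 3$ rather than $d_1>d_2>d_3\geqslant 3$, a weaker condition than the one you verify, so nothing is lost.
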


So, we need only to consider the cases $s=1$ and $s=2$.

\medskip\noindent
{5.6.1. \bf The subcase $s = t = 1$.}\label{s91}\ 
\setcounter{equation}{0} 

\medskip
For  $s=1, t = 1,$ we have $n = 2^{u+2}+ 3$.  
According to Theorem \ref{mdkmk}, we have
$$B_3(n) = \begin{cases}\psi(\Phi(B_2(2^{u+1}))), & \text{ if } \ u \ne 2,\\
\psi(\Phi(B_2(8))\cup \{x_1^7x_2^9x_3^3\}, & \text{ if } \ u = 2.
\end{cases}$$
\begin{props}\label{dl811} \

{\rm i)} $C_4(11)=\Phi(B_3(11))\cup \{x_1^{3}x_2^{4}x_3x_4^{3}, x_1^{3}x_2^{4}x_3^{3}x_4\}$ is the set of all the admissible monomials for $\mathcal A$-module $P_4$ in degree $11$.

{\rm ii)} $C_4(19) = \Phi(B_3(19))\cup \{x_1^{7}x_2^{9}x_3^{2}x_4,  x_1^{3}x_2^{12}x_3x_4^{3},  x_1^{3}x_2^{12}x_3^{3}x_4,  x_1^{3}x_2^{4}x_3x_4^{11},  x_1^{3}x_2^{4}x_3^{11}x_4,$ $ x_1^{3}x_2^{7}x_3^{8}x_4, x_1^{7}x_2^{3}x_3^{8}x_4, x_1^{7}x_2^{8}x_3x_4^{3}, x_1^{7}x_2^{8}x_3^{3}x_4, x_1^{3}x_2^{4}x_3^{3}x_4^{9}, x_1^{3}x_2^{4}x_3^{9}x_4^{3}\}$ is the set of all the admissible monomials for $\mathcal A$-module $P_4$ in degree $19$.

{\rm iii)}  $C_4(n)= \Phi(B_3(n))\cup \{x_1^{3}x_2^{4}x_3x_4^{2^{u+2}-5}, x_1^{3}x_2^{4}x_3^{2^{u+2}-5}x_4, x_1^{3}x_2^{4}x_3^{3}x_4^{2^{u+2}-7}\}$ 
is the set of all the admissible monomials for $\mathcal A$-module $P_4$ in degree $n = 2^{u+2}+ 3$, with any positive integer $u \geqslant 3$.
\end{props}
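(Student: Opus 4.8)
The plan is to follow the two-part scheme used throughout Section~\ref{s4}: first prove that the listed set $C_4(n)$ spans $(QP_4)_n$ by showing every admissible monomial of degree $n=2^{u+2}+3$ lies in it, and then prove that $[C_4(n)]$ is linearly independent in $QP_4$. Since $s=t=1$, Lemma~\ref{b81} forces $\omega(x)=(3,2,1^{(u)})$ for every admissible $x$ of this degree; in particular $\omega_1(x)=3$, so one may write $x=X_iy^2$ with $i$ the unique index carrying an even exponent and $y$ an admissible monomial of degree $2^{u+1}$ with $\omega(y)=(2,1^{(u)})$. This peeling-off of $X_i$ is what drives an induction on $u$, exactly as in the proofs of Propositions~\ref{dlc6} and \ref{mdc42}.

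For the spanning direction I would first record an auxiliary lemma listing the strictly inadmissible monomials of degrees $11$ and $19$ and of the generic shape occurring for $u\geqslant 3$ (in the spirit of Lemmas~\ref{3.2}, \ref{b53}, \ref{b610}, \ref{b611}, \ref{b452}), each checked by a direct expansion and application of Theorem~\ref{dlsig}. Then, by induction on $u$, I would argue that if $x\notin C_4(n)$ is admissible, locating inside $x=X_iy^2$ a factor $w$ from this strictly-inadmissible list gives $x=wz_0^{2^r}$ for some monomial $z_0$ and $r>0$; Theorem~\ref{dlcb1} then makes $x$ inadmissible, a contradiction. The admissibility of the monomials in $\Phi(B_3(n))$ follows from Theorem~\ref{dl1} together with the description of $B_3(n)$ in Theorem~\ref{mdkmk}, and the finitely many extra monomials in $P_4^+$ listed in each part are precisely the remaining survivors; so $C_4(n)$ exhausts the admissible monomials.

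For linear independence I would split into the small cases $u=1$ (degree $11$) and $u=2$ (degree $19$), and the generic case $u\geqslant 3$. In each case I posit a relation $\mathcal S=\sum_i\gamma_id_i\equiv 0$ and push it down to $QP_3$ under the $\mathcal A$-homomorphisms $p_{(i;I)}$ with $(i;I)\in\mathcal N_4$, expressing each $p_{(i;I)}(\mathcal S)$ in the explicit admissible basis of $B_3(n)$ recalled in Section~\ref{s3a}. For $u\geqslant 3$ I expect these projections alone to force every $\gamma_i=0$. For $u=1,2$ they will instead leave a small residual relation of the form $\sum\gamma\,\theta\equiv 0$ among a few symmetrized classes $\theta$; to eliminate it I would apply the generators $\varphi_i$ of $GL_4$ (as in Steps~1--4 of the proof of Proposition~\ref{dlc5}) to isolate the individual $\theta$'s one at a time, and then show each $[\theta]\neq 0$ by the unstable-action argument: assuming $\theta$ hit, write $\theta=Sq^1(A)+Sq^2(B)+Sq^4(C)+\cdots$, apply $(Sq^2)^3$, which annihilates the $Sq^1$- and $Sq^2$-images, and exhibit an explicit monomial surviving on the left that cannot appear on the right.

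The main obstacle will be this last step for $u=1$ and $u=2$: the exceptional $P_4^+$-monomials (such as $x_1^3x_2^4x_3x_4^3$ in degree $11$, or the eleven extra monomials in degree $19$) make several of the $p_{(i;I)}$-projections degenerate, producing genuine residual candidate relations that the projections cannot resolve. Breaking these requires the delicate $(Sq^2)^3$ computation, carried out for the correct symmetrized combination $\theta$ selected by each $\varphi_i$, and careful bookkeeping of which monomials can occur as terms of $(Sq^2)^3Sq^4(C)$ for $C$ of the appropriate degree. By contrast, the spanning direction and the independence for $u\geqslant 3$, though computationally lengthy, are essentially mechanical once the strictly-inadmissible seed list and the $B_3(n)$-projections are in place.
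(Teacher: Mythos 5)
Your proposal follows essentially the same route as the paper: spanning via Lemma~\ref{b81}, the factorization $x=X_iy^2$, a strictly-inadmissible list (the paper's Lemma~\ref{b852}) and Theorem~\ref{dlcb1}; then independence via the projections $p_{(j;J)}$, with residual relations among symmetrized classes $[\theta_i]$ broken by the $GL_4$-generators $\varphi_i$ and the $(Sq^2)^3$ hit-detection argument. The only discrepancy is where the delicate step is needed: the paper finds that the projections alone already force all $\gamma_i=0$ whenever $u\neq 2$ (so in particular for $u=1$, degree $11$), and the residual six-term relation requiring the four-step $\varphi_i$/$(Sq^2)^3$ analysis occurs only for $u=2$ (degree $19$) — so your contingency plan for $u=1$ turns out to be unnecessary, though harmless.
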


By a direct computation, we can easy obtain the following lemma.  

\begin{lems}\label{b852} The following monomials are strictly inadmissible:

\rm{i)} $x_1^3x_2^4x_3^4x_4^4x_ix_j^3, \ i, j >1, \ i \ne j,\ x_1^7x_2^3x_3^4x_4^4x_j,\  x_1^3x_2^5x_3^5x_4^5x_j,\ j = 3, 4$.

\rm{ii)} $X_2x_1^2x_j^2x_2^{28},\ X_jx_1^2x_4^2x_2^4x_3^{24},\ X_2x_1^2x_j^2x_2^4x_3^{8}x_4^{16},\ X_jx_1^2x_2^4x_3^{8}x_4^{18},\ X_jx_1^2x_2^4x_3^{10}x_4^{16}$, $X_jx_1^2x_2^2x_i^4x_3^{8}x_4^{16},\  X_3x_1^2x_2^2x_i^4x_3^{24},\ X_2x_1^2x_4^2x_2^4x_3^{24},\ i = 1, 2,\ j = 3, 4.$
\end{lems}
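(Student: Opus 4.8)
The plan is to verify strict inadmissibility directly from the definition: for each monomial $x$ in the list I must exhibit monomials $y_1,\dots,y_t$ with $y_j<x$ in the order of Definition \ref{defn3} and $x-\sum_j y_j\in\mathcal A^+_sP_4$, where $s=\max\{i:\omega_i(x)>0\}$. First I would record the weight vector and this $s$ for every entry. A short computation shows that all the monomials in part~(i) have $\omega(x)=(3,2,3)$, hence $s=3$, while all those in part~(ii) have $\omega(x)=(3,2,1,1,1)$, hence $s=5$; thus each family is homogeneous in its $\omega$-vector. Comparing with the minimal spike of the relevant degree (degree $19$, whose spike has weight $(3,2,1,1)$, and degree $35$, whose spike has weight $(3,2,1,1,1)$) shows that in neither family is $\omega(x)$ strictly below the spike weight (it exceeds it in part~(i) and equals it in part~(ii)). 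Consequently Theorem \ref{dlsig} yields no reduction, and every reduction must be a genuine straightening carried out modulo $\mathcal A^+_sP_4$ rather than modulo all of $\mathcal A^+P_4$.

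The engine of the computation is the Cartan formula together with Proposition \ref{mdcb1}: for a judiciously chosen monomial $z$ of degree $\deg x-u$ and an exponent $u$ with $1\le u<2^s$, the expansion of $Sq^u(z)$ contains $x$ as one of its terms, so that (working over $\mathbb F_2$) one has $x=Sq^u(z)+\sum(\text{remaining terms})$, and hence $x\equiv\sum(\text{remaining terms})$ modulo $\mathcal A^+_sP_4$; I then check that each remaining monomial is $<x$. For the higher-degree entries a single square will not suffice, and I would use a short combination $\sum_u Sq^u(z_u)$ with all $u<2^s$. To keep each verification finite and local, I would peel off the common $2^r$-th power factors that every monomial in a family carries (for example the tails $x_2^{28}$, $x_3^{24}$ in part~(ii), or the factor $X_1^4$ in part~(i)) using Lemma \ref{bdad} and Proposition \ref{mdcb4}: a relation $x_0\simeq_{s_0}f$ may be multiplied by a fixed $y^{2^{s_0}}$ via Proposition \ref{mdcb4}(i), and the product still lies in $\mathcal A^+_sP_4$ because $\mathcal A^+_{s_0}\subseteq\mathcal A^+_s$ whenever $s_0\le s$ and because multiplication by $y^{2^{s_0}}$ preserves the order. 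This localizes the essential work to bounded-degree straightening identities, which I would verify by hand.

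Finally I would organize the cases by family, treating one representative per family and obtaining the listed variants (the choices $i,j\in\{2,3,4\}$ in part~(i) and $j\in\{3,4\}$, $i\in\{1,2\}$ in part~(ii)) either by the permutation symmetry in the variables, valid precisely when the required $\sigma$-inequalities are themselves permutation-stable, or else by repeating the short computation. The main obstacle is not conceptual but combinatorial: since the $\omega$-vectors equal or exceed the spike weight, Singer's criterion is useless, and each error term produced by a Cartan expansion must be shown to drop strictly in the two-stage order of Definition \ref{defn3}, first in $\omega$ and, when $\omega$ is unchanged, in $\sigma$; simultaneously one must keep every Steenrod degree below $2^s$ so that the relation remains in $\mathcal A^+_sP_4$. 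This last point is exactly the distinction between strict inadmissibility, which is what is needed here so that these monomials can be fed into Theorem \ref{dlcb1} in the later applications, and mere inadmissibility. The difficulty is therefore the sheer bookkeeping of the $\sigma$-comparisons across all entries, each individual check being elementary.
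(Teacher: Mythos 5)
Your proposal is correct and is essentially the paper's own argument: the paper disposes of this lemma with the single line ``by a direct computation,'' and the computation it intends is exactly the one you describe --- verify strict inadmissibility from the definition by Cartan-formula straightening using only $Sq^u$ with $u<2^s$, checking that every correction term drops in the two-stage order of Definition \ref{defn3}, with Proposition \ref{mdcb4} and Lemma \ref{bdad} used to peel off the repeated-square factors and localize the work. Your preliminary facts all check out: the part (i) monomials have $\omega=(3,2,3)$ (degree $19$, $s=3$), the part (ii) monomials have $\omega=(3,2,1,1,1)$ (degree $35$, $s=5$), and these respectively exceed and equal the minimal-spike weights $(3,2,1,1)$ and $(3,2,1,1,1)$, so Singer's criterion indeed gives nothing and the term-by-term bookkeeping you outline (including your correct caveat that variable permutations need not preserve the $\sigma$-comparisons) is unavoidable.
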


\begin{proof}[Proof of Theorem \ref{dl811}] Let $x$ be an admissible monomial of degree $n=2^{u+2} + 3$ in  $P_4$.  By Lemma \ref{b81}, $\omega_1(x) = 3$. So, $x = X_iy^2$ with $y$ a monomial of degree $2^{u+1}$. Since $x$ is admissible, by Theorem \ref{dlcb1}, $y \in B_4(2^{u+1})$. By a direct computation, we see that if $x = X_iy^2$ with $y \in B_4(2^{u+1})$ and $x$ does not belong to the set $C_4(n)$ as given in the proposition, then there is a monomial $w$ which is given in one of Lemmas \ref{b53}, \ref{b852} such that $x = wy^{2^r}$ for some monomial $y$ and integer $r > 1$. By Theorem \ref{dlcb1}, $x$ is inadmissible. Hence,  $(QP_4)_{n}$ is spanned by the set $[C_4(n)]$.

Now, we prove that set $[E(s)\cup\Phi^0(B_3(n))]$ is linearly independent in $QP_4$.  

Set $|C_4(2^{u+2}+3)\cap P_4^+| = m(u)$, where $m(1) = 32,\ m(2) = 80,\ m(u) = 64$ for all $u > 2$.  Suppose that there is a linear relation
\begin{equation}\mathcal S =\sum_{i=1}^{m(u)}\gamma_id_i = 0, \label{ct911}
\end{equation}
with $\gamma_i \in \mathbb F_2$ and $d_i = d_{n,i}$. By a direct computation from the relations $p_{(j;J)}(\mathcal S) \equiv 0$ with $(j;J) \in \mathcal N_4,$ we obtain $\gamma_i = 0$ for all $i$ if $u \ne 2$. 

For $u = 2$, $\gamma_j =0$ for $j = $1, 3, 4, 6, 7, 8, 9, 10, 11, 12, 14, 16, 17, 18, 19, 21, 23, 26, 27, 28, 29, 30, 31, 32, 35, 36, 38, 40, 43, 45, 51, 54, 55, 60, 61, 62, 68, 71, 79, 80, and 
$\gamma_2 = \gamma_i, i = 5, 24, 25, 41, 42, 52, 53$, 
$\gamma_{13 }= \gamma_i, i = 13, 33, 20, 56, 48, 58$, 
$\gamma_{15} = \gamma_i, i = 22, 34, 49, 57, 59$,  
$\gamma_{37} = \gamma_i,  i = 67, 70, 75$, 
$\gamma_{46} = \gamma_i, i = 69, 72, 76$,
$\gamma_{65} = \gamma_i, i =  66, 73, 74, 77, 78,$
$\gamma_{46} = \gamma_{39} + \gamma_{2}$, $\gamma_{44} = \gamma_{37} + \gamma_{2}$, $\gamma_{65} = \gamma_{47} + \gamma_{13}$, 
$\gamma_{65} = \gamma_{50} + \gamma_{22}$, $\gamma_{63} = \gamma_{37} + \gamma_{13}$, $\gamma_{64} = \gamma_{46} + \gamma_{22}$.

Substituting the above equalities into the relation (\ref{ct911}), we have
\begin{equation} 
  \gamma_{37}[\theta_1] + \gamma_{46}[\theta_2]  +  \gamma_{13}[\theta_3] + \gamma_{22}[\theta_4] + \gamma_{65}[\theta_5] + \gamma_{2}[\theta_6] = 0,\label{ct912}
\end{equation}
where 
\begin{align*}
\theta_1 &= d_{37} + d_{44} + d_{63} + d_{67} + d_{70} + d_{75},\\
\theta_2 &=  d_{39} + d_{46} + d_{64} + d_{69} + d_{72} + d_{76},\\
\theta_3 &=  d_{13} + d_{20} + d_{33} + d_{47} + d_{48} + d_{56} + d_{58} + d_{63},\\
\theta_4 &=  d_{15} + d_{22} + d_{34} + d_{49} + d_{50} + d_{57} + d_{59} + d_{64},\\
\theta_5 &= d_{47} + d_{50} + d_{65} + d_{66} + d_{73} + d_{74} + d_{77} + d_{78},\\
\theta_6 &=   d_{2} + d_{5} + d_{24} + d_{25} + d_{39} + d_{41} + d_{42} + d_{44} + d_{52} + d_{53}.
\end{align*}

We need to prove $\gamma_{2} = \gamma_{13} = \gamma_{22} = \gamma_{37} =\gamma_{46} = \gamma_{65} = 0.$ The proof is divided into 4 steps.

{\it Step 1.} First we prove $\gamma_{65} = 0$ by showing the polynomial $[\theta] = [\beta_1\theta_1 + \beta_2\theta_2 + \beta_3\theta_3 + \beta_4\theta_4 + \theta_5 +  \beta_6\theta_6] \ne 0$  for all $\beta_1, \beta_2, \beta_3, \beta_4, \beta_6 \in \mathbb F_2$. Suppose the contrary that this polynomial is hit. Then we have
$$\theta = Sq^1(A) + Sq^2(B) + Sq^4(C) + Sq^8(D),$$
for some polynomials $A, B, C, D$ in $P_4^+$. Let $(Sq^2)^3$ act on the both sides of this equality. Using the relations $(Sq^2)^3Sq^1 =0, (Sq^2)^3Sq^2 =0$, we get
$$(Sq^2)^3(\theta) = (Sq^2)^3Sq^4(C) + (Sq^2)^3Sq^8(D).$$
The monomial $x_1^7x_2^{12}x_3^4x_4^2$
is a term of $(Sq^2)^3(\theta)$. If $x_1^7x_2^{12}x_3^4x_4^2 $ is a term of the polynomial $(Sq^2)^3Sq^8(y)$ with $y$ a monomial of degree 11 in $P_4$, then $y = x_1^7f_1(z)$ with $z$ a monomial of degree 4 in $P_3$. Then $x_1^7x_2^{12}x_3^4x_4^2 $ is a term of $x_1^7(Sq^2)^3Sq^8(f_1(z)) = 0.$ This is a contradiction. So,   $x_1^7x_2^{12}x_3^4x_4^2$  is not a term of $(Sq^2)^3Sq^8(D)$ for all $D$. 
Hence,  $x_1^7x_2^{12}x_3^4x_4^2$ is a term of $(Sq^2)^3Sq^4(C)$, 
 then either $x_1^7x_2^{5}x_3x_4^2$ or $x_1^7x_2^{5}x_3^2x_4$ or $x_1^7x_2^{6}x_3x_4$ is a term of $C$.

Suppose $x_1^7x_2^{5}x_3^2x_4$  
is a term of $C$. Then 
$$(Sq^2)^3(\theta + Sq^4(x_1^7x_2^{5}x_3^2x_4)) = (Sq^2)^3(Sq^4(C') + Sq^8(D)),$$
where $C' = C +x_1^7x_2^{5}x_3^2x_4$. We see that the monomial $x_1^{16}x_2^6x_3^2x_4$ is a term of $(Sq^2)^3(\theta+Sq^4(x_1^7x_2^{5}x_3^2x_4))$. This monomial is not a term of $(Sq^2)^3Sq^8(D)$ for all $D$. So, it is a term of $(Sq^2)^3Sq^4(C')$. Then either $x_1^7x_2^{5}x_3^2x_4$ or $x_1^7x_2^{6}x_3x_4$ is a term of $C$.   Since $x_1^7x_2^{5}x_3^2x_4$ is not a term of $C'$, $x_1^7x_2^{6}x_3x_4$ is a term of $C'$. Hence,  we obtain
$$(Sq^2)^3(\theta + Sq^4(x_1^7x_2^{5}x_3^2x_4 + x_1^7x_2^{6}x_3x_4))\\
 = (Sq^2)^3(Sq^4(C'') + Sq^8(D)),
$$
where $C'' = C + x_1^7x_2^{5}x_3^2x_4 + x_1^7x_2^{6}x_3x_4$. Now $x_1^7x_2^{12}x_3^4x_4^2 $ is a term of 
$$(Sq^2)^3(\theta + Sq^4(x_1^7x_2^{5}x_3^2x_4 + x_1^7x_2^{6}x_3x_4))$$
 So, either $x_1^7x_2^{5}x_3x_4^2$ or $x_1^7x_2^{5}x_3^2x_4$ or $x_1^7x_2^{6}x_3x_4$ is a term of $C''$. Since $x_1^7x_2^{5}x_3^2x_4+x_1^7x_2^{6}x_3x_4$ is  a summand of $C''$, $x_1^7x_2^{5}x_3x_4^2$ is s term of $C''$. Then $x_1^{16}x_2^6x_3^2x_4$ is a term of $(Sq^2)^3(\theta+Sq^4(x_1^7x_2^{5}x_3^2x_4 + x_1^7x_2^{5}x_3x_4^2+ x_1^7x_2^{6}x_3x_4))$. So, either $x_1^7x_2^{5}x_3x_4^2$ or $x_1^7x_2^{5}x_3^2x_4$ or $x_1^7x_2^{6}x_3x_4$ is a term of $C'' + x_1^7x_2^{5}x_3x_4^2$
 and we have a contradiction. 

By a same argument, if either $x_1^7x_2^{5}x_3x_4^2$ or $x_1^7x_2^{6}x_3x_4$ is a term of $C$ then we have also a contradiction. Hence,  $[\theta] \ne 0$ and $\gamma_{65} = 0$. 
 
{\it Step 2.} By a direct computation, we see that the homomorphism $\varphi_3$ sends (\ref{ct912}) to
$$ \gamma_{37}[\theta_1] + \gamma_{2}[\theta_3] +\gamma_{22}[\theta_4]+  \gamma_{46}[\theta_5] + \gamma_{13}[\theta_6] = 0.
$$
By Step 1, we obtain $\gamma_{46} =0$.

{\it Step 3.} The homomorphism $\varphi_2$ sends (\ref{ct912}) to
$$
 \gamma_{13}[\theta_1] + \gamma_{22}[\theta_2] + \gamma_{37}[\theta_3] + \gamma_{2}[\theta_6]  = 0.$$ By Step 2, we obtain $\gamma_{22} =0.$

{\it Step 4.} Now the homomorphism $\varphi_3$ sends (\ref{ct912}) to
$ \gamma_{37}[\theta_2] + \gamma_{13}[\theta_4]+ \gamma_{2}[\theta_6]  = 0.
$ Combining Step 2 and Step 3, we obtain $\gamma_{13} = \gamma_{37} = 0$.

Since $\varphi_2([\theta_3]) = [\theta_6]$, we get $\gamma_2 = 0.$
So, we obtain $\gamma_j = 0$ for all $j$. The proposition follows.
\end{proof}

\medskip\noindent
{5.6.2. \bf The subcase $s = 1,\ t = 2$.}\label{s92}\ 
\setcounter{equation}{0} 

\medskip
For  $s=1, t = 2,$ we have $n = 2^{u+3}+ 7 = 2m+3$ with $m = 2^{u+2}+ 2$. Combining Theorem \ref{dl1} and Theorem \ref{mdkmk}, we have $B_3(n) = \psi(\Phi(B_2(m))).$ where
$$B_2(m) = \begin{cases}\{x_1^3x_2^7, x_1^7x_2^3\}, & \text{ if } \ u=1,\\
\{x_1^3x_2^{2^{u+2}-1}, x_1^{2^{u+2}-1}x_2^3, x_1^7x_2^{2^{u+2} - 5}\}, & \text{ if } \ u>1.
\end{cases}$$

 Denote by $F(u)$  the set of all the following monomials:
\begin{align*}
&x_1^{3}x_2^{4}x_3x_4^{2^{u+3}-1},\  
x_1^{3}x_2^{4}x_3^{2^{u+3}-1}x_4,\  
x_1^{3}x_2^{2^{u+3}-1}x_3^{4}x_4,\  
x_1^{2^{u+3}-1}x_2^{3}x_3^{4}x_4, \\ &
x_1^{3}x_2^{7}x_3^{2^{u+3}-4}x_4,\  
x_1^{7}x_2^{3}x_3^{2^{u+3}-4}x_4,\  
x_1^{7}x_2^{2^{u+3}-5}x_3^{4}x_4,\  
x_1^{7}x_2^{7}x_3^{2^{u+3}-8}x_4, \\ &
x_1^{3}x_2^{4}x_3^{3}x_4^{2^{u+3}-3},\  
x_1^{3}x_2^{4}x_3^{2^{u+3}-5}x_4^{5},\  
x_1^{3}x_2^{4}x_3^{7}x_4^{2^{u+3}-7},\  
x_1^{3}x_2^{7}x_3^{4}x_4^{2^{u+3}-7}, \\ &
x_1^{7}x_2^{3}x_3^{4}x_4^{2^{u+3}-7},\  
x_1^{3}x_2^{7}x_3^{8}x_4^{2^{u+3}-11},\  
x_1^{7}x_2^{3}x_3^{8}x_4^{2^{u+3}-11}.
\end{align*}

\begin{props}\label{dl92} \

\noindent \ \ {\rm i)} $C_4(23)=\Phi(B_3(23))\cup F(1) \cup \{x_1^{7}x_2^{9}x_3^{2}x_4^{5},\ x_1^{7}x_2^{9}x_3^{3}x_4^{4}\}$ is the set of all the admissible monomials for $\mathcal A$-module $P_4$ in degree $23$.

\noindent\ \ {\rm ii)} $C_4(n) = \Phi(B_3(n))\cup F(u) \cup \{x_1^{7}x_2^{7}x_3^{8}x_4^{2^{u+3}-15}, x_1^{7}x_2^{7}x_3^{9}x_4^{2^{u+3}-16}, x_1^{3}x_2^{4}x_3^{11}x_4^{2^{u+3}-11}\}$
  is the set of all the admissible monomials for $\mathcal A$-module $P_4$ in degree $n = 2^{u+3}+ 7$ with any positive integer $u > 1$.
\end{props}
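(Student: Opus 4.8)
The plan is to follow the two-part scheme of Section~\ref{s4}: first prove that $(QP_4)_n$ is spanned by $[C_4(n)]$, and then establish that $[C_4^+(n)]$ is linearly independent. The portion of $C_4(n)$ lying in $P_4^0$ is handled automatically through the decomposition $QP_4 = QP_4^0\oplus QP_4^+$ of Proposition~\ref{2.7}, so the real work concerns $C_4^+(n)$, treated separately for $u=1$ (degree $23$) and $u>1$ with the same method but different explicit monomial lists.

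For the spanning step I would invoke Lemma~\ref{b81}, which forces $\omega(x)=(3,2^{(2)},1^{(u)})$ for every admissible monomial $x$ of degree $n$. Since $\omega_1(x)=3$, I can write $x=X_iy^2$ with $y$ a monomial of degree $m=2^{u+2}+2$; as $x$ is admissible, Theorem~\ref{dlcb1} forces $y\in B_4(m)$. Crucially, $m=2^{2+u}+2^2-2$ is a degree of the form $2^{s+t}+2^s-2$ (with $s=2$, $t=u$) treated in Subsection~\ref{s8}, so $B_4(m)$ is explicitly known. The spanning claim then reduces to a finite inspection: for each $y\in B_4(m)$ and each $i$ with $X_iy^2\notin C_4(n)$, I would exhibit a strictly inadmissible monomial $w$, taken from Lemmas~\ref{b53} and \ref{b852} together with a short new list of strictly inadmissible monomials in degree $n$ recorded in a preliminary lemma proved by direct computation, such that $X_iy^2=wz^{2^r}$ for some monomial $z$ and some $r>1$. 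Theorem~\ref{dlcb1} then renders such $x$ inadmissible, so $[C_4(n)]$ spans.

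For linear independence I would order $C_4^+(n)$ as in Definition~\ref{defn3} and suppose $\mathcal S=\sum_i\gamma_id_i\equiv 0$. Applying the $\mathcal A$-homomorphisms $p_{(j;J)}$ for $(j;J)\in\mathcal N_4$, each relation $p_{(j;J)}(\mathcal S)\equiv 0$ becomes an identity in $(QP_3)_n$, which I expand against the explicit basis $B_3(n)=\psi(\Phi(B_2(m)))$ supplied by Theorems~\ref{dl1} and \ref{mdkmk}. This linear system annihilates most of the coefficients outright and pins the survivors down to a small number of $GL_4$-symmetric families $\gamma[\theta_\ell]$.

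The main obstacle is collapsing these residual families, which the maps $p_{(j;J)}$ cannot separate. Here I would use the homomorphisms $\varphi_1,\dots,\varphi_4$ generating $GL_4$: applying them to the reduced relation permutes the families, and combining the images isolates single terms $\gamma[\theta_\ell]$. To conclude $\gamma=0$ I must verify that each such $\theta_\ell$ is non-hit, and this is the delicate step. Assuming $\theta_\ell=Sq^1(A)+Sq^2(B)+Sq^4(C)+Sq^8(D)$ and applying $(Sq^2)^3$, which kills the $Sq^1$ and $Sq^2$ terms since $(Sq^2)^3Sq^1=(Sq^2)^3Sq^2=0$, I would track a carefully chosen monomial of shape $x_1^7x_2^{*}x_3^{*}x_4^{*}$ through $(Sq^2)^3Sq^4$ and $(Sq^2)^3Sq^8$, using the unstable constraint that any relevant preimage has the form $x_1^7f_1(z)$ so that $x_1^7(Sq^2)^3Sq^8(f_1(z))=0$. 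The contradiction is then reached by the same iterated term-chasing argument as in the subcase $s=t=1,\,u=2$. I expect this non-hitness verification, carried out for each family and with slightly different explicit lists in the two cases $u=1$ and $u>1$, to be the crux of the proof.
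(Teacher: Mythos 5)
Your overall scheme coincides with the paper's: spanning via Lemma \ref{b81}, the factorization $x=X_iy^2$ with $y\in B_4(2^{u+2}+2)$, a preliminary list of strictly inadmissible monomials (the paper's Lemma \ref{b911}, used together with Lemma \ref{b53}) and Theorem \ref{dlcb1}; then independence via the maps $p_{(j;J)}$, the homomorphisms $\varphi_i$, and a $(Sq^2)^3$ non-hitness argument. However, there is a concrete flaw in your non-hitness step for part (ii). In degree $n=2^{u+3}+7$ with $u>1$ you cannot assume a hit polynomial has the form $Sq^1(A)+Sq^2(B)+Sq^4(C)+Sq^8(D)$: since the $Sq^{2^m}$ are indecomposable and act nontrivially whenever $2^{m+1}\leqslant n$, one must allow $\theta=\sum_{m=0}^{u+2}Sq^{2^m}(A_m)$, so for $u\geqslant 2$ the operations $Sq^{16},\dots,Sq^{2^{u+2}}$ genuinely enter. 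Your truncation is legitimate only in degree $23$, where $2^{m+1}\leqslant 23$ forces $m\leqslant 3$; this is exactly why the paper writes the full sum in its proof of the $u>1$ case.

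The omission is not cosmetic, because it interacts with your choice of tracked monomial. You propose to chase a monomial of shape $x_1^7x_2^{*}x_3^{*}x_4^{*}$ whose candidate preimages have the form $x_1^7f_1(z)$; but then $\deg z = 2^{u+3}-2^m$ is large, and instability gives no control over $Sq^{2^m}(z)$ for the higher $m$, so the extra terms $(Sq^2)^3Sq^{2^m}(A_m)$ cannot be excluded. The paper instead tracks $x_1^8x_2^4x_3^2x_4^{2^{u+3}-1}$, so that any candidate preimage must carry the untouched factor with exponent $2^{u+3}-1$, i.e. $y=x_2^{2^{u+3}-1}f_2(z)$ with $\deg z = 8-2^m\leqslant 4$; then $Sq^{2^m}(z)=0$ for $m>2$ by instability, and for $m=2$ one has $Sq^4(z)=z^2$ with $(Sq^2)^3(z^2)=0$, killing all the extra terms at once. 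So the non-hitness verification needs both the full decomposition over all $Sq^{2^m}$, $m\leqslant u+2$, and a tracked monomial that isolates the exponent $2^{u+3}-1$ on a single variable; as written, your argument for $u>1$ would not close.
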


By a direct computation, we can easy obtain the following lemma.  

\begin{lems}\label{b911} The following monomials are strictly inadmissible:

\medskip
\rm{i)} $X_2x_1^2x_j^6x_2^{12}, X_jx_1^2x_2^4x_3^8x_4^6, X_2x_1^2x_i^4x_2^8x_3^2x_4^4, X_2x_1^2x_2^4x_3^8x_4^6, \ i = 1, 2, j= 3,4$.

\rm{ii)} $X_3x_1^2x_2^2x_i^{12}x_3^{20}$, $X_3x_1^2x_2^2x_i^{4}x_3^{20}x_4^4$,  $X_jx_1^2x_2^2x_i^{12}x_3^{4}x_4^{16}$, $X_jx_1^2x_2^4x_i^{14}x_3^{16},$

\quad \ $X_jx_1^6x_2^{10}x_3^{4}x_4^{16}$, $X_jx_1^6x_2^{10}x_3^{16}x_4^{4}$, $X_3x_1^6x_2^{10}x_3^{20}$, $X_2x_1^2x_2^{4}x_3^{14}x_4^{16}, i = 1, 2, j= 3,4$.
\end{lems}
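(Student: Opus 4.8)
The plan is to verify, for each monomial $x$ in the two lists, the defining condition of strict inadmissibility directly: writing $s=\max\{i:\omega_i(x)>0\}$, I must exhibit monomials $y_1,\dots,y_t$, each satisfying $y_\ell<x$, for which $x-\sum_\ell y_\ell\in\mathcal A_s^+P_4$. The first move is a reduction that recasts this as a statement about the equivalence $\simeq_{(s,\omega(x))}$ of Definition \ref{dfn2}. Since every monomial of $P_4^-(\omega(x))$ has weight vector strictly smaller than $\omega(x)$, and hence is $<x$ by Definition \ref{defn3}(i), any relation of the form $x\simeq_{(s,\omega(x))}\sum_\ell y_\ell$ with $y_\ell<x$ already forces $x$ to be strictly inadmissible: by Definition \ref{dfn2}(ii) the difference $x-\sum_\ell y_\ell$ lies in $\mathcal A_s^+P_4+P_4^-(\omega(x))$, and the residual $P_4^-(\omega(x))$-part is itself a sum of monomials smaller than $x$, which may simply be adjoined to the list $\{y_\ell\}$. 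The crucial point is that the index threshold $s$ built into $\simeq_{(s,\cdot)}$ is exactly the one demanded for strictness, so no separate control of the Steenrod degrees is needed once the relation is in hand.

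Before constructing the relations I would tabulate $\omega(x)$ and $s$ for every listed monomial. A short count of dyadic digits shows that the part (i) monomials all have degree $23$ and weight vector $(3,2,2,1)$, so $s=4$, while the part (ii) monomials all have degree $39$ and weight vector $(3,2,2,1,1)$, so $s=5$; thus the admissible Steenrod squares are $Sq^j$ with $1\le j<16$ in the first case and $1\le j<32$ in the second. The engine for the relations is the basic straightening identity $X_i^aX_j^b\simeq_2 X_i^{2^d-2}X_j$ of Lemma \ref{bdad}, valid whenever $a+b=2^d-1$, together with Proposition \ref{mdcb4}, which transports a low-degree relation through a $2^r$-th power factor while keeping the resulting index under control. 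Concretely, for each $x$ I would factor $x=x_0\,v^{2^r}$ so as to isolate in the low factor $x_0$ a pair of variables to which Lemma \ref{bdad} applies, straighten that pair, multiply back up by $v^{2^r}$ using Proposition \ref{mdcb4}(i)--(ii), and iterate until the dominant monomial is reached with every spawned error term lying strictly below it. The few monomials whose digit pattern does not fit this product shape I would instead treat by a direct Cartan-formula expansion, writing $x$ as $\sum Sq^{a}(\,\cdot\,)$ plus smaller monomials and checking $a<2^s$ by inspection.

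The main obstacle is not any single identity but the bookkeeping across all roughly twenty monomials. For each I must (a) choose a factorization and straightening order for which Lemma \ref{bdad} and Proposition \ref{mdcb4} combine to yield an error index at most $s$ — this is exactly what separates strict inadmissibility from ordinary inadmissibility, since Theorem \ref{dlcb1} delivers only the latter — and (b) verify that every monomial on the right-hand side is genuinely $<x$, comparing first weight vectors and then, where these coincide, the $\sigma$-vectors in left-lexicographic order as in Definition \ref{defn3}. Part (ii) is the more laborious, since its larger value $s=5$ permits more intermediate straightening steps and hence more error terms to order. Throughout, Theorem \ref{dlsig} and Proposition \ref{mdcb3} serve as shortcuts: any candidate error monomial whose weight vector falls below that of the relevant minimal spike is automatically hit and can be discarded without further work.
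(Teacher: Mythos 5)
Your general framework is sound, and it matches the paper's (entirely terse) treatment: the paper proves Lemma \ref{b911} by unrecorded direct computation, and your reduction of strict inadmissibility to a relation $x \simeq_{(s,\omega(x))} \sum_\ell y_\ell$ with all $y_\ell < x$ is valid, since any residual terms in $P_4^-(\omega(x))$ are monomials of smaller weight vector and may simply be adjoined to the list. The genuine gap is that the one computation you actually performed — the tabulation of degrees and weight vectors that calibrates everything else — is wrong for part (ii). Your part (i) table is correct, but the monomials $X_3x_1^2x_2^2x_i^{4}x_3^{20}x_4^4$ have degree $3+2+2+4+20+4=35$, not $39$: for $i=1$ this monomial is $x_1^{7}x_2^{3}x_3^{20}x_4^{5}$, with $\omega=(3,2,3,0,1)$. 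Likewise $X_jx_1^2x_2^4x_i^{14}x_3^{16}$ does have degree $39$, but $X_4x_1^2x_2^4x_1^{14}x_3^{16}=x_1^{17}x_2^{5}x_3^{17}$ has $\omega=(3,0,1,0,2)$, and $X_3x_1^2x_2^4x_2^{14}x_3^{16}=x_1^{3}x_2^{19}x_3^{16}x_4$ has $\omega=(3,2,0,0,2)$ — not $(3,2,2,1,1)$. The value $s=5$ survives these corrections by accident, but the order comparisons, which are the substance of the verification, do not: testing error terms against $(3,2,2,1,1)$ instead of the true $\omega(x)$ would, for example, let you accept a monomial of weight $(3,2,0,0,2)$ as smaller than $x_1^{17}x_2^{5}x_3^{16}x_4$, whereas in the left-lexicographic order of Definition \ref{defn3} it is strictly larger, so the resulting certificate of strict inadmissibility would be invalid. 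Each of the twenty-odd cases needs its own correctly computed $\omega(x)$ before your straightening via Lemma \ref{bdad} and Proposition \ref{mdcb4} can even be set up.

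A secondary flaw: your proposed shortcut via Theorem \ref{dlsig} is the wrong mechanism in this context. Strict inadmissibility requires $x-\sum y_j \in \mathcal A_s^+P_4$ with the $y_j$ smaller monomials; being hit means lying in $\mathcal A^+P_4$, which is neither of those things, so an error term cannot be "discarded because it is hit." The legitimate reason — which you already gave in your first paragraph — is that an error monomial with weight vector below $\omega(x)$ is itself smaller than $x$ and may be kept in the list. (Ironically, for the family $x_1^{17}x_2^{5}x_3^{17}$ above, the corrected weight vector $(3,0,1,0,2)$ lies below that of the minimal spike $x_1^{31}x_2^{7}x_3$ of degree $39$, so Singer's criterion shows this monomial is hit; since $Sq^{j}(f)=0$ whenever $j>\deg f$, every operation hitting it has $j\leqslant 19<2^{5}$, and this gives the quickest route to strict inadmissibility for that one family — but only after the weight vector is computed correctly.) Finally, note that your proposal defers all of the case-by-case relations, which are the entire content of the lemma; as it stands it is a plausible plan whose only executed step contains the errors above, so it cannot be accepted as a proof.
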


\begin{proof}[Proof of Proposition \ref{dl92}] Let $x$ be an admissible monomial of degree $n=2^{u+3} + 7$ in  $P_4$.  
By Lemma \ref{b81}, $\omega_1(x) = 3$. So, $x = X_iy^2$ with $y$ a monomial of degree $2^{u+2}+2$. Since $x$ is admissible, by Theorem \ref{dlcb1}, $y \in B_4(2^{u+2}+2)$. 

By a direct computation, we see that if $x = X_iy^2$ with $y \in B_4(2^{u+2}+2)$ and $x $ does not belong to the set $C_4(n)$ as given in the proposition, then there is a monomial $w$ which is given in one of Lemmas \ref{b911}, \ref{b53} such that $x = wy^{2^r}$ for some monomial $y$ and integer $r > 1$.
 By Theorem \ref{dlcb1}, $x$ is inadmissible. Hence,  $(QP_4)_{n}$ is spanned by the set $[C_4(n)]$.

Now, we prove that set $[C_4(n)]$ is linearly independent in $QP_4$.  

For $u = 1$, we have,  $|C_4^+(23)\cap P_4^+| = 99$.   Suppose that there is a linear relation
\begin{equation}\mathcal S =\sum_{i=1}^{99}\gamma_id_i = 0, \label{ct921}
\end{equation}
with $\gamma_i \in \mathbb F_2$ and $d_i = d_{23,i}$. By a direct computation from the relations $p_{(j;J)}(\mathcal S) \equiv 0$ with $(j;J) \in \mathcal N_4,$ we obtain $\gamma_i = 0$ for all $i \in E$, with some $E \subset \mathbb N_{99}$ and  the relation (\ref{ct912}) becomes
\begin{equation}\sum_{i=1}^{15}c_i[\theta_i] = 0,\label{ct913}\\
\end{equation}
where $c_1= \gamma_1, c_2=\gamma_4, c_3=\gamma_{33}, c_4= \gamma_{94}, c_5 = \gamma_2,  c_6 = \gamma_{22}, c_7 = \gamma_{74}, c_8 = \gamma_{29}, c_9 = \gamma_{81}, c_{10} = \gamma_{68}, c_{11} = \gamma_{10}, c_{12} = \gamma_{43}, c_{13} = \gamma_{54}, c_{14} = \gamma_{70}, c_{15} = \gamma_{11}$ and
\begin{align*}
\theta_{1} &=  d_{1} + d_{17} + d_{37} + d_{49},\\ 
\theta_{2} &= d_{4} + d_{21} + d_{44} + d_{53},\\
\theta_{3} &= d_{33} + d_{36} + d_{72} + d_{73},\\ 
\theta_{4} &= d_{94} + d_{97} + d_{98} + d_{99} ,\\
\theta_{5} &= d_{2} + d_{19} + d_{40} + d_{51},\\
\theta_{6} &= d_{22} + d_{25} + d_{62} + d_{63},\\
\theta_{7} &= d_{74} + d_{77} + d_{82} + d_{83} ,\\
\theta_{8} &=  d_{12} + d_{14} + d_{26} + d_{29} + d_{66} + d_{67},\\
\theta_{9} &= d_{40} + d_{42} + d_{78} + d_{81} + d_{86} + d_{87},\\
\theta_{10} &=  d_{10} + d_{15} + d_{24} + d_{27} + d_{46} + d_{47} + d_{64} + d_{65},\\
\theta_{11} &= d_{38} + d_{43} + d_{46} + d_{47} + d_{76} + d_{79} + d_{84} + d_{85},\\
\theta_{12} &=  d_{62} + d_{67} + d_{68} + d_{71} + d_{88} + d_{89} + d_{92} + d_{93},\\
\theta_{13} &= d_{47} + d_{54} + d_{57} + d_{62} + d_{69} + d_{82} + d_{85} + d_{88} + d_{90},\\
\theta_{14} &=   d_{12} + d_{15} + d_{19} + d_{20} + d_{46} + d_{47} + d_{51} + d_{52} + d_{58} + d_{61}\\  &\quad+ d_{64} + d_{66} + d_{67} + d_{70} + d_{84} + d_{87} + d_{89} + d_{91},\\
\theta_{15} &=   d_{11} + d_{12} + d_{18} + d_{20} + d_{24} + d_{25} + d_{26} + d_{27} + d_{38} + d_{40} + d_{45}\\  &\quad + d_{47} + d_{48} + d_{50} + d_{52} + d_{57} + d_{61} + d_{63} + d_{64} + d_{65} + d_{66}\\  &\quad + d_{67} + d_{69} + d_{77} + d_{78} + d_{83} + d_{85} + d_{86} + d_{87} +  d_{89} + d_{90}.
\end{align*}

Now, we show that $c_i = 0$ for $i = 1, 2,\ldots , 15$. The proof is divided into 6 steps.

{\it Step 1.} Set $\theta = \theta_1 + \sum_{i=2}^{15}\beta_i\theta_i$ for $\beta_i \in \mathbb F_2, i= 2, 3, \ldots, 15$. We prove that $[\theta] \ne 0$. Suppose the contrary that $\theta$ is hit. Then we have
$$\theta = Sq^1(A) + Sq^2(B) +Sq^4(C) + Sq^8(D)$$
for some polynomials $A, B, C, D \in P_4^+$. Let $(Sq^2)^3$ act to the both sides of the above equality, we obtain
$$(Sq^2)^3(\theta) = (Sq^2)^3Sq^4(C) + (Sq^2)^3Sq^8(D).$$
By a similar computation as in the proof of Proposition \ref{mdc42}, we see that the monomial $x_1^8x_2^4x_3^2x_4^{15}$ is a term of $(Sq^2)^3(\theta)$. This monomial is not a term of $(Sq^2)^3(Sq^4(C)+Sq^8(D))$ for all polynomials $C, D$ and we have a contradiction. So, $[\theta] \ne 0$  and we get $c_1 = \gamma_1 = 0$. 

By an argument analogous to the previous one, we get $c_2 = c_3 = c_4 = 0$. Now, the relation (\ref{ct913}) becomes
\begin{equation} \sum_{i=5}^{15}c_i[\theta_i] = 0. \label{ct914}
\end{equation}

{\it Step 2.} The homomorphisms $$\varphi_1, \varphi_1\varphi_3, \varphi_1\varphi_3 \varphi_4, \varphi_1\varphi_3 \varphi_2,  \varphi_1\varphi_3 \varphi_2\varphi_4, \varphi_1 \varphi_3\varphi_4\varphi_2 \varphi_3$$ send (\ref{ct914}) respectively  to
\begin{align*}
 c_{10}[\theta_3]  &= 0    \quad \text{mod}\langle [\theta_5],[\theta_6], \ldots , [\theta_{15}]\rangle,\\
 c_{9}[\theta_3]  &= 0    \quad \text{mod}\langle [\theta_5],[\theta_6], \ldots , [\theta_{15}]\rangle,\\
 c_{7}[\theta_3]&  = 0    \quad \text{mod}\langle [\theta_5],[\theta_6], \ldots , [\theta_{15}]\rangle,\\
 c_{8}[\theta_3]&  = 0    \quad \text{mod}\langle [\theta_5],[\theta_6], \ldots , [\theta_{15}]\rangle,\\
 c_{6}[\theta_3]  &= 0    \quad \text{mod}\langle [\theta_5], [\theta_6],\ldots , [\theta_{15}]\rangle,\\
 c_{5}[\theta_3]  &= 0    \quad \text{mod}\langle [\theta_5], [\theta_6],\ldots , [\theta_{15}]\rangle.
\end{align*}
Using the results in Step 1, we get $c_{5} = c_{6} = c_{7} =  c_{8} = c_{9} = c_{10} = 0$.
So, the relation (\ref{ct914}) becomes
\begin{equation} 
c_{11}[\theta_{11}] + c_{12}[\theta_{12}] + c_{13}[\theta_{13}] + c_{14}[\theta_{14}] + c_{15}[\theta_{15}] = 0. \label{ct924}
\end{equation}

{\it Step 3.} The homomorphism $\varphi_1$ sends (\ref{ct924}) to
\begin{align*}
&c_{13}[\theta_6]+ (c_{14}+c_{15})[\theta_7]+(c_{11}+c_{12})[\theta_{11}]\\
&\quad + c_{12}[\theta_{12}] + c_{13}[\theta_{13}] + c_{14}[\theta_{14}] + c_{15}[\theta_{15}] = 0. 
\end{align*}
By Step 2, we get $c_{13} = 0$ and $c_{14}=c_{15}$. So, the relation (\ref{ct924}) becomes
\begin{equation} 
c_{11}[\theta_{11}] + c_{12}[\theta_{12}] + c_{14}[\theta_{14}] + c_{14}[\theta_{15}] = 0. \label{ct925}
\end{equation}

{\it Step 4.} The homomorphism $\varphi_3$ sends (\ref{ct925}) to
$$c_{11}[\theta_{11}] + c_{14}[\theta_{12}] + (c_{12}+c_{14})[\theta_{13}] +c_{14}[\theta_{14}] + c_{14}[\theta_{15}] = 0.$$
By Step 3, we get $c_{12}=c_{14}$. Then the relation (\ref{ct925}) becomes
\begin{equation} 
c_{11}[\theta_{11}] + c_{12}[\theta_{12}] + c_{12}[\theta_{14}] + c_{12}[\theta_{15}] = 0. \label{ct926}
\end{equation}

{\it Step 5.} The homomorphism $\varphi_2$ sends (\ref{ct926}) to
$$(c_{11}+c_{12})[\theta_{12}] +  c_{12}[\theta_{14}] + c_{12}[\theta_{15}] = 0.$$
From the result in Step 4, we get $c_{11}=0$. Then the relation (\ref{ct926}) becomes
\begin{equation} 
c_{12}([\theta_{12}] + [\theta_{14}] + [\theta_{15}]) = 0. \label{ct927}
\end{equation}
{\it Step 6.} The homomorphism $\varphi_1$ sends (\ref{ct927}) to
$$c_{12}[\theta_{11}] + c_{12}([\theta_{12}] + [\theta_{14}] + [\theta_{15}]) = 0. 
$$
By the result in Step 5, we have $c_{12} = 0.$
The case $u=1$ of the proposition is completely proved.

For $u > 1$, we have $|C_4(n)^+| = 141$. Suppose that there is a linear relation
\begin{equation}\mathcal S =\sum_{i=1}^{141}\gamma_id_i = 0, \label{ct931}
\end{equation}
with $\gamma_i \in \mathbb F_2$ and $d_i = d_{n,i}\in B_4^+(n)$. By a direct computation from the relations $p_{(j;J)}(\mathcal S) \equiv 0$ with $(j;J) \in \mathcal N_4,$ we obtain $\gamma_i = 0$ for all $i \notin E$, with some $E  \subset \mathbb N_{141}$ and the relation (\ref{ct931}) becomes
\begin{equation}\sum_{i=1}^{15}c_i[\theta_i] = 0,\label{ct932}\\
\end{equation}
where $c_1= \gamma_1, c_2=\gamma_6, c_3=\gamma_{51}, c_4= \gamma_{136}, c_5 = \gamma_2,  c_6 = \gamma_{31}, c_7 = \gamma_{107}, c_8 = \gamma_{40}, c_9 = \gamma_{116}, c_{10} = \gamma_{101}, c_{11} = \gamma_{14}, c_{12} = \gamma_{56}, c_{13} = \gamma_{79}, c_{14} = \gamma_{23}, c_{15} = \gamma_{15}$ and
\begin{align*}
\theta_{1} &=  d_{1} + d_{25} + d_{55} + d_{73},\\ 
\theta_{2}  &=  d_{6} + d_{30} + d_{66} + d_{78}, \\ 
\theta_{3} & =   d_{51} + d_{54} + d_{105} + d_{106}, \\ 
\theta_{4}  &= d_{7} + d_{8} + d_{47} + d_{48}, \\  
\theta_{5} & =   d_{2} + d_{27} + d_{58} + d_{75} , \\  
\theta_{6}  &=  d_{31} + d_{34} + d_{89} + d_{90}, \\  
\theta_{7} & =  + d_{107} + d_{110} + d_{117} + d_{118}, \\
\theta_{8}  &=  d_{16} + d_{22} + d_{35} + d_{40} + d_{94} + d_{95}, \\ 
\theta_{9} & =   d_{58} + d_{64} + d_{111} + d_{116} + d_{122} + d_{123},\\
\theta_{10} & =  d_{89} + d_{95} + d_{101} + d_{104} + d_{124} + d_{127} + d_{129} + d_{130},\\ 
\theta_{11} & =   d_{14} + d_{19} + d_{33} + d_{36} + d_{68} + d_{69} + d_{91} + d_{92},\\
\theta_{12} & =   d_{56} + d_{61} + d_{68} + d_{69} + d_{109} + d_{112} + d_{119} + d_{120},\\
\theta_{13} & =   d_{67} + d_{69} + d_{79} + d_{82} + d_{89} + d_{90} + d_{117} + d_{118} + d_{124} + d_{125},\\
\theta_{14} & =  d_{16} + d_{23} + d_{27} + d_{29} + d_{70} + d_{71} + d_{72} + d_{75} + d_{77}\\ &\quad + d_{83} + d_{88} + d_{94} + d_{95} + d_{122} + d_{123} + d_{126} + d_{127}, \\  
\theta_{15} & =   d_{15} + d_{19} + d_{26} + d_{27} + d_{33} + d_{34} + d_{35} + d_{36} + d_{58}\\ &\quad  + d_{61} + d_{68} + d_{69} + d_{70} + d_{74} + d_{75} + d_{82} + d_{83} + d_{91}\\ &\quad  + d_{92} + d_{109} + d_{110} + d_{111} + d_{112} + d_{119} + d_{120} + d_{125}.
\end{align*}

Now, we prove $c_i = 0$ for $ i = 1,2, \ldots , 15$. The proof is divided into 6 steps.

{\it Step 1.} First, we prove $c_1 = 0$. Set $\theta = \theta_1 + \sum_{j=2}^{15}c_j\theta_j$. We show that $[\theta] \ne 0$ for all $c_j \in \mathbb F_2, j = 2,3, \ldots, 15$. Suppose the contrary that $\theta$ is hit. Then we have
$$\theta = \sum_{m=0}^{u+2}Sq^{2^m}(A_m),$$
 for some polynomials $A_m, m= 0,1,\ldots, u+2$. Let $(Sq^2)^3$ act on the both sides of this equality. Since $(Sq^2)^3Sq^1 = 0, (Sq^2)^3Sq^2 =0$, we get
$$(Sq^2)^3(\theta) = \sum_{m=2}^{u+2}(Sq^2)^3Sq^{2^m}(A_m).$$
It is easy to see that the monomial $x=x_1^8x_2^4x_3^2x_4^{2^{u+3}-1}$ is a term of $(Sq^2)^3(\theta)$. Hence, it is a term of $(Sq^2)^3Sq^{2^m}(y)$ for some monomial $y$ of degree $2^{u+3}-2^m+7$ with $ m \geqslant 2$. Then $y = x_2^{2^{u+3} - 1}f_2(z)$ with $z$ a monomial of degree $8 - 2^m\leqslant 4$ in $P_3$ and $x$ is a term of $x_2^{2^{u+3} - 1}(Sq^2)^3Sq^{2^m}(z)$. If $m > 2$ then $Sq^{2^m}(z)  = 0$. If $m=2$, then $Sq^{2^2}(z)  = z^2$. Hence,  $(Sq^2)^3Sq^{2^m}(z) = (Sq^2)^3(z^2) = 0$. So, $x$ is not a term of 
$$(Sq^2)^3(\theta) = \sum_{m=2}^{u+2}(Sq^2)^3Sq^{2^m}(A_m),$$
for all polynomial $A_m$ with $m >1$. This is a contradiction. So, we get $c_1 = 0$.

By an argument analogous to the previous one, we get $c_2 = c_3 = c_4 = 0$. Then the relation (\ref{ct932}) becomes
\begin{equation} \sum_{i=5}^{15}c_i[\theta_i] = 0.\label{ct933}
\end{equation}

{\it Step 2.} The homomorphisms $$\varphi_1, \varphi_1\varphi_3, \varphi_1\varphi_3 \varphi_4, \varphi_1\varphi_3 \varphi_2,  \varphi_1\varphi_3 \varphi_2\varphi_4, \varphi_1 \varphi_3\varphi_4\varphi_2 \varphi_3$$ send (\ref{ct914}) respectively  to
\begin{align*}
 c_{10}[\theta_3]  &= 0    \quad \text{mod}\langle [\theta_5],[\theta_6], \ldots , [\theta_{15}]\rangle,\\
 c_{9}[\theta_3]  &= 0    \quad \text{mod}\langle [\theta_5],[\theta_6], \ldots , [\theta_{15}]\rangle,\\
 c_{7}[\theta_3]&  = 0    \quad \text{mod}\langle [\theta_5],[\theta_6], \ldots , [\theta_{15}]\rangle,\\
 c_{8}[\theta_3]&  = 0    \quad \text{mod}\langle [\theta_5],[\theta_6], \ldots , [\theta_{15}]\rangle,\\
 c_{6}[\theta_3]  &= 0    \quad \text{mod}\langle [\theta_5], [\theta_6],\ldots , [\theta_{15}]\rangle,\\
 c_{5}[\theta_3]  &= 0    \quad \text{mod}\langle [\theta_5], [\theta_6],\ldots , [\theta_{15}]\rangle.
\end{align*}
By  Step 1, we get $c_{5} =  c_{6} = c_{7} =  c_{8} =  c_{9} =  c_{10} =  0$.
So, the relation (\ref{ct914}) becomes
\begin{equation} 
c_{11}[\theta_{11}] + c_{12}[\theta_{12}] + c_{13}[\theta_{13}] + c_{14}[\theta_{14}] + c_{15}[\theta_{15}] = 0. \label{ct934}
\end{equation}

{\it Step 3.} Applying the homomorphism $\varphi_1$ to (\ref{ct934}), we get
$$c_{13}[\theta_6] + c_{14}[\theta_8] + (c_{11} + c_{12} + c_{15})[\theta_{11}] +  c_{12}[\theta_{12}] + c_{13}[\theta_{13}] + c_{14}[\theta_{14}] + c_{15}[\theta_{15}] = 0. $$

By the results in Step 2, we obtain $c_{13}= c_{14} = 0.$ Then the relation (\ref{ct934}) becomes
\begin{equation} 
c_{11}[\theta_{11}] + c_{12}[\theta_{12}] + c_{14}[\theta_{15}] = 0. \label{ct935}
\end{equation}

{\it Step 4.} Applying the homomorphism $\varphi_3$ to the relation (\ref{ct935}) we obtain
 $$c_{11}[\theta_{11}] + c_{12}[\theta_{13}] + c_{15}[\theta_{15}] = 0.$$

By the results in Step 3, we get $c_{12} = 0$. So, the relation (\ref{ct935}) becomes
\begin{equation} 
c_{11}[\theta_{11}]  + c_{15}[\theta_{15}] = 0. \label{ct936}
\end{equation}

{\it Step 5.} Applying the homomorphism $\varphi_2$ to the relation (\ref{ct935}) one gets
$$c_{11}[\theta_{13}] + c_{15}[\theta_{15}] = 0.$$

By Step 4, we get $c_{10} = \gamma_{41} = 0$. So, the relation (\ref{ct936}) becomes
\begin{equation} 
 c_{15}[\theta_{15}] = 0. \label{ct937}
\end{equation}

{\it Step 6.} Applying the homomorphism $\varphi_1$ to the relation (\ref{ct937}) we obtain
$$ c_{15}[\theta_{11}] + c_{15}[\theta_{15}] = 0.$$

By Step 5, we get $c_{15}$. 
 The proposition is completely proved.
\end{proof}

\medskip\noindent
 {5.6.3. \bf  The subcase $s = 1,\ t > 2$.}\label{s93}\ 
\setcounter{equation}{0} 

\medskip
For  $s=1, t > 2,$ we have $n = 2^{t + u+1}+ 2^{t+1} - 1= 2m+3$ with $m = 2^{t + u}+ 2^{t} - 2$. From Theorem \ref{mdkmk}, we have $B_3(n) = \psi(\Phi(B_2(m))). $ 

\begin{props}\label{dl94} \

{\rm i)} $C_4(n)=\Phi(B_3(n))\cup \{x_1^3x_2^4x_3^{2^{t+1}-5}x_4^{2^{t+2}-3},\ x_1^3x_2^4x_3^{2^{t+2}-5}x_4^{2^{t+1}-3}\}$ is the set of all the admissible monomials for $\mathcal A$-module $P_4$ in degree $n =  2^{t + 2}+ 2^{t+1} - 1$ with any positive integer $t > 2$.

{\rm ii)} $C_4(n)=\Phi(B_3(n))\cup A(t,u)$ is the set of all the admissible monomials for $\mathcal A$-module $P_4$ in degree $n =  2^{t + u+ 1}+ 2^{t+1} - 1$ with any positive integers $t > 2, u > 1$, where $A(t,u)$ is the set consisting of 3 monomials:
$$x_1^3x_2^4x_3^{2^{t+1}-5}x_4^{2^{t+u+1}-3},\ x_1^3x_2^4x_3^{2^{t+u+1}-5}x_4^{2^{t+1}-3},\ x_1^3x_2^4x_3^{2^{t+2}-5}x_4^{2^{t+u+1}-2^{t+1}-3}.$$ 
\end{props}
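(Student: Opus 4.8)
The plan is to run the same two-stage scheme (spanning, then linear independence) already used for the subcases $s=t=1$ (Proposition~\ref{dl811}) and $s=1,\,t=2$ (Proposition~\ref{dl92}), taking advantage of the fact that here $s=1$ forces $\omega_1=3$ and so reduces everything to the already-known answer for $P_4$ in degree $m=2^{t+u}+2^t-2$. Note first that Theorem~\ref{dl1} is not available, since the degree $n=2^{t+u+1}+2^{t+1}-1$ has $d_{k-1}=s=1<3$; this is exactly why both the spanning and the independence have to be established by hand.

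For the spanning statement I would argue as follows. By Lemma~\ref{b81}, every admissible monomial $x$ of degree $n$ satisfies $\omega(x)=(3,2^{(t)},1^{(u)})$; in particular $\omega_1(x)=3$, so $x=X_iy^2$ for a unique $i$ and a monomial $y$ of degree $m=(n-3)/2=2^{t+u}+2^t-2$. Since $x$ is admissible, Theorem~\ref{dlcb1} forces $y$ to be admissible, i.e. $y\in B_4(m)$, and $B_4(m)$ is explicitly known (Subsection~\ref{s8} and \cite{su}). The key computational input would be a lemma, parallel to Lemmas~\ref{b852} and \ref{b911}, exhibiting a finite list of strictly inadmissible monomials $w$ with $\omega_s(w)\ne0$ at the top. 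I would then run through all the candidates $x=X_iy^2$, $y\in B_4(m)$, and check that whenever $x\notin C_4(n)$ one can factor $x=wz^{2^r}$ with such a $w$; Theorem~\ref{dlcb1}(ii) then makes $x$ inadmissible, so $(QP_4)_n$ is spanned by $[C_4(n)]$.

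For independence, the splitting $QP_4=QP_4^0\oplus QP_4^+$ (Proposition~\ref{2.7}) together with the known $k\le 3$ computations reduces the problem to showing that $[C_4^+(n)]$ is linearly independent, where $C_4^+(n)=C_4(n)\cap P_4^+$. I would start from a general relation $\mathcal S=\sum_j\gamma_jd_j\equiv0$ and apply every projection $p_{(j;J)}:P_4\to P_3$, $(j;J)\in\mathcal N_4$. Because $B_3(n)=\psi(\Phi(B_2(m)))$ is given explicitly by Theorem~\ref{mdkmk}, each relation $p_{(j;J)}(\mathcal S)\equiv0$ expands in the admissible basis of $(QP_3)_n$ and forces the vast majority of the $\gamma_j$ to vanish. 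Since only two (for $u=1$) or three (for $u>1$) new generators from $A(t,u)$ occur beyond $\Phi(B_3(n))$, the survivors should assemble into just a small number of symmetric sums $\theta_\ell$ that are invisible to all $p_{(j;J)}$; these I would separate using the $GL_4$-maps $\varphi_1,\varphi_2,\varphi_3,\varphi_4$, exactly as in Steps~1--6 of Propositions~\ref{dl92} and \ref{dl811}.

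The hard part will be showing that each residual class $[\theta_\ell]$ is nonzero, i.e. that the associated polynomial is not hit. As before, I would let $(Sq^2)^3$ (together with the higher operations $(Sq^2)^3Sq^{2^i}$ when $u$ is large) act on $\theta_\ell$, pick out a single explicit monomial in $(Sq^2)^3(\theta_\ell)$—typically one whose $x_4$-exponent is as large as possible—and show, by a degree count on its possible preimages under $Sq^{2^i}$, that it cannot occur in any $(Sq^2)^3Sq^{2^i}(C)$ with $C\in P_4^+$. This non-hit obstruction, carried out uniformly in $t$ and $u$, is the genuinely delicate step; once it is in place, combining it with the $\varphi_i$-images cancels the $\theta_\ell$ one at a time and forces $\gamma_j=0$ for all $j$, completing the proof.
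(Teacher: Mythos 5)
Your plan follows the paper's proof almost exactly. For spanning, the paper does precisely what you describe: Lemma \ref{b81} gives $\omega_1(x)=3$, so $x=X_iy^2$ with $y\in B_4(2^{t+u}+2^t-2)$ admissible by Theorem \ref{dlcb1}, and a short list of strictly inadmissible monomials (Lemma \ref{b943}, used together with Lemma \ref{b53}) lets one factor every candidate $X_iy^2\notin C_4(n)$ as $wz^{2^r}$, whence Theorem \ref{dlcb1} makes it inadmissible.

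The one place where your proposal diverges --- and over-prepares --- is the independence step. You predict that after applying all projections $p_{(j;J)}$, $(j;J)\in\mathcal N_4$, a few symmetric sums $\theta_\ell$ will survive and will have to be eliminated by the $GL_4$-maps $\varphi_i$ together with a $(Sq^2)^3$ non-hit obstruction carried out uniformly in $t$ and $u$; you single this out as the genuinely delicate step. In fact, for this subcase the paper's computation shows that the projections alone already force $\gamma_i=0$ for all $i$ (with $|C_4^+(n)|=84$ for $u=1$ and $126$ for $u>1$): unlike the subcases $s=t=1$ and $s=1,\ t=2$ (Propositions \ref{dl811} and \ref{dl92}), no residual classes survive the projections, so no $\varphi_i$-arguments and no hit/non-hit analysis are needed at all. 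Your plan would still succeed --- the residual set would simply turn out to be empty and your later steps would be vacuous --- but the step you flag as the hard core of the argument does not arise here, and presenting it as essential would misrepresent where the actual work lies (namely, in the finite check that the $p_{(j;J)}$-images of the $84$ resp.\ $126$ classes are linearly independent in $(QP_3)_n$).
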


By a direct computation, we can easy obtain the following lemma.  

\begin{lems}\label{b943} The following monomials are strictly inadmissible:
$$X_3x_1^2x_2^2x_3^8x_4^{28}x_i^4,\ X_3x_1^2x_2^2x_3^8x_{4}^{12}x_i^4 ,\ i = 1, 2, \ X_4x_1^6x_2^{10}x_3^{12}x_4^{16}.$$
\end{lems}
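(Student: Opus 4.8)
The plan is first to record, for each listed monomial $x$, its weight vector and the index $s=\max\{i:\omega_i(x)>0\}$ that governs the relation $\simeq_{(s,\omega)}$ in the definition of strict inadmissibility. Writing $X_3=x_1x_2x_4$ and $X_4=x_1x_2x_3$ and expanding, the family $X_3x_1^2x_2^2x_3^8x_4^{28}x_i^4$ and the monomial $X_4x_1^6x_2^{10}x_3^{12}x_4^{16}$ give monomials of degree $47$ with $\omega=(3,2,2,2,1)$, hence $s=5$, while $X_3x_1^2x_2^2x_3^8x_4^{12}x_i^4$ gives monomials of degree $31$ with $\omega=(3,2,2,2)$, hence $s=4$. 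To prove strict inadmissibility I must exhibit for each $x$ an identity $x\equiv\sum_j y_j$ modulo $\mathcal A_s^+P_4$ with every $y_j<x$ in the order of Definition \ref{defn3}; since $\mathcal A_t^+\subseteq\mathcal A_s^+$ for $t\le s$, it is enough, and in fact cleaner, to produce the stronger relation $x\simeq_t\sum_j y_j$ for the smallest $t$ I can manage.

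My first reduction removes the top weight coordinate. Each degree-$47$ monomial factors as $w\cdot x_\ell^{2^4}$ with $\omega_i(w)=0$ for $i>4$: explicitly $x_1^7x_2^3x_3^8x_4^{29}=(x_1^7x_2^3x_3^8x_4^{13})\,x_4^{16}$, $x_1^3x_2^7x_3^8x_4^{29}=(x_1^3x_2^7x_3^8x_4^{13})\,x_4^{16}$, and $x_1^7x_2^{11}x_3^{13}x_4^{16}=(x_1^7x_2^{11}x_3^{13})\,x_4^{16}$. Once a factor $w$ of weight $(3,2,2,2)$ is shown strictly inadmissible, i.e. $w\simeq_4\sum y_j$ with $y_j<w$, Proposition \ref{mdcb4}(i), applied with $s=4$, $y=x_\ell$, $t=4$, yields $w\,x_\ell^{16}\simeq_4\big(\sum y_j\big)x_\ell^{16}$; and each $y_jx_\ell^{16}<x$ because multiplication by $x_\ell^{2^4}$ adds $16$ to the same exponent on both sides, preserving both the weight and the $\sigma$-order of Definition \ref{defn3}. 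Thus the whole lemma is reduced to the three base monomials $x_1^7x_2^3x_3^8x_4^{13}$, $x_1^3x_2^7x_3^8x_4^{13}$ and $x_1^7x_2^{11}x_3^{13}$, all of weight $(3,2,2,2)$ and degree $31$.

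For each base monomial I would produce the strict-inadmissibility relation by a Cartan-formula straightening. The natural first move is to lower an exponent that is a power of two, using $Sq^{2^a}(x_j^{2^a})=x_j^{2^{a+1}}$ from Proposition \ref{mdcb1}(ii): for instance $x_3^8=Sq^4(x_3^4)$ lets me realize the base monomial as a term of $Sq^4(g)$ for a suitable $g$, and the Cartan expansion produces it together with error terms. Terms of strictly lower weight are discarded at once, since by Singer's criterion (Theorem \ref{dlsig}) together with Lemma \ref{bdbs1} every monomial of weight below that of the minimal spike lies in $\mathcal A^+P_4$, hence in $P_4^-(\omega)$; the surviving same-weight terms are then normalized by the elementary swaps of Lemma \ref{bdad}. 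The two cases $i=1$ and $i=2$ are mirror images under the transposition $x_1\leftrightarrow x_2$, but because the order of Definition \ref{defn3} is index-dependent the two computations must be carried out, or at least re-checked, separately.

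The hard part will be the straightening itself: the obvious choices of $g$ generate same-weight terms that are \emph{larger} than the target in the $\sigma$-order, so a single Cartan expansion does not close the computation. For example $Sq^4(x_1^7x_2^3x_3^4x_4^{13})$ produces both the target $x_1^7x_2^3x_3^8x_4^{13}$ and the term $x_1^{11}x_2^3x_3^4x_4^{13}$, which exceeds it. The real work is to choose $g$, or a combination $\sum_a Sq^{2^a}(g_a)$, so that after cancellation every surviving monomial lies strictly below $x$, and to verify that this straightening terminates. This is a finite but intricate bookkeeping over the weight-$(3,2,2,2)$ monomials of degree $31$, which is precisely what is meant by ``a direct computation''; the structural results (Propositions \ref{mdcb1}, \ref{mdcb4}, Theorem \ref{dlsig} and Lemmas \ref{bdbs1}, \ref{bdad}) serve only to keep that computation organized and to propagate each base case up to the five monomials in the statement.
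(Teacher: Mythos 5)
Your reduction step is correct and genuinely useful: after expanding, the two degree-$47$ monomials are $x_1^7x_2^3x_3^8x_4^{29}=(x_1^7x_2^3x_3^8x_4^{13})x_4^{16}$, $x_1^3x_2^7x_3^8x_4^{29}=(x_1^3x_2^7x_3^8x_4^{13})x_4^{16}$ and $x_1^7x_2^{11}x_3^{13}x_4^{16}=(x_1^7x_2^{11}x_3^{13})x_4^{16}$, and Proposition \ref{mdcb4}(i) with $s=t=4$, $y=x_4$ does transfer a relation $w\simeq_4 f$ to $wx_4^{16}\simeq_4 fx_4^{16}$; since multiplying by $x_4^{16}$ changes no exponent in bits $0$--$3$, every monomial $y_j<w$ (whether of lower weight or of equal weight and smaller $\sigma$) satisfies $y_jx_4^{16}<wx_4^{16}$, and $\mathcal A_4^+\subset\mathcal A_5^+$, so strict inadmissibility of the factor does imply that of the product. (One small correction: you do not need Theorem \ref{dlsig} to discard lower-weight terms --- they lie in $P_4^-(\omega)$ by definition and are admissible as summands $y_j<x$; also your implication ``hit, hence in $P_4^-(\omega)$'' runs in the wrong direction, though nothing depends on it.)

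The genuine gap is that you never prove any of the three base cases, and they are the entire content of the lemma. Strict inadmissibility of, say, $w=x_1^7x_2^3x_3^8x_4^{13}$ requires an explicit identity $w+\sum_j y_j\in\mathcal A_4^+P_4$ with every monomial $y_j<w$, and your proposal ends exactly where that identity should begin: ``the real work is to choose $g$ \dots and to verify that this straightening terminates'' is a restatement of the claim, not a proof of it. Worse, your own observation shows why this is not routine bookkeeping: every natural relation places $w$ at the \emph{bottom} of its weight class. Indeed $Sq^4(x_1^7x_2^3x_3^4x_4^{13})=w+x_1^{11}x_2^3x_3^4x_4^{13}+(\text{lower weight})$, $Sq^2(x_1^7x_2^3x_3^8x_4^{11})=w+x_1^7x_2^5x_3^8x_4^{11}+(\text{lower weight})$, $Sq^2(x_1^7x_2^3x_3^6x_4^{13})=w+x_1^9x_2^3x_3^6x_4^{13}+x_1^7x_2^5x_3^6x_4^{13}+(\text{lower weight})$, and $Sq^1(x_1^7x_2^3x_3^7x_4^{13})=w+x_1^8x_2^3x_3^7x_4^{13}+x_1^7x_2^4x_3^7x_4^{13}+x_1^7x_2^3x_3^7x_4^{14}$, where in each case the companions of $w$ are strictly \emph{larger} (in $\sigma$ or in weight). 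Each such identity proves the larger companion strictly inadmissible, not $w$; iterating them moves you up the order, never down. What the lemma asserts is that there is a combination $\sum_a Sq^{2^a}(g_a)$ in which all same-or-higher-weight companions of $w$ cancel, leaving $w$ plus monomials $<w$; you exhibit no such combination for any of $x_1^7x_2^3x_3^8x_4^{13}$, $x_1^3x_2^7x_3^8x_4^{13}$, $x_1^7x_2^{11}x_3^{13}$, and nothing you establish rules out the alternative that one of these monomials is admissible. Until those explicit relations (the ``direct computation'' the paper alludes to) are produced and checked, the lemma remains unproved.
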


\begin{proof}[Proof of Proposition \ref{dl94}] Let $x\in P_4$ be an admissible monomial of degree $n=2^{t+ u+1} + 2^{t+1} - 1$.  
By Lemma \ref{b81}, $\omega_1(x) = 3$. So, $x = X_iy^2$ with $y$ a monomial of degree $2^{t+u}+2^t - 2$. Since $x$ is admissible, by Theorem \ref{dlcb1}, $y \in B_4(2^{t+u}+2^t - 2)$. 

By a direct computation, we see that if $x = X_iy^2$ with $y \in B_4(2^{t+u}+2^t - 2)$ and $x $ does not belongs to the set $C_4(n)$ as given in the proposition, then there is a monomial $w$ which is given in one of Lemmas \ref{b943}  and \ref{b53} such that $x = wy^{2^r}$ for some monomial $y$ and integer $r > 1$. 
By Theorem \ref{dlcb1}, $x$ is inadmissible. Hence,  $(QP_4)_{n}$ is spanned by the set $[C_4(n)]$.

Now, we prove that set $[C_4(n)]$ is linearly independent in $QP_4$.  

We set $|C_4(n)^+| = m(t,u)$ with $m(t,1) = 84$ and $m(t,u) = 126$ for $u > 1$.   Suppose that there is a linear relation
\begin{equation*}\mathcal S =\sum_{i=1}^{m(t,u)}\gamma_id_i = 0,
\end{equation*}
with $\gamma_i \in \mathbb F_2$ and $d_i = d_{n,i}$. By a direct computation from the relations $p_{(j;J)}(\mathcal S) \equiv 0$ with $(j;J) \in \mathcal N_4,$ we obtain $\gamma_i = 0$ for all $i$.
\end{proof}

\medskip\noindent
{5.6.4. \bf The subcase $s = 2,\ t = 1$.}\label{s94}\ 
\setcounter{equation}{0}
\medskip 

For  $s=2, t = 1,$ we have $n = 2^{u+3}+ 9$.  
According to Theorem \ref{mdkmk}, we have
$$B_3(n) = \begin{cases}\psi^2(\Phi(B_2(2^{u+1}))), & \text{ if } \ u \ne 2,\\
\psi^2(\Phi(B_2(8)))\cup \{x_1^{15}x_2^{19}x_3^7\}, & \text{ if } \ u = 2.
\end{cases}$$

Denote by $G(u)$ the set of 7 monomials:
\begin{align*}
&x_1^{3}x_2^{7}x_3^{2^{u+3}-5}x_4^{4},\  x_1^{7}x_2^{3}x_3^{2^{u+3}-5}x_4^{4},\  x_1^{7}x_2^{2^{u+3}-5}x_3^{3}x_4^{4},\\
&x_1^{3}x_2^{7}x_3^{7}x_4^{2^{u+3}-8},\  x_1^{7}x_2^{3}x_3^{7}x_4^{2^{u+3}-8},\  x_1^{7}x_2^{7}x_3^{3}x_4^{2^{u+3}-8}, \  x_1^{7}x_2^{7}x_3^{2^{u+3}-8}x_4^{3},
\end{align*}
\begin{props}\label{dl95} \

{\rm i)} $C_4(25)=\Phi(B_3(25))\cup G(1)\cup\{x_1^7x_2^9x_3^3x_4^6\}$ is the set of all the admissible monomials for $\mathcal A$-module $P_4$ in degree $25$.

 {\rm ii)} $C_4(n) = \Phi(B_3(n))\cup G(u)\cup H(u)$ is the set of all the admissible monomials for $\mathcal A$-module $P_4$ in degree $n = 2^{u+3}+ 9$ with any positive integer $u>1$, where $H(u)$ is the set consisting of $5$ monomials:
\begin{align*}
&x_1^{3}x_2^{7}x_3^{11}x_4^{2^{u+3}-12},\  x_1^{7}x_2^{3}x_3^{11}x_4^{2^{u+3}-12},\  x_1^{7}x_2^{11}x_3^{3}x_4^{2^{u+3}-12},\\
&x_1^{7}x_2^{7}x_3^{8}x_4^{2^{u+3}-13},\  x_1^{7}x_2^{7}x_3^{11}x_4^{2^{u+3}-16}.
\end{align*}
\end{props}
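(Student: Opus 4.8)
The plan is to prove Proposition \ref{dl95} by the same two-part strategy used throughout Section \ref{s4}: first establish that the listed set spans $(QP_4)_n$ in degree $n=2^{u+3}+9$, and then establish that the corresponding classes are linearly independent in $QP_4$. For the spanning part, the starting point is Lemma \ref{b81}, which forces any admissible monomial $x$ of this degree to have $\omega(x)=(3^{(2)},2^{(1)},1^{(u)})$; in particular $\omega_1(x)=3$, so $x=X_iy^2$ for some monomial $y$ of degree $2^{u+2}+3$. Since $x$ is admissible, Theorem \ref{dlsig} forces $y$ to be admissible, hence $y\in B_4(2^{u+2}+3)$, a set already determined in Proposition \ref{dl811} (the subcase $s=t=1$). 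The key mechanism is then Theorem \ref{dlcb1}: I would run through every monomial of the form $X_iy^2$ with $y$ admissible and show that if $X_iy^2$ is not in the proposed generating set $C_4(n)$, then there is a strictly inadmissible factor $w$ (supplied by an auxiliary lemma analogous to Lemmas \ref{b852}, \ref{b911}, \ref{b943}) with $X_iy^2=wz^{2^r}$, so that $X_iy^2$ is inadmissible. This reduces spanning to a finite verification of strict inadmissibility together with the explicit list in Proposition \ref{dl811}.

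For the linear independence part, I would follow the template set by Propositions \ref{dlc5}, \ref{mdc42}, \ref{dl811} and \ref{dl92}. Order the elements of $C_4^+(n)$ by the order of Definition \ref{defn3}, write a hypothetical relation $\mathcal S=\sum_i\gamma_id_i\equiv 0$, and apply the $\mathcal A$-homomorphisms $p_{(j;J)}:P_4\to P_3$ for all $(j;J)\in\mathcal N_4$. Each $p_{(j;J)}(\mathcal S)\equiv 0$ expands in terms of the known admissible basis of $(QP_3)_n$, yielding a large system of linear equations in the $\gamma_i$. For most $i$ these equations will immediately give $\gamma_i=0$; the remaining coefficients will be grouped into a small number of symmetric sums $\theta_1,\dots,\theta_r$ with undetermined coefficients. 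I expect that after exhausting the $p_{(j;J)}$ relations the relation collapses to $\sum c_j[\theta_j]=0$ for a handful of sums, exactly as in the earlier cases.

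The hard part, and the main obstacle, is disposing of the residual classes $[\theta_j]$. These cannot be killed by the projections $p_{(j;J)}$ alone, and I would instead combine two tools: the symmetrizing homomorphisms $\varphi_1,\varphi_2,\varphi_3,\varphi_4$ of Section \ref{s4}, which permute the $[\theta_j]$ among themselves and let me extract one coefficient at a time, and a direct unstable-action argument showing each surviving $[\theta_j]\ne 0$. The latter is the delicate step: one assumes $\theta_j$ is hit, writes $\theta_j=\sum_{m\ge 0}Sq^{2^m}(A_m)$, applies the operator $(Sq^2)^3$ (which annihilates $Sq^1$ and $Sq^2$), and exhibits a specific monomial in $(Sq^2)^3(\theta_j)$ that cannot appear in $(Sq^2)^3Sq^{2^m}(A_m)$ for any $m\ge 2$, forcing a contradiction. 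This is precisely the argument carried out for $u>1$ in Step 1 of the proof of Proposition \ref{dl92}, and I would adapt the choice of witness monomial to the present $\theta_j$.

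I would organize the independence proof in cases $u=1$, $u=2$, and $u>2$, since the set $B_3(n)$ itself splits at $u=2$ (with the extra generator $x_1^{15}x_2^{19}x_3^7$) and the cardinality $|C_4^+(n)|$ changes accordingly; the table entries $240$ at $s\geqslant 4$ for the relevant row confirm the final dimension count. For $u>2$ I anticipate that the $p_{(j;J)}$ relations are already strong enough to yield $\gamma_i=0$ for all $i$ with no residual $\theta$-classes, so that case is purely computational. For $u=1$ and $u=2$ the residual sums $\theta_j$ survive and require the $\varphi_i$ plus $(Sq^2)^3$ machinery above. Throughout, the spanning and independence counts must agree with Theorem \ref{dl1} applied via the isomorphism $(\widetilde{Sq}^0_*)^2$ and with $B_3(n)=\psi^2(\Phi(B_2(2^{u+1})))$ (respectively its $u=2$ modification), which serves as the consistency check that the proposed list $C_4(n)$ is complete.
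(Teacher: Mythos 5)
Your overall two-part strategy (weight-vector reduction plus strictly-inadmissible filtering for spanning, then the projections $p_{(j;J)}$ for independence) is exactly the paper's strategy and would succeed, but you organize the reduction differently and you overestimate the independence step. For spanning, the paper uses the full strength of Lemma \ref{b81}: since $\omega_1(x)=\omega_2(x)=3$, it peels \emph{two} layers at once, writing $x=X_iX_j^2y^4$ with $\deg y=2^{u+1}=2^{1+u}+2^1-2$, so that $y$ ranges over the much smaller, previously determined set $B_4(2^{u+1})$ (from the $2^{s+t}+2^s-2$ family); the strictly inadmissible monomials of Lemmas \ref{b953} and \ref{b53} are tailored to this shape $X_iX_j^2(\cdots)$. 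You instead peel one layer, $x=X_iy^2$ with $y\in B_4(2^{u+2}+3)$ supplied by Proposition \ref{dl811}; this is legitimate (note it is Theorem \ref{dlcb1}, not Theorem \ref{dlsig}, that forces $y$ to be admissible), but it leaves a larger case list and obliges you to compile a fresh lemma of strictly inadmissible monomials, whereas the paper's two-layer choice shortens the finite check. For independence, the paper's computation shows that the relations $p_{(j;J)}(\mathcal S)\equiv 0$ for $(j;J)\in\mathcal N_4$ already force $\gamma_i=0$ for \emph{all} $i$ and \emph{all} $u\geqslant 1$ (with $m(1)=88$, $m(2)=165$, $m(u)=154$ for $u\geqslant 3$): no residual sums $[\theta_j]$ survive, so the $\varphi_i$ and $(Sq^2)^3$ machinery you hold in reserve by analogy with Proposition \ref{dl92} is never needed in this degree; your plan is safe but over-engineered here. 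Finally, your consistency check cites the wrong row of the table in Theorem \ref{dl3}: the entry $240$ belongs to the family $2^{s+2}+2^s-2$, while for the present family $2^{s+u+1}+2^{s+1}+2^s-3$ at $s=2$ the correct dimensions are $120$ ($u=1$), $225$ ($u=2$) and $210$ ($u\geqslant 3$).
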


The following is proved by a direct computation.  

\begin{lems}\label{b953} The following monomials are strictly inadmissible:

\medskip
{\rm i)} $X_3X_2^2x_1^4x_2^8x_4^4,\ X_jX_2^2x_1^4x_2^8x_4^4,\ X_3^3x_i^4x_3^8x_4^4,\  X_2^3x_1^4x_2^8x_j^4, \ i = 1, 2,\ j = 3, 4.$

{\rm ii)} $X_4X_3^2x_1^{12}x_2^{16}x_3^4, X_4X_2^2x_1^{4}x_2^{24}x_4^4,\ X_4^3x_i^{12}x_3^{16}x_4^4, X_4X_2^2x_1^{12}x_2^{16}x_4^4, X_4X_3x_1^4x_2^4x_i^8x_3^{16}, $

\quad \ $X_jX_2^2x_1^{12}x_2^{16}x_3^4,\ X_jX_2^2x_1^{12}x_2^{16}x_4^4,\ X_4X_2^2x_1^{4}x_2^{8}x_4^{20},\  X_j^3x_1^4x_2^4x_i^8x_j^{16}, \ X_2^3x_1^{12}x_2^{16}x_j^4, $ 

\quad \  $X_4^3x_i^4x_3^{12}x_4^{16}, \ X_4^3x_i^{12}x_3^4x_4^{16},\ X_3^3x_i^{12}x_3^{16}x_4^4,\ X_j^3x_1^4x_2^8x_3^{16}x_4^4, \ X_4X_2^2x_1^4x_2^8x_3^{16}x_4^4$, 

\quad \  $X_4^3x_1^4x_2^8x_3^4x_4^{16},\ i = 1, 2,\ j = 3, 4.$
\end{lems}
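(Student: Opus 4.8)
The goal is to prove that each of the monomials $w$ listed in Lemma \ref{b953} is strictly inadmissible, i.e. that there are monomials $y_1,\dots,y_t$ with $y_l<w$ and $w-\sum_l y_l\in\mathcal A^+_sP_4$, where $s=\max\{i:\omega_i(w)>0\}$. By Lemma \ref{b81} each $w$ in the list has weight vector $\omega(w)=(3^{(2)},2,1^{(u)})$ for the appropriate small $u$ (so $s=u+3$, and one checks $2^s<\deg w$); this is exactly what gives the statement content, since the high squares $Sq^j$ with $2^s\leqslant j\leqslant\deg w$ are forbidden and so mere inadmissibility does not suffice. I first record a reduction: since $P_4^-(\omega(w))$ is spanned by monomials of weight strictly below $\omega(w)$, each of which is $<w$, it is enough to establish the relation $w\simeq_s\sum_l z_l$ with the $z_l$ monomials satisfying $z_l<w$ in the order of Definition \ref{defn3}. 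Indeed, the $P_4^-(\omega(w))$ contribution hidden in $\simeq_{(s,\omega(w))}$ is itself a sum of monomials $<w$ and may simply be absorbed into the family $\{z_l\}$, leaving $w-\sum z'_l\in\mathcal A^+_sP_4$.

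The plan is to reduce every case to a short list of low-degree \emph{core} identities and then lift them by peeling off a square. For each $w$ I would use the factored form already exhibited in the statement, $w=w_0\,g^{2^r}$, where $g^{2^r}$ carries the part of the weight in positions $>r$ and $w_0$ is a monomial with $\omega_i(w_0)=0$ for $i>r$ and top weight index $s_0\leqslant r$; expressions such as $X_3X_2^2x_1^4x_2^8x_4^4$ or $X_4X_3^2x_1^{12}x_2^{16}x_3^4$ already display such a splitting. For the core $w_0$ one establishes, by a direct Cartan-formula computation exactly as in the proofs of Lemmas \ref{3.2}, \ref{b53} and \ref{b611}, an identity $w_0\simeq_{s_0}\sum_l z_l$ with each $z_l<w_0$; the atomic ingredients are the elementary reductions $X_i^aX_j^b\simeq_2 X_i^{2^d-2}X_j$ of Lemma \ref{bdad}, together with relations of the shape $Sq^{2^a}(h)=w_0+(\text{smaller or re-reducible terms})$. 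The $P_2$-level prototypes $Sq^1(x_1^3x_2^3)=x_1^4x_2^3+x_1^3x_2^4$ and $Sq^2(x_1^3x_2^2)=x_1^3x_2^4+x_1^5x_2^2$ already show both the mechanism and its typical complication. Because every square $Sq^{c}$ appearing in the core identity has $c<2^{s_0}\leqslant 2^{r}$, the Cartan formula together with Proposition \ref{mdcb1}(ii) gives $Sq^{c}(h)\,g^{2^r}=Sq^{c}(h\,g^{2^r})$; hence multiplying the core identity through by $g^{2^r}$ and invoking Proposition \ref{mdcb4}(i) (whose hypothesis $\omega_i(w_0)=0$ for $i>r$ is built into the factorization) transports it to $w\simeq_{s_0}\sum_l z_l\,g^{2^r}\simeq_s\sum_l z_l\,g^{2^r}$, with $z_l\,g^{2^r}<w$ because multiplication by the fixed square preserves the weight-then-$\sigma$ order. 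This is the required witness of strict inadmissibility.

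The main obstacle is not conceptual but combinatorial. For each of the monomials, and for each member of the small parameter families $i\in\{1,2\}$, $j\in\{3,4\}$, one must choose the factorization and the core reductions, and then verify bit by bit in the order of Definition \ref{defn3} that, after all secondary reductions, every residual monomial is genuinely $<w$. The difficulty is that the Cartan expansions routinely produce intermediate monomials that are \emph{larger} than the target before collapsing (as already visible in the $P_2$ prototypes, where $x_1^5x_2^2$ appears en route and must be re-reduced by a further low square), so one has to follow these chains to guarantee termination with only strictly smaller terms. Carrying out this uniform verification over the whole list, organized by the value of $s_0$ and mirroring the earlier strict-inadmissibility lemmas, is precisely the direct computation asserted in the statement and completes the proof.
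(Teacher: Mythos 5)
Your general mechanics are sound as far as they go: absorbing the $P_4^-(\omega(w))$ contribution into the list of smaller monomials is legitimate, and Proposition \ref{mdcb4}(i) together with your order-preservation argument does lift a relation for a core $w_0$ (one using only $Sq^c$ with $c<2^{s_0}$) to a relation for $w=w_0g^{2^r}$. The fatal gap is that no such core relation exists for the monomials of Lemma \ref{b953}, so your scheme can produce nothing. A relation $w_0\simeq_{s_0}\sum_l z_l$ with monomials $z_l<w_0$ is, by definition, strict inadmissibility of $w_0$; but for these entries every available core is an \emph{admissible} monomial. Take your own first example $w=X_3X_2^2x_1^4x_2^8x_4^4=x_1^7x_2^9x_3^2x_4^7$. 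Since the factorization $w=w_0g^{2^r}$ with $\omega_i(w_0)=0$ for $i>r$ forces $w_0$ to be the ``bits $<r$'' part of $w$, the only candidates are $r=1$: $w_0=X_3$; $r=2$: $w_0=X_3X_2^2=x_1^3x_2x_3^2x_4^3$; $r=3$: $w_0=x_1^7x_2x_3^2x_4^7$. All three are admissible: $X_3=f_3(x_1x_2x_3)$ with $x_1x_2x_3\in B_3(3)$; $X_3X_2^2=\phi_{(2;3)}(x_1^3x_2^3x_3^3)$ with $x_1^3x_2^3x_3^3=\psi(x_1x_2x_3)\in B_3(9)$, so $X_3X_2^2\in\Phi^+(B_3(9))\subset C_4(9)$ by the proposition of Subsection 5.4.1 (degree $2^{t+2}+1$, case $t=1$); and $x_1^7x_2x_3^2x_4^7=\phi_{(2;3)}(x_1^7x_2^3x_3^7)$ with $x_1^7x_2^3x_3^7=\psi(x_1^3x_2x_3^3)\in\psi(B_3(7))=B_3(17)$, so it lies in $\Phi^+(B_3(17))\subset C_4(17)$ (same proposition, case $t=2$). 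An admissible monomial cannot be strictly inadmissible, so none of these cores admits the identity your plan requires; the same phenomenon recurs throughout the list (e.g. $X_2^3=\phi_{(2;\emptyset)}(x_1^3x_2^3x_3^3)$ and $X_3^3=f_3(x_1^3x_2^3x_3^3)$ are admissible). This is no accident: monomials whose strict inadmissibility is inherited from a lower-degree strictly inadmissible factor are exactly the ones disposed of in the proof of Proposition \ref{dl95} by citing Lemma \ref{b53} or Lemma \ref{3.2} together with Theorem \ref{dlcb1}; Lemma \ref{b953} lists precisely those monomials for which this mechanism fails, and whose reduction genuinely requires squares $Sq^j$ with $2^{s_0}\leqslant j<2^s$ acting on the whole monomial, mixing the low part with the $2^r$-th power part --- exactly the operations your lifting framework excludes.

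A secondary but real defect: even where your scheme could apply, you never carry out any computation. The core identities are asserted to exist ``exactly as in'' Lemmas \ref{3.2}, \ref{b53} and \ref{b611} (whose proofs the paper likewise does not print), and the verification that all residual monomials are strictly smaller is explicitly deferred to ``the direct computation asserted in the statement''. The paper's intended proof is, for each listed $w$ with top weight index $s$, an explicit Cartan-formula computation \emph{at the full degree of $w$}, exhibiting $w+\sum_l z_l\in\mathcal A_s^+P_4$ with all $z_l<w$, where squares up to $Sq^{2^s-1}$ are both permitted and genuinely needed. Your proposal would have to be reworked to perform those full-degree computations directly, rather than attempting to reduce to lower-degree cores.
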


\begin{proof}[Proof of Proposition \ref{dl95}] Let $x$ be an admissible monomial of degree $n=2^{u+3} + 9$ in  $P_4$.  

By Lemma \ref{b81}, $\omega_1(x) = \omega_2(x)= 3$. So, $x = X_iX_j^2y^4$ with $y$ a monomial of degree $2^{u+1}$. Since $x$ is admissible, by Theorem \ref{dlcb1}, $y \in B_4(2^{t+u}+2^t - 2)$. 

By a direct computation, we see that if $x = X_iX_j^2y^4$ with $y \in B_4(2^{t+u} + 2^t - 2)$ and $x$ does not belongs to the set $C_4(n)$ given in the proposition, then there is a monomial $w$ which is given in one of Lemmas \ref{b953}, \ref{b53} such that $x = wy^{2^r}$ for some monomial $y$ and integer $r > 1$. 
 By Theorem \ref{dlcb1}, $x$ is inadmissible. Hence,  $(QP_4)_{n}$ is spanned by the set $[C_4(n)]$.

Now, we prove that set $[C_4(n)]$ is linearly independent in $QP_4$.  

We denote  $|C_4(n)^+| = m(u)$ with $m(1) = 88$, $m(2) = 165$  and $m(u) = 154$ for $u \geqslant 3$.   Suppose that there is a linear relation
\begin{equation*}\mathcal S =\sum_{i=1}^{m(u)}\gamma_id_i = 0, 
\end{equation*}
with $\gamma_i \in \mathbb F_2$ and $d_i = d_{n,i}$. By a direct computation from the relations $p_{(j;J)}(\mathcal S) \equiv 0$ with $(j;J) \in \mathcal N_4,$ we obtain $\gamma_i = 0$ for all $i$.
\end{proof}

\medskip\noindent
{5.6.5. \bf The subcase $s = 2,\ t \geqslant 2$.}\label{s95}\ 
\setcounter{equation}{0} 

\medskip
For  $s=2, t \geqslant 2,$ we have $n = 2^{t+u+2}+ 2^{t+2} +1 = 4m +9$ with $m = 2^{t+u} + 2^t - 2$.  
From Theorem \ref{dl1}, we have
$$B_3(n) = \psi^2(\Phi(B_2(m))).$$

Denote by $B(t,u)$ the set of 8 monomials:
\begin{align*}
&x_1^{3}x_2^{7}x_3^{2^{t+2}-5}x_4^{2^{t+u+2}-4},\  x_1^{7}x_2^{3}x_3^{2^{t+2}-5}x_4^{2^{t+u+2}-4},\ x_1^{7}x_2^{2^{t+2}-5}x_3^{3}x_4^{2^{t+u+2}-4},\\
&x_1^{3}x_2^{7}x_3^{2^{t+u+2}-5}x_4^{2^{t+2}-4},\  x_1^{7}x_2^{3}x_3^{2^{t+u+2}-5}x_4^{2^{t+2}-4},\ x_1^{7}x_2^{2^{t+u+2}-5}x_3^{3}x_4^{2^{t+2}-4},\\
&x_1^{7}x_2^{7}x_3^{2^{t+2}-8}x_4^{2^{t+u+2}-5},\  x_1^{7}x_2^{7}x_3^{2^{t+u+2}-8}x_4^{2^{t+2}-5},
\end{align*}
and by $C(t,u)$ the set of 4 monomials:
\begin{align*}
&x_1^{3}x_2^{7}x_3^{2^{t+3}-5}x_4^{2^{t+u+2}-2^{t+2}-4},\  x_1^{7}x_2^{3}x_3^{2^{t+3}-5}x_4^{2^{t+u+2}-2^{t+2}-4},\\
&x_1^{7}x_2^{2^{t+3}-5}x_3^{3}x_4^{2^{t+u+2}-2^{t+2}-4},\  x_1^{7}x_2^{7}x_3^{2^{t+3}-8}x_4^{2^{t+u+2}-2^{t+2}-5}.
\end{align*}
\begin{props}\label{dl96}\

 {\rm i)} $C_4(n)= \Phi(B_3(n))\cup B(t,1)$ is the set of all the admissible monomials for $\mathcal A$-module $P_4$ in degree $n = 2^{t+3}+ 2^{t+2} +1$.

{\rm ii)} For any positive integer $t, u > 1$, $C_4(n)=\Phi(B_3(n))\cup B(t,u)\cup C(t,u)$ is the set of all the admissible monomials for $\mathcal A$-module $P_4$ in degree $n = 2^{t+u+2}+ 2^{t+2}+ 1$.
\end{props}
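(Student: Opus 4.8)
The plan is to follow exactly the two-step template that has already been executed successfully in Propositions \ref{dl811}, \ref{dl92}, \ref{dl94}, and \ref{dl95} for the case $s=2$, since Proposition \ref{dl96} is the analogue of Proposition \ref{dl95} with $t=1$ replaced by $t\geqslant 2$. First I would invoke Lemma \ref{b81}, which forces any admissible monomial $x$ of degree $n = 2^{t+u+2}+2^{t+2}+1$ to have weight vector $\omega(x) = (3^{(s)},2^{(t)},1^{(u)}) = (3^{(2)},2^{(t)},1^{(u)})$; in particular $\omega_1(x)=\omega_2(x)=3$, so $x = X_iX_j^2 y^4$ for a monomial $y$ of degree $(n-9)/4 = m = 2^{t+u}+2^t-2$. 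Since $x$ is admissible, Theorem \ref{dlcb1} guarantees $y \in B_4(m)$, and the structure of $B_4(m)$ in this degree is already known from Subsection \ref{s8} (the case of degree $2^{s'+t'}+2^{s'}-2$).

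The spanning half of the proof then proceeds by the usual exhaustion: I would list, for each $y \in B_4(m)$, the candidate monomials $X_iX_j^2y^4$ and check that any such monomial not lying in the claimed generating set $\Phi(B_3(n))\cup B(t,u)$ (for $u=1$) or $\Phi(B_3(n))\cup B(t,u)\cup C(t,u)$ (for $u>1$) contains, as a factor after a suitable $2^r$-th power decomposition $x = w z^{2^r}$, one of the strictly inadmissible monomials $w$. The relevant list of strictly inadmissible monomials would be a new lemma (the analogue of Lemma \ref{b953}), whose entries are verified by direct computation, together with the already-established Lemmas \ref{3.2}, \ref{b53}, \ref{b611}. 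Theorem \ref{dlcb1}(i),(ii) then shows $x$ is inadmissible, so $(QP_4)_n$ is spanned by the classes of the listed monomials. The counting of $B_3(n) = \psi^2(\Phi(B_2(m)))$ via Theorem \ref{mdkmk} and Theorem \ref{dl1} supplies the generating set $\Phi(B_3(n))$ explicitly, matching the dimensions recorded in the table of Theorem \ref{dl3}.

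For linear independence, I would suppose a relation $\mathcal S = \sum_i \gamma_i d_i \equiv 0$ among the monomials of $C_4^+(n)$ and apply the homomorphisms $p_{(j;J)}: P_4 \to P_3$ for all $(j;J)\in\mathcal N_4$, using that each $p_{(j;J)}$ is an $\mathcal A$-homomorphism and expanding $p_{(j;J)}(\mathcal S)$ in the known admissible basis $B_3(n)$ of $(QP_3)_n$. As in the earlier propositions, these relations $p_{(j;J)}(\mathcal S)\equiv 0$ kill all but a small residual family of coefficients, organized into a handful of ``symmetrized'' polynomials $\theta_1,\theta_2,\ldots$. The remaining coefficients would then be eliminated using the action of the homomorphisms $\varphi_i$ (induced by $\overline{\varphi}_i$, generating $GL_4$) combined with the hit-detection argument: to show a particular $[\theta_\ell]\neq 0$, I would suppose $\theta_\ell$ is hit, write it as $\sum_m Sq^{2^m}(A_m)$, apply the operator $(Sq^2)^3$ which annihilates $Sq^1$ and $Sq^2$, and exhibit an explicit monomial (of the form $x_1^{8}x_2^{\cdot}x_3^{\cdot}x_4^{\cdot}$) that survives on the left but cannot appear on the right, yielding a contradiction.

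The main obstacle will be the linear-independence computation for small $u$ (especially $u=1$ and $u=2$), where the number of residual $\theta$-polynomials is largest and the elimination via $\varphi_i$ must be carried out in a carefully chosen order, interleaving the $\varphi_i$-images with the squaring-operator contradiction exactly as in Steps 1--6 of the proofs of Propositions \ref{dl92} and \ref{dl811}. Setting up the strictly-inadmissible lemma so that it covers every stray monomial $X_iX_j^2y^4$ with $y\in B_4(m)$ is delicate, since the extra families $B(t,u)$ and $C(t,u)$ only appear in the $s=2$ degrees and must be distinguished from the inadmissible ones by their weight-vector tails $(2^{(t)},1^{(u)})$; verifying this exhaustively is routine but lengthy, and is where the bulk of the hidden work resides. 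Once those computations are in place, the proof concludes by combining the spanning and independence statements, giving the asserted admissible basis and hence the entry of the table in Theorem \ref{dl3}.
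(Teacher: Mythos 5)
Your proposal follows essentially the same route as the paper's proof: Lemma \ref{b81} forces $\omega_1(x)=\omega_2(x)=3$, hence $x = X_iX_j^2y^4$ with $y \in B_4(2^{t+u}+2^t-2)$ by Theorem \ref{dlcb1}; spanning is obtained by exhibiting each stray monomial as $wz^{2^r}$ for $w$ in a new strictly-inadmissible list (the paper's Lemma \ref{b914}, used together with Lemma \ref{3.2}); and independence is settled by the projections $p_{(j;J)}$. The one point where you over-prepare is the independence step: you anticipate a residual family of $\theta$-polynomials needing the $\varphi_i$-action and the $(Sq^2)^3$ hit-detection argument, whereas in this degree the relations $p_{(j;J)}(\mathcal S)\equiv 0$ for all $(j;J)\in\mathcal N_4$ already force every coefficient to vanish, so that extra machinery (harmless, but unnecessary) never comes into play.
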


By a direct computation, we get the following.
\begin{lems}\label{b914} The following monomials are strictly inadmissible:

\medskip
 $X_jX_3^2x_1^{12}x_2^{12}x_3^{16}, X_4^3x_i^{12}x_3^{12}x_4^{16},\ X_4^3x_1^{12}x_2^{12}x_4^{16}, X_4^3x_1^4x_2^4x_3^{8}x_4^8x_j^{16},  X_4X_3^2x_3^4x_1^{12}x_4^{8}x_2^{16}$, 
 
 $X_4X_3^2x_1^{4}x_2^{4}x_4^{8}x_i^8x_3^{16} ,  X_j^3x_1^{4}x_2^{4}x_3^8x_i^8x_4^{16} , X_4^3x_1^4x_3^4x_2^8x_3^8x_4^{16},\  i = 1, 2,\ j = 3, 4.$
\end{lems}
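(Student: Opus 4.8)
The plan is to handle all the listed monomials (the eight displayed families, together with the sub-cases $i\in\{1,2\}$, $j\in\{3,4\}$, some fourteen monomials of $P_4$ in all) by one uniform direct computation. First I would unravel the $X_{\mathbb J}$-notation: writing $X_4=x_1x_2x_3$ and $X_3=x_1x_2x_4$, each entry becomes an explicit monomial, e.g. $X_4^3x_1^{12}x_2^{12}x_4^{16}=x_1^{15}x_2^{15}x_3^{3}x_4^{16}$ and $X_4X_3^2x_3^4x_1^{12}x_4^8x_2^{16}=x_1^{15}x_2^{19}x_3^{5}x_4^{10}$. A short check of dyadic expansions then shows that every one of them has degree $49$ and the \emph{same} weight vector $\omega:=(3,3,2,2,1)$, which is exactly $\omega(z)$ for the minimal spike $z=x_1^{31}x_2^{15}x_3^{3}$ (this is the case $s=2,t=2,u=1$ of Lemma \ref{b81}). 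Consequently the index occurring in the definition of strict inadmissibility is $s=\max\{i:\omega_i(x)>0\}=5$, so the operations at our disposal are $Sq^u$ with $1\leqslant u<2^5=32$. By the definitions of $\simeq_{(s,\omega)}$ (Definition \ref{dfn2}) and of the order (Definition \ref{defn3}), the task reduces, for each such $x$, to producing a congruence $x\simeq_{(5,\omega)}\sum_j y_j$ in which each $y_j$ is a monomial with $\omega(y_j)=\omega$ and $\sigma(y_j)<\sigma(x)$; monomials of weight strictly below $\omega$ need never be tracked, since they lie in $P_4^-(\omega)$ and, $\omega$ being the minimal-spike weight, are hit by Theorem \ref{dlsig}.

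The engine of the reduction is the Cartan formula, used exactly as in the proof of Lemma \ref{bdad} and of the earlier strict-inadmissibility lemmas. For each $x$ I would exhibit an explicit element $\sum_{1\leqslant u<32}Sq^u(z_u)\in\mathcal A_5^+P_4$ equal to $x$ modulo $\sigma$-smaller monomials of the same weight. Concretely one proceeds iteratively: starting from a predecessor $z$ of degree $49-u$ obtained by lowering one high exponent of $x$, one expands $Sq^u(z)$ by Cartan, reduces each binomial coefficient $\binom{m}{a}\bmod 2$ (discarding the even ones and, via Proposition \ref{mdcb1}, the terms that vanish for degree reasons), and then cancels every Cartan term exceeding $x$ in the order by feeding it into a further $Sq^{u'}(z')$ with $u'<32$, just as the alternating $Sq^1$/$Sq^2$ corrections are handled in Lemma \ref{bdad}. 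Since the concentration moves mass onto a higher-index variable and drops the lower-index exponents towards $1$, each surviving monomial is $\sigma$-smaller than $x$, which is what strict inadmissibility requires.

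In practice the bookkeeping is lightened by recognising that many of these monomials factor as $x=w\,v^{2^c}$ with $w$ one of the smaller strictly inadmissible monomials already established in Lemmas \ref{b53}, \ref{b611} and \ref{b953}; when the weight hypotheses hold, Proposition \ref{mdcb4} lifts the congruence for $w$ through the $2^c$-th power and keeps all correcting squares inside $\mathcal A_5^+$, so the reduction for $x$ is inherited for free. The one genuine difficulty is the clerical accuracy of the case analysis: for each of the fourteen monomials one must choose the right predecessor (or factorization), carry out the Cartan expansion, and verify the order relation on the survivors, and a single mis-evaluated coefficient would corrupt a whole case. I would organise the work by the position of the unique variable carrying the top $2^4$-bit (the one responsible for $\omega_5=1$), which pins down the admissible squares, and cross-check each output against the admissible list of Proposition \ref{dl96}: a correct reduction must re-express $x$ purely in terms of monomials that survive there. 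Once strict inadmissibility is established for all fourteen monomials, they are precisely the building blocks fed into Theorem \ref{dlcb1}(ii) in the proof of Proposition \ref{dl96}.
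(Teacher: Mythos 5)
Your setup is correct, and it is the same method the paper itself (tacitly) uses: the paper's entire proof of this lemma is the sentence ``by a direct computation.'' Your expansions of the $X_{\mathbb J}$-notation are right, every listed monomial does have degree $49$ and weight vector $\omega=(3,3,2,2,1)$ --- the weight of the minimal spike $x_1^{31}x_2^{15}x_3^{3}$, i.e.\ the case $s=t=2$, $u=1$ of Lemma \ref{b81} --- so strict inadmissibility is indeed a statement about $\mathcal A_5^+$, and your reformulation (produce $x\simeq_{(5,\omega)}\sum_j y_j$ with $\omega(y_j)=\omega$ and $\sigma(y_j)<\sigma(x)$) does imply strict inadmissibility: the monomials absorbed into $P_4^-(\omega)$ are themselves legitimate choices of $y_j$, since smaller weight already makes them smaller in the order of Definition \ref{defn3}. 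For that same reason your appeal to Theorem \ref{dlsig} is beside the point, and would not help anyway: hit-ness only gives membership in $\mathcal A^+P_4$, not in $\mathcal A_5^+P_4$, which is what the definition of strict inadmissibility demands.

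The genuine gap is that nothing beyond this setup is carried out, and for this lemma the computation \emph{is} the content. The families expand to fifteen distinct monomials ($2+2+1+2+1+2+4+1=15$, not fourteen), and for not a single one do you exhibit the polynomials $z_u$, the Cartan expansion, or the check that the surviving terms are $\sigma$-smaller. Moreover, the device you hope will lighten the work is largely unavailable here. The natural low parts of these monomials (e.g.\ $x_1^{15}x_2^{15}x_3^{3}$, of degree $33$ and weight $(3,3,2,2)$) occur in none of Lemmas \ref{b53}, \ref{b611}, \ref{b953}; and in the one case that genuinely factors through an earlier lemma, namely $x_1^{15}x_2^{19}x_3^{5}x_4^{10}=\big(X_4X_3^2x_1^{12}x_2^{16}x_3^{4}\big)\cdot x_4^{8}$ with the first factor appearing in Lemma \ref{b953}, Proposition \ref{mdcb4}(i) cannot be invoked as you suggest: the cofactor is only an $8$-th power, so the hypothesis $t\leqslant s=3$ would require the factor's reduction to use only $Sq^u$ with $u<8$, whereas its strict inadmissibility (its weight is $(3,3,2,1,1)$, maximal index $5$) only provides a relation in $\mathcal A_5^+P_4$. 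So all fifteen cases need the full hand computation you describe; until at least one representative reduction is actually displayed, with the rest indicated as symmetric variants, what you have is a correct plan rather than a proof.
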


\begin{proof}[Proof of Proposition \ref{dl96}] Let $x\in P_4$ be an admissible monomial  of degree $n = 2^{t+u+2}+ 2^{t+2}+ 1$.  By Lemma \ref{b81}, $\omega_1(x) = \omega_2(x)= 3$. So, $x = X_iX_j^2y^4$ with $y$ a monomial of degree $2^{t+u} + 2^t - 2$. 

Since $x$ is admissible, by Theorem \ref{dlcb1}, $y \in B_4(2^{t+u}+2^t - 2)$. By a direct computation, we see that if $x = X_iX_j^2y^4$ with $y \in B_4(2^{t+u} + 2^t - 2)$ and $x$ does not belongs to the set $C_4(n)$ as given in the proposition, then there is a monomial $w$ which is given in one of Lemmas \ref{b914}, \ref{3.2} such that $x = wy^{2^r}$ for some monomial $y$ and integer $r > 1$. 
By Theorem \ref{dlcb1}, $x$ is inadmissible. Hence,  $(QP_4)_{n}$ is spanned by the set $[C_4(n)]$.

Now, we prove that set $[C_4(n)]$ is linearly independent in $QP_4$.  

We set $|C_4(n)^+| = m(t,u)$ with $m(t,1) = 154$ and $m(t,u) = 231$ for $t \geqslant 2$.   Suppose that there is a linear relation
\begin{equation*}\mathcal S =\sum_{i=1}^{m(t,u)}\gamma_id_i = 0,
\end{equation*}
with $\gamma_i \in \mathbb F_2$ and $d_i = d_{n,i}$. By a direct computation from the relations $p_{(j;J)}(\mathcal S) \equiv 0$ with $(j;J) \in \mathcal N_4,$ we obtain $\gamma_i = 0$ for all $i$.  
\end{proof}

Theorem \ref{dl3} follows from the results in Subsections \ref{sub1}-\ref{sub9}.

\medskip
\noindent
{\bf Acknowledgment.}
I would like to thank Prof. Nguy\~\ecircumflex n H. V. H\uhorn ng for  helpful suggestions and constant encouragement. My thanks also go to all colleagues at the Department of Mathematics, Quy Nh\ohorn n University for many conversations. 

The original version of this work was completed while the author was visiting the Vietnam Institute for Advanced Study in Mathematics (VIASM) in July, 2013. He would like to thank the VIASM for supporting the visit and hospitality.
The work was also supported in part by the Research Project Grant No. B2013.28.129.

I would like to express my warmest thanks to the referees for the careful  reading and detailed comments with many helpful suggestions.

\bigskip
{}

\medskip\noindent
{Department of Mathematics, Quy Nh\ohorn n University,

\noindent
170 An D\uhorn \ohorn ng V\uhorn \ohorn ng, Quy Nh\ohorn n, B\`inh \DD \d inh, Vi\^{\d e}t Nam.}

\medskip\noindent
E-mail: nguyensum@qnu.edu.vn

\newpage
\centerline{\bf APPENDIX}

\bigskip
In the appendix, we explicitly determine $(QP_4)_{45}$ by using the algorism presented in the proof of Proposition \ref{mdc1}. For $k=4$, the degree $n=45$ is entry with $d_1= 5, d_2= d_3 =3$ and $m = 3$. It is well known that $\dim (QP_3)_3 = 7$ and 
$$B_3(3) = \{x_3^3,\ x_2x_3^2,\ x_2^3,\ x_1x_3^2,\ x_1x_2x_3,\ x_1x_2^2,\ x_1^3\}.$$
For simplycity, we denote the monomial $y = x_1^ax_2^bx_3^c \in B_3(3)$ by $(abc)$ with $0\leqslant a,b,c \leqslant 3$. By Theorem \ref{dl1}, $\dim (QP_4)_{45} = (2^4-1)\dim (QP_3)_3 = 105$ and $\Phi(B_3(n))$ is the minimal set of generators for $\mathcal A$-module $P_4$ in degree $n =45$. From the proof of Proposition \ref{mdc1}, $\Phi(B_3(n)) = \{\phi_{(i;I)}(X^7y^8) :(i;I) \in \mathcal N_4,\ y \in B_3(3)\}$. The monomials $\phi_{(i;I)}(X^7y^8)$ are determine by the following table.

\medskip
\centerline{\begin{tabular}{llllllll}
\hline
$\vdq\ \  (i;I)$  &\vdq $\quad y$ & $\vdq\phi_{(i;I)}(X^7y^8)$&\vdq\qquad &\vdq \ \  $(i;I)$  &\vdq $\quad y$ & $\vdq\phi_{(i;I)}(X^7y^8)$&\vdq\cr
\hline
\vdq $(1;\emptyset)$&\vdq $ (003)$ &\vdq $x_2^{7}x_3^{7}x_4^{31}$& \vdq \qquad  &\vdq $(1;\emptyset)$&\vdq $ (012)$ &\vdq $x_2^{7}x_3^{15}x_4^{23}$&\vdq\cr    
\vdq $(1;\emptyset)$&\vdq $ (030)$ &\vdq $x_2^{7}x_3^{31}x_4^{7}$& \vdq \qquad  &\vdq $(1;\emptyset)$&\vdq $ (102)$ &\vdq $x_2^{15}x_3^{7}x_4^{23}$&\vdq\cr    
\vdq $(1;\emptyset)$&\vdq $ (111)$ &\vdq $x_2^{15}x_3^{15}x_4^{15}$& \vdq \qquad  &\vdq $(1;\emptyset)$&\vdq $ (120)$ &\vdq $x_2^{15}x_3^{23}x_4^{7}$&\vdq\cr    
\vdq $(1;\emptyset)$&\vdq $ (300)$ &\vdq $x_2^{31}x_3^{7}x_4^{7}$& \vdq \qquad  &\vdq $(1;2)$&\vdq $ (003)$ &\vdq $x_1x_2^{6}x_3^{7}x_4^{31}$&\vdq\cr    
\vdq $(1;2)$&\vdq $ (012)$ &\vdq $x_1x_2^{6}x_3^{15}x_4^{23}$& \vdq \qquad  &\vdq $(1;2)$&\vdq $ (030)$ &\vdq $x_1x_2^{6}x_3^{31}x_4^{7}$&\vdq\cr   
 \vdq $(1;2)$&\vdq $ (102)$ &\vdq $x_1x_2^{14}x_3^{7}x_4^{23}$& \vdq \qquad  &\vdq $(1;2)$&\vdq $ (111)$ &\vdq $x_1x_2^{14}x_3^{15}x_4^{15}$&\vdq\cr    
\vdq $(1;2)$&\vdq $ (120)$ &\vdq $x_1x_2^{14}x_3^{23}x_4^{7}$& \vdq \qquad  &\vdq $(1;2)$&\vdq $ (300)$ &\vdq $x_1x_2^{30}x_3^{7}x_4^{7}$&\vdq\cr    
\vdq $(1;3)$&\vdq $ (003)$ &\vdq $x_1x_2^{7}x_3^{6}x_4^{31}$& \vdq \qquad  &\vdq $(1;3)$&\vdq $ (012)$ &\vdq $x_1x_2^{7}x_3^{14}x_4^{23}$&\vdq\cr    
\vdq $(1;3)$&\vdq $ (030)$ &\vdq $x_1x_2^{7}x_3^{30}x_4^{7}$& \vdq \qquad  &\vdq $(1;3)$&\vdq $ (102)$ &\vdq $x_1x_2^{15}x_3^{6}x_4^{23}$&\vdq\cr    
\vdq $(1;3)$&\vdq $ (111)$ &\vdq $x_1x_2^{15}x_3^{14}x_4^{15}$& \vdq \qquad  &\vdq $(1;3)$&\vdq $ (120)$ &\vdq $x_1x_2^{15}x_3^{22}x_4^{7}$&\vdq\cr    
\vdq $(1;3)$&\vdq $ (300)$ &\vdq $x_1x_2^{31}x_3^{6}x_4^{7}$& \vdq \qquad  &\vdq $(1;4)$&\vdq $ (003)$ &\vdq $x_1x_2^{7}x_3^{7}x_4^{30}$&\vdq\cr    
\vdq $(1;4)$&\vdq $ (012)$ &\vdq $x_1x_2^{7}x_3^{15}x_4^{22}$& \vdq \qquad  &\vdq $(1;4)$&\vdq $ (030)$ &\vdq $x_1x_2^{7}x_3^{31}x_4^{6}$&\vdq\cr    
\vdq $(1;4)$&\vdq $ (102)$ &\vdq $x_1x_2^{15}x_3^{7}x_4^{22}$& \vdq \qquad  &\vdq $(1;4)$&\vdq $ (111)$ &\vdq $x_1x_2^{15}x_3^{15}x_4^{14}$&\vdq\cr    
\vdq $(1;4)$&\vdq $ (120)$ &\vdq $x_1x_2^{15}x_3^{23}x_4^{6}$& \vdq \qquad  &\vdq $(1;4)$&\vdq $ (300)$ &\vdq $x_1x_2^{31}x_3^{7}x_4^{6}$&\vdq\cr    
\vdq $(2;\emptyset)$&\vdq $ (003)$ &\vdq $x_1^{7}x_3^{7}x_4^{31}$& \vdq \qquad  &\vdq $(2;\emptyset)$&\vdq $ (012)$ &\vdq $x_1^{7}x_3^{15}x_4^{23}$&\vdq\cr    
\vdq $(2;\emptyset)$&\vdq $ (030)$ &\vdq $x_1^{7}x_3^{31}x_4^{7}$& \vdq \qquad  &\vdq $(2;\emptyset)$&\vdq $ (102)$ &\vdq $x_1^{15}x_3^{7}x_4^{23}$&\vdq\cr    
\vdq $(2;\emptyset)$&\vdq $ (111)$ &\vdq $x_1^{15}x_3^{15}x_4^{15}$& \vdq \qquad  &\vdq $(2;\emptyset)$&\vdq $ (120)$ &\vdq $x_1^{15}x_3^{23}x_4^{7}$&\vdq\cr    
\vdq $(2;\emptyset)$&\vdq $ (300)$ &\vdq $x_1^{31}x_3^{7}x_4^{7}$& \vdq \qquad  &\vdq $(3;\emptyset)$&\vdq $ (003)$ &\vdq $x_1^{7}x_2^{7}x_4^{31}$&\vdq\cr    
\vdq $(3;\emptyset)$&\vdq $ (012)$ &\vdq $x_1^{7}x_2^{15}x_4^{23}$& \vdq \qquad  &\vdq $(3;\emptyset)$&\vdq $ (030)$ &\vdq $x_1^{7}x_2^{31}x_4^{7}$&\vdq\cr    
\vdq $(3;\emptyset)$&\vdq $ (102)$ &\vdq $x_1^{15}x_2^{7}x_4^{23}$& \vdq \qquad  &\vdq $(3;\emptyset)$&\vdq $ (111)$ &\vdq $x_1^{15}x_2^{15}x_4^{15}$&\vdq\cr    
\vdq $(3;\emptyset)$&\vdq $ (120)$ &\vdq $x_1^{15}x_2^{23}x_4^{7}$& \vdq \qquad  &\vdq $(3;\emptyset)$&\vdq $ (300)$ &\vdq $x_1^{31}x_2^{7}x_4^{7}$&\vdq\cr    
\vdq $(2;3)$&\vdq $ (003)$ &\vdq $x_1^{7}x_2x_3^{6}x_4^{31}$& \vdq \qquad  &\vdq $(2;3)$&\vdq $ (012)$ &\vdq $x_1^{7}x_2x_3^{14}x_4^{23}$&\vdq\cr    
\vdq $(2;3)$&\vdq $ (030)$ &\vdq $x_1^{7}x_2x_3^{30}x_4^{7}$& \vdq \qquad  &\vdq $(2;3)$&\vdq $ (102)$ &\vdq $x_1^{15}x_2x_3^{6}x_4^{23}$&\vdq\cr    
\vdq $(2;3)$&\vdq $ (111)$ &\vdq $x_1^{15}x_2x_3^{14}x_4^{15}$& \vdq \qquad  &\vdq $(2;3)$&\vdq $ (120)$ &\vdq $x_1^{15}x_2x_3^{6}x_4^{23}$&\vdq\cr    
\vdq $(2;3)$&\vdq $ (300)$ &\vdq $x_1^{31}x_2x_3^{6}x_4^{7}$& \vdq \qquad  &\vdq $(2;4)$&\vdq $ (003)$ &\vdq $x_1^{7}x_2x_3^{7}x_4^{30}$&\vdq\cr    
\vdq $(2;4)$&\vdq $ (012)$ &\vdq $x_1^{7}x_2x_3^{14}x_4^{23}$& \vdq \qquad  &\vdq $(2;4)$&\vdq $ (030)$ &\vdq $x_1^{7}x_2x_3^{31}x_4^{6}$&\vdq\cr    
\hline  
\end{tabular}}

\centerline{\begin{tabular}{llllllll}
\hline
$\vdq\ \  (i;I)$  &\vdq $\quad y$ & $\vdq\phi_{(i;I)}(X^7y^8)$&\vdq\qquad &\vdq \ \  $(i;I)$  &\vdq $\quad y$ & $\vdq\phi_{(i;I)}(X^7y^8)$&\vdq\cr
\hline
\vdq $(2;4)$&\vdq $ (102)$ &\vdq $x_1^{15}x_2x_3^{7}x_4^{22}$& \vdq \qquad  &\vdq $(2;4)$&\vdq $ (111)$ &\vdq $x_1^{15}x_2x_3^{15}x_4^{14}$&\vdq\cr    
\vdq $(2;4)$&\vdq $ (120)$ &\vdq $x_1^{15}x_2x_3^{23}x_4^{14}$& \vdq \qquad  &\vdq $(2;4)$&\vdq $ (300)$ &\vdq $x_1^{31}x_2x_3^{7}x_4^{6}$&\vdq\cr  
\vdq $(3;4)$&\vdq $ (003)$ &\vdq $x_1^{7}x_2^{7}x_3x_4^{30}$& \vdq \qquad  &\vdq $(3;4)$&\vdq $ (012)$ &\vdq $x_1^{7}x_2^{15}x_3x_4^{22}$&\vdq\cr    
\vdq $(3;4)$&\vdq $ (030)$ &\vdq $x_1^{7}x_2^{31}x_3x_4^{6}$& \vdq \qquad  &\vdq $(3;4)$&\vdq $ (102)$ &\vdq $x_1^{15}x_2^{7}x_3x_4^{14}$&\vdq\cr    
\vdq $(3;4)$&\vdq $ (111)$ &\vdq $x_1^{15}x_2^{15}x_3x_4^{14}$& \vdq \qquad  &\vdq $(3;4)$&\vdq $ (120)$ &\vdq $x_1^{15}x_2^{23}x_3x_4^{6}$&\vdq\cr    
\vdq $(3;4)$&\vdq $ (300)$ &\vdq $x_1^{31}x_2^{7}x_3x_4^{6}$& \vdq \qquad  &\vdq $(1;2,3)$&\vdq $ (003)$ &\vdq $x_1^{3}x_2^{5}x_3^{6}x_4^{31}$&\vdq\cr    
\vdq $(1;2,3)$&\vdq $ (012)$ &\vdq $x_1^{3}x_2^{5}x_3^{14}x_4^{23}$& \vdq \qquad  &\vdq $(1;2,3)$&\vdq $ (030)$ &\vdq $x_1^{3}x_2^{5}x_3^{30}x_4^{7}$&\vdq\cr    
\vdq $(1;2,3)$&\vdq $ (102)$ &\vdq $x_1^{3}x_2^{13}x_3^{6}x_4^{23}$& \vdq \qquad  &\vdq $(1;2,3)$&\vdq $ (111)$ &\vdq $x_1^{3}x_2^{13}x_3^{14}x_4^{15}$&\vdq\cr    
\vdq $(1;2,3)$&\vdq $ (120)$ &\vdq $x_1^{3}x_2^{13}x_3^{22}x_4^{7}$& \vdq \qquad  &\vdq $(1;2,3)$&\vdq $ (300)$ &\vdq $x_1^{3}x_2^{29}x_3^{6}x_4^{7}$&\vdq\cr    
\vdq $(1;2,4)$&\vdq $ (003)$ &\vdq $x_1^{3}x_2^{5}x_3^{7}x_4^{30}$& \vdq \qquad  &\vdq $(1;2,4)$&\vdq $ (012)$ &\vdq $x_1^{3}x_2^{5}x_3^{15}x_4^{22}$&\vdq\cr    
\vdq $(1;2,4)$&\vdq $ (030)$ &\vdq $x_1^{3}x_2^{5}x_3^{31}x_4^{6}$& \vdq \qquad  &\vdq $(1;2,4)$&\vdq $ (102)$ &\vdq $x_1^{3}x_2^{13}x_3^{7}x_4^{22}$&\vdq\cr    
\vdq $(1;2,4)$&\vdq $ (111)$ &\vdq $x_1^{3}x_2^{13}x_3^{15}x_4^{14}$& \vdq \qquad  &\vdq $(1;2,4)$&\vdq $ (120)$ &\vdq $x_1^{3}x_2^{13}x_3^{23}x_4^{6}$&\vdq\cr    
\vdq $(1;2,4)$&\vdq $ (300)$ &\vdq $x_1^{3}x_2^{29}x_3^{7}x_4^{6}$& \vdq \qquad  &\vdq $(1;3,4)$&\vdq $ (003)$ &\vdq $x_1^{3}x_2^{7}x_3^{5}x_4^{30}$&\vdq\cr    
\vdq $(1;3,4)$&\vdq $ (012)$ &\vdq $x_1^{3}x_2^{7}x_3^{13}x_4^{22}$& \vdq \qquad  &\vdq $(1;3,4)$&\vdq $ (030)$ &\vdq $x_1^{3}x_2^{7}x_3^{29}x_4^{6}$&\vdq\cr    
\vdq $(1;3,4)$&\vdq $ (102)$ &\vdq $x_1^{3}x_2^{15}x_3^{5}x_4^{22}$& \vdq \qquad  &\vdq $(1;3,4)$&\vdq $ (111)$ &\vdq $x_1^{3}x_2^{15}x_3^{13}x_4^{14}$&\vdq\cr    
\vdq $(1;3,4)$&\vdq $ (120)$ &\vdq $x_1^{3}x_2^{15}x_3^{21}x_4^{6}$& \vdq \qquad  &\vdq $(1;3,4)$&\vdq $ (300)$ &\vdq $x_1^{3}x_2^{31}x_3^{5}x_4^{6}$&\vdq\cr    
\vdq $(2;3,4)$&\vdq $ (003)$ &\vdq $x_1^{7}x_2^{3}x_3^{5}x_4^{30}$& \vdq \qquad  &\vdq $(2;3,4)$&\vdq $ (012)$ &\vdq $x_1^{7}x_2^{3}x_3^{13}x_4^{22}$&\vdq\cr    
\vdq $(2;3,4)$&\vdq $ (030)$ &\vdq $x_1^{7}x_2^{3}x_3^{29}x_4^{6}$& \vdq \qquad  &\vdq $(2;3,4)$&\vdq $ (102)$ &\vdq $x_1^{15}x_2^{3}x_3^{5}x_4^{22}$&\vdq\cr    
\vdq $(2;3,4)$&\vdq $ (111)$ &\vdq $x_1^{15}x_2^{3}x_3^{13}x_4^{14}$& \vdq \qquad  &\vdq $(2;3,4)$&\vdq $ (120)$ &\vdq $x_1^{15}x_2^{3}x_3^{21}x_4^{6}$&\vdq\cr    
\vdq $(2;3,4)$&\vdq $ (300)$ &\vdq $x_1^{31}x_2^{3}x_3^{5}x_4^{6}$& \vdq \qquad  &\vdq $(4;\emptyset)$&\vdq $ (003)$ &\vdq $x_1^{7}x_2^{7}x_3^{31}$&\vdq\cr    
\vdq $(4;\emptyset)$&\vdq $ (012)$ &\vdq $x_1^{7}x_2^{15}x_3^{23}$& \vdq \qquad  &\vdq $(4;\emptyset)$&\vdq $ (030)$ &\vdq $x_1^{7}x_2^{31}x_3^{7}$&\vdq\cr    
\vdq $(4;\emptyset)$&\vdq $ (102)$ &\vdq $x_1^{15}x_2^{7}x_3^{23}$& \vdq \qquad  &\vdq $(4;\emptyset)$&\vdq $ (111)$ &\vdq $x_1^{15}x_2^{15}x_3^{15}$&\vdq\cr    
\vdq $(4;\emptyset)$&\vdq $ (120)$ &\vdq $x_1^{15}x_2^{23}x_3^{7}$& \vdq \qquad  &\vdq $(4;\emptyset)$&\vdq $ (300)$ &\vdq $x_1^{31}x_2^{7}x_3^{7}$&\vdq\cr    
\vdq $(1;I_1)$&\vdq $ (003)$ &\vdq $x_1^{7}x_2^{7}x_3^{7}x_4^{24}$& \vdq \qquad  &\vdq $(1;I_1)$&\vdq $ (012)$ &\vdq $x_1^{7}x_2^{7}x_3^{9}x_4^{22}$&\vdq\cr    
\vdq $(1;I_1)$&\vdq $ (030)$ &\vdq $x_1^{7}x_2^{7}x_3^{25}x_4^{6}$& \vdq \qquad  &\vdq $(1;I_1)$&\vdq $ (102)$ &\vdq $x_1^{7}x_2^{11}x_3^{5}x_4^{22}$&\vdq\cr    
\vdq $(1;I_1)$&\vdq $ (111)$ &\vdq $x_1^{7}x_2^{11}x_3^{13}x_4^{14}$& \vdq \qquad  &\vdq $(1;I_1)$&\vdq $ (120)$ &\vdq $x_1^{7}x_2^{11}x_3^{21}x_4^{6}$&\vdq\cr    
\vdq $(1;I_1)$&\vdq $ (300)$ &\vdq $x_1^{7}x_2^{27}x_3^{5}x_4^{6}$& \vdq \qquad  &\vdq &\vdq $ $ &\vdq &\vdq\cr    
\hline
\end{tabular}}

\bigskip
Now, we compute $\phi_{(i;I)}(X^7)\bar y^8$ in terms of the monomials in $B_4(45) = \Phi(B_3(45))$ with $\bar y$ a monomial in $B_4(3)$ such that $\nu_i(\bar y) >0$ and $(i;I) \in \mathcal N_4$. It is easy to see that $B_4(3)= \Phi^0(B_3(3))$ is the set consisting of all the following monomials
\begin{align*}
&x_4^{3},\  x_3x_4^{2},\  x_3^{3},\  x_2x_4^{2},\  x_2x_3x_4,\  x_2x_3^{2},\  x_2^{3},\  x_1x_4^{2},\\ &  x_1x_3x_4,\  x_1x_3^{2},\  x_1x_2x_4,\  x_1x_2x_3,\  x_1x_2^{2},\  x_1^{3}.
\end{align*}
In the following table, we denote the monomial $\bar y = x_1^ax_2^bx_3^cx_4^d$ by $(abcd)$ with $0\leqslant a,b,c,d \leqslant 3$. If the monomial $\bar y$ satisfies the conditions of Case 3.1.$u$ in the proof of Proposition \ref{mdc1}, then we denote $\phi_{(i;I)}(X^7)\bar y^8$ by $\phi_{(i;I)}^{(u)}\bar y^8.$ Here $1 \leqslant u \leqslant 14$.

\centerline{\begin{tabular}{lllll}
\hline
$\vdq\ \  (i;I)$  &\vdq\quad \  $\bar y$ &\vdq Case & $\vdq\hskip2cm\phi_{(i;I)}(X^7)\bar y^8 \equiv $&\vdq\cr
\hline
$\vdq (1;\emptyset)$  &\vdq $(1002)$ &\vdq\quad 4 & \vdq$x_1x_2^{14}x_3^{7}x_4^{23}$ + $x_1x_2^{7}x_3^{14}x_4^{23}$ + $x_1x_2^{7}x_3^{7}x_4^{30}$&\vdq\cr      
$\vdq (1;\emptyset)$  &\vdq $(1011)$ &\vdq\quad 4 & \vdq$x_1x_2^{14}x_3^{15}x_4^{15}$ + $x_1x_2^{7}x_3^{14}x_4^{23}$ + $x_1x_2^{7}x_3^{15}x_4^{22}$&\vdq\cr      
$\vdq (1;\emptyset)$  &\vdq $(1020)$ &\vdq\quad 4 & \vdq$x_1x_2^{14}x_3^{23}x_4^{7}$ + $x_1x_2^{7}x_3^{30}x_4^{7}$ + $x_1x_2^{7}x_3^{15}x_4^{22}$&\vdq\cr      
$\vdq (1;\emptyset)$  &\vdq $(1101)$ &\vdq\quad 4 & \vdq$x_1x_2^{14}x_3^{7}x_4^{23}$ + $x_1x_2^{15}x_3^{14}x_4^{15}$ + $x_1x_2^{15}x_3^{7}x_4^{22}$&\vdq\cr      
$\vdq (1;\emptyset)$  &\vdq $(1110)$ &\vdq\quad 4 & \vdq$x_1x_2^{14}x_3^{23}x_4^{7}$ + $x_1x_2^{15}x_3^{22}x_4^{7}$ + $x_1x_2^{15}x_3^{15}x_4^{14}$&\vdq\cr      
$\vdq (1;\emptyset)$  &\vdq $(1200)$ &\vdq\quad 4 & \vdq$x_1x_2^{30}x_3^{7}x_4^{7}$ + $x_1x_2^{15}x_3^{22}x_4^{7}$ + $x_1x_2^{15}x_3^{7}x_4^{22}$&\vdq\cr      
$\vdq (1;2)$  &\vdq $(1002)$ &\vdq\quad 4 & \vdq$x_1x_2^{14}x_3^{7}x_4^{23}$ + $x_1^{3}x_2^{5}x_3^{14}x_4^{23}$ + $x_1^{3}x_2^{5}x_3^{7}x_4^{30}$&\vdq\cr      
$\vdq (1;2)$  &\vdq $(1011)$ &\vdq\quad 4 & \vdq$x_1x_2^{14}x_3^{15}x_4^{15}$ + $x_1^{3}x_2^{5}x_3^{14}x_4^{23}$ + $x_1^{3}x_2^{5}x_3^{15}x_4^{22}$&\vdq\cr      
$\vdq (1;2)$  &\vdq $(1020)$ &\vdq\quad 4 & \vdq$x_1x_2^{14}x_3^{23}x_4^{7}$ + $x_1^{3}x_2^{5}x_3^{30}x_4^{7}$ + $x_1^{3}x_2^{5}x_3^{15}x_4^{22}$&\vdq\cr      
$\vdq (1;2)$  &\vdq $(1101)$ &\vdq\quad 4 & \vdq$x_1x_2^{14}x_3^{7}x_4^{23}$ + $x_1^{3}x_2^{13}x_3^{14}x_4^{15}$ + $x_1^{3}x_2^{13}x_3^{7}x_4^{22}$&\vdq\cr      
$\vdq (1;2)$  &\vdq $(1110)$ &\vdq\quad 4 & \vdq$x_1x_2^{14}x_3^{23}x_4^{7}$ + $x_1^{3}x_2^{13}x_3^{22}x_4^{7}$ + $x_1^{3}x_2^{13}x_3^{15}x_4^{14}$&\vdq\cr      
$\vdq (1;2)$  &\vdq $(1200)$ &\vdq\quad 4 & \vdq$x_1x_2^{30}x_3^{7}x_4^{7}$ + $x_1^{3}x_2^{13}x_3^{22}x_4^{7}$ + $x_1^{3}x_2^{13}x_3^{7}x_4^{22}$&\vdq\cr      
$\vdq (1;3)$  &\vdq $(1002)$ &\vdq\quad 4 & \vdq$x_1^{3}x_2^{13}x_3^{6}x_4^{23}$ + $x_1x_2^{7}x_3^{14}x_4^{23}$ + $x_1^{3}x_2^{7}x_3^{5}x_4^{30}$&\vdq\cr      
$\vdq (1;3)$  &\vdq $(1011)$ &\vdq\quad 4 & \vdq$x_1^{3}x_2^{13}x_3^{14}x_4^{15}$ + $x_1x_2^{7}x_3^{14}x_4^{23}$ + $x_1^{3}x_2^{7}x_3^{13}x_4^{22}$&\vdq\cr      
$\vdq (1;3)$  &\vdq $(1020)$ &\vdq\quad 4 & \vdq$x_1^{3}x_2^{13}x_3^{22}x_4^{7}$ + $x_1x_2^{7}x_3^{30}x_4^{7}$ + $x_1^{3}x_2^{7}x_3^{13}x_4^{22}$&\vdq\cr      
$\vdq (1;3)$  &\vdq $(1101)$ &\vdq\quad 4 & \vdq$x_1^{3}x_2^{13}x_3^{6}x_4^{23}$ + $x_1x_2^{15}x_3^{14}x_4^{15}$ + $x_1^{3}x_2^{15}x_3^{5}x_4^{22}$&\vdq\cr      
$\vdq (1;3)$  &\vdq $(1110)$ &\vdq\quad 4 & \vdq$x_1^{3}x_2^{13}x_3^{22}x_4^{7}$ + $x_1x_2^{15}x_3^{22}x_4^{7}$ + $x_1^{3}x_2^{15}x_3^{13}x_4^{14}$&\vdq\cr      
$\vdq (1;3)$  &\vdq $(1200)$ &\vdq\quad 4 & \vdq$x_1^{3}x_2^{29}x_3^{6}x_4^{7}$ + $x_1x_2^{15}x_3^{22}x_4^{7}$ + $x_1^{3}x_2^{15}x_3^{5}x_4^{22}$&\vdq\cr      
$\vdq (1;4)$  &\vdq $(1002)$ &\vdq\quad 4 & \vdq$x_1^{3}x_2^{13}x_3^{7}x_4^{22}$ + $x_1^{3}x_2^{7}x_3^{13}x_4^{22}$ + $x_1x_2^{7}x_3^{7}x_4^{30}$&\vdq\cr      
$\vdq (1;4)$  &\vdq $(1011)$ &\vdq\quad 4 & \vdq$x_1^{3}x_2^{13}x_3^{15}x_4^{14}$ + $x_1^{3}x_2^{7}x_3^{13}x_4^{22}$ + $x_1x_2^{7}x_3^{15}x_4^{22}$&\vdq\cr      
$\vdq (1;4)$  &\vdq $(1020)$ &\vdq\quad 4 & \vdq$x_1^{3}x_2^{13}x_3^{23}x_4^{6}$ + $x_1^{3}x_2^{7}x_3^{29}x_4^{6}$ + $x_1x_2^{7}x_3^{15}x_4^{22}$&\vdq\cr      
$\vdq (1;4)$  &\vdq $(1101)$ &\vdq\quad 4 & \vdq$x_1^{3}x_2^{13}x_3^{7}x_4^{22}$ + $x_1^{3}x_2^{15}x_3^{13}x_4^{14}$ + $x_1x_2^{15}x_3^{7}x_4^{22}$&\vdq\cr      
$\vdq (1;4)$  &\vdq $(1110)$ &\vdq\quad 4 & \vdq$x_1^{3}x_2^{13}x_3^{23}x_4^{6}$ + $x_1^{3}x_2^{15}x_3^{21}x_4^{6}$ + $x_1x_2^{15}x_3^{15}x_4^{14}$&\vdq\cr      
$\vdq (1;4)$  &\vdq $(1200)$ &\vdq\quad 4 & \vdq$x_1^{3}x_2^{29}x_3^{7}x_4^{6}$ + $x_1^{3}x_2^{15}x_3^{21}x_4^{6}$ + $x_1x_2^{15}x_3^{7}x_4^{22}$&\vdq\cr      
$\vdq (2;\emptyset)$  &\vdq $(0102)$ &\vdq\quad 4 & \vdq$\phi_{(1;2)}^{(4)}(1002)^8$ + $x_1^{7}x_2x_3^{14}x_4^{23}$ + $x_1^{7}x_2x_3^{7}x_4^{30}$&\vdq\cr      
$\vdq (2;\emptyset)$  &\vdq $(0111)$ &\vdq\quad 4 & \vdq$\phi_{(1;2)}^{(4)}(1011)^8$ + $x_1^{7}x_2x_3^{14}x_4^{23}$ + $x_1^{7}x_2x_3^{15}x_4^{22}$&\vdq\cr      
$\vdq (2;\emptyset)$  &\vdq $(0120)$ &\vdq\quad 4 & \vdq$\phi_{(1;2)}^{(4)}(1020)^8$ + $x_1^{7}x_2x_3^{30}x_4^{7}$ + $x_1^{7}x_2x_3^{15}x_4^{22}$&\vdq\cr      
$\vdq (3;\emptyset)$  &\vdq $(0012)$ &\vdq\quad 5 & \vdq$\phi_{(1;\emptyset)}^{(4)}(1002)^8$ + $\phi_{(2;\emptyset)}^{(4)}(0102)^8$ + $x_1^{7}x_2^{7}x_3^{7}x_4^{24}$&\vdq\cr      
$\vdq (4;\emptyset)$  &\vdq $(0012)$ &\vdq\quad 6 & \vdq$\phi_{(1;\emptyset)}^{(4)}(1011)^8$ + $\phi_{(2;\emptyset)}^{(4)}(0111)^8$ + $\phi_{(3;\emptyset)}^{(5)}(0012)^8$&\vdq\cr      
$\vdq (3;\emptyset)$  &\vdq $(0030)$ &\vdq\quad 7 & \vdq$\phi_{(1;3)}^{(4)}(1020)^8$ + $\phi_{(2;3)}^{(4)}(0120)^8$ + $x_1^{7}x_2^{7}x_3^{9}x_4^{22}$&\vdq\cr      
$\vdq (2;3)$  &\vdq $(0102)$ &\vdq\quad 8 & \vdq$\phi_{(1;3)}^{(4)}(1002)^8$ + $\phi_{(3;\emptyset)}^{(5)}(0012)^8$ + $x_1^{7}x_2^{7}x_3^{9}x_4^{22}$&\vdq\cr      
$\vdq (2;3)$  &\vdq $(0111)$ &\vdq\quad 8 & \vdq$\phi_{(1;3)}^{(4)}(1011)^8$ + $\phi_{(3;\emptyset)}^{(5)}(0012)^8$ + $x_1^{7}x_2^{7}x_3^{9}x_4^{22}$&\vdq\cr      
$\vdq (2;3)$  &\vdq $(0120)$ &\vdq\quad 8 & \vdq$\phi_{(1;3)}^{(4)}(1020)^8$ + $\phi_{(3;\emptyset)}^{(7)}(0030)^8$ + $x_1^{7}x_2^{7}x_3^{9}x_4^{22}$&\vdq\cr      
$\vdq (2;4)$  &\vdq $(0102)$ &\vdq\quad 8 & \vdq$\phi_{(1;4)}^{(4)}(1002)^8$ + $x_1^{7}x_2^{7}x_3^{9}x_4^{22}$ + $x_1^{7}x_2^{7}x_3^{7}x_4^{24}$&\vdq\cr      
$\vdq (2;4)$  &\vdq $(0111)$ &\vdq\quad 8 & \vdq$\phi_{(1;4)}^{(4)}(1011)^8$ + $\phi_{(4;\emptyset)}^{(6)}(0012)^8$ + $x_1^{7}x_2^{7}x_3^{9}x_4^{22}$&\vdq\cr      
$\vdq (2;4)$  &\vdq $(0120)$ &\vdq\quad 8 & \vdq$\phi_{(1;4)}^{(4)}(1020)^8$ + $\phi_{(4;\emptyset)}^{(6)}(0012)^8$ + $x_1^{7}x_2^{7}x_3^{25}x_4^{6}$&\vdq\cr      
\hline
\end{tabular}}

\centerline{\begin{tabular}{lllll}
\hline
$\vdq\ \  (i;I)$  &\vdq\quad \  $\bar y$ &\vdq  Case & $\vdq\hskip2cm\phi_{(i;I)}(X^7)\bar y^8 \equiv $&\vdq\cr
\hline
$\vdq (3;\emptyset)$  &\vdq $(0111)$ &\vdq\quad 9 & \vdq$\phi_{(1;3)}^{(4)}(1101)^8$ + $\phi_{(2;3)}^{(8)}(0102)^8$ + $x_1^{7}x_2^{15}x_3x_4^{22}$&\vdq\cr      
$\vdq (3;4)$  &\vdq $(0111)$ &\vdq\quad 9 & \vdq$\phi_{(1;4)}^{(4)}(1101)^8$ + $\phi_{(2;4)}^{(8)}(0102)^8$ + $\phi_{(1;\emptyset)}^{(4)}(1101)^8$&\vdq\cr \vdq&\vdq &\vdq &\vdq \hskip1.5cm  + $\phi_{(2;\emptyset)}^{(4)}(0102)^8$ + $\phi_{(3;\emptyset)}^{(9)}(0111)^8$&\vdq\cr      
$\vdq (3;\emptyset)$  &\vdq $(0120)$ &\vdq\quad 9 & \vdq$\phi_{(1;3)}^{(4)}(1110)^8$ + $\phi_{(2;3)}^{(8)}(0120)^8$ + $\phi_{(3;4)}^{(9)}(0111)^8$&\vdq\cr      
$\vdq (3;4)$  &\vdq $(0120)$ &\vdq\quad 9 & \vdq$\phi_{(1;4)}^{(4)}(1110)^8$ + $\phi_{(2;4)}^{(8)}(0120)^8$ + $\phi_{(1;\emptyset)}^{(4)}(1110)^8$&\vdq\cr \vdq&\vdq &\vdq &\vdq \hskip1.5cm  + $\phi_{(2;\emptyset)}^{(4)}(0120)^8$ + $\phi_{(3;\emptyset)}^{(9)}(0120)^8$&\vdq\cr      
$\vdq (2;\emptyset)$  &\vdq $(0300)$ &\vdq\quad 10 & \vdq$\phi_{(1;2)}^{(4)}(1020)^8$ + $\phi_{(2;3)}^{(8)}(0102)^8$ + $\phi_{(2;4)}^{(8)}(0102)^8$&\vdq\cr      
$\vdq (2;3)$  &\vdq $(0300)$ &\vdq\quad 10 & \vdq$\phi_{(1;3)}^{(4)}(1200)^8$ + $\phi_{(3;\emptyset)}^{(9)}(0120)^8$ + $x_1^{7}x_2^{15}x_3x_4^{22}$&\vdq\cr      
$\vdq (2;4)$  &\vdq $(0300)$ &\vdq\quad 10 & \vdq$\phi_{(1;4)}^{(4)}(1200)^8$ + $\phi_{(3;4)}^{(9)}(0120)^8$ + $\phi_{(1;\emptyset)}^{(4)}(1200)^8$&\vdq\cr \vdq&\vdq &\vdq &\vdq \hskip1.5cm  + $\phi_{(2;\emptyset)}^{(10)}(0300)^8$ + $\phi_{(3;\emptyset)}^{(9)}(0120)^8$&\vdq\cr      
$\vdq (1;2,3)$  &\vdq $(1002)$ &\vdq\quad 11 & \vdq$\phi_{(2;3)}^{(8)}(0102)^8$ + $x_1^{7}x_2x_3^{14}x_4^{23}$ + $x_1^{7}x_2^{3}x_3^{5}x_4^{30}$&\vdq\cr      
$\vdq (1;2,3)$  &\vdq $(1011)$ &\vdq\quad 11 & \vdq$\phi_{(2;3)}^{(8)}(0111)^8$ + $x_1^{7}x_2x_3^{14}x_4^{23}$ + $x_1^{7}x_2^{3}x_3^{13}x_4^{22}$&\vdq\cr      
$\vdq (1;2,3)$  &\vdq $(1020)$ &\vdq\quad 11 & \vdq$\phi_{(2;3)}^{(8)}(0120)^8$ + $x_1^{7}x_2x_3^{30}x_4^{7}$ + $x_1^{7}x_2^{3}x_3^{13}x_4^{22}$&\vdq\cr      
$\vdq (1;2,3)$  &\vdq $(1101)$ &\vdq\quad 11 & \vdq$\phi_{(2;3)}^{(8)}(0102)^8$ + $\phi_{(2;3)}^{(8)}(0111)^8$ + $x_1^{7}x_2^{11}x_3^{5}x_4^{22}$&\vdq\cr      
$\vdq (1;2,3)$  &\vdq $(1110)$ &\vdq\quad 11 & \vdq$x_1^{7}x_2^{11}x_3^{13}x_4^{14}$&\vdq\cr      
$\vdq (1;2,3)$  &\vdq $(1200)$ &\vdq\quad 11 & \vdq$\phi_{(2;3)}^{(8)}(0300)^8$ + $\phi_{(2;3)}^{(8)}(0120)^8$ + $x_1^{7}x_2^{11}x_3^{5}x_4^{22}$&\vdq\cr      
$\vdq (1;2,4)$  &\vdq $(1002)$ &\vdq\quad 11 & \vdq$\phi_{(2;4)}^{(8)}(0102)^8$ + $x_1^{7}x_2^{3}x_3^{13}x_4^{22}$ + $x_1^{7}x_2x_3^{7}x_4^{30}$&\vdq\cr      
$\vdq (1;2,4)$  &\vdq $(1011)$ &\vdq\quad 11 & \vdq$\phi_{(2;4)}^{(8)}(0111)^8$ + $x_1^{7}x_2^{3}x_3^{13}x_4^{22}$ + $x_1^{7}x_2x_3^{15}x_4^{22}$&\vdq\cr      
$\vdq (1;2,4)$  &\vdq $(1020)$ &\vdq\quad 11 & \vdq$\phi_{(2;4)}^{(8)}(0120)^8$ + $x_1^{7}x_2^{3}x_3^{29}x_4^{6}$ + $x_1^{7}x_2x_3^{15}x_4^{22}$&\vdq\cr      
$\vdq (1;2,4)$  &\vdq $(1101)$ &\vdq\quad 11 & \vdq$x_1^{7}x_2^{11}x_3^{13}x_4^{14}$&\vdq\cr      
$\vdq (1;2,4)$  &\vdq $(1110)$ &\vdq\quad 11 & \vdq$\phi_{(2;4)}^{(8)}(0120)^8$ + $\phi_{(2;4)}^{(8)}(0111)^8$ + $x_1^{7}x_2^{11}x_3^{21}x_4^{6}$&\vdq\cr      
$\vdq (1;2,4)$  &\vdq $(1200)$ &\vdq\quad 11 & \vdq$\phi_{(2;4)}^{(10)}(0300)^8$ + $\phi_{(2;4)}^{(8)}(0102)^8$ + $x_1^{7}x_2^{11}x_3^{21}x_4^{6}$&\vdq\cr      
$\vdq (1;3,4)$  &\vdq $(1002)$ &\vdq\quad 11 & \vdq$x_1^{7}x_2^{11}x_3^{5}x_4^{22}$ + $x_1^{7}x_2^{7}x_3^{9}x_4^{22}$ + $x_1^{7}x_2^{7}x_3x_4^{30}$&\vdq\cr      
$\vdq (1;3,4)$  &\vdq $(1011)$ &\vdq\quad 11 & \vdq$x_1^{7}x_2^{11}x_3^{13}x_4^{14}$&\vdq\cr      
$\vdq (1;3,4)$  &\vdq $(1020)$ &\vdq\quad 11 & \vdq$x_1^{7}x_2^{11}x_3^{21}x_4^{6}$ + $x_1^{7}x_2^{7}x_3^{25}x_4^{6}$ + $x_1^{7}x_2^{7}x_3^{9}x_4^{22}$&\vdq\cr      
$\vdq (1;3,4)$  &\vdq $(1101)$ &\vdq\quad 11 & \vdq$\phi_{(3;4)}^{(9)}(0111)^8$ + $x_1^{7}x_2^{11}x_3^{5}x_4^{22}$ + $x_1^{7}x_2^{15}x_3x_4^{22}$&\vdq\cr      
$\vdq (1;3,4)$  &\vdq $(1110)$ &\vdq\quad 11 & \vdq$\phi_{(3;4)}^{(9)}(0120)^8$ + $\phi_{(3;4)}^{(9)}(0111)^8$ + $x_1^{7}x_2^{11}x_3^{21}x_4^{6}$&\vdq\cr      
$\vdq (1;3,4)$  &\vdq $(1200)$ &\vdq\quad 11 & \vdq$\phi_{(3;4)}^{(9)}(0120)^8$ + $x_1^{7}x_2^{27}x_3^{5}x_4^{6}$ + $x_1^{7}x_2^{15}x_3x_4^{22}$&\vdq\cr      
$\vdq (2;\emptyset)$  &\vdq $(1101)$ &\vdq\quad 12 & \vdq$\phi_{(1;2)}^{(4)}(1020)^8$ + $x_1^{15}x_2x_3^{14}x_4^{15}$ + $x_1^{15}x_2x_3^{7}x_4^{22}$&\vdq\cr      
$\vdq (2;\emptyset)$  &\vdq $(1110)$ &\vdq\quad 12 & \vdq$\phi_{(1;2)}^{(4)}(1020)^8$ + $x_1^{15}x_2x_3^{22}x_4^{7}$ + $x_1^{15}x_2x_3^{15}x_4^{14}$&\vdq\cr      
$\vdq (2;3)$  &\vdq $(1101)$ &\vdq\quad 12 & \vdq$\phi_{(1;2,3)}^{(11)}(1002)^8$ + $x_1^{15}x_2x_3^{14}x_4^{15}$ + $x_1^{15}x_2^{3}x_3^{5}x_4^{22}$&\vdq\cr      
$\vdq (2;3)$  &\vdq $(1110)$ &\vdq\quad 12 & \vdq$\phi_{(1;2,3)}^{(11)}(1020)^8$ + $x_1^{15}x_2x_3^{22}x_4^{7}$ + $x_1^{15}x_2^{3}x_3^{13}x_4^{14}$&\vdq\cr      
$\vdq (2;4)$  &\vdq $(1101)$ &\vdq\quad 12 & \vdq$\phi_{(1;2,4)}^{(11)}(1002)^8$ + $x_1^{15}x_2^{3}x_3^{13}x_4^{14}$ + $x_1^{15}x_2x_3^{7}x_4^{22}$&\vdq\cr      
$\vdq (2;4)$  &\vdq $(1110)$ &\vdq\quad 12 & \vdq$\phi_{(1;2,4)}^{(11)}(1020)^8$ + $x_1^{15}x_2^{3}x_3^{21}x_4^{6}$ + $x_1^{15}x_2x_3^{15}x_4^{14}$&\vdq\cr      
\hline
\end{tabular}}

\

\bigskip
\centerline{\begin{tabular}{lllll}
\hline
$\vdq\ \ (i;I)$  &\vdq\quad \  $\bar y$ &\vdq  Case & $\vdq\hskip2cm\phi_{(i;I)}(X^7)\bar y^8 \equiv $&\vdq\cr
\hline
$\vdq (2;3,4)$  &\vdq $(1101)$ &\vdq\quad 12 & \vdq$\phi_{(1;3,4)}^{(11)}(1002)^8$ + $\phi_{(1;\emptyset)}^{(4)}(1002)^8$ + $\phi_{(1;3)}^{(4)}(1002)^8$&\vdq\cr 
\vdq&\vdq &\vdq &\vdq \hskip1.5cm  + $\phi_{(1;4)}^{(4)}(1002)^8$ + $\phi_{(2;\emptyset)}^{(12)}(1101)^8$&\vdq\cr 
\vdq&\vdq &\vdq &\vdq \hskip1.5cm  + $\phi_{(2;3)}^{(12)}(1101)^8$ + $\phi_{(2;4)}^{(12)}(1101)^8$&\vdq\cr      
$\vdq (2;3,4)$  &\vdq $(1110)$ &\vdq\quad 12 & \vdq$\phi_{(1;3,4)}^{(11)}(1020)^8$ + $\phi_{(1;\emptyset)}^{(4)}(1020)^8$ + $\phi_{(1;3)}^{(4)}(1020)^8$&\vdq\cr 
\vdq&\vdq &\vdq &\vdq \hskip1.5cm  + $\phi_{(1;4)}^{(4)}(1020)^8$ + $\phi_{(2;\emptyset)}^{(12)}(1110)^8$&\vdq\cr 
\vdq&\vdq &\vdq &\vdq \hskip1.5cm  + $\phi_{(2;3)}^{(12)}(1110)^8$ + $\phi_{(2;4)}^{(12)}(1110)^8$&\vdq\cr      
$\vdq (2;\emptyset)$  &\vdq $(1200)$ &\vdq\quad 12 & \vdq$\phi_{(1;2)}^{(4)}(1200)^8$ + $\phi_{(2;3)}^{(12)}(1110)^8$ + $\phi_{(2;4)}^{(12)}(1101)^8$&\vdq\cr      
$\vdq (2;3)$  &\vdq $(1200)$ &\vdq\quad 12 & \vdq$\phi_{(1;2,3)}^{(11)}(1200)^8$ + $\phi_{(2;3)}^{(12)}(1110)^8$ + $\phi_{(2;3,4)}^{(12)}(1101)^8$&\vdq\cr      
$\vdq (2;4)$  &\vdq $(1200)$ &\vdq\quad 12 & \vdq$\phi_{(1;2,4)}^{(11)}(1200)^8$ + $\phi_{(2;3,4)}^{(12)}(1110)^8$ + $\phi_{(2;4)}^{(12)}(1101)^8$&\vdq\cr      
$\vdq (2;3,4)$  &\vdq $(1200)$ &\vdq\quad 12 & \vdq$\phi_{(1;3,4)}^{(11)}(1200)^8$ + $\phi_{(1;\emptyset)}^{(4)}(1200)^8$ + $\phi_{(1;3)}^{(4)}(1200)^8$&\vdq\cr 
\vdq&\vdq &\vdq &\vdq \hskip1.5cm  + $\phi_{(1;4)}^{(4)}(1200)^8$ + $\phi_{(2;\emptyset)}^{(12)}(1200)^8$&\vdq\cr 
\vdq&\vdq &\vdq &\vdq \hskip1.5cm  + $\phi_{(2;3)}^{(12)}(1200)^8$ + $\phi_{(2;4)}^{(12)}(1200)^8$&\vdq\cr      
$\vdq (1;\emptyset)$  &\vdq $(3000)$ &\vdq\quad 13 & \vdq$\phi_{(1;2)}^{(4)}(1200)^8$ + $\phi_{(1;3)}^{(4)}(1020)^8$ + $\phi_{(1;4)}^{(4)}(1002)^8$&\vdq\cr      
$\vdq (1;2)$  &\vdq $(3000)$ &\vdq\quad 13 & \vdq$\phi_{(1;2)}^{(4)}(1200)^8$ + $\phi_{(1;2,3)}^{(11)}(1020)^8$ + $\phi_{(1;2,4)}^{(11)}(1002)^8$&\vdq\cr      
$\vdq (1;3)$  &\vdq $(3000)$ &\vdq\quad 13 & \vdq$\phi_{(1;2,3)}^{(11)}(1200)^8$ + $\phi_{(1;3)}^{(4)}(1020)^8$ + $\phi_{(1;3,4)}^{(11)}(1002)^8$&\vdq\cr      
$\vdq (1;4)$  &\vdq $(3000)$ &\vdq\quad 13 & \vdq$\phi_{(1;2,4)}^{(11)}(1200)^8$ + $\phi_{(1;3,4)}^{(11)}(1020)^8$ + $\phi_{(1;4)}^{(4)}(1002)^8$&\vdq\cr      
$\vdq (1;2,3)$  &\vdq $(3000)$ &\vdq\quad 13 & \vdq$\phi_{(2;3)}^{(12)}(1200)^8$ + $x_1^{15}x_2x_3^{14}x_4^{15}$ + $x_1^{15}x_2^{3}x_3^{5}x_4^{22}$&\vdq\cr      
$\vdq (1;2,4)$  &\vdq $(3000)$ &\vdq\quad 13 & \vdq$\phi_{(2;4)}^{(12)}(1200)^8$ + $x_1^{15}x_2^{3}x_3^{21}x_4^{6}$ + $x_1^{15}x_2x_3^{7}x_4^{22}$&\vdq\cr      
$\vdq (1;3,4)$  &\vdq $(3000)$ &\vdq\quad 13 & \vdq$\phi_{(2;3,4)}^{(12)}(1200)^8$ + $\phi_{(1;\emptyset)}^{(4)}(3000)^8$ + $\phi_{(1;3)}^{(4)}(3000)^8$&\vdq\cr 
\vdq&\vdq &\vdq &\vdq \hskip1.5cm + $\phi_{(1;4)}^{(4)}(3000)^8$ + $\phi_{(2;\emptyset)}^{(12)}(1200)^8$&\vdq\cr 
\vdq&\vdq &\vdq &\vdq \hskip1.5cm + $\phi_{(2;3)}^{(12)}(1200)^8$ + $\phi_{(2;4)}^{(12)}(1200)^8$&\vdq\cr      
$\vdq (3;\emptyset)$  &\vdq $(1011)$ &\vdq\quad 14 & \vdq$\phi_{(1;3)}^{(4)}(1002)^8$ + $\phi_{(2;3)}^{(12)}(1101)^8$ + $x_1^{15}x_2^{7}x_3x_4^{22}$&\vdq\cr      
$\vdq (3;\emptyset)$  &\vdq $(1110)$ &\vdq\quad 14 & \vdq$\phi_{(1;3)}^{(4)}(1200)^8$ + $\phi_{(2;3)}^{(12)}(1200)^8$ + $x_1^{15}x_2^{15}x_3x_4^{14}$&\vdq\cr      
$\vdq (3;4)$  &\vdq $(1011)$ &\vdq\quad 14 & \vdq$\phi_{(1;3,4)}^{(11)}(1002)^8$ + $\phi_{(2;3,4)}^{(12)}(1101)^8$ + $x_1^{15}x_2^{7}x_3x_4^{22}$&\vdq\cr      
$\vdq (3;4)$  &\vdq $(1020)$ &\vdq\quad 14 & \vdq$\phi_{(1;3,4)}^{(11)}(1020)^8$ + $\phi_{(2;3,4)}^{(12)}(1110)^8$ + $\phi_{(3;4)}^{(14)}(1011)^8$&\vdq\cr      
$\vdq (3;4)$  &\vdq $(1110)$ &\vdq\quad 14 & \vdq$\phi_{(1;3,4)}^{(11)}(1200)^8$ + $\phi_{(2;3,4)}^{(12)}(1200)^8$ + $x_1^{15}x_2^{15}x_3x_4^{14}$&\vdq\cr      
$\vdq (4;\emptyset)$  &\vdq $(0111)$ &\vdq\quad 14 & \vdq$\phi_{(1;\emptyset)}^{(4)}(1110)^8$ + $\phi_{(2;\emptyset)}^{(4)}(0120)^8$ + $\phi_{(3;\emptyset)}^{(9)}(0120)^8$&\vdq\cr      
$\vdq (4;\emptyset)$  &\vdq $(1101)$ &\vdq\quad 14 & \vdq$\phi_{(1;4)}^{(4)}(1200)^8$ + $\phi_{(2;4)}^{(12)}(1200)^8$ + $\phi_{(3;4)}^{(14)}(1110)^8$&\vdq\cr      
$\vdq (3;\emptyset)$  &\vdq $(1020)$ &\vdq\quad 14 & \vdq$\phi_{(1;3)}^{(4)}(1020)^8$ + $\phi_{(2;3)}^{(12)}(1110)^8$ + $\phi_{(3;4)}^{(14)}(1011)^8$&\vdq\cr      
$\vdq (4;\emptyset)$  &\vdq $(0102)$ &\vdq\quad 14 & \vdq$\phi_{(1;4)}^{(4)}(1101)^8$ + $\phi_{(2;4)}^{(8)}(0102)^8$ + $\phi_{(3;4)}^{(9)}(0111)^8$&\vdq\cr      
$\vdq (4;\emptyset)$  &\vdq $(1002)$ &\vdq\quad 14 & \vdq$\phi_{(1;4)}^{(4)}(1002)^8$ + $\phi_{(2;4)}^{(12)}(1101)^8$ + $\phi_{(3;4)}^{(14)}(1011)^8$&\vdq\cr   
$\vdq (4;\emptyset)$  &\vdq $(1011)$ &\vdq\quad 14 & \vdq$\phi_{(1;4)}^{(4)}(1020)^8$ + $\phi_{(2;4)}^{(12)}(1110)^8$ + $\phi_{(3;4)}^{(14)}(1020)^8$&\vdq\cr      
\hline
\end{tabular}}

\end{document}